\documentclass[11pt,a4paper,titlepage,twoside
]{book}

\usepackage[utf8x]{inputenc}
\usepackage[T1]{fontenc}
\usepackage{kpfonts}

\usepackage[a4paper,includeheadfoot,pdftex,textwidth=16cm,textheight=24cm,
bottom=3.6cm]{geometry}
\usepackage[svgnames]{xcolor}%https://www.latextemplates.com/svgnames-colors
\usepackage{graphicx}

\usepackage[bookmarks=true,
pdfborder={0 0 1},%citebordercolor=violet, 
colorlinks=true,urlcolor=blue,citecolor=Purple, 
linkcolor=NavyBlue,hypertexnames=false]{hyperref}

\usepackage{enumitem}
\setlist{parsep=0pt} % decrease enumerate/itemize separation
\setlist[itemize,enumerate]{nolistsep,itemsep=1pt,topsep=1pt} 
\setlist{leftmargin=5mm}

\usepackage{fancybox}
\usepackage[Lenny]{fncychap}
\usepackage{fancyhdr}
\usepackage{amsmath}
\usepackage{amsfonts}
\usepackage{amssymb}
\usepackage{amsthm}
\usepackage{ upgreek }

\usepackage{bbm}

\usepackage{mathtools}%,xparse}
\usepackage{mdframed}

\usepackage{pgf,tikz}
\usetikzlibrary{matrix,automata,arrows,positioning,calc,patterns,fit,calc,
decorations.pathreplacing,decorations.pathmorphing,decorations.markings,matrix,
tikzmark,shadows.blur}
\usetikzlibrary{cd}
 \usepgflibrary{fpu}

\tikzset{
    mynode/.style={circle,fill=black,inner sep=1.5pt},
}

\usepackage[margin=10pt,font=small,labelfont=bf,
labelsep=endash]{caption}

\definecolor{ThmColor}{rgb}{0.9,0.9,0.995}
\definecolor{DefColor}{rgb}{0.85,0.94,0.98}
\definecolor{RemColor}{rgb}{0.95,0.9,0.95}
\definecolor{ExoColor}{rgb}{0.89,0.993,0.89}

\mdfdefinestyle{thmstyle}{backgroundcolor=ThmColor,nobreak}
\mdfdefinestyle{defstyle}{backgroundcolor=DefColor,nobreak} 
\mdfdefinestyle{remstyle}{backgroundcolor=RemColor} 
\mdfdefinestyle{exostyle}{backgroundcolor=ExoColor} 

\mdtheorem[style=thmstyle]{theorem}%
{Theorem}[section]

\mdtheorem[style=thmstyle]{proposition}[theorem]{Proposition}[section]
\mdtheorem[ntheorem,style=thmstyle]{corollary}[theorem]{Corollary}[section]
\mdtheorem[ntheorem,style=thmstyle]{lemma}[theorem]{Lemma}[section]
\mdtheorem[ntheorem,style=defstyle]{definition}[theorem]{Definition}[section]
\mdtheorem[ntheorem,style=defstyle]{notation}[theorem]{Notation}[section]
\mdtheorem[ntheorem,style=defstyle]{assumption}[theorem]{Assumption}[section]

\mdtheorem[ntheorem,style=remstyle]{example}[theorem]{Example}[section]
\mdtheorem[ntheorem,style=remstyle]{remark}[theorem]{Remark}[section]

\mdtheorem[ntheorem,style=exostyle]{exercise}[theorem]{Exercise}[section]

\usepackage{cleveref}
\crefname{exercise}{exercise}{exercises}
\usepackage{autonum} %numerotation intelligente

\input{sarajevo.sty}
\input{trees_Hopf.sty}
\input{graphs_gibbs_phi4.sty}
\begin{document}

\pagestyle{empty}
\newgeometry{margin=1in}

\hypersetup{pageanchor=false}

\thispagestyle{empty}

\vspace*{1cm}
\begin{center}

{\Huge\bfseries\scshape
Topics in Gaussian \\[1mm]
Wiener chaos expansion \\[1mm]
}%\\[8mm]

\vspace*{12mm}
{\large Nils Berglund}\\[2mm]
{\large Institut Denis Poisson -- UMR 7013}\\[2mm]
{\large Universit\'e d'Orl\'eans, Universit\'e de Tours, CNRS}

\vspace*{12mm}
{\Large Lecture notes\\[4mm]
44th Finnish Summer School on Probability and Statistics}\\[2mm]
{\large Lammi, May 2026}\\
\vspace*{12mm}

\begin{figure}[h]
\begin{center}
\begin{tikzpicture}[>=stealth',main node/.style={circle,minimum
size=0.4cm,inner sep=2pt,fill=blue!25,draw},small node/.style={draw,circle,fill=white,minimum
size=3pt,inner sep=0pt},scale=1.2]
\draw[semithick] (0,0) -- (1,1.5);
\draw[semithick] (0,0) -- (1,1.0);
\draw[semithick] (0,0) -- (1,0.5);
\draw[semithick] (0,0) -- (1,0.0);
\draw[semithick] (0,0) -- (1,-0.5);
\draw[semithick] (0,0) -- (1,-1.0);
\draw[semithick] (0,0) -- (1,-1.5);

\draw[semithick] (3,1) -- (2,1.75);
\draw[semithick] (3,1) -- (2,1.25);
\draw[semithick] (3,1) -- (2,0.75);
\draw[semithick] (3,1) -- (2,0.25);

\draw[semithick] (3,-1) -- (2,-1);

\draw[thick,blue] (1,1.5) to[out=50,in=150] (2,1.75);
\draw[thick,blue] (1,1.0) to[out=40,in=160] (2,1.25);
\draw[thick,blue] (1,0.5) to[out=30,in=180] (2,-1);

\node[main node] (f) at (0,0) {$f$};

\node[main node] (g1) at (3,1) {$g_1$};
\node[main node] (g2) at (3,-1) {$g_2$};

\node[small node] (1) at (1,1.5) {};
\node[small node] (2) at (1,1.0) {};
\node[small node] (3) at (1,0.5) {};
\node[small node] (4) at (1,0.0) {};
\node[small node] (5) at (1,-0.5) {};
\node[small node] (6) at (1,-1.0) {};
\node[small node] (7) at (1,-1.5) {};

\node[small node] (8) at (2,1.75) {};
\node[small node] (9) at (2,1.25) {};
\node[small node] (10) at (2,0.75) {};
\node[small node] (11) at (2,0.25) {};

\node[small node] (12) at (2,-1) {};
\end{tikzpicture}
\hspace{10mm}
\begin{tikzpicture}[>=stealth',main node/.style={circle,minimum
size=0.4cm,inner sep=2pt,fill=blue!25,draw},small node/.style={draw,circle,fill=white,minimum
size=3pt,inner sep=0pt},scale=1.2]
\draw[semithick] (0,0) -- (1,1.5);
\draw[semithick] (0,0) -- (1,1.0);
\draw[semithick] (0,0) -- (1,0.5);
\draw[semithick] (0,0) -- (1,0.0);
\draw[semithick] (0,0) -- (1,-0.5);
\draw[semithick] (0,0) -- (1,-1.0);
\draw[semithick] (0,0) -- (1,-1.5);

\draw[semithick] (3,1) -- (2,1.75);
\draw[semithick] (3,1) -- (2,1.25);
\draw[semithick] (3,1) -- (2,0.75);
\draw[semithick] (3,1) -- (2,0.25);

\draw[semithick] (3,-1) -- (2,-1);

\draw[thick,blue] (1,1.5) to[out=50,in=150] (2,1.75);
\draw[thick,blue] (1,1.0) to[out=40,in=160] (2,1.25);
\draw[thick,blue] (1,0.5) to[out=30,in=195] (2,0.75);

\node[main node] (f) at (0,0) {$f$};

\node[main node] (g1) at (3,1) {$g_1$};
\node[main node] (g2) at (3,-1) {$g_2$};

\node[small node] (1) at (1,1.5) {};
\node[small node] (2) at (1,1.0) {};
\node[small node] (3) at (1,0.5) {};
\node[small node] (4) at (1,0.0) {};
\node[small node] (5) at (1,-0.5) {};
\node[small node] (6) at (1,-1.0) {};
\node[small node] (7) at (1,-1.5) {};

\node[small node] (8) at (2,1.75) {};
\node[small node] (9) at (2,1.25) {};
\node[small node] (10) at (2,0.75) {};
\node[small node] (11) at (2,0.25) {};

\node[small node] (12) at (2,-1) {};
\end{tikzpicture}
\end{center}
\end{figure}

\vspace*{27mm}
--- Version of July 30, 2026 ---\\[2mm]
% --- Version of \today\ ---\\[2mm]

\end{center}
\hypersetup{pageanchor=true}

\cleardoublepage

\pagestyle{fancy}
\fancyhead[RO,LE]{\thepage}
\fancyhead[LO]{\nouppercase{\rightmark}}
\fancyhead[RE]{\nouppercase{\leftmark}}
\cfoot{}
\setcounter{page}{1}
\pagenumbering{roman}
\restoregeometry

\tableofcontents

\cleardoublepage

%%%%%%%%%%%%%%%%%%%%%%%%%%%%%%%%%%%%%%%%%%%%%%%%%%%%%%%%%%%%%%%%%%%%%%%%%%%%%%%%

\chapter*{Preface}

These notes have been written for a series of lectures given at the 44th 
Finnish Summer School on Probability and Statistics in Lammi, Finland, from 25th to 
29th May, 2026. They contain an introduction to Wiener chaos decomposition in finite 
dimension, a construction of Gaussian fields on the torus, including white noise 
and the Gaussian free field, and applications to the $\Phi^4$ model. They do 
\emph{not} cover other important aspects of the topic, such as stochastic 
integration, stochastic PDEs and Malliavin calculus. 
Sections with a $*$ are more technical, and can safely be skipped in a 
first reading.

The material included in these notes is mostly based on the 
monograph~\cite{nualart2006malliavin}, the lecture 
notes~\cite{Hairer_Malliavin26,Tubaro_Zanella25}, and the monograph~\cite{NB_SPDE_book}.
Other useful resources on the topic  
include~\cite{Peccati_Taqqu_book,Janson_book_08,Sanz_Sole_book}.
This is a preliminary version of the notes, that may contain mistakes 
and typos. Feel free to let me know if you find any. 

Thanks are due to Dario Gasbarra for organising the Summer School and 
inviting me to give these lectures, thereby providing the motivation 
to compile these notes, as well as supporting 
institutions. 

\vfill

\cleardoublepage
%\newpage
\setcounter{page}{1}
\pagenumbering{arabic}

%%%%%%%%%%%%%%%%%%%%%%%%%%%%%%%%%%%%%%%%%%%%%%%%%%%%%%%%%%%%%%%%%%%%%%%%%%%%%%%%

\chapter{The one-dimensional case}
\label{chap:1d} 

We start this exposition with the very simple situation of a one-dimensional 
Gaussian random variable, since this allows us to introduce many objects that will 
become important in higher dimension in a relatively simple setting. 

%%%%%%%%%%%%%%%%%%%%%%%%%%%%%%%%%%%%%%%%%%%%%%%%%%%%%%%%%%%%%%%%%%%%%%%%%%%%%%%%

\section{Gaussian random variables}
\label{sec:Gaussian} 

Our fundamental objects are Gaussian random variables, whose definition we recall 
here. 

\begin{definition}[Gaussian random variable]
Let $\R$ be the real line, equipped with the $\sigma$-algebra $\cB$ 
of Borel sets and Lebesgue measure $\6x$. A random variable $X:\R\to\R$ is a 
(one-dimensional) Gaussian random variable with mean $m$ and variance $\sigma^2$ if 
its law is  
\begin{equation}
 \mu(\6x) = \frac{1}{\sqrt{2\pi\sigma^2}} \e^{-(x-m)^2/(2\sigma^2)} \6x\;.
\end{equation}
In that case, we write $X\sim\cN(m,\sigma^2)$.
\end{definition}

We summarise some fundamental properties of Gaussian random variables as follows. 

\begin{proposition}[Basic properties of normal random variables] 
\label{prop:Gaussian_basic} 
\begin{enumerate}
\item   One has $X\sim\cN(m,\sigma^2)$ if, and only if, $X = m + \sigma Y$ 
with $Y \sim \cN(0,1)$. 

\item   Assume $X\sim\cN(m_1,\sigma_1^2)$ and $Y\sim\cN(m_2,\sigma_2^2)$ 
are defined on a common probability space, and let $Z = X + Y$. Then 
$Z$ is Gaussian, with parameters 
\begin{equation}
 Z \sim \cN \bigpar{m_1+m_2, \sigma_1^2 + \sigma_2^2 + 2\cov(X,Y)}\;.
\end{equation} 

\item   Two Gaussian variables $X$ and $Y$ are independent if, and only if, 
they are uncorrelated, that is, $\cov(X,Y) = \expec{XY} - \expec{X}\expec{Y} = 0$. 
\end{enumerate}
\end{proposition}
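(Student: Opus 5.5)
The plan is to work throughout with characteristic functions $\varphi_X(t)=\expec{\e^{itX}}$. A law on $\R$ is determined by its characteristic function, and for $Y\sim\cN(0,1)$ completing the square gives $\varphi_Y(t)=\e^{-t^2/2}$.

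\emph{Item 1.} This is a change of variables. If $X=m+\sigma Y$ with $\sigma>0$, the density of $X$ at $x$ is $\sigma^{-1}$ times the standard density at $(x-m)/\sigma$, which is exactly the density in the definition. Conversely, if $X\sim\cN(m,\sigma^2)$, set $Y=(X-m)/\sigma$ and apply the same substitution in reverse. The degenerate case $\sigma=0$ is taken as the convention $\cN(m,0)=\delta_m$. As a consequence, $\varphi_X(t)=\e^{itm-\sigma^2t^2/2}$, and $\cN(m,\sigma^2)$ is characterised as the unique law with this characteristic function.

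\emph{Items 2 and 3: a needed hypothesis.} As literally stated, these items need the extra hypothesis that $(X,Y)$ is a \emph{jointly} Gaussian pair, meaning every linear combination $aX+bY$ is Gaussian. Without it both fail. Take $X\sim\cN(0,1)$ and $Y=\varepsilon X$, where $\varepsilon=\pm1$ is a fair sign independent of $X$. Then $Y\sim\cN(0,1)$ and $\cov(X,Y)=0$, yet $X$ and $Y$ are not independent, and $X+Y$ has an atom at $0$, so it is not Gaussian. I would therefore state and prove the items under the joint Gaussianity assumption, and add a remark containing this counterexample.

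\emph{Item 2.} Under joint Gaussianity, $Z=X+Y$ is Gaussian by assumption. Its parameters are then forced: $\expec{Z}=m_1+m_2$ by linearity, and expanding the square gives
\[
\operatorname{Var}(Z)=\sigma_1^2+\sigma_2^2+2\cov(X,Y).
\]
Since a Gaussian law is determined by its mean and variance (item 1), this proves item 2.

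\emph{Item 3.} Independence implies zero covariance for any integrable variables, so only the converse needs work; this is the main step. Assume $\cov(X,Y)=0$ and consider the joint characteristic function $\expec{\e^{i(sX+tY)}}$. By joint Gaussianity, $sX+tY$ is Gaussian, with mean $sm_1+tm_2$ and variance $s^2\sigma_1^2+t^2\sigma_2^2$, because the cross term vanishes. Item 1 then gives
\[
\expec{\e^{i(sX+tY)}}=\e^{isFm_1-s^2\sigma_1^2/2}\,\e^{itm_2-t^2\sigma_2^2/2}=\varphi_X(s)\varphi_Y(t),
\]
where the stray symbol should read $\e^{ism_1-s^2\sigma_1^2/2}$. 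By uniqueness of two-dimensional characteristic functions, the joint law is the product of the marginals, which is independence.

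The only genuine obstacle is the hypothesis issue, not any computation. Everything else reduces to the Gaussian characteristic function and the injectivity of the Fourier transform on measures, which I would quote as standard.
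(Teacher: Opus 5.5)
The paper states this proposition without proof, as a reminder of standard facts followed only by informal comments, so there is no argument of the paper to compare yours with. Judged on its own, your proof is correct, and your main observation is the right one. As literally stated, items~2 and~3 are false; they hold once $(X,Y)$ is assumed \emph{jointly} Gaussian.

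Your counterexample works: $X+Y=(1+\varepsilon)X$ vanishes on the event $\{\varepsilon=-1\}$ of probability $\frac12$ while having variance $2$, so it is not Gaussian, and $|X|=|Y|$ rules out independence. The mean and variance formula in item~2 holds for arbitrary square-integrable $X,Y$; only the Gaussianity of $Z$ uses the joint hypothesis.

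The paper itself hints at the intended reading (``the converse is false in general''). When it later invokes item~3, for $\langle\xi,\varphi_1\rangle$ and $\langle\xi,\varphi_2\rangle$ right after Definition~\ref{def:white_noise}, the variables come from a family that is linear in $\varphi$. Every linear combination of them is therefore Gaussian, which is exactly your added hypothesis. Your convention $\cN(m,0)=\delta_m$ is also needed rather than cosmetic: the paper defines Gaussians through a density, and even a jointly Gaussian pair such as $Y=-X$ gives $Z=0$.

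Where you differ from the tools the paper does develop is in using characteristic functions. Proposition~\ref{prop:Laplace_Gaussian_ndim} works with the real Laplace transform $\expec{\e^{\langle t,X\rangle}}=\e^{\langle t,Ct\rangle/2}$, and its proof shows that a diagonal covariance yields a product density. For a jointly Gaussian pair with $\cov(X,Y)=0$, this is your item-3 argument with $\e^{sX+tY}$ in place of $\e^{i(sX+tY)}$.

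The real version has the advantage that the one-dimensional input is literally the completion of the square of Example~\ref{example:Laplace}. With an imaginary exponent, ``completing the square'' hides a contour shift $x\mapsto x-it$ (or an analytic continuation), which deserves a sentence. In exchange, the real version needs uniqueness for Laplace transforms finite on all of $\R^2$ instead of Fourier injectivity. Both are standard, so either route is fine. Finally, simply correct the typo in your last display instead of annotating it.
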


The first property states that all one-dimensional Gaussian random variables are 
equivalent by an affine transformation. The second one states that Gaussian random 
variables are stable, and is at the core of the central limit theorem. The third 
property is only true for very special random variables: while independence 
always implies non-correlation, the converse is false in general.

Because of the first property, we will almost always focus on the case $\mu = 0$,
that is, when $X$ is \emph{centred}. Whenever possible, we will assume $\sigma^2 = 1$, 
although it will sometimes be useful to allow for different variances. 

Our main interest will be expectations of functions of Gaussian random variables. 
Assume $X\sim\cN(0,1)$, and let $f:\R\to\R$. Then 
\begin{equation}
 \expec{f(X)} = \int_{-\infty}^\infty f(x) \mu(\6x)\;,
\end{equation} 
provided the integral is absolutely convergent. 

\begin{example}[Laplace transform]
\label{example:Laplace} 
Let $f(x) = \e^{t x}$ for $t\in\R$. Then, using completion of squares 
to write 
\begin{equation}
 t x - \frac{x^2}{2} 
 = -\frac12 (x-t)^2 + \frac{t^2}{2}\;,
\end{equation} 
we find 
\begin{equation}
\label{eq:exp_moment_N} 
 \expec{\e^{t X}}
 = \int_{-\infty}^\infty 
 \e^{t x-x^2/2}
 \frac{\6x}{\sqrt{2\pi}} 
 = \e^{t^2/2} \int_{-\infty}^\infty 
 \e^{-(x-t)^2/2}
 \frac{\6x}{\sqrt{2\pi}} 
 = \e^{t^2/2}\;.
\end{equation} 
\end{example}

For general functions $f$, explicit expressions of $\expec{f(X)}$ are not available. 
One possible strategy to compute expectations efficiently is to compute expectations
of a set of appropriate basis functions. One choice of basis functions is given by 
monomials. 

\begin{proposition}[Moments of Gaussian random variables]
\label{prop:gaussian_moments} 
Let $X\sim\cN(0,1)$. 
For any $n\in\N$, one has 
\begin{equation}
 \expec{X^n} = 
 \begin{cases}
  (n-1)!! & \text{if $n$ is even\;,} \\
  0 & \text{if $n$ is odd\;,}
 \end{cases}
\label{eq:moment_Gaussian} 
\end{equation} 
where
\begin{equation}
 (n-1)!! = \prod_{k=0}^{n/2-1} (2k+1)
 = 1\cdot3\cdot5 \dots (n-3)(n-1)
\end{equation} 
is the double factorial. 
\end{proposition}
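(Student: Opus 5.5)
The quickest route uses the Laplace transform already computed in Example~\ref{example:Laplace}. By~\eqref{eq:exp_moment_N}, $\expec{\e^{tX}} = \e^{t^2/2}$ for every real $t$. The plan is to expand both sides as power series in $t$ and compare coefficients.

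On the right-hand side,
\begin{equation}
 \e^{t^2/2} = \sum_{k\geqs0} \frac{t^{2k}}{2^k k!}\;.
\end{equation}
On the left-hand side, we would like to write
\begin{equation}
 \expec{\e^{tX}} = \sum_{n\geqs0} \frac{t^n}{n!}\expec{X^n}\;.
\end{equation}
Identifying coefficients then gives $\expec{X^n}=0$ for odd $n$. For $n=2k$ it gives
\begin{equation}
 \expec{X^{2k}} = \frac{(2k)!}{2^k k!}\;.
\end{equation}
This equals $(2k-1)!!$, because $(2k)! = \bigl(2\cdot4\cdots(2k)\bigr)\cdot\bigl(1\cdot3\cdots(2k-1)\bigr)$ and $2\cdot4\cdots(2k) = 2^k k!$.

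The one step needing care is exchanging expectation and summation on the left. I would justify it by Fubini--Tonelli. The bound $\sum_n |t|^n|X|^n/n! = \e^{|t||X|} \leqs \e^{tX}+\e^{-tX}$ shows the dominating series has finite expectation $2\e^{t^2/2}$, again by Example~\ref{example:Laplace}. Hence all moments are finite, and the exchange is legitimate for every $t$. Both sides are then power series converging on all of $\R$, so uniqueness of power-series coefficients applies. I expect this justification to be the only real obstacle, and it is mild.

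A fully elementary alternative is integration by parts. Since $\frac{\6}{\6x}\e^{-x^2/2} = -x\e^{-x^2/2}$, we get $\expec{X^{n+1}} = n\,\expec{X^{n-1}}$, with boundary terms vanishing by Gaussian decay. Odd moments also vanish directly by the symmetry $x\mapsto -x$ of $\mu$. Starting from $\expec{X^0}=1$ and $\expec{X}=0$, induction gives the formula. I would mention this recursion as a cross-check, since it also foreshadows the Gaussian integration by parts that is central later.
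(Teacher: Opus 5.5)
Your proposal is correct and matches the paper's intended proof. The paper leaves this proposition as an exercise suggesting exactly your two routes: expanding the Laplace transform \eqref{eq:exp_moment_N} in powers of $t$, and the integration-by-parts recursion $\expec{X^{n+1}} = n\expec{X^{n-1}}$. Your Fubini--Tonelli bound $\e^{|t||X|}\leqs\e^{tX}+\e^{-tX}$ justifies the interchange of expectation and summation, which the paper takes for granted.
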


\begin{exercise}
Prove Proposition~\ref{prop:gaussian_moments} in two different ways:
\begin{enumerate}
\item   by using~\eqref{eq:exp_moment_N};
\item   by showing, using integration by parts, that 
\begin{equation}
 \expec{X^{n+1}} = n\expec{X^{n-1}}
 \qquad 
 \text{for all $n\geqs1$\;.}
\end{equation} 
\end{enumerate}
\end{exercise}

If $f$ admits an entire series expansion 
\begin{equation}
 f(x) = \sum_{n=0}^\infty a_n x^n 
\end{equation} 
with positive radius of convergence $R$, then Proposition~\ref{prop:gaussian_moments} 
allows to compute 
\begin{equation}
\label{eq:expec_series} 
  \expec{f(X)} = \sum_{n\geqs0} a_n \expec{X^n}\;.
\end{equation}

%%%%%%%%%%%%%%%%%%%%%%%%%%%%%%%%%%%%%%%%%%%%%%%%%%%%%%%%%%%%%%%%%%%%%%%%%%%%%%%%

\section{Hermite polynomials}
\label{sec:Hermite} 

Our main workhorse will be Hermite polynomials. In this section, we review 
several of their definitions, and how they are related to geometry/linear 
algebra, probability theory, analysis, algebra, and combinatorics. 

%%%%%%%%%%%%%%%%%%%%%%%%%%%%%%%%%%%%%%%%%%%%%%%%%%%%%%%%%%%%%%%%%%%%%%%%%%%%%%%%

\subsection{Gram--Schmidt orthogonalisation}
\label{ssec:Gram-Schmidt} 

One drawback of using moments to compute expectations is that the basis $(X^n)_{n\geqs0}$ 
is not orthogonal. Here orthogonality is defined with respect to the inner product 
\begin{equation}
  \pscal{f}{g} = \expec{fg}\;,
\end{equation} 
meaning in particular that independent random variables are orthogonal. Indeed,
Proposition~\ref{prop:gaussian_moments} shows that $X^n$ and $X^m$ are orthogonal, 
according to this definition, if and only if $n+m$ is odd. 

However, a basis can always be turned into an orthogonal basis by the 
\emph{Gram--Schmidt procedure}. Let $(v_n)_{n\geqs0}$ be an arbitrary basis of a vector 
space. Then the Gram--Schmidt procedure provides an orthogonal basis $(u_n)_{n\geqs0}$
defined inductively by $u_0 = v_0$ and
\begin{equation}
 u_n = v_n - \sum_{k=0}^{n-1} \frac{\pscal{v_n}{u_k}}{\pscal{u_k}{u_k}} u_k\;,
 \qquad n \geqs 1\;.
\end{equation} 
This means that $u_n$ is obtained by subtracting from $v_n$ its projection 
on the plane spanned by the $n-1$ first $u_k$. It is easy to show by 
induction that $u_n$ is orthogonal to all vectors $u_0, \dots, u_{n-1}$. 
Indeed, the base case $n = 0$ is trivially true, while for $n\geqs1$, 
one has, for any $\ell \in \set{0,\dots,n-1}$,  
\begin{equation}
 \pscal{u_n}{u_\ell} = \pscal{v_n}{v_\ell} 
 - \frac{\pscal{v_n}{u_\ell}}{\pscal{u_\ell}{u_\ell}} \pscal{u_\ell}{u_\ell}
 = 0\;.
\end{equation} 
Let us apply this procedure to the basis $(X^n)_{n\geqs0}$, denoting the 
resulting orthogonal basis by $(H_n(X))_{n\geqs0}$. The first steps are 
\begin{align}
 H_0(X) &= X^0 = 1\;, \\
 H_1(X) &= X - \frac{\pscal{X}{1}}{\pscal{1}{1}} 1 = X\;, \\
 H_2(X) &= X^2 - \frac{\pscal{X^2}{X}}{\pscal{X}{X}} X 
 - \frac{\pscal{X^2}{1}}{\pscal{1}{1}} 1
 = X^2 - 0 - \frac{\expec{X^2}}{\expec{1}} 1 
 = X^2 - 1\;, \\ 
 H_3(X) &= X^3 - \frac{\pscal{X^3}{H_2(X)}}{\pscal{H_2(X)}{H_2(X)}} H_2(X) 
 - \frac{\pscal{X^3}{X}}{\pscal{X}{X}} X 
 - \frac{\pscal{X^3}{1}}{\pscal{1}{1}} 1
 = X^3 - 0 - \frac{\expec{X^4}}{\expec{1}} X 
 = X^3 - 3X\;.
\end{align}

\begin{exercise}
Show by this method that 
\begin{equation}
H_4(X) = X^4 - 6X^2 + 3\;. 
\end{equation} 
Check that the random variables $(H_n(X))_{0\leqs n\leqs 4}$ are mutually 
independent. 
\end{exercise}

Table~\ref{tab:Hermite} shows the first $11$ Hermite polynomials. 
Clearly, while the Gram--Schmidt procedure does produce an orthogonal basis, 
the computations are not efficient. We are thus going to look for more 
efficient methods. One of them, which we discuss in the next section, uses the notion of \emph{cumulant expansion}. 

\begin{table}
\begin{center}
\begin{tabular}{|r|l|}
\hline 
$n$ & $H_n(x)$ \\
\hline 
$0$ & $1$ \\
$1$ & $x$ \\
$2$ & $x^2 - 1$ \\
$3$ & $x^3 - 3x$ \\
$4$ & $x^4 - 6x^2 + 3$ \\
$5$ & $x^5 - 10x^3 + 15x$ \\
$6$ & $x^6 - 15x^4 + 45x^2 - 15$ \\
$7$ & $x^7 - 21x^5 + 105x^3 - 105x$ \\
$8$ & $x^8 - 28x^6 + 210x^4 -420x^2 + 105$ \\
$9$ & $x^9 - 36x^7 + 378x^5 - 1260x^3 + 945$ \\
$10$ & $x^{10} -45x^8 + 630x^6 -3150x^4 + 472x^2 - 945$\\
\hline
\end{tabular}
\end{center}
\caption[]{List of the first Hermite polynomials.}
\label{tab:Hermite} 
\end{table}

\begin{remark}[Scaling conventions]
\label{rem:Hermite_conventions} 
One finds several conventions for Hermite polynomials in the literature. 
What we use here are the \lq\lq probabilists' Hermite polynomials\rq\rq. 
Another convention, called the \lq\lq physicists' Hermite polynomials\rq\rq, 
uses the scaled version  
\begin{equation}
\widetilde H_n(x) 
 = 2^{n/2} H_n(\sqrt{2}x)\;.
\end{equation} 
Yet another convention, used in~\cite{nualart2006malliavin}, 
is to multiply $H_n(x)$ by $1/n!$. 
\end{remark}

%%%%%%%%%%%%%%%%%%%%%%%%%%%%%%%%%%%%%%%%%%%%%%%%%%%%%%%%%%%%%%%%%%%%%%%%%%%%%%%%

\subsection{Hermite polynomials and cumulants}
\label{ssec:cumulants} 

\begin{definition}[Cumulant expansion]
Let $X$ be a random variable such that $\expec{\e^{tX}}$ exists for all $t$ 
in an open interval $(-\delta,\delta)$, and write 
\begin{equation}
\label{eq:X_moment_gen} 
 \expec{\e^{tX}} = \sum_{n\geqs0} \mu_n\frac{t^n}{n!}\;, 
 \qquad 
 \mu_n = \expec{X^n}\;.
\end{equation} 
Then the \emph{cumulant expansion} of $X$ is given by 
\begin{equation}
\label{eq:X_cumulant} 
 K_X(t) = \log\expec{\e^{tX}}
 = \sum_{n\geqs0} \kappa_n \frac{t^n}{n!}\;.
\end{equation} 
The coefficients $\mu_n$ are called \emph{moments} of $X$, while the 
$\kappa_n$ are called \emph{cumulants}. 
\end{definition}

In the case of a Gaussian $X\sim\cN(0,1)$, the situation is particularly simple.
Indeed, we have by~\eqref{eq:exp_moment_N}
\begin{equation}
 K_X(t) = \log(\e^{t^2/2}) = \frac{t^2}{2}\;,
\end{equation} 
so that the cumulants are given by 
\begin{equation}
\label{eq:cumulant_gaussian} 
 \kappa_n = 
 \begin{cases}
  1 & \text{if $n = 2$\;,} \\
  0 & \text{otherwise\;.}
 \end{cases}
\end{equation} 
Consider now the function 
\begin{equation}
\label{eq:generating_function} 
 G(t,x) 
 = \frac{\e^{tx}}{\expec{\e^{tX}}}
 = \e^{tx - t^2/2}\;.
\end{equation} 
where $t\geqs0$ and $x\in\R$.

\begin{proposition}[Generating function]
\label{prop:Hermite_generating}
$G$ is the generating function of Hermite polynomials, that is 
\begin{equation}
\label{eq:G_Hermite} 
 G(t,x) = \sum_{n\geqs0} \frac{t^n}{n!} H_n(x)\;.
\end{equation} 
\end{proposition}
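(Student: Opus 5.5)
Since $G(\cdot,x)$ is entire in $t$, I will expand it as $G(t,x)=\sum_{n\geqs0}\frac{t^n}{n!}P_n(x)$ with $P_n(x)=\partial_t^n G(t,x)\big|_{t=0}$. The goal is to show $P_n=H_n$, where $H_n$ is defined by Gram--Schmidt applied to $(X^n)_{n\geqs0}$. That procedure is characterised uniquely by two properties: $H_n$ is a monic polynomial of degree $n$, and $\pscal{H_n}{X^k}=0$ for all $k<n$. So it suffices to check both properties for $P_n$.

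\textbf{Monic of degree $n$.} Multiplying the two series
\begin{equation}
 \e^{tx}\e^{-t^2/2}=\sum_{j,m\geqs0}\frac{t^{j}x^j}{j!}\,\frac{(-1)^m t^{2m}}{2^m m!}
\end{equation}
and collecting the coefficient of $t^n/n!$ gives
\begin{equation}
 P_n(x)=\sum_{m=0}^{\lfloor n/2\rfloor}\frac{n!\,(-1)^m}{2^m m!\,(n-2m)!}\,x^{n-2m}.
\end{equation}
This is a polynomial of degree $n$ with leading coefficient $1$ (the $m=0$ term).

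\textbf{Orthogonality.} I will use the Laplace transform~\eqref{eq:exp_moment_N}. For $X\sim\cN(0,1)$ and real $s,t$,
\begin{equation}
 \expec{G(t,X)G(s,X)}=\e^{-(t^2+s^2)/2}\,\expec{\e^{(t+s)X}}=\e^{ts}=\sum_{n\geqs0}\frac{(ts)^n}{n!}.
\end{equation}
Next I expand both generating functions, exchange expectation and summation, and compare coefficients of $t^ns^m$. This gives $\expec{P_n(X)P_m(X)}=n!\,\delta_{nm}$. Because $P_0,\dots,P_{n-1}$ span the polynomials of degree less than $n$, it follows that $\pscal{P_n}{X^k}=0$ for $k<n$. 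Hence $P_n=H_n$, which is~\eqref{eq:G_Hermite}; as a bonus we also get $\pscal{H_n}{H_n}=n!$.

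\textbf{Main obstacle.} The delicate step is exchanging expectation with the double series. I would justify it by dominated convergence. By the triangle inequality on the coefficients, $\sum_n\frac{|t|^n}{n!}|P_n(x)|\leqs \e^{|t||x|+t^2/2}$, and the same for $s$. The product of these bounds is integrable against the Gaussian density, again by~\eqref{eq:exp_moment_N}. Since both sides are then analytic in $(t,s)$, coefficient identification is legitimate.
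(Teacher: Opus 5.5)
Your proof is correct, and its skeleton matches the paper's. Orthogonality comes from $\expec{G(t,X)G(s,X)}=\e^{ts}$ by comparing coefficients of $t^ns^m$. The identification with the Gram--Schmidt family then follows because both consist of monic polynomials of degree $n$ orthogonal to all lower degrees.

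The real difference is how you obtain monicity. You read it off the coefficient of $t^n$ in the Cauchy product of $\e^{tx}$ and $\e^{-t^2/2}$. The paper instead first proves the recursion $H_{n+1}=xH_n-H_n'$ (Lemma~\ref{lem:Hermite_recursive}) from $\partial_tG=(x-t)G=xG-\partial_xG$, and then inducts from $H_0=G(0,x)=1$. Your route is more direct and gives the explicit formula~\eqref{eq:Hn_xn} immediately, which the paper only reaches later via the convolution algebra or the pairing count. The paper's route instead yields the ladder relation $a^\dagger H_n=H_{n+1}$, which it reuses for Corollary~\ref{cor:Hermite_differential}.

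You also supply two points the paper leaves tacit. First, you justify exchanging expectation and the double series by dominated convergence, and your bound $\e^{(|t|+|s|)|x|+(t^2+s^2)/2}$ is exactly what is needed. Second, you invoke the uniqueness of the monic orthogonal polynomial of degree $n$. You only assert this, but it is immediate: the difference of two such polynomials has degree $<n$ and is orthogonal to itself, hence vanishes.
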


We will proceed in several steps to prove this result. First, we 
make an easy observation: if $X\sim\cN(0,1)$ we have 
\begin{equation}
 1 = \expec{G(t,X)}
 = \sum_{n\geqs0} \frac{t^n}{n!} \expec{H_n(X)}\;.
\end{equation} 
Since this is valid for all $t$ in $(-\delta,\delta)$, uniqueness of coefficients 
of power series shows that
\begin{equation}
 \expec{H_n(X)} = \delta_{n0} = 
 \begin{cases}
  1 & \text{if $n=0$\;,}\\
  0 & \text{otherwise\;.}
 \end{cases}
\end{equation} 
In other terms, all $H_n(X)$ with $n\geqs1$ are centred, and therefore 
orthogonal to $H_0(X) = 1$. The following result is a generalisation of this
observation.

\begin{proposition}[Orthogonality of the $H_n(X)$]
\label{prop:Hermite_orthogonal}
For any $n, m\in\N_0$, one has 
\begin{equation}
\label{eq:Hermite_orthogonal} 
 \expec{H_n(X)H_m(X)} =
 n!\delta_{nm} = 
 \begin{cases}
  n! & \text{if $n = m$\;,} \\
  0 & \text{otherwise\;.}
 \end{cases}
\end{equation} 
\end{proposition}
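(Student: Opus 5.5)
The natural route is the generating function $G(t,x)=\e^{tx-t^2/2}$ of Proposition~\ref{prop:Hermite_generating}, extending the observation just made for $\expec{H_n(X)}$. Take two parameters $s,t$ and compute $\expec{G(s,X)G(t,X)}$ in two ways. On one hand, by~\eqref{eq:exp_moment_N} with parameter $s+t$,
\begin{equation}
 \expec{G(s,X)G(t,X)} = \e^{-(s^2+t^2)/2}\,\expec{\e^{(s+t)X}}
 = \e^{-(s^2+t^2)/2}\e^{(s+t)^2/2} = \e^{st}
 = \sum_{n\geqs0}\frac{(st)^n}{n!}\;.
\end{equation}
On the other hand, expanding both factors with~\eqref{eq:G_Hermite} gives
\begin{equation}
 \expec{G(s,X)G(t,X)} = \sum_{n,m\geqs0}\frac{s^nt^m}{n!\,m!}\,\expec{H_n(X)H_m(X)}\;.
\end{equation}
Comparing the coefficients of $s^nt^m$ in these two power series in two variables yields $\expec{H_nH_m}/(n!m!)=\delta_{nm}/n!$. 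This is exactly~\eqref{eq:Hermite_orthogonal}.

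The main obstacle is justifying the interchange of expectation and the double sum. Since $H_n$ is a polynomial, $\expec{\abs{H_n(X)H_m(X)}}<\infty$, so each term is finite. To control the whole series, I would bound it by the analogous series with absolute values. If $\bar H_n$ denotes the polynomial obtained by replacing the coefficients of $H_n$ with their absolute values, then
\begin{equation}
 \sum_n \frac{\abs{t}^n}{n!}\abs{H_n(x)} \leqs \sum_n \frac{\abs{t}^n}{n!}\bar H_n(\abs{x}) = \e^{\abs{t}\abs{x}+t^2/2}\;.
\end{equation}
The equality holds because the coefficients of the expansion of $\e^{tx-t^2/2}$ come with alternating signs $(-1)^k$ from the factor $\e^{-t^2/2}$, and removing those signs produces the expansion of $\e^{tx+t^2/2}$. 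The product of two such bounds is $\e^{(\abs{s}+\abs{t})\abs{X}+(s^2+t^2)/2}$, which is integrable. Fubini--Tonelli therefore justifies the termwise expectation for all real $s,t$, and uniqueness of coefficients of a convergent double power series concludes the argument.

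As a fallback, avoiding series altogether: for $m<n$, $H_m$ has degree $m$, and $H_n(X)$ is orthogonal to all $X^k$ with $k<n$ by construction via Gram--Schmidt. Hence $\expec{H_nH_m}=0$. Moreover $\expec{H_n^2}=\expec{X^nH_n}$ because $H_n$ is monic, and this value can be computed via the integration-by-parts identity $\expec{Xf(X)}=\expec{f'(X)}$ together with $H_n'=nH_{n-1}$, giving $n\expec{X^{n-1}H_{n-1}}$ and hence $n!$ by induction. This route, however, requires the identification of the Gram--Schmidt polynomials with the coefficients of $G$, which Proposition~\ref{prop:Hermite_generating} provides. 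I would therefore present the generating-function argument as the main proof.
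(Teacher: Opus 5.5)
Your main argument is the paper's proof: you evaluate $\expec{G(s,X)G(t,X)}$ once as $\e^{st}$ via the Laplace transform and once by expanding both generating functions, then compare coefficients; your domination of the double series by $\e^{(\abs{s}+\abs{t})\abs{X}+(s^2+t^2)/2}$ is a correct justification of the termwise expectation, a step the paper leaves implicit. The one caveat is your fallback, which is circular in the paper's logical order: it relies on Proposition~\ref{prop:Hermite_generating} (the identification of the generating-function coefficients with the Gram--Schmidt polynomials), and the paper proves that proposition using this very orthogonality result.
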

\begin{proof}
We compute the expectation of $G(t,X)G(s,X)$ in two different ways. 
The first way is 
\begin{align}
 \expec{G(t,X)G(s,X)}
 &= \expec{\e^{tX-t^2/2}\e^{sX-s^2/2}} \\
 &= \e^{-(t^2+s^2)/2} \expec{\e^{(t+s)X}} \\
 &= \e^{-(t^2+s^2)/2} \e^{(t+s)^2/2} \\
 &= \e^{ts} \\
 &= \sum_{n\geqs0} \frac{t^ns^n}{n!}\;.
\end{align}
The second way to perform the computation is, using~\eqref{eq:G_Hermite}, 
\begin{equation}
 \expec{G(t,X)G(s,X)}
 = \sum_{n\geqs0} \sum_{m\geqs0}
 \frac{t^n}{n!} \frac{s^m}{m!} \expec{H_n(X)H_m(X)}\;.
\end{equation} 
Comparing the two obtained power series yields the result, by uniqueness 
of the coefficients of a series. 
\end{proof}

This result shows that the $H_n(X)$ defined via~\eqref{eq:G_Hermite} 
form \textit{an} orthogonal basis. It remains to show that this basis 
is identical with the one obtained by the Gram--Schmidt procedure. 
We do this with the following result. 

\begin{lemma}[Recursive relation between Hermite polynomials]
\label{lem:Hermite_recursive} 
For any $n\geqs0$, one has 
\begin{equation}
\label{eq:Hermite_recursive} 
 H_{n+1}(x) = x H_n(x) - H_n'(x)\;.
\end{equation} 
\end{lemma}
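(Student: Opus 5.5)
The plan is to prove the recursion directly from the generating function, taking the $H_n$ to be defined by~\eqref{eq:G_Hermite}. This is the logically consistent reading at this point: Proposition~\ref{prop:Hermite_generating} is what we are trying to establish, and Lemma~\ref{lem:Hermite_recursive} is a step towards identifying the $H_n$ of~\eqref{eq:G_Hermite} with the Gram--Schmidt ones. The argument compares two ways of differentiating $G(t,x)=\e^{tx-t^2/2}$.

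First I note that $G$ is an entire function of $(t,x)$. So the series $\sum_{n\geqs0}\frac{t^n}{n!}H_n(x)$ converges locally uniformly together with its derivatives, and the coefficients $H_n(x)=\partial_t^n G(t,x)|_{t=0}$ are polynomials in $x$. Explicitly, each $H_n$ is a polynomial of degree $n$ with leading coefficient $1$, which one sees by expanding $\e^{tx}\e^{-t^2/2}$. Hence termwise differentiation in $t$ and in $x$ is justified.

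Next I compute the two partial derivatives. For the $t$-derivative, the series gives
\begin{equation}
\partial_t G(t,x)=\sum_{n\geqs0}\frac{t^n}{n!}H_{n+1}(x),
\end{equation}
while the closed form gives $\partial_t G=(x-t)G$. For the $x$-derivative, the closed form gives $\partial_x G=tG$, and the series gives $\partial_x G=\sum_{n}\frac{t^n}{n!}H_n'(x)$. Combining these, I write
\begin{equation}
\partial_t G = xG-\partial_x G = \sum_{n\geqs0}\frac{t^n}{n!}\bigl(xH_n(x)-H_n'(x)\bigr).
\end{equation}
Identifying the coefficients of $t^n$ in the two expressions for $\partial_t G$ yields~\eqref{eq:Hermite_recursive}.

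The only delicate point is justifying the termwise differentiation and the identification of coefficients. Both follow from analyticity of $G$ in $t$ for fixed $x$ and uniqueness of power series coefficients, the same device used in the proof of Proposition~\ref{prop:Hermite_orthogonal}. The $x$-derivative of the series can alternatively be handled by noting that $H_n'(x)=\partial_t^n\partial_x G|_{t=0}$, since mixed partials of a smooth function commute. No real obstacle is expected beyond this bookkeeping.
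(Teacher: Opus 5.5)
Your proposal is correct and follows the paper's proof essentially verbatim: both use $\partial_t G = (x-t)G = xG - \partial_x G$, expand each side as a power series in $t$, and compare coefficients. The only difference is that you spell out why termwise differentiation is justified (analyticity of $G$), which the paper leaves implicit.
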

\begin{proof}
This follows from the relation 
\begin{equation}
 \frac{\partial}{\partial t} G(t,x) 
 = (x-t) G(t,x) 
 = xG(t,x) - \frac{\partial}{\partial x} G(t,x)\;.
\end{equation}
Indeed, we have 
\begin{equation}
 \frac{\partial}{\partial t} G(t,x) 
 = \sum_{n\geqs1} \frac{t^{n-1}}{(n-1)!} H_n(x) 
 = \sum_{n\geqs0} \frac{t^n}{n!} H_{n+1}(x)\;, 
\end{equation} 
while
\begin{equation}
 xG(t,x) - \frac{\partial}{\partial x} G(t,x)
 = \sum_{n\geqs0} \frac{t^n}{n!} \bigpar{xH_n(x) - H_n'(x)}\;.
\end{equation} 
Comparing the coefficients of the last two power series yields the 
result. 
\end{proof}

The recursive relation~\eqref{eq:Hermite_recursive} provides a quicker 
way to compute Hermite polynomials than the Gram--Schmidt procedure, 
starting with $H_0(x) = 1$. It also allows us to complete the proof 
of Proposition~\ref{prop:Hermite_generating}.

\begin{proof}[{\sc Proof of Proposition~\ref{prop:Hermite_generating}}]
We have already shown that the $(H_n)_{n\geqs0}$ form an orthogonal 
family. It remains to show that they co\"incide with the polynomials 
constructed by the Gram--Schmidt procedure. 
Evaluating~\eqref{eq:G_Hermite} at $t = 0$ yields 
$1 = G(0,x) = H_0(x)$. From~\eqref{eq:Hermite_recursive}, it follows 
by induction on $n$ that $H_n(x)$ has degree $n$, with $x^n$ having 
coefficient $1$, which is also the case for the $H_n$ obtained 
via the Gram--Schmidt procedure. 
\end{proof}

Another useful consequence of the expression~\eqref{eq:G_Hermite} of
the generating function is the following generalisation of Proposition~\ref{prop:Hermite_orthogonal}, which allows to 
transform products of Hermite functions into sums of such functions.
One can think of it as an analogue of product-sum formulas in trigonometry, 
which are useful to compute Fourier series. 

\begin{proposition}[Product--sum formula]
\label{prop:Hermite_product_sum}
For any $n, m\geqs0$, one has 
\begin{equation}
\label{eq:Hermite_product_sum} 
 H_n(x) H_m(x) = \sum_{p=0}^{n\wedge m} 
 p! \binom{n}{p} \binom{m}{p} H_{n+m-2p}(x)\;,
\end{equation} 
where $n\wedge m$ denotes the minimum of $n$ and $m$. 
\end{proposition}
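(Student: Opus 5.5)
The plan is to use the generating function of Proposition~\ref{prop:Hermite_generating} with two independent parameters, in the same spirit as the proof of Proposition~\ref{prop:Hermite_orthogonal}. The key identity is
\begin{equation}
 G(t,x)G(s,x) = \e^{(t+s)x - t^2/2 - s^2/2} = G(t+s,x)\,\e^{ts}\;,
\end{equation}
which follows from $(t+s)^2/2 = t^2/2 + s^2/2 + ts$. I will expand both sides as double power series in $(t,s)$ and compare the coefficients of $t^n s^m$.

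The left-hand side is
\begin{equation}
 \sum_{n,m\geqs0} \frac{t^n s^m}{n!\,m!} H_n(x)H_m(x)\;.
\end{equation}
For the right-hand side, I first expand $G(t+s,x) = \sum_{k\geqs0} \frac{(t+s)^k}{k!}H_k(x)$ and apply the binomial theorem, obtaining
\begin{equation}
 G(t+s,x) = \sum_{a,b\geqs0} \frac{t^a s^b}{a!\,b!} H_{a+b}(x)\;.
\end{equation}
Multiplying by $\e^{ts} = \sum_{p\geqs0} \frac{t^ps^p}{p!}$ gives the triple sum
\begin{equation}
 \sum_{a,b,p\geqs0} \frac{t^{a+p}s^{b+p}}{a!\,b!\,p!}H_{a+b}(x)\;.
\end{equation}
To extract the coefficient of $t^ns^m$, I set $a = n-p$ and $b = m-p$ with $0\leqs p\leqs n\wedge m$, which yields $\sum_p \frac{1}{(n-p)!(m-p)!p!}H_{n+m-2p}(x)$. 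Multiplying by $n!\,m!$ then gives exactly $p!\binom{n}{p}\binom{m}{p}$, since
\begin{equation}
 \frac{n!\,m!}{(n-p)!\,(m-p)!\,p!} = p!\binom{n}{p}\binom{m}{p}\;.
\end{equation}

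The only step needing care is justifying the rearrangement and the identification of coefficients of a two-variable power series. For fixed $x$, all the series involved converge absolutely for all $t,s$: the function $G$ is entire in $t$, and the Hermite coefficients are polynomials in $x$. Absolute convergence on a neighbourhood of the origin, which could be checked with crude bounds on $|H_k(x)|$ via the recursion of Lemma~\ref{lem:Hermite_recursive}, is enough to allow regrouping. Uniqueness of coefficients of bivariate power series then concludes the argument.

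Alternatively, and more cleanly, one can argue at the level of formal power series in $t,s$ with polynomial coefficients in $x$. The identity $G(t,x)G(s,x) = G(t+s,x)\e^{ts}$ holds formally because it is an identity of exponentials, so no analytic justification is needed at all. I expect this bookkeeping to be the main, though minor, obstacle; the combinatorics itself is routine.
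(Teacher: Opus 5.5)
Your proposal is correct and matches the paper's proof: both use the identity $G(t,x)G(s,x)=\e^{ts}G(t+s,x)$, expand $G(t+s,x)$ with the binomial formula, and read off the coefficient of $t^ns^m$. Your indices $(a,b,p)$ correspond to the paper's triple $(p,q,r)$ via $r=a$ and $q=a+b$. Your remark that the identity also holds at the level of formal power series is a clean way to justify the rearrangement, which the paper leaves implicit.
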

\begin{proof}
We start by observing that 
\begin{align}
 G(t,x) G(s,x) 
 &= \e^{(t+s)x-(t^2+s^2)/2} \\
 &= \e^{ts} \e^{(t+s)x-(t+s)^2/2} \\
 &= \e^{ts} G(t+s,x)\;.
\end{align}
Expanding the exponential and $G(t+s,x)$, and using Newton's binomial 
formula yields 
\begin{align}
 G(t,x) G(s,x)
 &= \sum_{p=0}^\infty \frac{(ts)^p}{p!} 
 \sum_{q=0}^\infty \frac{(t+s)^q}{q!} H_q(x) \\
 &= \sum_{p=0}^\infty\sum_{q=0}^\infty \sum_{r=0}^q 
 \frac{t^{p+r}s^{p+q-r}}{p!r!(q-r)!} H_q(x)\;.
 \label{eq:H_product_sum1} 
\end{align}
On the other hand, we have 
\begin{equation}
 G(t,x) G(s,x)
 = \sum_{n\geqs0} \sum_{m\geqs0} \frac{t^n}{n!} \frac{s^m}{m!}
 H_n(x) H_m(x)\;.
\end{equation} 
The result follows by determining the coefficient of $t^ns^m$ 
in~\eqref{eq:H_product_sum1}, which is obtained by summing over 
all triples $(p,q,r)$ such that $p+r = n$ and $p+q-r = m$. 
\end{proof}

Note that when taking the expectation on both sides 
of~\eqref{eq:Hermite_product_sum} when $x = X$, we 
recover the orthogonalilty relation~\eqref{eq:Hermite_orthogonal}. 

\begin{exercise}
Use~\eqref{eq:Hermite_product_sum} to write $H_4(X)^2$ as a sum 
of Hermite polynomials, and compute $\expec{H_4(X)^3}$. 
\end{exercise}

%%%%%%%%%%%%%%%%%%%%%%%%%%%%%%%%%%%%%%%%%%%%%%%%%%%%%%%%%%%%%%%%%%%%%%%%%%%%%%%%

\subsection{Hermite polynomials and differential operators}
\label{ssec:differential} 

Lemma~\ref{lem:Hermite_recursive} has revealed a link between Hermite 
polynomials and differential operators. To make this connection more 
precise, we introduce the linear operators 
\begin{equation}
\label{eq:aadagger} 
 a = \frac{\6}{\6x}\;, \qquad 
 a^\dagger = x - \frac{\6}{\6x}\;, \qquad 
 \cL = -a^\dagger a
 = \frac{\6^2}{\6x^2} - x \frac{\6}{\6x}
\end{equation} 
acting on $\cC^\infty$ functions in the Hilbert 
space $\cH = L^2(\R,\mu(\6x))$, where $\mu(\6x)$ is the Gaussian 
measure 
\begin{equation}
\frac{1}{\sqrt{2\pi}} \e^{-x^2/2} \6x\;. 
\end{equation} 
The notation $a^\dagger$ is motivated by the following result.

\begin{lemma}
\label{lem:aadagger}
The operators $a$ and $a^\dagger$ are mutually adjoint, while $\cL$ 
is self-adjoint in $\cH$. Furthermore,
\begin{equation}
\label{eq:a_commutator} 
 a a^\dagger - a^\dagger a = \id\;.
\end{equation} 
\end{lemma}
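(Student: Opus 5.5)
We work with smooth functions $f,g$ for which all integrals below converge absolutely, for instance polynomials or functions of polynomial growth together with their derivatives. The key identity is that the Gaussian density $\varphi(x) = \e^{-x^2/2}/\sqrt{2\pi}$ satisfies $\varphi'(x) = -x\varphi(x)$. This is what turns the derivative $a$ into the operator $x - \frac{\6}{\6x}$ under integration by parts.

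\emph{Adjointness of $a$ and $a^\dagger$.} Compute
\begin{equation}
 \pscal{af}{g} = \int_{-\infty}^\infty f'(x) g(x)\varphi(x)\,\6x\;.
\end{equation}
Integrating by parts over $[-R,R]$ gives a boundary term $f g\varphi\big|_{-R}^{R}$. This term tends to $0$ as $R\to\infty$, because $\varphi$ decays faster than any polynomial grows. The remaining integral is
\begin{equation}
 -\int f(x)\bigpar{g'(x)\varphi(x) + g(x)\varphi'(x)}\,\6x
 = \int f(x)\bigpar{x g(x) - g'(x)}\varphi(x)\,\6x
 = \pscal{f}{a^\dagger g}\;.
\end{equation}
Taking adjoints on both sides then gives $\pscal{a^\dagger f}{g} = \pscal{f}{ag}$.

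\emph{Self-adjointness of $\cL$.} Applying the previous step twice gives
\begin{equation}
 \pscal{\cL f}{g} = -\pscal{a^\dagger a f}{g} = -\pscal{af}{ag}\;.
\end{equation}
The right-hand side is symmetric in $f$ and $g$, so it equals $\pscal{f}{\cL g}$. This also shows that $-\cL$ is a positive operator. As a check, one can expand $-a^\dagger a f = -(xf' - f'')$ and confirm it matches the formula for $\cL$ given in~\eqref{eq:aadagger}.

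\emph{Commutator.} For smooth $f$,
\begin{equation}
 aa^\dagger f = \bigpar{xf - f'}' = f + xf' - f''\;,
 \qquad
 a^\dagger a f = xf' - f''\;.
\end{equation}
Subtracting the second from the first gives $f$, which is~\eqref{eq:a_commutator}.

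\emph{Main obstacle.} The only delicate point is functional-analytic. The operators are unbounded, so "adjoint" and "self-adjoint" must be understood on a suitable domain. I would state the results as symmetric relations on a core: polynomials, which are dense in $\cH$ because the Hermite polynomials form a basis, or smooth functions of polynomial growth. On such a core the boundary terms vanish, as shown above. I would not attempt a full treatment of domains of the closures; I would only note that $\cL$ acts diagonally on the Hermite basis, which yields essential self-adjointness.
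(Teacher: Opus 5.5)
Your proof is correct and takes essentially the same route as the paper. Both arguments use integration by parts against the Gaussian weight: the paper encodes your identity $\varphi'=-x\varphi$ by writing $a^\dagger f=-e^{x^2/2}\bigl(e^{-x^2/2}f\bigr)'$. Both then use $\cL=-a^\dagger a$ to obtain symmetry, and compute the commutator directly.

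Your restriction to smooth functions of polynomial growth justifies the vanishing boundary terms more carefully than the paper does. The paper appeals only to $\pscal{f}{g}<\infty$, and that alone does not force $fg\,e^{-x^2/2}\to 0$ at infinity.
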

\begin{proof}
An elegant proof of the first claim consists in rewriting $a$ and $a^\dagger$ as 
\begin{equation}
 (af)(x) = \e^{x^2/2} \biggpar{x + \frac{\6}{\6x}}\bigpar{\e^{-x^2/2}f(x)}\;, 
 \qquad 
 (a^\dagger f)(x) = -\e^{x^2/2} \frac{\6}{\6x} \bigpar{\e^{-x^2/2}f(x)}\;.
\label{eq:aadagger_inner} 
\end{equation} 
One indeed checks that this is equivalent to~\eqref{eq:aadagger} by applying 
Leibniz' rule. Now for any $f, g\in\cH$, integration by parts gives 
\begin{align}
 \pscal{a^\dagger f}{g}
 &= -\int_{-\infty}^\infty \frac{\6}{\6x} 
 \bigpar{\e^{-x^2/2}f(x)}g(x) \frac{\6x}{\sqrt{2\pi}} \\
 &= \e^{-x^2/2}f(x)g(x) \biggr\vert_{-\infty}^\infty
 + \int_{-\infty}^\infty \e^{-x^2/2}f(x)g'(x) \frac{\6x}{\sqrt{2\pi}} \\
 &= \pscal{f}{ag}\;,
\end{align} 
since the boundary term vanishes because $\pscal{f}{g} < \infty$ by 
the Cauchy--Schwarz inequality. As a consequence, we also have 
\begin{equation}
 -\pscal{f}{\cL g} 
 = \pscal{f}{a^\dagger a g} 
 = \pscal{af}{ag} 
 = \pscal{a^\dagger a f}{g}
 = -\pscal{\cL f}{g}\;,
\end{equation} 
which implies that $\cL$ is self-adjoint. Finally, 
\begin{equation}
 (aa^\dagger f)(x)
 = \frac{\6}{\6x} \bigpar{xf(x) - f'(x)}
 = f(x) + xf'(x) - f''(x) 
 = (a^\dagger a f)(x) + f(x)\;,
\end{equation} 
which proves~\eqref{eq:a_commutator}.
\end{proof}

The link with Hermite polynomials is as follows.

\begin{corollary}[Hermite polynomials and differential operators]
\label{cor:Hermite_differential}
The Hermite polynomials are eigenfunctions of $\cL$. More precisely,
\begin{equation}
\label{eq:Hermite_eigenvalue} 
 (\cL H_n)(x) = -n H_n(x) 
 \qquad 
 \forall n\geqs 0\;.
\end{equation} 
Furthermore, one has 
\begin{equation}
\label{eq:Hermite_aadagger} 
 a^\dagger H_{n-1} = H_n\;, 
 \qquad 
 a H_n = n H_{n-1}
 \qquad 
 \forall n\geqs 1\;.
\end{equation} 
% for all $n\geqs 1$. 
\end{corollary}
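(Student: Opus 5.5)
The plan is to get both relations in~\eqref{eq:Hermite_aadagger} directly from the generating function $G(t,x)=\e^{tx-t^2/2}$ of Proposition~\ref{prop:Hermite_generating}, and then obtain the eigenvalue relation~\eqref{eq:Hermite_eigenvalue} by composing them.

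The relation $a^\dagger H_{n-1}=H_n$ needs no new work. It is exactly Lemma~\ref{lem:Hermite_recursive} with $n$ replaced by $n-1$, because $a^\dagger f = xf-f'$ by~\eqref{eq:aadagger}.

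For $aH_n = nH_{n-1}$, differentiate the generating function in $x$:
\begin{equation}
 \frac{\partial}{\partial x}G(t,x) = tG(t,x)\;.
\end{equation}
Expanding the left-hand side termwise gives $\sum_{n\geqs0}\frac{t^n}{n!}H_n'(x)$. The right-hand side is $\sum_{n\geqs0}\frac{t^{n+1}}{n!}H_n(x)=\sum_{n\geqs1}\frac{t^n}{n!}\,nH_{n-1}(x)$. Comparing coefficients of $t^n$ gives $H_n' = nH_{n-1}$ for $n\geqs1$, and $H_0'=0$. Termwise differentiation in $x$ is legitimate because the series converges locally uniformly, $G$ being entire in $(t,x)$; this is the same justification already used in the proof of Lemma~\ref{lem:Hermite_recursive}.

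With both ladder relations available, the eigenvalue equation follows. For $n\geqs1$,
\begin{equation}
 \cL H_n = -a^\dagger a H_n = -a^\dagger (nH_{n-1}) = -nH_n\;,
\end{equation}
where linearity of $a^\dagger$ is used in the middle step. For $n=0$, $H_0=1$ gives $aH_0=0$, so $\cL H_0=0$, consistent with eigenvalue $0$.

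As a cross-check, one could instead use the commutation relation~\eqref{eq:a_commutator} and induct on $n$ from $H_n=(a^\dagger)^nH_0$. This route is unnecessary, since the generating-function computation gives $aH_n$ directly.

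I do not expect a genuine obstacle. The only delicate point is justifying the termwise differentiation of the series, and that is handled by analyticity as noted above.
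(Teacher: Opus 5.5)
Your proof is correct. You obtain $a^\dagger H_{n-1}=H_n$ from Lemma~\ref{lem:Hermite_recursive} exactly as the paper does, but you get $aH_n=nH_{n-1}$ and the eigenvalue relation by a different route.

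The paper uses the generating function only once. After that it argues purely algebraically with the commutation relation~\eqref{eq:a_commutator} of Lemma~\ref{lem:aadagger}. It proves $-\cL H_{n+1}=a^\dagger a a^\dagger H_n=a^\dagger(a^\dagger a+\id)H_n=(n+1)H_{n+1}$ by induction from $\cL H_0=0$. It then reads off $aH_{n+1}=aa^\dagger H_n=(a^\dagger a+\id)H_n=(n+1)H_n$.

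You instead take a second identity from the generating function, $\partial_x G=tG$, which gives $H_n'=nH_{n-1}$ directly. The eigenvalue relation then follows in one line, $\cL H_n=-a^\dagger(nH_{n-1})=-nH_n$, with no induction and no commutator.

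What each route buys:
\begin{itemize}
\item Your route is shorter and needs nothing beyond Proposition~\ref{prop:Hermite_generating}.
\item The paper's route shows that everything follows from $H_n=(a^\dagger)^nH_0$, $aH_0=0$ and the canonical commutation relation alone. This is the standard ladder-operator argument for the harmonic oscillator, and it motivates the names \emph{creation} and \emph{annihilation} operator introduced right afterwards.
\end{itemize}

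A small point on justification. Local uniform convergence alone does not allow term-by-term differentiation of a series of real functions. Your appeal to $G$ being entire does make it work, via Weierstrass' theorem in the complex variable $x$. Alternatively, note that $H_n(x)=\partial_t^nG(0,x)$ and exchange $\partial_x$ with $\partial_t^n$, which gives $H_n'(x)=\partial_t^n(tG)(0,x)=nH_{n-1}(x)$ by Leibniz' rule.
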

\begin{proof}
First note that the first relation in~\eqref{eq:Hermite_aadagger} is 
just a rewriting of~\eqref{eq:Hermite_recursive}. 
This allows us to prove~\eqref{eq:Hermite_eigenvalue} by induction on $n$. 
For $n = 0$, we clearly have $\cL H_0(x) = 0$, proving the base case. 
Assuming~\eqref{eq:Hermite_eigenvalue} holds for some $n\geqs 0$, 
\eqref{eq:a_commutator} yields
\begin{equation}
 -\cL H_{n+1} = a^\dagger a a^\dagger H_n 
 = a^\dagger (a^\dagger a + \id) H_n 
 = (n + 1) a^\dagger H_n 
 = (n+1) H_{n+1}\;.
\end{equation} 
Finally, we also have 
\begin{equation}
 aH_{n+1} = aa^\dagger H_n 
 = (a^\dagger a + \id) H_n 
 = (n+1) H_n\;,
\end{equation} 
which proves the second relation in~\eqref{eq:Hermite_aadagger}.
\end{proof}

Note that this result provides another proof of orthogonality of Hermite 
polynomials, since eigenfunctions of a self-adjoint operator 
corresponding to different eigenvalues are known to be orthogonal.
The second relation in~\eqref{eq:Hermite_aadagger} can also be written 
\begin{equation}
 H_n'(x) = n H_{n-1}(x)\;.
\end{equation} 
Together with the recursive relation~\eqref{eq:Hermite_recursive}, this yields 
\begin{equation}
\label{eq:Hn_recurrence2} 
 H_{n+1}(x) = x H_n(x) - n H_{n-1}(x)
 \qquad \forall n\geq1\;.
\end{equation} 
Yet another relation, following from the 
representation~\eqref{eq:aadagger_inner} of $a^\dagger$ is 
\begin{equation}
 H_n(x) = ((a^\dagger)^n H_0)(x)
 = (-1)^n \e^{x^2/2} \frac{\6^n}{\6x^n} (\e^{-x^2/2})\;.
\end{equation} 
The operator $\cL$ defined in~\eqref{eq:aadagger} occurs in several applications. 
In particular, define the \emph{Ornstein--Uhlenbeck process} as the solution of the 
stochastic differential equation 
\begin{equation}
 \6x_t = -x_t \6t + \sqrt{2}\6W_t\;,
\end{equation} 
where $(W_t)_{t\geqs0}$ is a standard Brownian motion, 
which can be written in terms of an It\^o integral as 
\begin{equation}
 x_t = x_0\e^{-t} + \sqrt{2} \int_0^t \e^{-(t-s)} \6W_s\;.
\end{equation} 
Then $\cL$ is the infinitesimal generator of the process, meaning that 
for any sufficiently regular test function $f$, one has 
\begin{equation}
 \frac{\6}{\6t} \expecin{x_0}{f(x_t)} \Bigr\vert_{t = 0} 
 = (\cL f)(x_0)\;.
\end{equation} 
This is a consequence of It\^o's formula (see als Section~\ref{ssec:OU} below). 

There also is a connection with quantum physics. Indeed, 
one finds that the conjugated operator 
\begin{equation}
 H = \e^{-x^2/4}\cL \e^{x^2/4}
\end{equation} 
has the expression 
\begin{equation}
 (Hf)(x) = \biggpar{\frac12 - \frac{x^2}{4}} f(x) + f''(x)\;, 
\end{equation} 
which is equivalent, up to a scaling, to the Hamiltonian of the 
quantum harmonic oscillator. The operator $H$ is self-adjoint in 
$L^2(\R,\6x)$, and its eigenfunctions are conjugated to the Hermite 
polynomials. As the operators $a^\dagger$ and $a$ allow to move 
between eigenfunctions, and these eigenfunctions are interpreted 
as $n$-particle states in quantum field theory, they as known as 
\emph{creation operator} and \emph{annihilation operator}. 

%%%%%%%%%%%%%%%%%%%%%%%%%%%%%%%%%%%%%%%%%%%%%%%%%%%%%%%%%%%%%%%%%%%%%%%%%%%%%%%%

\subsection{Convolution algebra*}
\label{ssec:convolution} 

Some of the above computations required to perform operations on 
power series, such as multiplication, division, and taking the logarithm. 
There exists an algebraic framework that makes these computations 
particularly easy.

Let $\R[x]$ denote the vector space of polynomials in one variable $x$, 
with the canonical basis $\set{x^n}_{n\geqs0}$. This is also an algebra 
for the usual product 
\begin{equation}
 x^n \cdot x^m = x^{n+m}\;.
\end{equation} 
Let $\R[[t]]$ be the space of formal power series, that is, expressions
of the form 
\begin{equation}
 \sum_{n\geqs0} \ph_n \frac{t^n}{n!}\;, 
 \qquad 
 \ph_n \in \R\;,
\end{equation} 
endowed with pointwise multiplication. 
By formal we mean that at this point, we are not concerned about 
convergence of the series. 

We can view the coefficients $\ph_n$ as the images of the $x^n$ by 
a linear map $\ph:\R[x]\to\R$. Denoting by $\cL(\R[x],\R)$ the 
space of all these maps, we can define a linear map 
\begin{align}
\label{eq:def_Lambda} 
 \Lambda \colon \cL(\R[x],\R) &\longrightarrow \R[[t]] \\
 \ph &\longmapsto \sum_{n\geqs0} \ph(x^n) \frac{t^n}{n!}\;.
\end{align} 
This map associates a power series with the map $\ph$ defining its coefficients. 
The interest of this construction is that due to the Cauchy product formula, 
multiplication of power series is equivalent to a convolution operation 
of maps. More precisely, for two maps $\ph,\psi \in \cL(\R[x],\R)$, 
define a map $\ph * \psi\in \cL(\R[x],\R)$ by 
\begin{equation}
\label{eq:convolution} 
 (\ph*\psi)(x^n) 
 = \sum_{k=0}^n \binom{n}{k} \ph(x^k)\psi(x^{n-k})\;.
\end{equation} 
We then have the following result.

\begin{theorem}[Isomorphism between convolution algebra and algebra of 
power series]
\label{thm:convolution} 
The map $\Lambda$ is an isomorphism between $\cL(\R[x],\R)$ and $\R[[t]]$.
\end{theorem}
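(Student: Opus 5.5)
We interpret the statement as saying that $\Lambda$ is an isomorphism of algebras. On the left the product is the convolution~\eqref{eq:convolution}. On the right it is the usual product of formal power series. The proof therefore splits into two parts: $\Lambda$ is a linear bijection, and $\Lambda$ is multiplicative and preserves units.

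For bijectivity, we first note that a linear map $\ph\colon\R[x]\to\R$ is entirely determined by its values $\ph(x^n)$ on the basis $\set{x^n}_{n\geqs0}$. These values can be prescribed arbitrarily, with no finiteness constraint, since each polynomial is a finite linear combination of monomials. Thus $\ph\mapsto(\ph(x^n))_{n\geqs0}$ is a linear bijection between $\cL(\R[x],\R)$ and $\R^{\N_0}$. Next, a formal power series $\sum_n \ph_n t^n/n!$ is by definition the same datum as its coefficient sequence $(\ph_n)_{n\geqs0}$, since $n!\neq0$. Composing these two identifications gives $\Lambda$. Linearity is immediate from~\eqref{eq:def_Lambda}. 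The inverse sends $\sum_n c_n t^n/n!$ to the unique linear map with $x^n\mapsto c_n$.

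For multiplicativity, take $\ph,\psi\in\cL(\R[x],\R)$. By the Cauchy product of formal series,
\begin{equation}
 \Lambda\ph\cdot\Lambda\psi
 = \sum_{n\geqs0}\sum_{k=0}^n \ph(x^k)\psi(x^{n-k})\frac{t^k}{k!}\frac{t^{n-k}}{(n-k)!}
 = \sum_{n\geqs0}\frac{t^n}{n!}\sum_{k=0}^n\binom{n}{k}\ph(x^k)\psi(x^{n-k})\;.
\end{equation}
By~\eqref{eq:convolution}, the inner sum is exactly $(\ph*\psi)(x^n)$, so $\Lambda(\ph*\psi)=\Lambda\ph\cdot\Lambda\psi$. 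For the unit, let $\varepsilon$ be the map $x^n\mapsto\delta_{n0}$, that is, evaluation at $0$. Then $\Lambda\varepsilon=1$, and one checks directly from~\eqref{eq:convolution} that $\varepsilon*\ph=\ph*\varepsilon=\ph$.

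As a consequence, transporting structure through $\Lambda$ shows that $*$ is associative and commutative, because the product on $\R[[t]]$ is. If one prefers a direct check, associativity follows from the multinomial identity
\begin{equation}
 \binom{n}{k}\binom{n-k}{l}=\frac{n!}{k!\,l!\,(n-k-l)!}\;.
\end{equation}

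The only real obstacle is interpretive rather than technical: making precise that $\R[[t]]$ carries no convergence conditions, so that every coefficient sequence is allowed. Once this is fixed, surjectivity is automatic and the remaining step is the binomial bookkeeping in the Cauchy product above. This is also where the normalisation $t^n/n!$ matters, since it is what produces the binomial coefficients in~\eqref{eq:convolution}.
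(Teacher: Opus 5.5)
Your proof is correct and follows essentially the same route as the paper. The Cauchy product computation gives $\Lambda(\ph*\psi)=\Lambda(\ph)\Lambda(\psi)$, and bijectivity comes from identifying linear forms on $\R[x]$ with arbitrary coefficient sequences. Your explicit checks of the unit and of surjectivity spell out details that the paper leaves implicit.
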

\begin{proof}
By the Cauchy product formula, for any $\ph,\psi \in \cL(\R[x],\R)$, 
\begin{align}
\Lambda(\ph)(t)\Lambda(\psi)(t)
&= \biggpar{\sum_{n\geqs0} \ph(x^n) \frac{t^n}{n!}}
\biggpar{\sum_{m\geqs0} \psi(x^m) \frac{t^m}{m!}} \\
&= \sum_{p\geqs0} \biggpar{\sum_{k=0}^p 
\frac{\ph(x^k)}{k!} \frac{\psi(x^{p-k})}{(p-k)!}}t^p \\
&= \sum_{p\geqs0} (\ph*\psi)(x^p) \frac{t^p}{p!} \\
&= \Lambda(\ph*\psi)(t)\;.
\end{align}
This shows that $\Lambda$ is indeed an algebra morphism. Bijectivity follows 
from uniqueness of the coefficients of power series.
\end{proof}

This result allows us to work with convolution of linear maps instead 
of multiplication of power series. 
More generally, we define the $p$-fold convolution by 
\begin{equation}
 \ph^{\ast p}(x^n) 
 = \sum_{\substack{n_1, \dots, n_p\geqs0\\n_1+\dots+n_p=n}} 
 \frac{n!}{n_1!\dots n_p!}  \ph(x^{n_1}) \dots \ph(x^{n_p})\;.
\end{equation} 
It till turn out to be useful to work with linear instead of multilinear maps.
This is achieved by introducing a linear map $\Delta:\R[x] \to \R[x]\otimes\R[x]$,
given by 
\begin{equation}
\label{eq:def_Delta} 
 \Delta(x^n) = \sum_{k=0}^n \binom{n}{k} x^k\otimes x^{n-k}\;.
\end{equation} 
Here the tensor product $\R[x]\otimes\R[x]$ is the vector space spanned 
by all $x^n\otimes x^m$ with $n, m\geqs0$. Indeed~\eqref{eq:def_Delta} allows us 
to rewrite the convolution product~\eqref{eq:convolution} as 
\begin{equation}
 \ph \ast \psi = \cM(\ph\otimes\psi)\Delta\;,
\end{equation} 
where $\cM$ denotes the multiplication map, $\cM(a\otimes b) = ab$. 
More generally, for $p\geqs 3$ we set 
\begin{equation}
 \Delta^{(p-1)}(x^n) 
 = \sum_{\substack{n_1, \dots, n_p\geqs0\\n_1+\dots+n_p=n}} 
 \frac{n!}{n_1!\dots n_p!} x^{n_1}\otimes\dots\otimes x^{n_p}\;.
\end{equation} 
Writing $\cM_p(a_1\otimes\dots\otimes a_p) = a_1\dots a_p$,  
the $n$-fold convolution product becomes
\begin{equation}
 \ph^{\ast p}
 = \cM_p(\ph^{\otimes p})\Delta^{(p-1)}\;.
\end{equation}

\begin{remark}[Hopf algebra]
\label{rem:Hopf}
The space $\R[x]$ endowed with the maps $\cdot$, $\Delta$, and a 
\emph{counit} $\unit^\star:\R[x]\to\R$ defined by $\unit^\star(x^n) = \delta_{n0}$
is a so-called \emph{bi-algebra}. When adding a linear map $\cA$
defined by $\cA(x^n) = (-1)^n x^n$ and called \emph{antipode}, it 
becomes a \emph{Hopf algebra}. The map $\Delta$ is called a 
\emph{co-product}, because it enjoys a property called 
\emph{co-associativity}, saying that applying 
$\Delta$ to the left or to the right of the tensor product 
in $\Delta(x^n)$ yields the same result, namely 
\begin{equation}
 \Delta^{(2)}(x^n)
 = (\Delta\otimes\id)\Delta(x^n) 
 = (\id\otimes\Delta)\Delta(x^n) 
 = \sum_{\substack{n_1, n_2, n_3\geqs0\\n_1+n_2+n_3 = n}} \frac{n!}{n_1!n_2!n_3!}
 x^{n_1}\otimes x^{n_2}\otimes x^{n_3}
\end{equation}
and similarly for higher powers. 
\end{remark}

Let us now introduce two special subsets of $\cL(\R[x],\R)$, given by 
\begin{align}
 \cL_1 &= \bigsetsuch{\ph\in\cL(\R[x],\R)}{\ph(1) = 1}\;, \\
 \cL_0 &= \bigsetsuch{\ph\in\cL(\R[x],\R)}{\ph(1) = 0}\;. 
\end{align}
Elements of $\cL_1$ can be inverted, via the Neumann series 
\begin{equation}
 \ph^{-1} = \sum_{k=0}^\infty(\unit^\star - \ph)^{\ast k}\;.
\end{equation} 
One has explicitly 
\begin{equation}
\label{eq:phi_inverse} 
 \ph^{-1}(x^n) = \sum_{k=1}^n (-1)^k \sum_{\substack{n_1,\dots,n_k\geqs1\\n_1+\dots+n_k=n}}
 \frac{n!}{n_1!\dots n_k!} \ph(x^{n_1})\dots\ph(x^{n_k})\;.
\end{equation} 

\begin{exercise}
Check that one has indeed 
$(\ph \ast \ph^{-1})(x^n) = \delta_{n0}$ for all $n\geqs0$.

\noindent
\textbf{Hints:} Recall that $\ph(1) = 1$, and therefore $\ph^{-1}(1) = 1$. 
Check the cases $n \in \set{0,1,2}$ first.
For the general case, the term $k = 0$ in~\eqref{eq:convolution} 
should be treated separately. 
\end{exercise}

We can now define an exponential map $\exp_\ast:\cL_0\to\cL_1$ and its inverse $\log_\ast:\cL_1\to\cL_0$ by 
\begin{equation}
 \exp_\ast(\ph) = \sum_{k\geqs0} \frac{1}{k!} \ph^{\ast k}\;, \qquad 
 \log_\ast(\ph) = \sum_{k\geqs1} \frac{(-1)^k}{k} (\ph - \unit^*)^{\ast k}\;.
\end{equation} 
There is no issue of convergence, since the sums are always finite when evaluated on a basis element. In fact,
\begin{align}
\label{eq:expstar}
 \exp_\ast(\ph)(x^n) &= 
 \sum_{k=0}^n \frac{1}{k!}
 \sum_{\substack{n_1, \dots, n_k\geqs1\\n_1+\dots+n_k=n}}
 \frac{n!}{n_1!\dots n_k!}\ph(x^{n_1})\dots\ph(x^{n_k})\;, \\
 \log_\ast(\ph)(x^n) &= 
 \sum_{k=1}^n \frac{(-1)^{k+1}}{k}
 \sum_{\substack{n_1, \dots, n_k\geqs1\\n_1+\dots+n_k=n}}
 \frac{n!}{n_1!\dots n_k!}\ph(x^{n_1})\dots\ph(x^{n_k})\;.
\label{eq:logstar} 
\end{align} 
The following corollary of Theorem~\ref{thm:convolution} 
clarifies the link between these objects and the usual inverse, 
exponential and logarithm. 

\begin{proposition}
\label{prop:Lambda} 
For any $\ph\in\cL_0$ and $\psi\in\cL_1$, one has the relations 
\begin{align}
 \Lambda(\psi^{-1})(t) &= \bigbrak{\Lambda(\psi)(t)}^{-1}\;, \\
 \Lambda(\exp_\ast\ph)(t) &= \exp(\Lambda(\ph)(t))\;, \\
 \Lambda(\log_\ast\psi)(t) &= \log(\Lambda(\psi)(t))\;.
 \label{eq:Lambda_logstar} 
\end{align}
\end{proposition}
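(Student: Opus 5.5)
The plan is to reduce all three identities to Theorem~\ref{thm:convolution}, which says that $\Lambda$ is an algebra isomorphism from $(\cL(\R[x],\R),\ast)$ onto $(\R[[t]],\cdot)$. Note first that $\Lambda(\unit^\star)(t) = 1$, so $\Lambda$ sends the unit to the unit. By induction on $k$, the morphism property gives
\begin{equation}
 \Lambda(\ph^{\ast k})(t) = \bigbrak{\Lambda(\ph)(t)}^k \qquad \text{for all } k\geqs0\;.
\end{equation}
The work then consists in checking that $\Lambda$ commutes with the infinite sums defining $\psi^{-1}$, $\exp_\ast$ and $\log_\ast$.

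The first step, and the only real subtlety, is this interchange of $\Lambda$ with the series. For $\ph\in\cL_0$ one has $\ph(1)=0$, so the formula for $\ph^{\ast k}(x^n)$ only involves terms with all $n_i\geqs1$, and $\ph^{\ast k}(x^n)=0$ for $k>n$. Equivalently, $\Lambda(\ph)(t)$ has no constant term, and $\Lambda(\ph)^k = O(t^k)$ as a formal power series. Consequently, for any sequence $(c_k)$, the coefficient of $t^n/n!$ in $\sum_k c_k\Lambda(\ph^{\ast k})$ is the finite sum
\begin{equation}
 \sum_{k\leqs n} c_k\ph^{\ast k}(x^n)\;.
\end{equation}
This shows that
\begin{equation}
 \Lambda\Bigpar{\sum_k c_k \ph^{\ast k}} = \sum_k c_k \Lambda(\ph)^k
\end{equation}
holds coefficientwise, in the sense of formal power series (the $t$-adic topology).

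With this in hand, I treat the three identities in turn.
\begin{itemize}
\item For the inverse, take $\ph = \unit^\star - \psi \in \cL_0$. Then
\begin{equation}
 \Lambda(\psi^{-1}) = \sum_k (1-\Lambda(\psi))^k = \Lambda(\psi)^{-1}\;,
\end{equation}
using the formal geometric series, which is valid because $1-\Lambda(\psi)$ has zero constant term. Alternatively, one can bypass the series: apply $\Lambda$ to $\psi\ast\psi^{-1}=\unit^\star$ (the exercise above) to get $\Lambda(\psi)\Lambda(\psi^{-1})=1$.
\item For the exponential, the interchange argument gives
\begin{equation}
 \Lambda(\exp_\ast\ph) = \sum_k \frac{1}{k!}\Lambda(\ph)^k = \exp(\Lambda(\ph))\;,
\end{equation}
which is the formal exponential of a series without constant term.
\item For the logarithm, set $u = \Lambda(\psi)-1 = \Lambda(\psi-\unit^\star)$, which has no constant term. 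The interchange gives $\Lambda(\log_\ast\psi) = \sum_{k\geqs1}\frac{(-1)^{k+1}}{k}u^k = \log(1+u)$. (The sign in the displayed definition of $\log_\ast$ should match~\eqref{eq:logstar}, i.e.\ $(-1)^{k+1}$.)
\end{itemize}

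The remaining issue, and the main obstacle, is interpretive: making sure the identities in $\R[[t]]$ match the analytic functions $\exp$, $\log$ and $1/\cdot$. Formally, $\exp$ and $\log(1+\cdot)$ composed with series without constant term are defined by exactly these series, so the identities hold as formal power series. When $\Lambda(\ph)$ or $\Lambda(\psi)$ has a positive radius of convergence, they also hold as functions for small $|t|$, by absolute convergence and composition of convergent power series. As a consistency check, one can also verify that $\exp_\ast$ and $\log_\ast$ are mutually inverse, since $\Lambda$ transports this from the corresponding formal identities.
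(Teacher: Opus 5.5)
Your proposal is correct and takes the same route as the paper: both apply the morphism property $\Lambda(\ph^{\ast k})=\Lambda(\ph)^k$ from Theorem~\ref{thm:convolution} term by term to the defining series, and the paper writes this out only for $\exp_\ast$, declaring the other two cases similar. Your justification of the interchange via $\ph^{\ast k}(x^n)=0$ for $k>n$ when $\ph(1)=0$, and your remark that the sign in the definition of $\log_\ast$ should be $(-1)^{k+1}$ as in~\eqref{eq:logstar}, supply details the paper leaves implicit.
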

\begin{proof}
We prove the second relation. Setting $\psi = \exp_\ast\ph$, we have
\begin{equation}
 \Lambda(\psi)(t) = \sum_{k\geqs0} \frac{1}{k!} \Lambda(\ph^{\ast k})(t) 
 = \sum_{k\geqs0} \frac{1}{k!} \Lambda(\ph)(t)^k 
 = \exp(\Lambda(\ph)(t))\;.
\end{equation} 
The other relations are proved in a similar way. 
\end{proof}

Returning to the topic of Hermite polynomials, we consider the 
special case where $X$ is a real-valued random variable, admitting 
exponential moments, and associate with it the linear map 
$\mu_X:\R[x]\to\R$ given by 
\begin{equation}
 \mu_X(x^n) = \expec{X^n}\;.
\end{equation} 
Note that $\mu_X\in\cL_1$, since $\expec{1} = 1$. The associated power series 
\begin{equation}
\label{eq:moment_generating} 
 \Lambda(\mu_X)(t) = \sum_{n\geqs0} \frac{t^n}{n!} \expec{X^n}
 = \expec{\e^{tX}}
\end{equation} 
is the \emph{moment generating function} of $X$ introduced in~\eqref{eq:X_moment_gen}.
Proposition~\ref{prop:Lambda} implies that the cumulant generating function
(cf.~\eqref{eq:X_cumulant}) can be written as 
\begin{equation}
 K_X(t)  = \log\expec{\e^{tX}} 
 = \Lambda(\log_\ast \mu_X)(t) 
 = \Lambda(\kappa_X)(t)\;,
\end{equation} 
where
\begin{equation}
 \kappa_X = \log_\ast \mu_X\;, \qquad 
 \mu_X = \exp_\ast \kappa_X\;.
\end{equation} 
Applying~\eqref{eq:expstar} and~\eqref{eq:logstar} implies
\begin{align}
\label{eq:LS1} 
 \mu_X(x^n) &= 
 \sum_{k=0}^n \frac{1}{k!}
 \sum_{\substack{n_1, \dots, n_k\geqs1\\n_1+\dots+n_k=n}}
 \frac{n!}{n_1!\dots n_k!}\kappa_X(x^{n_1})\dots\kappa_X(x^{n_k})\;, \\
 \kappa_X(x^n) 
& = \sum_{k=1}^n \frac{(-1)^{k+1}}{k}
  \sum_{\substack{n_1,\dots,n_k\geqs1\\n_1+\dots+n_k=n}}
 \frac{n!}{n_1!\dots n_k!} \mu_X(x^{n_1})\dots\mu_X(x^{n_k})\;.
\label{eq:LS2} 
\end{align} 
These are the so-called \emph{Leonov--Shiraev moment-cumulant relations}. 
We now define the \emph{Wick exponential} associated to $X$ as the 
linear map $\W:\R[x]\to\R[x]$ given by 
\begin{equation}
\label{eq:Wick_expo} 
 \W = (\mu_X^{-1} \otimes \id) \Delta 
 = (\exp_\ast(-\kappa_X) \otimes \id) \Delta\;. 
\end{equation} 
One easily checks that $\W(1) = 1$, while for $n\geqs1$, 
\eqref{eq:phi_inverse} and~\eqref{eq:expstar} imply
\begin{equation}
\label{eq:W(Xn)} 
 \W(x^n) 
 = \sum_{k=0}^n \sum_{j=1}^k \frac{(-1)^j}{j!}
 \sum_{\substack{n_1,\dots,n_j\geqs1\\n_1+\dots+n_j=k}}
 \frac{n!}{(n-k)!n_1!\dots n_j!} \kappa_X(x^{n_1})\dots\kappa_X(x^{n_j})x^{n-k}\;.
\end{equation} 
Moreover, we have an explicit expression for the inverse of $\W$.

\begin{lemma}
The inverse of $\W$ is the map 
\begin{equation}
\label{eq:W_inverse} 
 \W^{-1} = (\mu_X\otimes\id)\Delta\;.
\end{equation} 
\end{lemma}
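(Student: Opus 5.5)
The plan is to show that for any $\ph,\psi\in\cL(\R[x],\R)$, the maps $T_\ph = (\ph\otimes\id)\Delta$ and $T_\psi = (\psi\otimes\id)\Delta$ compose according to the convolution product:
\begin{equation}
 T_\ph \circ T_\psi = T_{\ph\ast\psi}\;.
\end{equation}
Granting this, the lemma is immediate. Indeed, $\W = T_{\mu_X^{-1}}$ by~\eqref{eq:Wick_expo}, so
\begin{equation}
 T_{\mu_X}\circ\W = T_{\mu_X\ast\mu_X^{-1}} = T_{\unit^\star}
 \qquad\text{and}\qquad
 \W\circ T_{\mu_X} = T_{\mu_X^{-1}\ast\mu_X} = T_{\unit^\star}\;.
\end{equation}
It remains to note that $T_{\unit^\star}(x^n) = \sum_k\binom nk\delta_{k0}x^{n-k} = x^n$, so $T_{\unit^\star}=\id$. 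Convolution is commutative because the binomial coefficients are symmetric, so $\mu_X^{-1}$ is a two-sided inverse and both compositions give the identity.

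To prove the composition rule, I would first evaluate on the basis directly:
\begin{equation}
 T_\ph(T_\psi(x^n)) = \sum_{k=0}^n\binom nk\psi(x^k)\,T_\ph(x^{n-k})
 = \sum_{k=0}^n\sum_{j=0}^{n-k}\binom nk\binom{n-k}{j}\psi(x^k)\ph(x^j)\,x^{n-k-j}\;.
\end{equation}
Using $\binom nk\binom{n-k}j = \frac{n!}{k!\,j!\,(n-k-j)!} = \binom{n}{m}\binom{m}{j}$ with $m=k+j$, and regrouping by $m$, this becomes
\begin{equation}
 \sum_{m=0}^n\binom nm\bigl(\ph\ast\psi\bigr)(x^m)\,x^{n-m} = T_{\ph\ast\psi}(x^n)\;.
\end{equation}
Alternatively, one could argue more abstractly with the co-associativity of $\Delta$ from Remark~\ref{rem:Hopf}: $(\ph\otimes\psi\otimes\id)(\Delta\otimes\id)\Delta = (\ph\otimes\psi\otimes\id)(\id\otimes\Delta)\Delta$. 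The explicit index computation is cleaner to present, so I would use it.

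The only step needing care is the reindexing of the triple sum into trinomial coefficients. Everything else is formal, and there are no convergence issues since all sums are finite on each basis element. As a sanity check I would verify $n=1,2$ by hand. For $n=1$ and centred $X$, $\W(x)=x$ and $\W^{-1}(x)=x+\mu_X(x)=x$. For $n=2$, $\W(x^2)=x^2-\kappa_2$ and $\W^{-1}(x^2)=x^2+\mu_2$, which are consistent since $\mu_2=\kappa_2$ when $X$ is centred.
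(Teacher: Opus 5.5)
Your proof is correct and follows essentially the same route as the paper. In both, the key point is that composing maps of the form $(g\otimes\id)\Delta$ amounts to convolving the linear forms $g$, after which $\mu_X\ast\mu_X^{-1}=\unit^\star$ gives the identity. The only differences are minor: the paper establishes the composition rule by pairing with a test linear form and uses co-associativity implicitly, whereas you do the binomial reindexing explicitly. You also check both compositions, while the paper only verifies $\W\W^{-1}=\id$.
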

\begin{proof}
For a linear map $g: \R[x]\to\R$, write 
\begin{equation}
 M^g(x^n) = (g\otimes\id) \Delta(x^n)\;.
\end{equation} 
Note that for any linear form $f: \R[x]\to\R$, we have 
\begin{equation}
 \pscal{f}{M^g(x^n)} = (g\otimes f) \Delta(x^n) 
 =: \pscal{g\circ f}{x^n}\;,
\end{equation} 
where we use an \lq\lq inner product\rq\rq\ notation for the action of 
$f$ for clarity. 
It follows that if $h: \R[x]\to\R$ is yet another linear form, then 
\begin{equation}
 \pscal{f}{M^hM^g(x^n)}
 = \pscal{h\circ f}{M^g(x^n)} 
 = \pscal{g\circ h\circ f}{x^n} 
 = \pscal{f}{M^{g\circ h}(x^n)}\;.
\end{equation} 
In short, $M^hM^g = M^{g\circ h}$. Now we observe that 
$\W = M^{\mu_X^{-1}}$ and $\W^{-1} = M^{\mu_X}$. Therefore 
\begin{equation}
 \W\W^{-1}(x^n)
 = M^{\mu_X\circ\mu_X^{-1}}(x^n)
  = \bigpar{(\mu_X\circ\mu_X^{-1})\otimes\id}\Delta(x^n)\;,
\end{equation}
where 
\begin{align}
 (\mu_X\circ\mu_X^{-1})(x^k) 
 &= (\mu_X\otimes\mu_X^{-1})\Delta(x^k) \\
 &= \sum_{\ell=0}^k \binom{k}{\ell} \mu_X(x^\ell)\mu_X^{-1}(x^{k-\ell}) \\
 &= (\mu_X\ast\mu_X^{-1})(x^k) \\
 &= \delta_{k0}\;.
\end{align} 
This implies $(\mu_X\circ\mu_X^{-1}\otimes\id)\Delta(x^n) = x^n$
for all $n$, proving that $\W^{-1}$ is indeed the inverse of $\W$. 
\end{proof}

The following result shows that $\W(t,x) := \Lambda(\W)(t)$ is nothing but the 
generating function that we encountered in~\eqref{eq:generating_function}. 

\begin{proposition}
\label{prop:Wick_map} 
One has the relation
\begin{equation}
 \W(t,x)
 = \frac{\e^{tx}}{\expec{\e^{tX}}}
 = \e^{tx - K_X(t)}\;.
\end{equation} 
\end{proposition}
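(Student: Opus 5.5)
The claim is that applying $\Lambda$ to the Wick map, in the sense $\W(t,x) = \sum_{n\geqs0}\frac{t^n}{n!}\W(x^n)$ (a formal power series in $t$ with polynomial coefficients in $x$), gives $\e^{tx}/\expec{\e^{tX}}$. The plan is to use the factorised form $\W = (\mu_X^{-1}\otimes\id)\Delta$ together with the Cauchy product structure behind Theorem~\ref{thm:convolution}. No inverse of a power series or logarithm has to be computed by hand; everything is delegated to Proposition~\ref{prop:Lambda}.

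First, expand the definition using~\eqref{eq:def_Delta}:
\begin{equation}
 \W(x^n) = \sum_{k=0}^n \binom{n}{k}\,\mu_X^{-1}(x^k)\,x^{n-k}\;.
\end{equation}
Multiplying by $t^n/n!$ and summing over $n$ turns the binomial coefficient into a Cauchy product, exactly as in the proof of Theorem~\ref{thm:convolution}:
\begin{equation}
 \sum_{n\geqs0}\frac{t^n}{n!}\W(x^n)
 = \biggpar{\sum_{k\geqs0}\mu_X^{-1}(x^k)\frac{t^k}{k!}}
   \biggpar{\sum_{m\geqs0}\frac{(tx)^m}{m!}}
 = \Lambda(\mu_X^{-1})(t)\,\e^{tx}\;.
\end{equation}
Formally, this is Theorem~\ref{thm:convolution} applied with $\psi$ the map $x^m\mapsto x^m$, now $\R[x]$-valued instead of real-valued; the same Cauchy product computation goes through verbatim. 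Its $\Lambda$-image is $\e^{tx}$.

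Second, apply the first relation of Proposition~\ref{prop:Lambda} with $\psi = \mu_X\in\cL_1$. This gives $\Lambda(\mu_X^{-1})(t) = [\Lambda(\mu_X)(t)]^{-1}$, and by~\eqref{eq:moment_generating} the right-hand side equals $1/\expec{\e^{tX}}$, which proves the first equality. For the second equality, use $\mu_X^{-1} = \exp_\ast(-\kappa_X)$ from~\eqref{eq:Wick_expo}. The second relation of Proposition~\ref{prop:Lambda}, together with linearity of $\Lambda$ and $\Lambda(\kappa_X)=K_X$, then gives $\Lambda(\mu_X^{-1})(t) = \e^{-K_X(t)}$. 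Equivalently, $1/\expec{\e^{tX}} = \e^{-\log\expec{\e^{tX}}}$ directly.

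The only step requiring care is analytic rather than algebraic. The identities are first obtained as formal power series in $t$, and they become genuine functional identities for $|t|<\delta$. This holds because $\expec{\e^{tX}}$ is analytic and nonzero there, so its reciprocal and logarithm are analytic near $0$, and the formal operations coincide with the analytic ones on their common domain of convergence. Once that is accepted, the proof consists of the two computations above. In the Gaussian case, $K_X(t)=t^2/2$ recovers $G(t,x)$ from~\eqref{eq:generating_function}, so $\W(x^n)=H_n(x)$ by Proposition~\ref{prop:Hermite_generating}.
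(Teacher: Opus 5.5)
Your proposal is correct and follows essentially the same route as the paper. The paper also writes $\W(x^n) = (\mu_X^{-1}\ast\id)(x^n)$, applies Theorem~\ref{thm:convolution} to obtain $\Lambda(\mu_X^{-1})(t)\,\Lambda(\id)(t)$, and concludes with Proposition~\ref{prop:Lambda} and~\eqref{eq:moment_generating}; your additional remark on passing from formal to convergent power series for small $|t|$ is harmless and is left implicit in the paper.
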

\begin{proof}
Observe that 
\begin{equation}
 \W(x^n)
 = \sum_{k=0}^n \binom{n}{k} \mu_X^{-1}(x^k) x^{n-k} 
 = (\mu_X^{-1} \ast \id)(x^n)\;.
\end{equation} 
Therefore, Theorem~\eqref{thm:convolution} implies 
\begin{equation}
 \Lambda(\W)(t) 
 = \Lambda(\mu_X^{-1} \ast \id)(t)
 = \Lambda(\mu_X^{-1})(t)\Lambda(\id)(t)
 = \frac{\Lambda(\id)(t)}{\Lambda(\mu_X)(t)}
 = \frac{\e^{tx}}{\expec{\e^{tX}}}\;,
\end{equation} 
where we have used Proposition~\ref{prop:Lambda} and~\eqref{eq:moment_generating}. 
\end{proof}

So far, the construction works for any random variable $X$ with exponential 
moments. Let us now particularise to the case $X \sim \cN(0,1)$. Since 
$\kappa_X(x^n) = \delta_{n2}$, cf.~\eqref{eq:cumulant_gaussian}, all $n_i$ 
in the first Leonov--Shiraev relation~\eqref{eq:LS1} must have value $2$. 
This is only possible if $n$ is even and $k = n/2$. Therefore, we obtain 
\begin{equation}
\label{eq:expec_X2k} 
 \expec{X^{2k}} 
 = \mu_X(x^{2k})
 = \frac{(2k)!}{k!2^k}\;.
\end{equation} 
This is equal to $(k-1)!!$, as one checks by separating even and odd factors 
in $(2k)!$. We thus recover Proposition~\ref{prop:gaussian_moments}. 
In addition, we obtain the following explicit expressions for 
Hermite polynomials, as well as the inverse relations between monomials 
and Hermite polynomials.

\begin{proposition}[Explicit expression of Hermite polynomials]
\label{prop:Hermite_explicit} 
For any $n\in\N_0$, one has 
\begin{equation}
\label{eq:Hn_xn} 
 H_n(x) = n!\sum_{k=0}^{\lfloor n/2 \rfloor}
 \frac{(-1)^k}{2^kk!(n-2k)!}x^{n-2k}\;.
\end{equation} 
The inverse relation is given by 
\begin{equation}
\label{eq:xn_Hn} 
 x^n = n!\sum_{k=0}^{\lfloor n/2 \rfloor}
 \frac{1}{2^kk!(n-2k)!}H_{n-2k}(x)\;.
\end{equation} 
\end{proposition}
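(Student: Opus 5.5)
The plan is to use the generating function of Proposition~\ref{prop:Hermite_generating}, or equivalently the Wick exponential of Proposition~\ref{prop:Wick_map}, and to compare coefficients of $t^n$.

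For~\eqref{eq:Hn_xn}, we factor
\begin{equation}
 G(t,x) = \e^{tx}\,\e^{-t^2/2}\;.
\end{equation}
We then expand both factors as power series:
\begin{equation}
 \e^{tx} = \sum_{m\geqs0} \frac{t^m x^m}{m!}\;,
 \qquad
 \e^{-t^2/2} = \sum_{k\geqs0} \frac{(-1)^k t^{2k}}{2^k k!}\;.
\end{equation}
Their Cauchy product has coefficient of $t^n$ equal to
\begin{equation}
 \sum_{2k+m=n} \frac{(-1)^k x^m}{2^k k!\, m!}\;.
\end{equation}
Identifying this with $H_n(x)/n!$ via~\eqref{eq:G_Hermite} and setting $m=n-2k$ gives~\eqref{eq:Hn_xn}.

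In the language of Section~\ref{ssec:convolution}, the same computation reads as follows. We have $\W=(\mu_X^{-1}\otimes\id)\Delta$, and $\mu_X^{-1}=\exp_\ast(-\kappa_X)$ with $\kappa_X(x^n)=\delta_{n2}$. Formula~\eqref{eq:expstar} then gives $\mu_X^{-1}(x^{2k})=(-1)^k(2k)!/(2^kk!)$ and zero on odd powers. Inserting this into $\W(x^n)=\sum_j\binom nj\mu_X^{-1}(x^j)x^{n-j}$ yields the same sum.

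For the inverse relation~\eqref{eq:xn_Hn}, we write
\begin{equation}
 \e^{tx} = \e^{t^2/2}\, G(t,x)
 = \Bigl(\sum_{k\geqs0}\frac{t^{2k}}{2^kk!}\Bigr)\Bigl(\sum_{m\geqs0}\frac{t^m}{m!}H_m(x)\Bigr)\;.
\end{equation}
This is the identity $\W^{-1}=(\mu_X\otimes\id)\Delta$ from~\eqref{eq:W_inverse}, with $\mu_X(x^{2k})=(2k)!/(2^kk!)$ by~\eqref{eq:expec_X2k}. Taking the coefficient of $t^n$ on both sides gives $x^n/n!$ on the left and $\sum_{2k+m=n}H_m(x)/(2^kk!\,m!)$ on the right, which is~\eqref{eq:xn_Hn}.

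The only point needing care is that coefficient identification is legitimate. All series involved are entire in $t$ for fixed $x$, so the products converge absolutely, and uniqueness of power series coefficients applies. Alternatively, the whole argument can be carried out at the formal level, where Theorem~\ref{thm:convolution} and Proposition~\ref{prop:Lambda} already justify it. Beyond this, the remaining work is just the index bookkeeping $m=n-2k$ with $0\leqs k\leqs\lfloor n/2\rfloor$.
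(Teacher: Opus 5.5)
Your proposal is correct and is essentially the paper's argument. The paper writes $H_n(x)=\W(x^n)$, evaluates \eqref{eq:W(Xn)} keeping only $n_i=2$, and uses \eqref{eq:W_inverse} for the inverse; this is exactly your second paragraph, and your Cauchy products of $\e^{tx}$ with $\e^{\mp t^2/2}$ are just the image of that convolution computation under the isomorphism $\Lambda$ of Theorem~\ref{thm:convolution}. Working from \eqref{eq:expstar} with the $k=0$ term included, as you do, also gives the leading $x^n$ term, which \eqref{eq:W(Xn)} as printed would omit because its $j$-sum starts at $j=1$.
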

\begin{proof}
Since 
\begin{equation}
 \W(t,x) = \Lambda(\W)(t) = \sum_{n\geqs0} \W(x^n) \frac{t^n}{n!}
\end{equation} 
by the definition~\eqref{eq:def_Lambda} of $\Lambda$, we have $H_n(x) 
= \W(x^n)$. We can thus apply~\eqref{eq:W(Xn)} when all $n_i$ are 
equal to $2$. This is only possible if $2j = k$, and the result 
follows by the index shift $k \mapsto k/2$. 
The inverse relation~\eqref{eq:xn_Hn} follows in the same way 
from the expression~\eqref{eq:W_inverse} for $\W^{-1}$. 
\end{proof}

%%%%%%%%%%%%%%%%%%%%%%%%%%%%%%%%%%%%%%%%%%%%%%%%%%%%%%%%%%%%%%%%%%%%%%%%%%%%%%%%

\subsection{Hermite polynomials and combinatorics}
\label{ssec:combinatorics} 

The coefficients of Hermite polynomials also have a nice combinatorial 
interpretation. Given a finite set $E_n$ of cardinal $n$, say 
$E_n = \intset{1,n} := \set{1,2,\dots,n}$, we will call \emph{pairwise matching}, 
or \emph{pairing}, a partition of $E_n$ into sets of cardinality $1$ or $2$, 
the former being called \emph{singletons} and 
the latter being called \emph{pairs}.

\begin{proposition}[Combinatorial interpretation of Hermite polynomials]
Let $0\leqs 2k\leqs n$. 
The coefficient of $x^{n-2k}$ of $H_n(x)$ is equal to the number of 
pairwise matchings of $E_n$ with $k$ pairs. 
\end{proposition}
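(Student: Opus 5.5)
The plan is to combine the explicit formula of Proposition~\ref{prop:Hermite_explicit} with a direct count of pairings. By~\eqref{eq:Hn_xn}, the coefficient of $x^{n-2k}$ in $H_n(x)$ is
\begin{equation}
 (-1)^k \frac{n!}{2^k k!(n-2k)!}\;.
\end{equation}
This carries the sign $(-1)^k$, for instance $H_2(x) = x^2 - 1$. So the statement has to be read as concerning the absolute value of the coefficient, or equivalently the coefficient of $x^{n-2k}$ in the polynomial $\i^{-n}H_n(\i x)$ with all signs made positive. I will state this precisely at the start of the proof. It then suffices to show that the number of pairwise matchings of $E_n$ with exactly $k$ pairs equals $n!/(2^k k!(n-2k)!)$.

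For the count, I will build such a matching in two stages. First, choose the $2k$ elements of $E_n$ that belong to pairs; there are $\binom{n}{2k}$ ways, and the remaining $n-2k$ elements are the singletons. Second, count the perfect matchings of a set of $2k$ elements. I will do this by listing the $(2k)!$ orderings of the set and grouping consecutive elements into pairs. Each perfect matching then arises exactly $2^k k!$ times: $k!$ from permuting the pairs, and $2^k$ from swapping the two elements inside each pair. This gives $(2k)!/(2^k k!)$ perfect matchings, which is consistent with~\eqref{eq:expec_X2k}. Multiplying the two stages gives
\begin{equation}
 \binom{n}{2k}\frac{(2k)!}{2^k k!} = \frac{n!}{2^k k!(n-2k)!}\;,
\end{equation}
which matches the absolute value of the coefficient above.

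As an independent check, which I could also present as an alternative proof, I would use the recurrence~\eqref{eq:Hn_recurrence2}. Write $c_{n,k}$ for the number of matchings of $E_n$ with $k$ pairs. Looking at the element $n+1$ of $E_{n+1}$, either it is a singleton, leaving a matching of $E_n$ with $k$ pairs, or it is paired with one of the $n$ elements of $E_n$, leaving a matching of the remaining $n-1$ elements with $k-1$ pairs. This gives $c_{n+1,k} = c_{n,k} + n\,c_{n-1,k-1}$. The same recursion holds for the absolute values of the Hermite coefficients: the term $xH_n$ shifts the degree, and the term $-nH_{n-1}$ contributes with the alternating sign accounting for the extra pair. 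The initial values also agree.

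I expect no real obstacle in this argument. The only delicate point is the sign convention, which must be stated explicitly so that the proposition is correct as written; the rest is an elementary counting argument.
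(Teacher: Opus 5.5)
Your proof is correct, and you are right to flag the sign. Taken literally, the statement is off by $(-1)^k$. The paper's own proof resolves this implicitly by giving each singleton the weight $x$ and each pair the weight $-1$. What it really proves is that $H_n(x)$ is the sum over all matchings of $E_n$, where a matching with $k$ pairs contributes $(-1)^k x^{n-2k}$.

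The two routes run in opposite directions. The paper proves this signed identity by induction on $n$. It splits the matchings of $E_{n+1}$ according to whether $n+1$ is a singleton or is paired with one of $n$ partners, which reproduces the recurrence~\eqref{eq:Hn_recurrence2}. This is exactly what you sketch as your independent check.

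Your main argument instead takes the closed form~\eqref{eq:Hn_xn} as input and compares it with the direct count $\binom{n}{2k}\frac{(2k)!}{2^kk!}$. That is shorter once~\eqref{eq:Hn_xn} is available. However, it relies on Proposition~\ref{prop:Hermite_explicit}, which is proved in the optional convolution-algebra section (it also follows from a Cauchy product of $\e^{tx}$ and $\e^{-t^2/2}$). It also inverts the paper's logic. Right after this proposition, the paper combines the combinatorial interpretation with precisely your count to give an \emph{alternative} proof of~\eqref{eq:Hn_xn}, and that would become circular on your route.

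The recurrence-based proof needs only~\eqref{eq:Hn_recurrence2}. It also produces the signed-sum picture (singletons $\mapsto x$, pairs $\mapsto -1$) that the notes reuse later. For instance, in Section~\ref{ssec:phi43_Wick} the Wick map is read as substituting pairs of quartic vertices by a quadratic counterterm.
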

\begin{proof}
Given a pairwise matching of $E_n$ with $k$ pairs, we associate to each 
singleton the label $x$, and to each pair the label $-1$. The value 
of this matching is defined as the product of all labels, namely  
\begin{equation}
 (-1)^{k} x^{n-2k}\;,
\end{equation} 
see Figure~\ref{fig:pairings} for an example. 
We claim that $H_n(x)$ is equal to the sum of the values of all pairwise 
matchings of $E_n$. 

We proceed by induction, taking $n = 1$ as base case. Then the only 
pairwise matching is $\set{\set{1}}$, which has value $x = H_1(x)$. 

Assume now that $n \geqs 1$. The pairwise matchings of $E_{n+1}$ are of two 
types. The first type are those in which $\set{n+1}$ is a singleton. 
These have value $xH_n(x)$ by induction hypothesis. The second type of 
pairings are those in which $n+1$ belongs to a pair. There are $n$ choices 
for the partner of $n+1$, and for each of these choices, the value of 
the matching of the remaining elements is $H_{n-1}(x)$. This shows that 
\begin{equation}
 H_{n+1}(x) = xH_n(x) - n H_{n-1}(x)\;,
\end{equation} 
which is exactly the recurrence relation~\eqref{eq:Hn_recurrence2}. 
\end{proof}

\begin{figure}
\begin{center}
 \begin{tikzpicture}[>=stealth',main node/.style={circle,minimum
size=0.4cm,inner sep=1pt,fill=blue!25,draw},scale=1.2]
\node[main node] (1) at (0,0) {$x$};
\node[main node] (2) at (0,-1) {$x$};
\node[main node] (3) at (1,0) {$x$};
\node[main node] (4) at (1,-1) {$x$};

\node[main node] (1) at (0,-2) {};
\node[main node] (2) at (0,-3) {};
\node[main node] (3) at (1,-2) {$x$};
\node[main node] (4) at (1,-3) {$x$};
\draw[thick,blue] (1) --node[left]{$-1$} (2);  

\node[main node] (1) at (2,-2) {};
\node[main node,] (2) at (2,-3) {$x$};
\node[main node] (3) at (3,-2) {};
\node[main node] (4) at (3,-3) {$x$};
\draw[thick,blue] (1) --node[above]{$-1$} (3);  

\node[main node] (1) at (4,-2) {};
\node[main node,] (2) at (4,-3) {$x$};
\node[main node] (3) at (5,-2) {$x$};
\node[main node] (4) at (5,-3) {};
\draw[thick,blue] (1) --node[right]{$-1$} (4);  

\node[main node] (1) at (6,-2) {$x$};
\node[main node,] (2) at (6,-3) {};
\node[main node] (3) at (7,-2) {};
\node[main node] (4) at (7,-3) {$x$};
\draw[thick,blue] (2) --node[right]{$-1$} (3);  

\node[main node] (1) at (8,-2) {$x$};
\node[main node,] (2) at (8,-3) {};
\node[main node] (3) at (9,-2) {$x$};
\node[main node] (4) at (9,-3) {};
\draw[thick,blue] (2) --node[below]{$-1$} (4);  

\node[main node] (1) at (10,-2) {$x$};
\node[main node,] (2) at (10,-3) {$x$};
\node[main node] (3) at (11,-2) {};
\node[main node] (4) at (11,-3) {};
\draw[thick,blue] (3) --node[right]{$-1$} (4);  

\node[main node] (1) at (0,-4) {};
\node[main node] (2) at (0,-5) {};
\node[main node] (3) at (1,-4) {};
\node[main node] (4) at (1,-5) {};
\draw[thick,blue] (1) --node[left]{$-1$} (2);  
\draw[thick,blue] (3) --node[right]{$-1$} (4);  

\node[main node] (1) at (2,-4) {};
\node[main node,] (2) at (2,-5) {};
\node[main node] (3) at (3,-4) {};
\node[main node] (4) at (3,-5) {};
\draw[thick,blue] (1) --node[above]{$-1$} (3);  
\draw[thick,blue] (2) --node[below]{$-1$} (4);  

\node[main node] (1) at (4,-4) {};
\node[main node,] (2) at (4,-5) {};
\node[main node] (3) at (5,-4) {};
\node[main node] (4) at (5,-5) {};
\draw[thick,blue] (1) --node[right]{$-1$} (4);  
\draw[thick,blue] (2) --node[left]{$-1$} (3);  
\end{tikzpicture}
\end{center}
\vspace{-4mm}
\caption[]{Pairwise matchings of $E_4 = \set{1,2,3,4}$.
There is one matching with no pairs, of value $x^4$.
There are $6$ matchings with one pair, of total value $-6x^2$, 
and $3$ matchings with two pairs, of total value $3$.
The sum of all values is $x^4 - 6x^2 +3 = H_4(x)$.}
\label{fig:pairings} 
\end{figure}

The number of pairwise matchings can also be computed exactly, 
which yields the following alternative proof of the first relation in
Proposition~\ref{prop:Hermite_explicit}.

\begin{proposition}[Explicit expression of Hermite polynomials]
For any $n\in\N_0$, one has 
\begin{equation}
 H_n(x) = n!\sum_{k=0}^{\lfloor n/2 \rfloor}
 \frac{(-1)^k}{2^kk!(n-2k)!}x^{n-2k}\;.
\end{equation} 
\end{proposition}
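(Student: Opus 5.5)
The plan is to use the combinatorial interpretation just established. By the preceding proposition, $H_n(x)$ is the sum, over all pairwise matchings of $E_n=\intset{1,n}$, of the value $(-1)^k x^{n-2k}$, where $k$ is the number of pairs. Grouping matchings by their number of pairs gives
\begin{equation}
 H_n(x) = \sum_{k=0}^{\lfloor n/2\rfloor} (-1)^k N_{n,k}\, x^{n-2k}\;,
\end{equation}
where $N_{n,k}$ denotes the number of pairwise matchings of $E_n$ with exactly $k$ pairs. The restriction $2k\leqs n$ is automatic, since $k$ pairs use $2k$ elements. So everything reduces to the counting identity
\begin{equation}
 N_{n,k} = \frac{n!}{2^k k!(n-2k)!}\;.
\end{equation}

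To count, I would first choose which $2k$ elements of $E_n$ are covered by pairs; there are $\binom{n}{2k}$ choices, and the remaining $n-2k$ elements are then forced to be singletons. It remains to count the perfect matchings of a set with $2k$ elements.

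For that count, I would list the $2k$ elements in an arbitrary order, in $(2k)!$ ways, and pair positions $1$ with $2$, $3$ with $4$, and so on. Each perfect matching is obtained exactly $2^k k!$ times: the order inside each pair can be swapped, giving $2^k$, and the $k$ pairs can be permuted among themselves, giving $k!$. So the number of perfect matchings is $(2k)!/(2^kk!)$, which is the same quantity as in~\eqref{eq:expec_X2k}; this is consistent, since that count equals $\expec{X^{2k}}$.

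Multiplying the two counts gives
\begin{equation}
 \binom{n}{2k}\frac{(2k)!}{2^kk!} = \frac{n!}{2^kk!(n-2k)!}\;,
\end{equation}
which is the claim. The only step requiring care is the overcounting factor $2^kk!$, and in particular justifying that the orbit of each ordering has exactly that size. As a check, I would compare with Figure~\ref{fig:pairings} for $n=4$: the formula gives $6$ matchings with one pair and $3$ with two pairs, as shown there.
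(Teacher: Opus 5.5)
Your proof is correct and follows the paper's argument. Like the paper, you group the signed matchings from the combinatorial interpretation by number of pairs, choose the $n-2k$ singletons in $\binom{n}{n-2k}$ ways, and multiply by the number $(2k)!/(2^kk!)$ of perfect matchings of the remaining $2k$ elements; the only cosmetic difference is that you get this last count by dividing the $(2k)!$ orderings by their $2^kk!$ symmetries, whereas the paper pairs the first element with one of $2k-1$ partners and inducts to obtain $(2k-1)!!$.
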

\begin{proof}
It is sufficient to count the number of matchings with $k$ pairs. 
There are $\binom{n}{n-2k}$ choices for the $n-2k$ singletons. 
The number of pairings of the remaining $2k$ elements is 
\begin{equation}
\label{eq:double_factorial} 
 (2k-1)!! = \frac{(2k)!}{k!2^k}\;.
\end{equation} 
This is because the first element has $2k-1$ possible partners. Having 
chosen the first part, it remains to pair $2k-2$ elements, so that the 
claim follows by induction. Multiplying the two numbers gives the 
claimed coefficient of $x^{n-2k}$.
\end{proof}

Pairings appear at other places in computations with Hermite polynomials.
For instance, Proposition~\ref{prop:Hermite_product_sum} yields 
\begin{equation}
 H_4(x)^2 
 = H_8(x) + 4^2 H_6(x) + 2! 6^2 H_4(x) + 3!4^2 H_2(x) + 4! H_0(x)\;.
\end{equation} 
The right-hand side can be represented graphically by 
\begin{equation}
 \FGfour \FGfour 
 + 4^2 \FGIlegVI
 + 2!6^2 \FGIIlegIV
 + 3!4^2 \FGIIIlegII 
 + 4! \FGIV \;, 
\end{equation} 
where the numerical coefficients count the number of ways to pair $2p$ legs 
from the two different diagrams having $4$ legs each.

%%%%%%%%%%%%%%%%%%%%%%%%%%%%%%%%%%%%%%%%%%%%%%%%%%%%%%%%%%%%%%%%%%%%%%%%%%%%%%%%

\section{Wiener chaos expansion}
\label{sec:Wiener_chaos} 

%%%%%%%%%%%%%%%%%%%%%%%%%%%%%%%%%%%%%%%%%%%%%%%%%%%%%%%%%%%%%%%%%%%%%%%%%%%%%%%%

Recall that we have introduced the Hilbert space 
\begin{equation}
 \cH = L^2(\R, \mu(\6x))\;, 
\end{equation} 
which consists in random variables of the form $f(X)$ which have a finite 
variance, with $X\sim\cN(0,1)$. We begin with a preparatory lemma, 
which is a special case of~\cite[Lemma 1.1.1]{nualart2006malliavin}.

\begin{lemma}
\label{lem:Wiener_chaos} 
The random variables $\setsuch{\e^{tX}}{t\in\R}$ form a total subset of $\cH$.  
\end{lemma}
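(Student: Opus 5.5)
To prove that $\setsuch{\e^{tX}}{t\in\R}$ is total in $\cH$, I would show that its orthogonal complement is trivial. So let $g\in\cH$ with $\pscal{g}{\e^{tX}} = \expec{g(X)\e^{tX}} = 0$ for all $t\in\R$; the goal is to show that $g=0$ $\mu$-almost everywhere. Note first that each $\e^{tX}$ lies in $\cH$, since $\expec{\e^{2tX}} = \e^{2t^2}<\infty$ by~\eqref{eq:exp_moment_N}.

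The key step is to pass from real $t$ to complex arguments. Consider
\begin{equation}
 F(z) = \int_{-\infty}^\infty g(x)\e^{zx}\,\mu(\6x)\;, \qquad z\in\C\;.
\end{equation}
By Cauchy--Schwarz, $\int \abs{g(x)}\e^{\abs{z}\abs{x}}\mu(\6x) \leqs \norm{g}\,\expec{\e^{2\abs{z}\abs{X}}}^{1/2}<\infty$, and this bound is locally uniform in $z$. Hence $F$ is an entire function: one can differentiate under the integral, or use Morera's theorem together with Fubini. By assumption, $F$ vanishes on the real axis. By the identity theorem, $F\equiv0$ on $\C$, and in particular $F(\icx s)=0$ for all $s\in\R$.

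Now $F(\icx s)$ is, up to the factor $1/\sqrt{2\pi}$, the Fourier transform of the function $h(x) = g(x)\e^{-x^2/2}$. This function belongs to $L^1(\R,\6x)$, because $\int\abs{g}\e^{-x^2/2}\6x \leqs \sqrt{2\pi}\,\norm{g}$ by Cauchy--Schwarz. Injectivity of the Fourier transform on $L^1$ then gives $h=0$ Lebesgue-almost everywhere, so $g=0$ $\mu$-almost everywhere. Therefore the orthogonal complement of $\setsuch{\e^{tX}}{t\in\R}$ is $\set{0}$, which is equivalent to totality in the Hilbert space $\cH$.

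The main obstacle is justifying analyticity of $F$ and the exchange of integral and derivative. This is handled by the domination above, since $\abs{x}^n\e^{\abs{z}\abs{x}}$ is integrable against $\abs{g}\,\mu$. An alternative route avoids Fourier injectivity: expand $\e^{tx}$ as a power series to get $\expec{g(X)X^n}=0$ for all $n$. One would then still need the density of polynomials in $\cH$, which itself rests on essentially the same Fourier argument. For that reason I would use the Fourier-transform route directly.
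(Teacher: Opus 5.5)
Your proof is correct and follows the same route as the paper. The paper also reduces to showing that any $Z\in\cH$ with $\expec{Z\e^{tX}}=0$ for all $t$ vanishes, considers the signed measure $\nu(B)=\expec{Z\indicator{B}(X)}$, and invokes uniqueness of the Laplace transform as a black box. You instead prove that uniqueness step (for the density $g(x)\e^{-x^2/2}$) by analytic continuation to the imaginary axis and injectivity of the Fourier transform on $L^1$, so your version is more self-contained, and your integrability estimates are sound.
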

\begin{proof}
Let $Z\in\cH$ be such that $\expec{Z\e^{tX}} = 0$ for all $t\in\R$. We have 
to show that $Z = 0$. Define a signed measure $\nu$ by 
\begin{equation}
 \nu(B) = \bigexpec{Z\indicator{B}(X)}
\end{equation} 
for any Borel set $B$ of $\R$. The fact that $\expec{Z\e^{tX}} = 0$
means that the Laplace transform of $\nu$  
is identically zero on $\R$. Therefore, this measure 
is zero, so that $\expec{Z\indicator{B}} = 0$ for any Borel set $B$. 
This implies that $Z$ is indeed equal to $0$. 
\end{proof}

\begin{definition}[Wiener chaos]
For any $n\geqs 1$, we denote by $\cH_n$ the one-dimensional subspace of $\cH$ 
spanned by the random variable $H_n(X)$. For $n = 0$, $\cH_0$ is the set
of constants, which is isomorphic to $\R$. Then $\cH_n$ is called the 
\emph{homogeneous Wiener chaos of order $n$}. 
The \emph{inhomogeneous Wiener chaos of order $n$} is defined as 
\begin{equation}
 \cH_{\leqs n} = \bigoplus_{k=0}^n \cH_k\;.
\end{equation} 
\end{definition}

Note that Proposition~\ref{prop:Hermite_orthogonal} implies that the subspaces 
$\cH_n$ are mutually orthogonal. 

The main theorem is then as follows (this is a particular case of~\cite[Theorem 1.1.1]{nualart2006malliavin}:

\begin{theorem}[Wiener chaos decomposition]
\label{thm:Wiener_chaos} 
The Hilbert space $\cH$ can be decomposed into the infinite orthogonal sum 
of the subspaces $\cH_n$:
\begin{equation}
 \cH = \bigoplus_{n=0}^\infty \cH_n\;.
\end{equation} 
\end{theorem}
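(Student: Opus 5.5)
The plan is to combine the orthogonality already established with a density argument based on Lemma~\ref{lem:Wiener_chaos}. By Proposition~\ref{prop:Hermite_orthogonal}, the subspaces $\cH_n$ are mutually orthogonal closed (one-dimensional) subspaces of $\cH$. Each $H_n$ is a polynomial, and $X$ has moments of all orders by Proposition~\ref{prop:gaussian_moments}, so $H_n(X)\in\cH$. The orthogonal sum $\bigoplus_{n\geqs0}\cH_n$ is therefore well defined as the closure of the linear span of $\set{H_n(X)}_{n\geqs0}$. It remains to show that this closed subspace is all of $\cH$. Equivalently, any $Z\in\cH$ orthogonal to every $H_n(X)$ must vanish.

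So I take $Z\in\cH$ with $\expec{ZH_n(X)}=0$ for all $n\geqs0$. I want to deduce that $\expec{Z\e^{tX}}=0$ for all $t\in\R$, and then Lemma~\ref{lem:Wiener_chaos} gives $Z=0$. The tool is the generating function of Proposition~\ref{prop:Hermite_generating}: $\e^{tX-t^2/2}=\sum_{n\geqs0}\frac{t^n}{n!}H_n(X)$. If this series converges in $\cH$ (not merely pointwise), then continuity of the inner product lets me exchange the expectation and the sum:
\begin{equation}
 \e^{-t^2/2}\expec{Z\e^{tX}} = \sum_{n\geqs0}\frac{t^n}{n!}\expec{ZH_n(X)} = 0\;.
\end{equation}

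The main step is justifying $L^2$ convergence. The partial sums $S_N=\sum_{n\leqs N}\frac{t^n}{n!}H_n(X)$ have mutually orthogonal terms, each with squared norm $\frac{t^{2n}}{(n!)^2}\,n! = \frac{t^{2n}}{n!}$ by Proposition~\ref{prop:Hermite_orthogonal}. Since $\sum_n t^{2n}/n!=\e^{t^2}<\infty$, the sequence $(S_N)$ is Cauchy in $\cH$ and converges in $\cH$ to some limit $S$. A subsequence converges almost surely to $S$. The series also converges pointwise to $G(t,X)$, since $G(t,\cdot)$ is entire in $t$. Hence $S=G(t,X)$ almost surely, which justifies the exchange.

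As a consistency check, $\norm{G(t,X)}^2=\expec{\e^{2tX-t^2}}=\e^{t^2}$, which matches the sum of the squared norms. This is not strictly needed, but it confirms that nothing is lost in the limit. Once the convergence is settled, the conclusion $Z=0$ follows immediately from Lemma~\ref{lem:Wiener_chaos}. Hence the orthogonal complement of $\bigoplus_n\cH_n$ is trivial, and the decomposition holds.
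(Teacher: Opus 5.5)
Your proof is correct and has the same skeleton as the paper's: orthogonality of $Z$ to every $\cH_n$ is upgraded to $\expec{Z\e^{tX}}=0$ for all $t\in\R$, and Lemma~\ref{lem:Wiener_chaos} then forces $Z=0$. The only difference is the middle step. The paper first uses \eqref{eq:xn_Hn} to get $\expec{ZX^n}=0$ and then sums the exponential series; the interchange of sum and expectation is left implicit there, and is justified by $\abs{Z}\e^{\abs{t}\abs{X}}\in L^1$ via Cauchy--Schwarz. You instead sum the Hermite generating function directly in $\cH$, which makes that interchange rigorous through the explicit norms $t^{2n}/n!$ and avoids the inverse relation altogether.
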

\begin{proof}
Let $Z\in\cH$ be orthogonal to $\cH_n$ for all $n\geqs0$. This means that 
\begin{equation}
 \expec{Z H_n(X)} = 0
 \qquad \text{for all $n\geqs0$\;.}
\end{equation} 
By~\eqref{eq:xn_Hn}, $X^n$ can be expressed as a linear 
combination of $H_k(X)$, with $0\leqs k\leqs n$. Therefore, we also have 
\begin{equation}
 \expec{Z X^n} = 0
 \qquad \text{for all $n\geqs0$\;.}
\end{equation} 
This implies that $\expec{Z\e^{tX}} = 0$ for all $t\in\R$. By 
Lemma~\ref{lem:Wiener_chaos}, this means that $Z = 0$, which completes 
the proof. 
\end{proof}

In a sense, this theorem is quite remarkable, because $f$ being in 
$L^2(\R,\mu(\6x))$ is a rather weak requirement. In particular, $f$ 
needs not be continuous, nor does it need to be bounded. We started this
chapter by remarking that $\expec{f(X)}$ can be computed if $f$ admits 
a power series expansion, with strictly positive radius of convergence, 
see~\eqref{eq:expec_series}. Theorem~\ref{thm:Wiener_chaos} shows that 
this assumption of $f$ is not at all necessary.

\begin{remark}
If $f\in\cH$, we have 
\begin{equation}
\label{eq:f_Wiener_decomp} 
 f(X) = \sum_{n=0}^\infty \expec{f(X)H_n(X)} H_n(X)\;.
\end{equation} 
Taking expectations shows that 
\begin{equation}
 \expec{f(X)} = \expec{f(X)H_0(X)}\;,
\end{equation} 
which does not yield any new information. However, in applications $f(X)$ often 
admits a more or less explicit decomposition of the 
form~\eqref{eq:f_Wiener_decomp}, which makes it possible to compute
expectations. 
\end{remark}

\begin{exercise}
Consider the function 
\begin{equation}
 f(X) = \e^{-\lambda H_4(X)}
\end{equation} 
where $\lambda\in[0,\infty)$. Our aim is to obtain an asymptotic expansion 
\begin{equation}
 \expec{f(X)} \asymp \sum_{n\geqs 0} a_n \lambda^n\;,
\end{equation} 
which means that 
\begin{equation}
 \expec{f(X)} = \sum_{n = 0}^N a_n \lambda^n + \Order{\lambda^{N+1}}
\end{equation} 
for any $N\geqs1$. 
\begin{itemize}
\item   Compute explicitly $a_0$, $a_1$, $a_2$ and $a_3$.
\item   Give a combinatorial interpretation of $a_n$ for any $n$
(in terms of certain types of graphs).
\item   Find an upper bound $b_n$ for $\abs{a_n}$. What is the 
radius of convergence of the series $\sum_n b_n \lambda^n$?
\end{itemize}
\end{exercise}

\begin{exercise}
Consider the function 
\begin{equation}
 f(X) = \e^{-\lambda H_2(X)}
\end{equation} 
where $\lambda\in[0,\infty)$. 
\begin{itemize}
\item   Compute $\expec{f(X)}$ explicitly.
\item   What is the radius of convergence of the expansion of $\expec{f(X)}$
into powers of $\lambda$?
\item   Give a combinatorial interpretation of the $n$th coefficient 
of this series. Check this for the first values of $n$. 
\end{itemize}
\end{exercise}

%%%%%%%%%%%%%%%%%%%%%%%%%%%%%%%%%%%%%%%%%%%%%%%%%%%%%%%%%%%%%%%%%%%%%%%%%%%%%%%%

\chapter{The multi-dimensional case}
\label{chap:nd} 

In this chapter, we extend results from the previous chapter to $N$-dimensional
Gaussian random variables with $N$ finite. 

%%%%%%%%%%%%%%%%%%%%%%%%%%%%%%%%%%%%%%%%%%%%%%%%%%%%%%%%%%%%%%%%%%%%%%%%%%%%%%%%

\section{Wick calculus}
\label{sec:Wick_calculus}

%%%%%%%%%%%%%%%%%%%%%%%%%%%%%%%%%%%%%%%%%%%%%%%%%%%%%%%%%%%%%%%%%%%%%%%%%%%%%%%%

\subsection{Multivariate Gaussian random variables}
\label{ssec:Gaussian_multivariate} 

We start by a quick recapitulation of basic properties of $N$-dimensional
Gaussian random variables. 

\begin{definition}[Multivariate Gaussian]
For $N\geqs1$, let $\R^N$ be equipped with the $\sigma$-algebra $\cB$ of Borel
sets and Lebesgue measure $\6x$. Let $m\in\R^N$ and let $C\in\R^{N\times N}$ be 
a symmetric, positive definite matrix. 
A random variable $X:\R^N\to\R$ is a (multivariate) Gaussian random variable 
with mean $m$ and covariance matrix $C$ if its law is 
\begin{equation}
\label{eq:Gaussian_ndim} 
 \mu(\6x) = 
 \frac{1}{(2\pi)^{N/2}\det(C)^{1/2}} \e^{-\pscal{(x-m)}{C^{-1}(x-m)}/2} \6x\;.
\end{equation} 
In that case, we write $X \sim \cN(m, C)$. 
\end{definition}

The following result generalises~\eqref{eq:exp_moment_N}
to the multivariate case. 

\begin{proposition}[Laplace transform]
\label{prop:Laplace_Gaussian_ndim} 
Let $C$ be symmetric, positive definite. 
Then $X \sim \cN(0,C)$ if, and only if, for any $t\in\R^n$, one has 
\begin{equation}
\label{eq:Laplace_Gaussian_ndim} 
 \expec{\e^{\pscal{t}{X}}} = \e^{\pscal{t}{Ct}/2}\;.
\end{equation} 
\end{proposition}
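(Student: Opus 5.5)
The plan has two directions. For the forward implication, assume $X\sim\cN(0,C)$ and compute $\expec{\e^{\pscal{t}{X}}}$ directly from the density~\eqref{eq:Gaussian_ndim} with $m=0$, completing the square exactly as in Example~\ref{example:Laplace}. Since $C$ is symmetric positive definite, $C^{-1}$ exists and is symmetric. This gives the identity
\begin{equation}
 \pscal{t}{x} - \frac12\pscal{x}{C^{-1}x}
 = -\frac12\pscal{(x-Ct)}{C^{-1}(x-Ct)} + \frac12\pscal{t}{Ct}\;,
\end{equation}
which one checks by expanding the right-hand side and using $C^{-1}C=\id$ together with the symmetry of $C$. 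The factor $\e^{\pscal{t}{Ct}/2}$ then comes out of the integral. The remaining integral is the total mass of the density of $\cN(Ct,C)$, which equals $1$, for instance by the change of variables $x\mapsto x+Ct$.

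To see that the normalisation in~\eqref{eq:Gaussian_ndim} is correct, diagonalise $C=O^TDO$ with $O$ orthogonal and $D$ diagonal with positive entries. Then substitute $x = O^TD^{1/2}y$. The Jacobian equals $\det(C)^{1/2}$, and the integral factorises into $N$ one-dimensional standard Gaussian integrals. An alternative route for the whole forward direction is to write $X = C^{1/2}Y$ with $Y$ having independent $\cN(0,1)$ coordinates. Then $\pscal{t}{X}=\pscal{C^{1/2}t}{Y}$, and applying~\eqref{eq:exp_moment_N} coordinatewise gives $\e^{\abs{C^{1/2}t}^2/2}=\e^{\pscal{t}{Ct}/2}$.

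For the converse, assume~\eqref{eq:Laplace_Gaussian_ndim} holds for all $t\in\R^N$. The forward direction shows that a random variable $\tilde X\sim\cN(0,C)$ has the same Laplace transform, finite on all of $\R^N$. It remains to show that two probability measures on $\R^N$ whose Laplace transforms agree and are finite on all of $\R^N$ (or merely on a neighbourhood of $0$) are equal.

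This uniqueness statement is the main obstacle, and it is the same fact already used in the proof of Lemma~\ref{lem:Wiener_chaos}. I would handle it by analytic continuation. Finiteness of $\expec{\e^{\pscal{t}{X}}}$ for all real $t$ makes $z\mapsto\expec{\e^{\pscal{z}{X}}}$ entire in each complex coordinate $z_j$. Dominated convergence applies because $\abs{\e^{\pscal{z}{X}}}=\e^{\pscal{\Re z}{X}}$. Two such functions agreeing on $\R^N$ therefore agree on $\C^N$, by applying the identity theorem one variable at a time. In particular they agree at $z=\icx s$, so the characteristic functions coincide. The standard uniqueness theorem for characteristic functions (Fourier inversion) then gives equality of the laws, that is $X\sim\cN(0,C)$.
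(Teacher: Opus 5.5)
Your proof is correct. It differs from the paper's in two respects.

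For the forward direction, the paper diagonalises $C=U\Lambda U^\top$ and observes that $Y=U^\top X$ has independent $\cN(0,\lambda_i)$ coordinates. It then applies the one-dimensional formula~\eqref{eq:exp_moment_N} coordinatewise and rotates back with $s=U^\top t$. Your main route instead completes the square directly in $\R^N$, using diagonalisation only to check the normalisation. Your alternative via $X=C^{1/2}Y$ is essentially the paper's argument. The direct computation is coordinate-free, while the paper's route makes the independent-coordinates structure explicit and reuses the one-dimensional result.

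The more substantial difference is the converse. The paper disposes of it in one sentence, remarking that setting $s_i=0$ recovers the transforms of the individual $Y_i$. Turning that into a proof tacitly requires that a law on $\R^N$ is determined by its Laplace transform. This is needed both to identify each marginal and to deduce independence from the factorised joint transform. That is exactly the uniqueness statement you isolate and prove, via analytic continuation to the characteristic function and Fourier inversion. So your converse is the more complete of the two.

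Two small points to tighten:
\begin{itemize}
\item Holomorphy via dominated convergence needs an integrable bound uniform over compact sets of real parts. For instance, $\e^{\pscal{a}{X}}\le\sum_{\epsilon\in\{-1,1\}^N}\e^{R\pscal{\epsilon}{X}}$ whenever $\abs{a_j}\le R$ for all $j$, and the right-hand side is integrable by hypothesis.
\item Your parenthetical about finiteness only near $0$ would need the tube-domain version of the analyticity argument rather than entire functions. You do not need it here.
\end{itemize}
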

\begin{proof}
We proceed by diagonalisation. 
Since $C$ is symmetric, positive definite, there exists an orthogonal 
matrix $U$ such that $U^\top C U = \Lambda$ is diagonal, with real, strictly 
positive diagonal elements $\lambda_1, \dots, \lambda_N$. Since $\det(\Lambda) 
= \det(C)\det(UU^\top) = \det(C)$, the fact that $X \sim \cN(0,C)$ is equivalent, by 
the change of variables formula (or transfer theorem), to $Y$ having law 
\begin{equation}
 \hat\mu(\6y) 
 = \frac{1}{(2\pi)^{N/2}\det(\Lambda)^{1/2}} \e^{-\pscal{U^\top y}{C^{-1}U^\top Y}/2} \6y
 = \frac{1}{(2\pi)^{N/2}\det(\Lambda)^{1/2}} \e^{-\pscal{y}{\Lambda^{-1}y}/2} \6y
 = \prod_{i=1}^N \frac{\e^{-\lambda_iy_i^2/2}}{\sqrt{2\pi\lambda_i}}\;.
\end{equation} 
This means that the components $Y_i$ of $Y$ are independent, with 
$Y_i\sim\cN(0,\lambda_i)$. By~\eqref{eq:exp_moment_N}, this implies that 
for any $s\in\R^N$, 
\begin{equation}
 \expec{\e^{\pscal{s}{Y}}} 
 = \prod_{i=1}^N \expec{\e^{\pscal{s_i}{Y_i}}}
 = \prod_{i=1}^N \e^{\lambda_i s_i^2/2}
 = \expec{\e^{\pscal{s}{\Lambda s}}}\;.
\end{equation} 
The converse is actually true, because $\expec{\e^{\pscal{s_i}{Y_i}}} = 1$
if $s_i = 0$. Setting $s = tU^\top$, this is equivalent to 
\begin{equation}
 \expec{\e^{\pscal{t}{X}}}
 = \expec{\e^{\pscal{t}{UY}}}
 = \expec{\e^{\pscal{U^\top t}{Y}}}
 = \e^{\pscal{U^\top t}{\Lambda U^\top t}/2}
 = \e^{\pscal{t}{U\Lambda U^\top t}/2}
 = \e^{\pscal{t}{Ct}/2}\;,
\end{equation} 
which completes the proof. 
\end{proof}

One consequence of this result is that one can easily justify the name 
\lq\lq covariance matrix\rq\rq\ (though there are other ways to verify this).

\begin{corollary}
If  $X \sim \cN(0,C)$, then $\expec{X_iX_j} = C_{ij}$ for all 
$i,j\in\intset{1,N}$. 
\end{corollary}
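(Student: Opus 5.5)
The plan is to read off the second moments from the Laplace transform in Proposition~\ref{prop:Laplace_Gaussian_ndim}, by differentiating twice at $t = 0$. First I establish that $t\mapsto\expec{\e^{\pscal{t}{X}}}$ is finite and smooth on all of $\R^N$, and that derivatives may be taken under the expectation. Finiteness is the identity~\eqref{eq:Laplace_Gaussian_ndim} itself. For differentiation under the integral, I would dominate $\abs{X_iX_j}\e^{\pscal{t}{X}}$ uniformly for $t$ in a small ball around $0$. A convenient bound is
\begin{equation}
 \abs{X_iX_j}\e^{\pscal{t}{X}} \leqs \abs{X_iX_j}\prod_{k=1}^N\bigpar{\e^{X_k}+\e^{-X_k}}
 \qquad\text{for } \abs{t_k}\leqs 1\;.
\end{equation}
The right-hand side is integrable. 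Indeed, $\abs{X_iX_j}$ is bounded by a multiple of $\e^{\abs{X_i}}\e^{\abs{X_j}}$, and expanding the resulting products of exponentials gives finitely many terms of the form $\e^{\pscal{s}{X}}$ with $s\in\set{-2,\dots,2}^N$. Each of these has finite expectation by~\eqref{eq:Laplace_Gaussian_ndim}. This justifies
\begin{equation}
 \frac{\partial^2}{\partial t_i\partial t_j}\expec{\e^{\pscal{t}{X}}}\Bigr\vert_{t=0} = \expec{X_iX_j}\;.
\end{equation}

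Next I compute the same derivative on the right-hand side $\e^{\pscal{t}{Ct}/2}$. Since $C$ is symmetric, $\partial_{t_j}\bigpar{\pscal{t}{Ct}/2} = (Ct)_j$. Hence
\begin{equation}
 \partial_{t_i}\partial_{t_j}\e^{\pscal{t}{Ct}/2} = \bigpar{C_{ij} + (Ct)_i(Ct)_j}\e^{\pscal{t}{Ct}/2}\;,
\end{equation}
which equals $C_{ij}$ at $t=0$. Comparing the two expressions yields $\expec{X_iX_j}=C_{ij}$.

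An alternative avoids differentiation under the integral altogether. Both sides are entire in $t$: the left side is a power series in $t$ whose coefficients are mixed moments, by the same exponential domination. So one can instead compare the degree-two parts of the two Taylor expansions. The coefficient of $t_it_j$ is $\expec{X_iX_j}$ for $i\neq j$ (respectively $\tfrac12\expec{X_i^2}$ for $i=j$) on the left, and $C_{ij}$ (respectively $\tfrac12 C_{ii}$) on the right, by symmetry of $C$.

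The only genuine obstacle is the justification of interchanging derivative and expectation, and the domination above handles it. Everything else is a direct computation.
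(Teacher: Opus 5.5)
Your proof is correct and follows the paper's route: the paper likewise differentiates~\eqref{eq:Laplace_Gaussian_ndim} with respect to $t_i$ and $t_j$ and evaluates at $t=0$, and your domination argument supplies the justification for exchanging derivative and expectation that the paper leaves implicit. One small slip: when $i=j$ the expanded terms have exponents $s\in\set{-3,\dots,3}^N$ rather than $\set{-2,\dots,2}^N$. This changes nothing, since every $\e^{\pscal{s}{X}}$ is integrable by~\eqref{eq:Laplace_Gaussian_ndim}.
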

\begin{proof}
This follows by taking the derivative of~\eqref{eq:Laplace_Gaussian_ndim} 
with respect to $t_i$ and $t_j$, and evaluating at $t = 0$. 
\end{proof}

%%%%%%%%%%%%%%%%%%%%%%%%%%%%%%%%%%%%%%%%%%%%%%%%%%%%%%%%%%%%%%%%%%%%%%%%%%%%%%%%

\subsection{Isserlis' theorem}
\label{ssec:Isserlis} 

Isserlis' theorem (also known as Wick's theorem in physics) is a generalisation 
of the expression~\eqref{eq:moment_Gaussian} for the moments of a Gaussian 
random variable to the multivariate case. We first show a preparatory lemma,
which is a simple instance of the Schwinger--Dyson equations in 
quantum field theory. 

\begin{lemma}[Integration by parts]
\label{lem:ibp} 
Assume $X \sim \cN(0,C)$. 
For any $i\in\intset{1,N}$, the equality 
\begin{equation}
\label{eq:ibp} 
 \bigexpec{X_i f(X)} = \sum_{j=1}^N C_{ij} \bigexpec{\partial_j f(X)}
\end{equation} 
holds for all differentiable $f:\R^N\to\R$ such that both sides of the equality are 
well-defined. 
\end{lemma}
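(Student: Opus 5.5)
The plan is to reduce the identity to a single Gaussian integration by parts in $\R^N$, using the explicit density~\eqref{eq:Gaussian_ndim} with $m=0$. Write $\mu(\6x) = Z^{-1}\e^{-\pscal{x}{C^{-1}x}/2}\6x$ with $Z = (2\pi)^{N/2}\det(C)^{1/2}$. The key observation is that the gradient of the density brings down a linear factor:
\begin{equation}
 \partial_k \e^{-\pscal{x}{C^{-1}x}/2} = -(C^{-1}x)_k\, \e^{-\pscal{x}{C^{-1}x}/2}\;,
\end{equation}
using symmetry of $C$. Inverting $C^{-1}$ then gives $x_i = \sum_j C_{ij} (C^{-1}x)_j$. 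Thus
\begin{equation}
 x_i\,\e^{-\pscal{x}{C^{-1}x}/2} = -\sum_{j=1}^N C_{ij}\,\partial_j \e^{-\pscal{x}{C^{-1}x}/2}\;.
\end{equation}

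Multiply this identity by $f(x)/Z$ and integrate over $\R^N$. For each $j$, integrate by parts in the variable $x_j$, with the other coordinates held fixed, using Fubini. This moves the derivative onto $f$ and flips the sign, giving $\sum_j C_{ij}\int \partial_j f\,\6\mu$, which is the right-hand side of~\eqref{eq:ibp}. This mirrors the one-dimensional computation in the proof of Lemma~\ref{lem:aadagger}, where $a^\dagger$ was written as $-\e^{x^2/2}\frac{\6}{\6x}\e^{-x^2/2}$. Indeed, for $N=1$ and $C=1$ the statement reads $\expec{Xf(X)}=\expec{f'(X)}$, i.e.\ $\pscal{a^\dagger 1}{f} = \pscal{1}{af}$.

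The main obstacle is justifying that the boundary terms vanish under the weak hypothesis that both sides are well defined. My first attempt would be to prove the identity for $f\in\cC^1$ with compact support, where the boundary terms are trivially zero. I would then extend by approximation: take $f_R = f\chi(\cdot/R)$ with a smooth cutoff $\chi$, and pass to the limit by dominated convergence. The error term involves $R^{-1}\int |f|\,|\nabla\chi(\cdot/R)|\,\6\mu$, which needs $f\in L^1(\mu)$; this follows if, for instance, $f$ has at most polynomial growth. An alternative that avoids cutoffs is to work one coordinate at a time. For almost every choice of the other variables, the one-dimensional boundary term $f(x)\e^{-\pscal{x}{C^{-1}x}/2}$ at $x_j\to\pm\infty$ must have a limit, because both integrals converge. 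That limit must be zero, since otherwise integrability of $x_i f$ against the Gaussian weight would fail.

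An alternative route, if one prefers to stay close to the earlier results, is to diagonalise $C = U\Lambda U^\top$ as in the proof of Proposition~\ref{prop:Laplace_Gaussian_ndim}. One writes $X = UY$ with independent $Y_k\sim\cN(0,\lambda_k)$ and applies the scalar identity $\expec{Y_k g(Y)} = \lambda_k\expec{\partial_k g(Y)}$ to $g = f\circ U$. The chain rule then gives $\expec{X_i f(X)} = \sum_k U_{ik}\lambda_k \sum_j U_{jk}\expec{\partial_j f(X)} = \sum_j C_{ij}\expec{\partial_j f(X)}$. I would likely present the direct computation and mention this as a check.
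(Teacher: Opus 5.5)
Your proposal is correct and takes essentially the paper's route: the paper also combines the Leibniz rule for $\partial_j\bigl(f(x)\e^{-\pscal{x}{C^{-1}x}/2}\bigr)$ with the identity $\frac12\sum_j C_{ij}\partial_j\pscal{x}{C^{-1}x}=x_i$, integrates, and simply asserts that the boundary terms vanish (so your boundary-term discussion and the diagonalisation check go beyond the paper). One small caution about your cutoff-free variant: integrating coordinate by coordinate splits $x_if$ into the pieces $C_{ij}(C^{-1}x)_jf$, whose individual integrability is not among the hypotheses, so it is cleaner to integrate by parts along lines in the single direction $Ce_i$, on which $x_i\e^{-\pscal{x}{C^{-1}x}/2}$ is exactly minus the derivative of the weight and $x_i$ grows linearly because $C_{ii}>0$.
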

\begin{proof}
Leibniz' rule yields 
\begin{equation}
 \dpar{}{x_j} \Bigpar{f(x)\e^{-\pscal{x}{C^{-1}x}/2}}
 = \biggbrak{\dpar{f}{x_j}(x)
 -\frac12 f(x)\dpar{}{x_j} \pscal{x}{C^{-1}x} }
 \e^{-\pscal{x}{C^{-1}x}/2} \;.
\end{equation} 
Since we have 
\begin{equation}
 \frac12 \sum_{j=1}^N C_{ij} \dpar{}{x_j} \pscal{x}{C^{-1}x} 
 = \sum_{j,k=1}^N C_{ij}C^{-1}_{jk} x_k 
 = \sum_{k=1}^N \delta_{ik} x_k = x_i\;,
\end{equation} 
it follows that 
\begin{equation}
 x_i f(x)\e^{-\pscal{x}{C^{-1}x}/2} 
 = \sum_{j=1}^N C_{ij} 
 \biggbrak{-\dpar{}{x_j} \Bigpar{f(x)\e^{-\pscal{x}{C^{-1}x}/2}} + 
 \dpar{f}{x_j} \e^{-\pscal{x}{C^{-1}x}/2}}\;.  
\end{equation} 
Integrating over the whole space, we see that the boundary terms vanish, proving~\eqref{eq:ibp}.
\end{proof}

An immediate consequence of this result is the following theorem, due to 
Leon Isserlis. 

\begin{theorem}[Isserlis]
\label{thm:Isserlis} 
 For any $1\leqs k\leqs \frac{N}2$, we have 
 \begin{align}
  \bigexpec{X_1\dots X_{2k}} &= 
  \sum_{\cP}
  \;\prod_{\set{i,j}\in\cP} \bigexpec{X_iX_j}\;, \\
  \bigexpec{X_1\dots X_{2k-1}} &= 0\;,
 \end{align} 
where the sum runs over all \emph{perfect matchings} $\cP$ 
of $\intset{1,2k}$, meaning that each $\cP$ is a partition 
$\cP=\set{\set{i_1,j_1},\dots,\set{i_k,j_k}}$ of this set into disjoint subsets 
of two elements.
\end{theorem}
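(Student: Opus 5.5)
The plan is to prove both identities at once by induction on the number $m$ of factors, using Lemma~\ref{lem:ibp} with $i=1$ and $f(x)=x_2x_3\cdots x_m$. This $f$ is a monomial, so both sides of~\eqref{eq:ibp} are finite, because Gaussian variables have moments of all orders. Its partial derivatives are $\partial_j f(x)=\prod_{\ell\in\intset{2,m}\setminus\set{j}} x_\ell$ for $j\in\intset{2,m}$, and $\partial_1 f = 0$. The case $j=1$ is the only place where we use that the indices $1,\dots,m$ are distinct. In general one should index by labels, allowing the same coordinate of $X$ to appear several times, and then replace $X_1,\dots,X_m$ by arbitrary coordinates $X_{a_1},\dots,X_{a_m}$. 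To keep the induction clean, I will prove the more general statement for $\expec{X_{a_1}\cdots X_{a_m}}$ with arbitrary labels $a_\ell\in\intset{1,N}$, applying~\eqref{eq:ibp} with $i=a_1$.

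With this $f$, Lemma~\ref{lem:ibp} gives
\begin{equation}
 \bigexpec{X_{a_1}X_{a_2}\cdots X_{a_m}}
 = \sum_{\ell=2}^m \sum_{j=1}^N C_{a_1 j}\,\bigexpec{\partial_j \textstyle\prod_{r\neq 1} x_{a_r}}\Big|_{x=X}
 = \sum_{\ell=2}^m C_{a_1a_\ell}\,\bigexpec{\textstyle\prod_{r\notin\set{1,\ell}} X_{a_r}}\;.
\end{equation}
Here the product rule produces one term for each position $\ell$ with $a_\ell=j$, and summing over $j$ collapses the double sum. By the Corollary following Proposition~\ref{prop:Laplace_Gaussian_ndim}, $C_{a_1a_\ell}=\expec{X_{a_1}X_{a_\ell}}$.

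For the odd case, the base case $m=1$ is $\expec{X_{a_1}}=0$, which holds by~\eqref{eq:ibp} with $f\equiv1$ or by symmetry. The recursion then expresses an odd moment of order $m$ through odd moments of order $m-2$, so all odd moments vanish. For the even case, the base case $m=2$ is the Corollary itself. For the inductive step, I need the combinatorial bijection: a perfect matching of $\intset{1,2k}$ is the same as a choice of partner $\ell$ for the element $1$ together with a perfect matching of the remaining $2k-2$ elements. This is exactly the decomposition already used to count $(2k-1)!!$ in~\eqref{eq:double_factorial}. Inserting the induction hypothesis into the recursion therefore gives $\sum_\ell \expec{X_1X_\ell}\sum_{\cP'}\prod_{\cP'}$, which is the sum over all perfect matchings $\cP$.

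The analysis here is routine. The only points needing care are the bookkeeping of repeated indices, which I handle by working with labels, and the justification that the boundary terms in Lemma~\ref{lem:ibp} vanish for polynomial $f$. The latter follows from the Gaussian decay of the density. The main step is the matching bijection, and I consider it straightforward.
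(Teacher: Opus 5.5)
Your proof is correct and follows the paper's argument: induction on the number of factors, applying Lemma~\ref{lem:ibp} with $i=1$ and $f(x)=x_2\cdots x_m$. Your labelled version $X_{a_1}\cdots X_{a_m}$ just makes explicit the relabelling that the paper's one-line proof leaves implicit. There is one notational slip: the outer sum $\sum_{\ell=2}^m$ in the middle expression of your display should be dropped, since as written it multiplies everything by $m-1$; your final expression is correct.
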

\begin{proof}
The proof proceeds by induction on the number of factors, 
applying~\eqref{eq:ibp} with $i=1$ and $f(X)$ of the form $X_2\dots X_m$. 
\end{proof}

\begin{example}
In the case $2k=4$, we obtain 
\begin{equation}
 \bigexpec{X_1X_2X_3X_4} 
 = \bigexpec{X_1X_2}\bigexpec{X_3X_4} + \bigexpec{X_1X_3}\bigexpec{X_2X_4} + 
\bigexpec{X_1X_4}\bigexpec{X_2X_3}\;.
\label{eq:Isserlis_X1X2X3X4} 
\end{equation} 
A convenient graphical way of representing this relation is the following:
\begin{equation}
 \bigexpec{X_1X_2X_3X_4} 
 =
\raisebox{-22pt}{\tikz{
 \path[use as bounding box] (-0.25, -1.25) rectangle (1.25,0.25);
 \node[mynode,label=above:{\scriptsize 1}] (1) at (0,0) {};
 \node[mynode,label=below:{\scriptsize 2}] (2) at (0,-0.75) {};
 \node[mynode,label=above:{\scriptsize 3}] (3) at (0.75,0) {};
 \node[mynode,label=below:{\scriptsize 4}] (4) at (0.75,-0.75) {};
 \draw[thick] (1) -- (2);  
 \draw[thick] (3) -- (4);  
 \drawbox;
 }}
 +
\raisebox{-22pt}{\tikz{
 \path[use as bounding box] (-0.25, -1.25) rectangle (1.25,0.25);
 \node[mynode,label=above:{\scriptsize 1}] (1) at (0,0) {};
 \node[mynode,label=below:{\scriptsize 2}] (2) at (0,-0.75) {};
 \node[mynode,label=above:{\scriptsize 3}] (3) at (0.75,0) {};
 \node[mynode,label=below:{\scriptsize 4}] (4) at (0.75,-0.75) {};
 \draw[thick] (1) -- (3);  
 \draw[thick] (2) -- (4);  
 \drawbox;
 }}
+
\raisebox{-22pt}{\tikz{
 \path[use as bounding box] (-0.25, -1.25) rectangle (1.25,0.25);
 \node[mynode,label=above:{\scriptsize 1}] (1) at (0,0) {};
 \node[mynode,label=below:{\scriptsize 2}] (2) at (0,-0.75) {};
 \node[mynode,label=above:{\scriptsize 3}] (3) at (0.75,0) {};
 \node[mynode,label=below:{\scriptsize 4}] (4) at (0.75,-0.75) {};
 \draw[thick] (1) -- (4);  
 \draw[thick] (3) -- (2);  
 \drawbox;
 }}
\end{equation}
\end{example}

\begin{remark}
Theorem~\ref{thm:Isserlis} remains true if the covariance matrix $C$ 
is only semi-definite positive. We may thus allow for the case 
$X = (X_1,X_1,\dots,X_1)$, in which case it yields 
\begin{equation}
 \expec{X_1^{2k}} = (2k-1)!!
\end{equation} 
since this is the number of perfect matchings of $\intset{1,2k}$.
We thus recover Proposition~\ref{prop:gaussian_moments}. 
\end{remark}

\begin{exercise}
Provide an alternative proof of Isserlis' theorem, using the expression~\eqref{eq:Laplace_Gaussian_ndim} of the Laplace transform 
of $X = (X_1,\dots,X_n)$. 
\end{exercise}

%%%%%%%%%%%%%%%%%%%%%%%%%%%%%%%%%%%%%%%%%%%%%%%%%%%%%%%%%%%%%%%%%%%%%%%%%%%%%%%%

\section{Hermite polynomials for multivariate Gaussians}
\label{sec:Hermite_ndim} 

In this section, we derive some additional properties of Hermite polynomials, 
when they are evaluated in linear combinations of Gaussian random variables.

%%%%%%%%%%%%%%%%%%%%%%%%%%%%%%%%%%%%%%%%%%%%%%%%%%%%%%%%%%%%%%%%%%%%%%%%%%%%%%%%

\subsection{Scaling}
\label{ssec:Hermite_scaling} 

In Chapter~\ref{chap:1d}, we defined Hermite polynomials for centred 
Gaussian random variables of variance $1$. While centering is never 
a problem, it will often be useful to allow for general variances. 
This can be achieved by a simple scaling.

\begin{definition}[Scaled Hermite polynomials]
The \emph{Hermite polynomial of degree $n$ with variance $\sigma^2$} 
is defined as 
\begin{equation}
\label{eq:Hermite_scaling} 
 H_n(x;\sigma^2) = \sigma^n H_n(x/\sigma)\;.
\end{equation} 
\end{definition}

Table~\ref{tab:Hermite_scaled} shows the expressions of the first six
scaled Hermite polynomials. 

\begin{table}
\begin{center}
\begin{tabular}{|r|l|}
\hline 
$n$ & $H_n(x;\sigma^2)$ \\
\hline 
$0$ & $1$ \\
$1$ & $x$ \\
$2$ & $x^2 - \sigma^2$ \\
$3$ & $x^3 - 3\sigma^2x$ \\
$4$ & $x^4 - 6\sigma^2x^2 + 3\sigma^4$ \\
$5$ & $x^5 - 10\sigma^2x^3 + 15\sigma^4x$ \\
\hline
\end{tabular}
\end{center}
\caption[]{List of the first Hermite polynomials of variance $\sigma^2$.}
\label{tab:Hermite_scaled} 
\end{table}

One easily checks the following properties:
\begin{itemize}
\item   scaled Hermite polynomials admit the generating function 
\begin{equation}
 G(t,x;\sigma^2) = \e^{tx - \sigma^2t^2/2}\;;
\end{equation} 
\item   scaled Hermite polynomials satisfy the same 
orthogonality~\eqref{eq:Hermite_orthogonal} as unscaled ones;
\item   scaled Hermite polynomials satisfy the recursive relations 
\begin{align}
 H_{n+1}(x;\sigma^2) &= x H_n(x;\sigma^2) - \sigma^2 \partial_x H_n(x;\sigma^2)\;, \\
 \partial_x H_n(x;\sigma^2) &= n H_{n-1}(x;\sigma^2)\;;
\end{align}
\item   scaled Hermite polynomials admit the explicit expression
\begin{equation}
\label{eq:scaled_Wick} 
 H_n(x;\sigma) = n!\sum_{k=0}^{\lfloor n/2 \rfloor}
 \frac{(-1)^k}{2^kk!(n-2k)!}\sigma^{2k}x^{n-2k}\;.
\end{equation} 
\end{itemize}

%%%%%%%%%%%%%%%%%%%%%%%%%%%%%%%%%%%%%%%%%%%%%%%%%%%%%%%%%%%%%%%%%%%%%%%%%%%%%%%%

\subsection{Binomial formula}
\label{ssec:Hermite_binomial} 

The following binomial formula allows to relate Hermite polynomials with 
different variances. 

\begin{lemma}[Binomial formula for Hermite polynomials]
\label{lem:Hermite_binomial} 
For any $x, y\in\R$, any $\sigma_1, \sigma_2 > 0$ 
and any $n\in\N_0$, one has 
\begin{equation}
 H_n(x + y;\sigma_1^2 + \sigma_2^2)
 = \sum_{m=0}^n \binom{n}{m} 
 H_m(x;\sigma_1^2) H_{n-m}(y;\sigma_2^2)\;.
\end{equation} 
\end{lemma}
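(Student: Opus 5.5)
The natural route is through the generating function of the scaled Hermite polynomials, $G(t,x;\sigma^2)=\e^{tx-\sigma^2t^2/2}=\sum_{n\geqs0}\frac{t^n}{n!}H_n(x;\sigma^2)$. This follows from Proposition~\ref{prop:Hermite_generating} and the scaling~\eqref{eq:Hermite_scaling}: replace $x$ by $x/\sigma$ and $t$ by $\sigma t$. The key observation is that the exponent is additive in both the argument and the variance:
\begin{equation}
 G(t,x+y;\sigma_1^2+\sigma_2^2)
 = \e^{t(x+y)-(\sigma_1^2+\sigma_2^2)t^2/2}
 = G(t,x;\sigma_1^2)\,G(t,y;\sigma_2^2)\;.
\end{equation}

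The steps are as follows. First, I record the scaled generating function identity just mentioned. Next, I write the product factorisation above. Then I expand both factors on the right as power series in $t$. Since both series converge absolutely for all $t$ (they are entire functions), the Cauchy product formula applies. The coefficient of $t^n$ on the right is therefore
\begin{equation}
 \sum_{m=0}^n \frac{H_m(x;\sigma_1^2)}{m!}\,\frac{H_{n-m}(y;\sigma_2^2)}{(n-m)!}\;.
\end{equation}
Equivalently, by Theorem~\ref{thm:convolution} this is $\frac{1}{n!}$ times the binomial convolution. Finally, I compare with the coefficient $\frac{1}{n!}H_n(x+y;\sigma_1^2+\sigma_2^2)$ on the left and use uniqueness of power series coefficients; multiplying by $n!$ gives the claimed binomial formula.

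There is no real obstacle here. The only point needing a little care is justifying the scaled generating function and the term-by-term multiplication. Both hold because the series are entire in $t$ for fixed $x,y$, so absolute convergence is automatic. As an alternative check, one could induct on $n$ using the scaled recursion $H_{n+1}(x;\sigma^2)=xH_n-\sigma^2\partial_xH_n$ together with Pascal's rule, but the generating function argument is shorter and cleaner.
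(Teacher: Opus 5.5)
Your proof is correct and is essentially the paper's own argument: the paper writes $\e^{t(x+y)-(\sigma_1^2+\sigma_2^2)t^2/2}$ as the product of the two scaled generating functions and compares the coefficients of $t^n$. Your derivation of the scaled generating function via $t\mapsto\sigma t$, $x\mapsto x/\sigma$, and your absolute-convergence justification for the Cauchy product, simply spell out steps the paper leaves implicit.
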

\begin{proof}
Expanding the identity
\begin{equation}
 \e^{t(x+y)-(\sigma_1^2+\sigma_2^2)t^2/2}
  = \e^{tx-\sigma_1^2t^2/2}\e^{ty-\sigma_2^2t^2/2}
\end{equation} 
yields 
\begin{equation}
 \sum_{n=0}^\infty \frac{t^n}{n!} H_n(x+y;\sigma_1^2+\sigma_2^2)
= \sum_{m=0}^\infty \frac{t^m}{m!} H_m(x;\sigma_1^2)
 \sum_{k=0}^\infty \frac{t^k}{k!} H_k(y;\sigma_2^2)\;. 
\end{equation} 
Comparing coefficients of $t^n$ gives the result. 
\end{proof}

A very useful generalisation of this property, spelled out here for 
Hermite polynomials of unit variance, is the following result.

\begin{lemma}[Multinomial formula for Hermite polynomials]
\label{lem:hermite_multinomial}
Let $a\in\ell^2$ be a sequence of real numbers such that 
$\sum_{i\geqs0}a_i^2=1$. Then for any sequence $(x_i)_{i\geqs0}$ such that 
$\sum_{i\geqs0}a_ix_i$ converges, one has 
\begin{equation}
\label{eq:multinomial_Hermite} 
 H_n\biggpar{\sum_{i\geqs0}a_ix_i} 
 = \sum_{\abs{k} = n} \frac{n!}{k!} a^k \prod_{i\geqs0} H_{k_i}(x_i)\;,
\end{equation} 
where the sum runs over all $k\in\N_0^{\N_0}$ such that $\abs{k} = 
\sum_{i\geqs0} k_i = n$, 
and 
\begin{equation}
 k! := \prod_{i\geqs0} k_i!\;, \quad 
 a^k := \prod_{i\geqs0} a_i^{k_i}\;.
 \label{eq:fact_multiindex} 
\end{equation} 
\end{lemma}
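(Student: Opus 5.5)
The plan is to use the generating function $G(t,x)=\e^{tx-t^2/2}$ of Proposition~\ref{prop:Hermite_generating}, exactly as in the proof of Lemma~\ref{lem:Hermite_binomial}. The normalisation $\sum_i a_i^2=1$ lets the Gaussian correction split across coordinates. Writing $s=\sum_i a_ix_i$, we have
\begin{equation}
 \e^{ts - t^2/2} = \prod_{i\geqs0} \e^{ta_ix_i - t^2a_i^2/2} = \prod_{i\geqs0} G(ta_i,x_i)\;.
\end{equation}
The product converges because both $\sum_i a_ix_i$ and $\sum_i a_i^2$ converge, so the exponent series converges. The left-hand side equals $\sum_n \frac{t^n}{n!}H_n(s)$.

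First I treat the finite case, where only $a_0,\dots,a_M$ are nonzero. This could also be done by induction on $M$ from Lemma~\ref{lem:Hermite_binomial}, using $H_m(a x;a^2)=a^mH_m(x)$ from the scaling \eqref{eq:Hermite_scaling}; note that the binomial lemma gives the general-variance version directly. Via generating functions, each factor expands as
\begin{equation}
 G(ta_i,x_i)=\sum_{k_i\geqs0}\frac{t^{k_i}a_i^{k_i}}{k_i!}H_{k_i}(x_i)\;.
\end{equation}
Multiplying these finitely many absolutely convergent series and collecting the coefficient of $t^n$ gives $\sum_{\abs{k}=n}\frac{a^k}{k!}\prod_i H_{k_i}(x_i)$. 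Comparing coefficients then yields \eqref{eq:multinomial_Hermite}. For the finite case I do not need $\sum a_i^2=1$ at all, provided the left-hand side is read as $H_n(s;\sum a_i^2)$.

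The infinite case is the main obstacle. Here I pass to the limit $M\to\infty$ at fixed $n$. Set $\sigma_M^2=\sum_{i\leqs M}a_i^2$ and $s_M=\sum_{i\leqs M}a_ix_i$. The finite case gives
\begin{equation}
 H_n(s_M;\sigma_M^2)=\sum_{\abs{k}=n,\ \mathrm{supp}\,k\subset\intset{0,M}}\frac{n!}{k!}a^k\prod_i H_{k_i}(x_i)\;.
\end{equation}
The left side converges to $H_n(s;1)=H_n(s)$, since by the explicit formula \eqref{eq:scaled_Wick} it is a polynomial in $(s_M,\sigma_M^2)$.

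On the right side I must show that the infinite sum over multi-indices with $\abs{k}=n$ converges, and that it is the limit of the truncations. Since $H_0=1$, each term involves at most $n$ nontrivial factors. The plan is to group terms by the number of indices with $k_i=1$ versus $k_i\geqs2$. Factors with $k_i=1$ contribute $a_ix_i$, and sums of such products reorganise into products of $s_M$ (minus diagonal corrections). Factors with $k_i\geqs2$ carry $a_i^{k_i}$, which is summable since $\sum a_i^2<\infty$. The one delicate point is the $x_i$-dependence in $H_{k_i}(x_i)$ when $k_i\geqs2$: it needs something like $\sum_i a_i^{k}\abs{x_i}^j<\infty$ for $j\leqs k$. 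I would first try to get this from the convergence of $\sum a_ix_i$, which forces $a_ix_i\to0$ and hence $\abs{a_i^kx_i^j}\leqs\abs{a_i}^{k-j}\abs{a_ix_i}^j\lesssim a_i^2$ when $k-j\geqs2$. The borderline terms have $k-j\in\{0,1\}$, meaning $k_i=1$ or the leading term of $H_{k_i}$. If these resist, I would interpret the identity as the limit of the truncated sums, which is the form actually used later with random $x_i$ in $L^2$.
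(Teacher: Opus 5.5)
Your finite-dimensional argument is the paper's. The paper proves the two-coordinate case from Lemma~\ref{lem:Hermite_binomial} and the scaling \eqref{eq:Hermite_scaling}, then says the result follows ``by applying this identity repeatedly'', peeling off one coordinate and renormalising the remainder by $\sqrt{1-a_0^2}$. Lemma~\ref{lem:Hermite_binomial} is itself the two-factor case of your factorisation $\exp(ts_M-\sigma_M^2t^2/2)=\prod_{i\leq M}G(ta_i,x_i)$, so the two computations coincide. Your version has the minor advantage of never dividing by $\sqrt{1-a_0^2}$ or by $\sigma_M$, so vanishing coefficients cause no trouble. The paper stops there and only covers finitely many nonzero $a_i$, which is all that Lemma~\ref{lem:Wiener_tensor} needs. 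Your passage to $M\to\infty$ is therefore an addition to the paper, and it is correct.

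You should drop the plan to show that the sum over all $k$ with $|k|=n$ converges unconditionally, because under the stated hypothesis it is false. Take $a_0=0$, $a_i=c/i$ for $i\geq1$ with $c=\sqrt{6}/\pi$, and $x_i=(-1)^i\sqrt{i}/c$. Then $a_ix_i=(-1)^i/\sqrt{i}$, so $\sum_i a_ix_i$ converges. For $n=2$, the multi-indices $k=2\delta_i$ contribute $a_i^2H_2(x_i)=1/i-c^2/i^2>0$, whose sum is $+\infty$. Hence the right-hand side of \eqref{eq:multinomial_Hermite} has no meaning as an unordered sum, while the truncated sums equal $s_M^2-\sigma_M^2\to s^2-1=H_2(s)$. (The case $k-j=1$ you mention never occurs, since $H_k$ contains only the monomials $x^{k-2m}$. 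The genuine obstruction is the leading terms $(a_ix_i)^{k_i}$, exactly as in this example.)

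So reading the identity as the limit of the truncated sums is not a fallback; it is the only reading that is true in general. Your argument already proves it completely: the finite case plus joint continuity of the polynomial $(x,\sigma^2)\mapsto H_n(x;\sigma^2)$ from \eqref{eq:scaled_Wick}. If you also want unconditional convergence, assume $\sum_i|a_ix_i|<\infty$. Replacing every coefficient of $H_k$ by its absolute value bounds $|a_i|^{k}|H_k(x_i)|$ by the corresponding positive polynomial in $|a_ix_i|$ and $a_i^2$. The sum of absolute values over $|k|=n$ is then at most $n!$ times the coefficient of $t^n$ in $\exp(t\sum_i|a_ix_i|+t^2/2)$.
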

\begin{proof}
First note that the condition $\abs{k} = n$ implies that $k$ has only finitely 
many nonzero indices, so that all products are well-defined. Consider now $x, 
y, a, b\in\R$ such that $a^2 + b^2 = 1$. Then Lemma~\ref{lem:Hermite_binomial} 
and the scaling property~\eqref{eq:Hermite_scaling} imply 
\begin{align}
 H_n(ax + by) 
 &= H_n(ax + by; a^2 + b^2) \\
 &= \sum_{m=0}^n \binom{n}{m} H_m(ax; a^2) H_{n-m}(by; b^2) \\
 &= \sum_{m=0}^n \binom{n}{m} a^n b^{n-m} H_m(x) H_{n-m}(y)\;.
\end{align} 
The result then follows by applying this identity repeatedly. 
\end{proof}

\begin{exercise}
Use these results to compute $H_2\Bigpar{\frac{x+y}{\sqrt{2}}}$ and 
$H_4\Bigpar{\frac{x+y}{\sqrt{2}}}$ in terms of $H_n(x)$ and $H_n(y)$. 
\end{exercise}

%%%%%%%%%%%%%%%%%%%%%%%%%%%%%%%%%%%%%%%%%%%%%%%%%%%%%%%%%%%%%%%%%%%%%%%%%%%%%%%%

\section{Wiener chaos expansion}
\label{sec:Wiener_ndim} 

%%%%%%%%%%%%%%%%%%%%%%%%%%%%%%%%%%%%%%%%%%%%%%%%%%%%%%%%%%%%%%%%%%%%%%%%%%%%%%%%

\subsection{Main result}
\label{ssec:Wiener_ndim_main} 

Consider now the case where $X_1,\dots,X_N$ are independent Gaussian random 
variables, of zero expectation and unit variance, defined on a common probability 
space $(\Omega,\cF,\fP)$. We are interested in the Hilbert space 
\begin{equation}
 \cH = L^2(\Omega,\cF,\fP)
\end{equation} 
of random variables $F = f(X_1,\dots,X_N)$ admitting a finite variance. 

In order to deal with linear combinations of the $X_i$, it turns out to be 
useful to introduce the Hilbert space $\H = \R^N$ endowed with the canonical 
inner product $\pscal{\cdot}{\cdot}_\H$. Define a linear map $W:\H\to\cH$ by  
\begin{equation}
\label{eq:def_Wh} 
 W(h) = \sum_{i=1}^N h_i X_i\;.
\end{equation} 
Then we have, for any $h, g\in \H$,  
\begin{equation}
\label{eq:Wh_covariance} 
 \expec{W(h)W(g)} 
 = \sum_{i=1}^N \sum_{j=1}^N h_i g_j \expec{X_i X_j}
 = \sum_{i=1}^N h_i g_j 
 = \pscal{h}{g}_\H\;.
\end{equation} 
This means that the map $W$ is an isometry from $\H$ onto $\cH$. 

\begin{definition}[Wiener chaos, $N$-dimensional version]
For any $n\geqs 1$, we denote by $\cH_n$ the subspace of $\cH$ 
spanned by the random variables 
\begin{equation}
 \setsuch{H_n(W(h))}{h\in \H, \norm{h}_\H = 1}\;.
\end{equation} 
For $n = 0$, $\cH_0$ is the set
of constants, which is isomorphic to $\R$. Then $\cH_n$ is called the 
\emph{homogeneous Wiener chaos of order $n$}. 
The \emph{inhomogeneous Wiener chaos of order $n$} is defined as 
\begin{equation}
 \cH_{\leqs n} = \bigoplus_{k=0}^n \cH_k\;.
\end{equation} 
\end{definition}

We can now extend Lemma~\ref{lem:Wiener_chaos} and Theorem~\ref{thm:Wiener_chaos}
to the situation at hand. These are again particular cases of~\cite[Lemma~1.1.2 and Theorem~1.1.1]{nualart2006malliavin}.

\begin{lemma}
\label{lem:Wiener_chaos_N} 
The random variables $\setsuch{\e^{W(h)}}{h\in \H}$ form a total subset of $\cH$.  
\end{lemma}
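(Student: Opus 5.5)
The plan mirrors the proof of Lemma~\ref{lem:Wiener_chaos}, with the one-dimensional Laplace transform replaced by the $N$-dimensional one.

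Let $Z\in\cH$ satisfy $\expec{Z\e^{W(h)}}=0$ for all $h\in\H$; we must show $Z=0$. There are two preliminary observations.

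\begin{itemize}
\item Since $\cF$ may be larger than $\sigma(X_1,\dots,X_N)$, we interpret $\cH$ as the space of square-integrable functionals $f(X_1,\dots,X_N)$, as in the text. Equivalently, we may replace $Z$ by $\expec{Z\mid X}$, which satisfies the same orthogonality relations. We may therefore write $Z=g(X)$ with $g\in L^2(\R^N,\mu)$, where $\mu$ is the standard Gaussian law $\cN(0,\one)$ on $\R^N$.
\item Every $\e^{W(h)}$ lies in $\cH$, since $\expec{\e^{2W(h)}}=\e^{2\norm{h}^2}<\infty$ by Proposition~\ref{prop:Laplace_Gaussian_ndim}.
\end{itemize}

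Define the signed measure $\nu(B)=\expec{Z\indicator{B}(X)}$ on Borel sets $B\subset\R^N$. It is finite, since $\expec{\abs{Z}}\leqs\expec{Z^2}^{1/2}$. The hypothesis states precisely that
\begin{equation}
 \int_{\R^N}\e^{\pscal{h}{x}}\,\nu(\6x)=0
 \qquad\text{for all } h\in\R^N .
\end{equation}
This integral is absolutely convergent: by Cauchy--Schwarz, $\expec{\abs{Z}\e^{\pscal{h}{X}}}\leqs\norm{Z}_{L^2}\,\e^{\norm{h}^2}$. Thus the multivariate Laplace transform of $\nu$ vanishes identically.

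The key step is to conclude from this that $\nu=0$. I would argue by analytic continuation. The map
\begin{equation}
 z\mapsto\int\e^{\pscal{z}{x}}\,\nu(\6x)
\end{equation}
is entire on $\C^N$, because the integrand is dominated locally uniformly by Gaussian-integrable bounds. This makes differentiation under the integral legitimate, and the function is analytic separately in each variable. An entire function vanishing on $\R^N$ vanishes on all of $\C^N$; this follows by applying the one-variable identity theorem one coordinate at a time. Taking $z=\icx\xi$ shows that the Fourier transform of the finite signed measure $\nu$ is zero, so $\nu=0$ by uniqueness of Fourier transforms of finite measures.

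Consequently $\expec{Z\indicator{B}(X)}=0$ for every Borel set $B$. Choosing $B=\set{g>0}$ and $B=\set{g<0}$ gives $\expec{Z^+}=\expec{Z^-}=0$, hence $Z=0$.

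The main obstacle is the uniqueness statement for the $N$-dimensional Laplace transform, which was used implicitly in one dimension. If a more elementary route is preferred, there is an alternative. Vanishing of the Laplace transform on $\R^N$ implies, by differentiating at $h=0$, that all moments $\expec{ZX^\alpha}$ vanish. The Gaussian integrability then allows one to expand $\e^{\icx\pscal{\xi}{X}}$ in a power series with dominated convergence, which again yields a vanishing Fourier transform.
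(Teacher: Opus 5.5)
Your proof is correct and takes the same route as the paper: you define the signed measure $\nu(B)=\expec{Z\indicator{B}(X)}$, observe that its Laplace transform vanishes, and conclude $\nu=0$ and hence $Z=0$. The paper follows Nualart in using an arbitrary finite family $(W(h_1),\dots,W(h_m))$ and simply asserts uniqueness of the Laplace transform, whereas you use the coordinate vector $X$ directly (which suffices here) and justify that uniqueness step by analytic continuation to the Fourier transform.
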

\begin{proof}
Let $Z\in\cH$ be such that $\expec{Z\e^{W(h)}} = 0$ for all $h\in \H$. 
The linearity of the map $h\mapsto W(h)$ implies that 
\begin{equation}
\label{eq:lem_Wiener01} 
 \biggexpec{Z\exp\biggset{\sum_{i=1}^m t_i W(h_i)}} = 0
\end{equation} 
for any choice of $t_1,\dots,t_m\in\R$, any $h_1,\dots,h_m\in \H$ and $m\geqs1$. 
We now fix $m$ and $h_1,\dots,h_m$, and define a signed measure $\nu$ by 
\begin{equation}
 \nu(B) = \bigexpec{Z\indicator{B}(W(h_1),\dots,W(h_m))}
\end{equation} 
for any Borel set $B$ of $\R^N$. Then~\eqref{eq:lem_Wiener01}
means that the Laplace transform of $\nu$  
is identically zero on $\R$. Therefore, this measure 
is zero, so that $\expec{Z\indicator{B}} = 0$ for any Borel set $B$. 
This implies that $Z$ is equal to $0$, and therefore the completeness 
if the system. 
\end{proof}

\begin{theorem}[Wiener chaos decomposition]
\label{thm:Wiener_chaos_N} 
The Hilbert space $\cH$ can be decomposed into the infinite orthogonal sum 
\begin{equation}
 \cH = \bigoplus_{n=0}^\infty \cH_n\;.
\end{equation} 
\end{theorem}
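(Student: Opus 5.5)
The plan follows the proof of Theorem~\ref{thm:Wiener_chaos}, with two parts: first show that the subspaces $\cH_n$ are mutually orthogonal, then show that their closed span is all of $\cH$.

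\textbf{Orthogonality.} Take $h,g\in\H$ with $\norm{h}_\H=\norm{g}_\H=1$. By \eqref{eq:Wh_covariance}, the pair $(W(h),W(g))$ is a centred Gaussian vector with unit variances and covariance $\rho=\pscal{h}{g}_\H$. Proposition~\ref{prop:Laplace_Gaussian_ndim} then gives
\begin{equation}
 \bigexpec{G(t,W(h))G(s,W(g))} = \e^{-(t^2+s^2)/2}\,\bigexpec{\e^{tW(h)+sW(g)}} = \e^{ts\rho}\;.
\end{equation}
Expanding both sides with \eqref{eq:G_Hermite} and comparing the coefficients of $t^ns^m$ yields
\begin{equation}
 \bigexpec{H_n(W(h))H_m(W(g))} = n!\,\rho^n\delta_{nm}\;.
\end{equation}
The series are absolutely summable, since all Gaussian exponential moments are finite, so the interchange of sum and expectation is justified. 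Hence $\cH_n\perp\cH_m$ for $n\neq m$. The identity extends by linearity and continuity to the closed spans, which I take as the definition of $\cH_n$ if closure is needed.

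\textbf{Completeness.} Let $Z\in\cH$ be orthogonal to every $\cH_n$. Fix $h\in\H$ with $\norm{h}_\H=1$. Then $\expec{Z H_n(W(h))}=0$ for all $n$, and by \eqref{eq:xn_Hn} also $\expec{Z W(h)^n}=0$ for all $n$.

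The main step is to pass from these moment identities to $\expec{Z\e^{tW(h)}}=0$ for all $t\in\R$. I would use dominated convergence on $\sum_n t^nW(h)^n/n!$: the partial sums are bounded in absolute value by $\e^{\abs{t}\abs{W(h)}}$, and $\abs{Z}\e^{\abs{t}\abs{W(h)}}$ is integrable by Cauchy--Schwarz, because $\e^{2\abs{t}\abs{W(h)}}$ has finite expectation. This gives $\expec{Z\e^{tW(h)}}=0$.

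Every $h'\in\H$ can be written $h'=t h$ with $t=\norm{h'}_\H$ and $\norm{h}_\H=1$; the case $h'=0$ is trivial, since $\cH_0$ consists of the constants. Hence $\expec{Z\e^{W(h')}}=0$ for all $h'\in\H$. Lemma~\ref{lem:Wiener_chaos_N} then gives $Z=0$, which shows that $\bigoplus_n\cH_n$ is dense in $\cH$ and completes the proof.

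One point needs care. The lemma is stated for $\cH=L^2(\Omega,\cF,\fP)$, so I would take $\cF$ to be the $\sigma$-algebra generated by $X_1,\dots,X_N$, as in the setting of the statement, where $\cH$ consists of random variables of the form $f(X_1,\dots,X_N)$. Beyond that, I expect the only delicate step to be the interchange of limits handled above.
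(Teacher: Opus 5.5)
Your proof is correct and follows the paper's route for completeness. Orthogonality of $Z$ to every $H_n(W(h))$ gives vanishing moments via \eqref{eq:xn_Hn}, hence $\expec{Z\e^{tW(h)}}=0$, and Lemma~\ref{lem:Wiener_chaos_N} then forces $Z=0$. You also supply several steps the paper leaves implicit: the mutual orthogonality of the $\cH_n$ via $\expec{H_n(W(h))H_m(W(g))}=n!\pscal{h}{g}_\H^n\delta_{nm}$, the dominated-convergence justification for summing the moment series, the rescaling $h'=th$ needed to reach all of $\H$, and the requirement that $\cF=\sigma(X_1,\dots,X_N)$.
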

\begin{proof}
Let $Z\in\cH$ be orthogonal to $\cH_n$ for all $n\geqs0$. This means that 
\begin{equation}
 \expec{Z H_n(W(h))} = 0
\end{equation} 
for all $n\geqs0$ and all $h\in \H$ with $\norm{h}_\H = 1$.
By~\eqref{eq:xn_Hn}, we also have $\expec{Z W(h)^n} = 0$ for all $n\geqs0$,
and therefore $\expec{Z \e^{tW(h)}} = 0$ for all $t\in\R$ and all 
$h\in \H$ such that $\norm{h}_\H = 1$. By Lemma~\ref{lem:Wiener_chaos_N}, 
this means that $Z = 0$, which completes the proof. 
\end{proof}

%%%%%%%%%%%%%%%%%%%%%%%%%%%%%%%%%%%%%%%%%%%%%%%%%%%%%%%%%%%%%%%%%%%%%%%%%%%%%%%%

\subsection{Wiener isometry and Fock space}
\label{ssec:Wiener_Fock} 

We now construct an orthogonal basis of each Wiener chaos $\cH_n$.
For $k\in\N_0^N$, we define 
\begin{equation}
\label{eq:def_Phik} 
 \Phi_k = \prod_{i=1}^N H_{k_i}(X_i)\;.
\end{equation} 
Then Proposition~\ref{prop:Hermite_orthogonal} (orthogonality of the $H_n$) 
implies that 
\begin{equation}
 \expec{\Phi_k\Phi_\ell} 
 = \prod_{i=1}^N \expec{H_{k_i}(X_i)H_{\ell_i}(X_i)}
 = \prod_{i=1}^N k_i! \delta_{k_i\ell_i}
 = k! \delta_{k\ell}
\end{equation} 
with $k!$ as in~\eqref{eq:fact_multiindex}. 

We denote by $\H^{\otimes n}$ the $n$-fold tensor product of $\H$, and 
by $\Hsym{n}$ the subspace of symmetric tensors. Any element of 
$\smash{\H^{\otimes n}}$ can be canonically projected on $\Hsym{n}$ via
\begin{equation}
\label{eq:def_Pi} 
 \Pi\bigpar{h_1\otimes \dots \otimes h_n}
 := \frac{1}{n!} \sum_{\sigma\in\frakS_n} h_{\sigma(1)} \otimes \dots \otimes 
h_{\sigma(n)}\;,
\end{equation} 
where $\frakS_n$ denotes the set of all permutations of $\set{1,\dots,n}$. 
Let $(e_1,\dots,e_N)$ denote any orthonormal basis of $\H$ (for instance the 
canonical basis). Then for any $k\in\N_0^N$, 
\begin{equation}
\label{eq:ek_Wiener} 
 e_k := \Pi\bigotimes_{i=1}^N e_i^{\otimes k_i}
\end{equation} 
is an element of $\Hsym{\abs{k}}$, where $\abs{k} = \sum_{i=1}^N \abs{k_i}$, 
with the convention that only strictly positive $k_i$ count in the product~\eqref{eq:ek_Wiener}.
Moreover, the set $\setsuch{e_k}{\abs{k}=n}$ forms an orthogonal basis of $\Hsym{n}$, 
with 
\begin{equation}
 \pscal{e_k}{e_\ell} = \frac{k!}{n!} \delta_{k\ell}\;.
\end{equation} 
This is because among the $n!$ permutations defining $e_k$, there are 
$k!$ permutations that yield the same term. 

\begin{example}
Assume $N = 3$, and let $k = (2,1,0)$. Then $k! = 2$, $n = \abs{k} = 3$, and 
\begin{align}
 e_k &= \Pi(e_1 \otimes e_1 \otimes e_2) \\
 &= \frac13 \bigpar {e_1 \otimes e_1 \otimes e_2 + e_1 \otimes e_2 \otimes e_1 
 + e_2 \otimes e_1 \otimes e_1}\;,
\end{align}
because the $3! = 6$ permutations of $\set{1,2,3}$ have pairwise the same image. 
It follows that 
\begin{equation}
 \pscal{e_k}{e_k} = \frac13 = \frac{k!}{n!}\;.
\end{equation} 
\end{example}

\begin{definition}[Wiener isometry]
For $k\in\N_0^N$, let $n = \abs{k}$. The map 
\begin{equation}
\label{eq:def_In} 
 I_n : e_k \longmapsto \frac{1}{\sqrt{n!}}\Phi_k
\end{equation} 
is called the $n$th \emph{Wiener isometry}. 
\end{definition}

\begin{exercise}
\begin{enumerate}
\item   Show that $I_n$ is indeed an isometry between $\Hsym{n}$ and $\cH_n$. 

\item   If $N = 3$, what are the dimensions of $\cH_1$, $\cH_2$ and $\cH_3$? 
Provide orthogonal bases of these spaces. 

\item   What is the dimension of $\cH_n$ for general $N$? 

\textbf{Hint:} Use the method of stars and bars.
\end{enumerate}
\end{exercise}

% \begin{definition}[Fock space]
The space 
\begin{equation}
 \cF = \bigoplus_{n=0}^\infty \Hsym{n}
\end{equation} 
is called \emph{Fock space}. 
% \end{definition}
The Wiener isometry thus provides an isometry between Fock space and $\cH$. 
Fock space is known from quantum physics, where it describes interacting 
bosons. The space $\Hsym{n}$ describes the set of states with given number 
$n$ of particles. It is identified with the space of symmetric functions 
of $n$ variables. 

Two important particular cases of Wiener isometries are 
\begin{equation}
 I_0 = 1\;, \qquad 
 I_1(h) = W(h)\;,
\end{equation} 
where the second relation follows from $I_1(e_i) = X_i$, see~\eqref{eq:def_Wh}. 
The following lemma generalises these relations. 

\begin{lemma}
\label{lem:Wiener_tensor} 
For any $n\geqs1$ and $h\in\H$ with $\norm{h}_\H=1$, one has 
\begin{equation}
\label{eq:Wiener_tensor} 
 I_n(h^{\otimes n}) 
 = \frac{1}{\sqrt{n!}}H_n\bigpar{W(h)}\;,
\end{equation} 
independently of the basis $\set{e_i}_{1\leqs i\leqs N}$ of $\H$. 
\end{lemma}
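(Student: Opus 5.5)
We expand $h^{\otimes n}$ in the basis $\set{e_k}_{\abs{k}=n}$ of $\Hsym{n}$, apply $I_n$ term by term (it is defined on the $e_k$ and extended by linearity), and compare the result with the multinomial formula of Lemma~\ref{lem:hermite_multinomial}. Basis independence then comes for free: the right-hand side of~\eqref{eq:Wiener_tensor} involves only $W(h)$, which does not refer to any basis.

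\textbf{Step 1: expand $h^{\otimes n}$.} Write $h=\sum_{i=1}^N h_i e_i$, where $h_i=\pscal{h}{e_i}_\H$. By multilinearity,
\begin{equation}
 h^{\otimes n} = \sum_{i_1,\dots,i_n=1}^N h_{i_1}\cdots h_{i_n}\, e_{i_1}\otimes\dots\otimes e_{i_n}\;.
\end{equation}
Since $h^{\otimes n}$ is symmetric, we have $h^{\otimes n}=\Pi h^{\otimes n}$. We group the index tuples according to their occupation numbers $k\in\N_0^N$ with $\abs{k}=n$. There are $n!/k!$ tuples with a given $k$, each has coefficient $h^k$, and $\Pi$ sends each of them to $e_k$. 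Hence
\begin{equation}
 h^{\otimes n} = \sum_{\abs{k}=n} \frac{n!}{k!}\, h^k\, e_k\;.
\end{equation}

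\textbf{Step 2: apply $I_n$.} By linearity and the definition~\eqref{eq:def_In},
\begin{equation}
 I_n(h^{\otimes n}) = \frac{1}{\sqrt{n!}}\sum_{\abs{k}=n}\frac{n!}{k!}\,h^k\,\Phi_k
 = \frac{1}{\sqrt{n!}}\sum_{\abs{k}=n}\frac{n!}{k!}\,h^k\prod_{i=1}^N H_{k_i}(X_i)\;.
\end{equation}

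\textbf{Step 3: invoke the multinomial formula.} Since $\norm{h}_\H=1$, we have $\sum_i h_i^2=1$. Lemma~\ref{lem:hermite_multinomial}, applied with $a_i=h_i$ and $x_i=X_i$ (and $a_i=0$ for $i>N$), identifies the last sum with $H_n\bigl(\sum_i h_iX_i\bigr)=H_n(W(h))$. This proves~\eqref{eq:Wiener_tensor}.

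\textbf{Basis independence.} The right-hand side of~\eqref{eq:Wiener_tensor} depends only on $h$. In a different orthonormal basis $\set{e_i'}$, the variables $X_i'=W(e_i')$ are again i.i.d.\ $\cN(0,1)$ by~\eqref{eq:Wh_covariance}, so the same computation holds there. Moreover, the vectors $h^{\otimes n}$ with $\norm{h}_\H=1$ span $\Hsym{n}$ by polarisation. Hence the maps $I_n$ built from the two bases agree on a spanning set, and therefore coincide.

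The main obstacle I expect is the bookkeeping in Step 1, namely checking that the coefficient is exactly $n!/k!$ given the normalisation $\pscal{e_k}{e_k}=k!/n!$. I would first verify it on the example $N=3$, $k=(2,1,0)$. A secondary subtlety is that the multinomial formula must be applied with exactly the unit-variance normalisation that $\norm{h}_\H=1$ guarantees.
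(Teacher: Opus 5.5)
Your proof is correct and is essentially the paper's argument. You write $h^{\otimes n}=\Pi(h^{\otimes n})=\sum_{\abs{k}=n}\frac{n!}{k!}h^k e_k$, apply the definition of $I_n$, and conclude with the multinomial formula of Lemma~\ref{lem:hermite_multinomial}. The paper only asserts basis independence; your polarisation argument supplies it correctly, since the two candidate maps agree on the spanning set of unit tensors $h^{\otimes n}$.
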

\begin{proof}
If $h = \sum_{i=1}^N h_i e_i$, then 
\begin{equation}
 h^{\otimes n} 
 = \Pi(h^{\otimes n})
 = \sum_{1\leqs i_1, \dots, i_n\leqs N} h_{i_1}\dots h_{i_n} 
\Pi(e_{i_1}\otimes\dots\otimes e_{i_n})
 = \sum_{\abs{k} = n} \frac{n!}{k!} h^k e_k\;,
\end{equation} 
where the combinatorial factor counts the number of ways a tuple 
$(i_1,\dots,i_n)$ can be mapped to a given $k$, defined by the fact that 
$k_j$ is the number of indices equal to $j$. The result then follows from the 
definitions~\eqref{eq:def_In} of $I_n$ and~\eqref{eq:def_Phik} of $\Phi_k$ and 
the multinomial formula~\eqref{eq:multinomial_Hermite} (see 
Lemma~\ref{lem:hermite_multinomial}). 
\end{proof}

\begin{remark}
\label{rem:Wiener_isometry_scaled} 
If $h$ is not normalised, one has 
\begin{equation}
 \label{eq:Wiener_isometry_scaled}
 I_n(h^{\otimes n}) 
 = \frac{\norm{h}_\H^n}{\sqrt{n!}} H_n\biggpar{W\biggpar{\frac{h}{\norm{h}_\H}}}
 = \frac{1}{\sqrt{n!}} H_n\bigpar{W(h); \norm{h}_\H^2} 
\end{equation} 
by the scaling property~\eqref{eq:Hermite_scaling} of Hermite polynomials. 
\end{remark}

In general, we can compute $I_n$ on an element of $\H^n$ by projecting it 
on $\Hsym{n}$, decomposing it in the basis of $e_k$, and applying the 
definition~\eqref{eq:def_In} of $I_n$.

\begin{exercise}
Assume $h = \sum_{i=1}^N h_ie_i$ and $g = \sum_{i=1}^N g_ie_i$
belong to $\H$. Show that 
\begin{equation}
\label{eq:I2hg} 
 I_2(h\otimes g) = 
 \frac{1}{\sqrt{2!}} 
 \biggbrak{\sum_{i\neq j=1}^N h_ig_j H_1(X_i) H_1(X_j) 
 + \sum_{i=1}^N h_ig_i H_2(X_i)}\;.
\end{equation} 
Check that one recovers~\eqref{eq:Wiener_tensor} when $h = g$ and 
$\norm{h}_\H = \norm{g}_\H = 1$.
\end{exercise}

%%%%%%%%%%%%%%%%%%%%%%%%%%%%%%%%%%%%%%%%%%%%%%%%%%%%%%%%%%%%%%%%%%%%%%%%%%%%%%%%

\subsection{Multiplication and Wick product}
\label{ssec:Wick_product}

Now that we have defined $I_n(f)$ for general $f\in\Hsym{n}$ 
(or $\H^{\otimes n}$, using symmetrization), it is of interest to compute 
products of such quantities, in the same spirit as for the product-sum 
formulas seen in Proposition~\ref{prop:Hermite_product_sum}. 
Here it will be more convenient to use the normalisation 
\begin{equation}
 \hat I_n(f) = \sqrt{n!} I_n(f)\;.
\end{equation} 
In order not to confuse components of tensor products in $\H^n$ and components 
of elements of $\H$, we will write the latter as 
\begin{equation}
 f = \sum_{i=1}^N f(i) e_i \in \H\;.
\end{equation} 
In this way, elements of $\H^{\otimes n}$ can be written as 
\begin{equation}
 f_1 \otimes \dots \otimes f_n 
 = \sum_{i_1,\dots,i_n=1}^N f_1(i_1)\dots f_n(i_n) 
 e_{i_1}\otimes\dots\otimes e_{i_n}\
 =: \sum_{i_1,\dots,i_n=1}^N f(i_1,\dots,i_n) 
 e_{i_1}\otimes\dots\otimes e_{i_n}\;.
\end{equation} 
We can thus view $f$ as a map from $\intset{1,N}$ to $\R$. 

Let us start by computing $\hat I_1(f) \hat I_1(g)$. 
This is given by 
\begin{equation}
 \hat I_1(f) \hat I_1(g) 
 = W(f) W(g) 
 = \sum_{i,j=1}^N f(i)g(j) X_i X_j\;.
\end{equation} 
Comparing with~\eqref{eq:I2hg}, we see that 
\begin{equation}
 \hat I_1(f) \hat I_1(g) - \hat I_2(f\otimes g) 
 = \sum_{i=1}^N f(i)g(i)\bigbrak{X_i^2-H_2(X_i)}
 = \pscal{f}{g}_\H \in\cH_0\;.
\end{equation} 
We have thus obtained 
\begin{equation}
\label{eq:mult_11} 
 \hat I_1(f) \hat I_1(g) = \hat I_2(f\otimes g) 
 + \pscal{f}{g}_\H\;.
\end{equation} 
% A first generalisation of this result is as follows, 
Let $\frakS(p,n)\subset\frakS(n)$ denote the set of permutations
of $\intset{1,n}$ preserving the order of the first $p$ and the last $n-p$ elements, also called \emph{shuffles}. A first generalisation of~\eqref{eq:mult_11} 
is as follows. 

\begin{lemma}[Multiplication between the $n$th and first chaos]
\label{lem:multn1} 
Assume $f\in\H^{\otimes n}$ and $g\in\H$. Then 
\begin{equation}
 \hat I_n(f)\hat I_1(g) 
 = \hat I_{n+1}(f\otimes g) + \hat I_{n-1}(f\star_1g)\;,
\end{equation} 
where $\star_1$ denotes the \emph{contraction operation} 
\begin{equation}
\label{eq:def_star1} 
 (f\star_1g)(i_1,\dots,i_{n-1}) 
 = \sum_{\Sigma\in\frakS(1,n)}
 \sum_{j=1}^N f(\Sigma(j,i_1,\dots,i_{n-1}))g(j)\;. 
\end{equation} 
\end{lemma}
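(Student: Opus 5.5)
By linearity and continuity in $f$ and $g$, and since $\hat I_n(f)=\hat I_n(\Pi f)$, it suffices to prove the identity for $g=e_j$ a basis vector and $f=e_k$ a symmetric basis element. One can equally take $f=h^{\otimes n}$ with $\norm{h}_\H=1$, since such tensors span $\Hsym{n}$ by polarisation. I will carry out the argument with $f=h^{\otimes n}$, because Lemma~\ref{lem:Wiener_tensor} then gives $\hat I_n(f)$ in closed form, namely $\hat I_n(h^{\otimes n})=H_n(W(h))$. Both sides are bilinear, so by polarisation in $g$ it is also enough to treat two cases for $g$: $g=h$, and $g\perp h$. Equivalently, one can decompose $g=\pscal{g}{h}_\H h+g^\perp$.

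\textbf{Case $g=h$.} The left-hand side is $H_n(W(h))W(h)$. By the recursion~\eqref{eq:Hn_recurrence2} this equals $H_{n+1}(W(h))+nH_{n-1}(W(h))$. The first term is $\hat I_{n+1}(h^{\otimes(n+1)})$. For the second, I compute $f\star_1 g$ from~\eqref{eq:def_star1}. The shuffles in $\frakS(1,n)$ place the contracted index in one of $n$ positions. Since $f$ is symmetric, each of these $n$ shuffles contributes $\sum_j h(j)^2\,h(i_1)\cdots h(i_{n-1})$. Hence $f\star_1 h=n\,h^{\otimes(n-1)}$, and the contraction term is $nH_{n-1}(W(h))$, which matches.

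\textbf{Case $g\perp h$.} Now $W(g)$ and $W(h)$ are uncorrelated Gaussians, hence independent. The contraction vanishes, because every shuffle term contains the factor $\pscal{h}{g}_\H=0$. It therefore remains to show that $\hat I_{n+1}(h^{\otimes n}\otimes g)=H_n(W(h))\,W(g)$. To prove this I complete $h$ and $g/\norm{g}_\H$ to an orthonormal basis, which is allowed because Lemma~\ref{lem:Wiener_tensor} states that $I_n$ does not depend on the choice of basis. In this basis $\Pi(h^{\otimes n}\otimes g)=\norm{g}_\H\, e_k$ with $k=(n,1,0,\dots)$. By definition~\eqref{eq:def_In}, its image under $\hat I_{n+1}$ is $\norm{g}_\H H_n(X_1)H_1(X_2)=H_n(W(h))W(g)$, as required.

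The main obstacle is justifying that $\hat I_m$ is basis-independent on general tensors, not just on the tensors $h^{\otimes m}$ covered by Lemma~\ref{lem:Wiener_tensor}. I would deduce this from that lemma together with polarisation, since the $h^{\otimes m}$ span $\Hsym{m}$. A second point needing care is bookkeeping the normalisation of the contraction, namely the factor $n$ coming from the shuffles in~\eqref{eq:def_star1}, for a non-symmetric $f$. I would handle it by noting that the right-hand side depends only on $\Pi f$, because the sum over shuffles symmetrises the position of the contracted slot.
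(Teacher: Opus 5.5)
Your proof is correct, and it follows a genuinely different route from the paper's.

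\textbf{The paper's route.} The paper never leaves the fixed basis. It reduces by linearity to elementary tensors $f=e_{i_1}\otimes\dots\otimes e_{i_n}$ and $g=e_j$, and distinguishes whether $j$ occurs among the $i_r$. In the second case it factors $\hat I_n(f)=H_m(X_j)P_{n-m}$, where $m$ is the multiplicity of $j$. It then applies $H_mH_1=H_{m+1}+mH_{m-1}$ and counts the $m$ non-vanishing shuffle terms in $f\star_1 e_j$.

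\textbf{Your route.} You reduce to the spanning family $h^{\otimes n}$ with $\norm{h}_\H=1$, and split $g$ into its components along and orthogonal to $h$. This replaces the multiplicity bookkeeping with two structural facts, both of which you correctly identify and justify. First, both sides depend on $f$ only through $\Pi f$; for the contraction term this holds because $\Pi(f\star_1 g)$ is unchanged when the factors of $f$ are permuted. Second, $\hat I_m$ does not depend on the orthonormal basis, since the two candidate maps are linear on $\Hsym{m}$ and agree on all $h^{\otimes m}$ by Lemma~\ref{lem:Wiener_tensor}.

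\textbf{How the two relate.} Your parallel case is the paper's second case with $m=n$ and $P_0=1$. Your orthogonal case is the paper's first case after rotating the basis so that $h$ and $g/\norm{g}_\H$ are basis vectors.

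\textbf{What each buys.} The paper's argument is more elementary: it uses nothing beyond the definition of $\hat I_n$ on the basis $e_k$ and the one-variable product formula. Yours is coordinate-free and makes the rotation invariance of the Wiener isometry explicit. It also keeps the contraction combinatorics trivial, since there are just $n$ identical shuffle terms.

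One small terminological point: the split $g=\pscal{g}{h}_\H h+g^\perp$ is plain linearity in $g$, not polarisation.
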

\begin{proof}
By linearity, it suffices to check the relation for basis vectors, that is, 
when one has $f = e_{i_1}\otimes\dots\otimes e_{i_n}$ and $g = e_j$. We 
distinguish between two cases.
\begin{enumerate}
\item   If $j\not\in\set{i_1,\dots,i_n}$, then $f\star_1g = 0$, because 
all components of $f$ containing one index $j$ are zero.
One easily sees that 
\begin{equation}
 \hat I_{n+1}(f\otimes e_j) = \hat I_n(f)X_j\;.
\end{equation} 
\item   If $j\in\set{i_1,\dots,i_n}$, say $j=i_1$, 
let $m$ be the number of indices equal to $i_1$. We have 
\begin{equation}
 \hat I_n(f) = H_m(X_{i_1}) P_{n-m}\;,
 \qquad 
 \hat I_1(g) = H_1(X_{i_1})\;,
\end{equation} 
where $P_{n-m}$ is a polynomial of degree $n-m$ that does not contain $X_{i_1}$. 
By the product-sum formula~\eqref{eq:H_product_sum1}, we obtain 
\begin{equation}
 H_m(X_{i_1}) H_1(X_{i_1}) 
 = \sum_{p=0}^1 \binom{m}{p}\binom{1}{p} H_{m+1-p}(X_{i_1})
 = H_{m+1}(X_{i_1}) + mH_{m-1}(X_{i_1})\;,
\end{equation} 
so that 
\begin{equation}
\label{eq:proof_mult01} 
 \hat I_n(f) \hat I_1(g)
 = \bigbrak{H_{m+1}(X_{i_1}) + mH_{m-1}(X_{i_1})} P_{n-m}\;.
\end{equation} 
On the other hand, we have
\begin{equation}
\label{eq:proof_mult02} 
 \hat I_{n+1}(f\otimes g)
 = H_{m+1}(X_i) P_{n-m}\;,
\end{equation} 
while, using $g(\ell) = \delta_{\ell j}$,  
\begin{equation}
 (f\star_1g)(i_1,\dots,i_{n-1}) 
 = \sum_{\Sigma\in\frakS(1,m)} 
 f(\Sigma(i_1,i_1,\dots,i_{n-1}))\;.
\end{equation} 
There are $m$ ways to insert the first $i_1$ among the other indices,
which all yield the same value upon applying $\hat I_{n-1}$. Therefore,
\begin{equation}
\label{eq:proof_mult03} 
 \hat I_{n-1} (f\star_1g) 
 = m H_{m-1}(X_{i_1}) P_{n-m}\;.
\end{equation} 
The sum of~\eqref{eq:proof_mult02} and~\eqref{eq:proof_mult03} is indeed 
equal to~\eqref{eq:proof_mult01}. 
\qed
\end{enumerate}
\renewcommand{\qed}{}
\end{proof}

\begin{example}
Consider the case $n=3$. Then 
\begin{equation}
 \hat I_3(f) \hat I_1(g) 
 = \hat I_4(f\otimes g) + \hat I_2(f\star_1 g)\;,
\end{equation} 
where
\begin{equation}
 (f\otimes g)(i_1,i_2,i_3,j) 
 = f(i_1,i_2,i_3) g(j)\;.
\end{equation} 
Denoting a permutation $\Sigma\in\frakS(1,3)$ by the image 
$(\Sigma(1),\Sigma(2),\Sigma(3))$ of $(1,2,3)$, we have 
\begin{equation}
\frakS(1,3) = \set{(1,2,3), (2,1,3), (2,3,1)} 
\end{equation} 
and therefore 
\begin{equation}
 (f\star_1 g)(i_1,i_2) 
 = \sum_{j=1}^N \bigbrak{f(j,i_1,i_2) + f(i_1,j,i_2) + f(i_1,i_2,j)}g(j)\;.
\end{equation} 
\end{example}

The generalisation of this to arbitrary $f$ and $g$ is as follows.

\begin{proposition}[Multiplication between $n$th and $m$th chaos]
\label{prop:mult_nm} 
Assume $f\in\H^{\otimes n}$ and $g\in\H^{\otimes m}$. Then 
\begin{equation}
\label{eq:chaos_multn} 
 \hat I_n(f)\hat I_m(g) 
 = \sum_{p=0}^{n\wedge m} \hat I_{n+m-2p}(f\star_p g)
\end{equation} 
where $\star_0 = \otimes$ and 
the contraction $\star_p$ is defined 
for $\mathbf{i} = (i_1,\dots,i_{n-p})$ 
and $\mathbf{j} = (j_1,\dots,j_{m-p})$ 
by  
\begin{equation}
\label{eq:def_starp} 
 (f\star_pg)(\mathbf{i},\mathbf{j}) 
 = \sum_{\substack{\Sigma\in\frakS(p,n)\\ \bar\Sigma\in\frakS(p,m)}}
 \sum_{\sigma\in\frakS(p)}
 \sum_{\mathbf{k}\in\intset{1,N}^p} f(\Sigma(\mathbf{k},\mathbf{i}))g(\bar\Sigma(\mathbf{k},\sigma(\mathbf{j})))\;. 
\end{equation} 
\end{proposition}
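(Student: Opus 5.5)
By linearity of both sides in $f$ and in $g$, it suffices to prove \eqref{eq:chaos_multn} when $f$ and $g$ are elementary tensors of basis vectors, $f = e_{i_1}\otimes\dots\otimes e_{i_n}$ and $g = e_{j_1}\otimes\dots\otimes e_{j_m}$. I will also use that $\hat I_r$ only depends on the symmetrisation $\Pi$ of its argument. So $\hat I_n(f)$ only depends on the multi-index $k\in\N_0^N$ counting how often each $\ell\in\intset{1,N}$ occurs among $i_1,\dots,i_n$, and then $\hat I_n(f) = \Phi_k = \prod_\ell H_{k_\ell}(X_\ell)$. Similarly $\hat I_m(g) = \Phi_{k'}$, where $k'$ is the occupation multi-index of the $j$'s.

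Since the $X_\ell$ are independent and $\Phi_k\Phi_{k'}$ factorises over $\ell$, I would apply the one-dimensional product--sum formula, Proposition~\ref{prop:Hermite_product_sum}, coordinate by coordinate:
\begin{equation}
 \Phi_k\Phi_{k'} = \prod_{\ell=1}^N \sum_{p_\ell=0}^{k_\ell\wedge k'_\ell} p_\ell!\binom{k_\ell}{p_\ell}\binom{k'_\ell}{p_\ell} H_{k_\ell+k'_\ell-2p_\ell}(X_\ell)\;.
\end{equation}
Expanding the product gives a sum over vectors $(p_\ell)_\ell$. I then group the terms by $p = \sum_\ell p_\ell$. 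The term indexed by $(p_\ell)_\ell$ equals $\bigl(\prod_\ell p_\ell!\binom{k_\ell}{p_\ell}\binom{k'_\ell}{p_\ell}\bigr)\,\Phi_{k+k'-2\mathbf p}$, which belongs to $\hat I_{n+m-2p}$ applied to the elementary tensor with occupation numbers $k+k'-2\mathbf p$.

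The key step, and the main obstacle, is to check that for fixed $p$ this grouped sum equals $\hat I_{n+m-2p}(f\star_p g)$. I would do this by a counting argument on \eqref{eq:def_starp}. With $f$ and $g$ basis tensors, $f(\Sigma(\mathbf k,\mathbf i))$ is nonzero only for the single index arrangement equal to $(i_1,\dots,i_n)$. So the combined sum over $\Sigma$, $\bar\Sigma$, $\sigma$ and $\mathbf k$ counts the ways of selecting $p$ positions of $f$ and $p$ positions of $g$ and matching them bijectively so that matched indices coincide. The unmatched positions give the remaining tensor. After symmetrisation by $\Pi$, this tensor depends only on how many pairs $p_\ell$ are matched at each site $\ell$.

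For given $(p_\ell)_\ell$, the number of such matchings is $\prod_\ell \binom{k_\ell}{p_\ell}\binom{k'_\ell}{p_\ell}p_\ell!$. Here one chooses $p_\ell$ of the $k_\ell$ slots of $f$ carrying $\ell$, then $p_\ell$ of the $k'_\ell$ slots of $g$, then a bijection between them. This is exactly the pairing interpretation of the coefficients in Proposition~\ref{prop:Hermite_product_sum}, cf.\ Section~\ref{ssec:combinatorics}. The delicate point is bookkeeping: I must check that the ordering conventions (shuffles $\frakS(p,n)$, the permutation $\sigma\in\frakS(p)$ acting on the contracted slots, and the labels $\mathbf k$) realise each matching exactly once, without an extra factor. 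I would verify this against the cases $p=1$ (Lemma~\ref{lem:multn1}) and $n=m=1$ (\eqref{eq:mult_11}). If the conventions instead produced overcounting, I would correct the normalisation by comparing the case $f=g=e_1^{\otimes n}$ with Proposition~\ref{prop:Hermite_product_sum}.

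Summing over $p$ then yields \eqref{eq:chaos_multn}. As an alternative route for the combinatorics, one could argue by induction on $m$ using Lemma~\ref{lem:multn1}, writing $\hat I_m(g)$ for $g = g'\otimes e_j$ as $\hat I_{m-1}(g')\hat I_1(e_j) - \hat I_{m-2}(g'\star_1 e_j)$. I expect this to be messier than the direct counting argument.
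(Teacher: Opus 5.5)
Your proof is correct, and it follows a genuinely different route from the paper's.

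\textbf{The paper's route.} It argues by induction on $m$. It splits $g=g_1\otimes g_2$ with $g_2\in\H$ orthogonal to the components of $g_1$, so that $\hat I_{m-1}(g_1)\hat I_1(g_2)=\hat I_m(g)$. It then applies the induction hypothesis, and Lemma~\ref{lem:multn1} to each resulting term. Finally it reassembles the two sums using the recursion $f\star_p(g_1\otimes g_2)=(f\star_{p-1}g_1)\star_1g_2+(f\star_pg_1)\otimes g_2$ of Lemma~\ref{lem:pairing}, which is only justified graphically. The case $g=g_2^{\otimes m}$, where no orthogonal splitting exists, is left to Exercise~\ref{ex:multn}.

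\textbf{Your route.} You reduce to basis tensors, factor $\Phi_k\Phi_{k'}$ site by site, and apply Proposition~\ref{prop:Hermite_product_sum} in each coordinate. You then identify the coefficients by directly counting the terms of \eqref{eq:def_starp}.

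\textbf{Comparison.} Your route is uniform and self-contained. Repeated indices, including the pure-power case the paper defers, need no separate treatment. The coefficient $p!\binom{n}{p}\binom{m}{p}$ is explained as a count of pairings at each site. The paper's route gives a recursive, leg-by-leg structure that matches the diagrammatic picture used later for Feynman diagrams. Its cost is needing Lemma~\ref{lem:pairing} and a separate exceptional case. Incidentally, independence of the $X_\ell$ plays no role in your factorisation step, since the product--sum formula is a pointwise polynomial identity.

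\textbf{The bookkeeping.} The step you flagged comes out exactly. So your fallback of ``correcting the normalisation'' is unnecessary, and it would not be legitimate anyway, since $\star_p$ is fixed by its definition.
\begin{itemize}
\item Since $\sigma\in\frakS(p)$ while $\mathbf j$ has $m-p$ entries, $\sigma$ must act on $\mathbf k$. The second factor in \eqref{eq:def_starp} therefore reads $g(\bar\Sigma(\sigma(\mathbf k),\mathbf j))$, consistent with the Remark after the proof.
\item Take $f=e_{a_1}\otimes\dots\otimes e_{a_n}$ and $g=e_{b_1}\otimes\dots\otimes e_{b_m}$. A shuffle $\Sigma$ amounts to a $p$-subset $S$ of slots of $f$, which forces $\mathbf k=a_S$ and $\mathbf i=a_{S^c}$ (in increasing order).
\item The shuffle $\bar\Sigma$ amounts to a $p$-subset $T$ of slots of $g$.
\item The permutation $\sigma$ amounts to a bijection $S\to T$, subject to the constraint that paired slots carry equal indices.
\end{itemize}
Hence each matching is counted exactly once, and $f\star_pg$ is the sum over matchings of the tensor of unpaired indices. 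The number of matchings with profile $(p_\ell)_\ell$ is $\prod_\ell p_\ell!\binom{k_\ell}{p_\ell}\binom{k'_\ell}{p_\ell}$, as you claim.
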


Before giving a proof of this result, we provide some intuition for the 
meaning of contractions, see Figure~\ref{fig:pairings_n}. 
We think of $f$ as a vertex with $n$ legs, representing its 
components, while $g$ is a vertex with $m$ legs. 
The contraction $\star_p$ represents all ways of 
pairing $p$ legs of $f$ with $p$ legs of $g$, where $\Sigma$ 
represents the choice of legs of $f$, $\bar\Sigma$ represents the 
choice of legs of $g$, and $\sigma$ counts all ways of pairing 
the chosen legs. As for the sum over $\ell$, it can be viewed as 
an inner product in $\H$. With this picture in mind, we can show 
the following lemma.

\begin{lemma}
\label{lem:pairing} 
Assume $f\in\H^{\otimes n}$, and $g = g_1\otimes g_2\in\H^{\otimes m}$
with $g_1\in\H^{\otimes(m-1)}$ and $g_2\in\H$. Then 
one has 
\begin{equation}
 f \star_p g 
 = \indicator{p\neq0} (f\star_{p-1} g_1)\star_1 g_2 
 + \indicator{p\neq m} (f\star_p g_1)\otimes g_2
 \qquad 
 \text{for $0\leqs p\leqs m$\;.} 
\end{equation} 
\end{lemma}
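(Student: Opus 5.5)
This is a combinatorial identity about the index sets in the definition~\eqref{eq:def_starp} of $\star_p$, so the plan is to prove it by a bijection on the terms. By bilinearity it suffices to take $f=e_{i_1}\otimes\dots\otimes e_{i_n}$, $g_1=e_{j_1}\otimes\dots\otimes e_{j_{m-1}}$ and $g_2=e_{j_m}$. Equivalently, one can work with the diagrammatic picture of Figure~\ref{fig:pairings_n}. There $f\star_p g$ is a sum over all ways of choosing $p$ legs of $f$ and $p$ legs of $g$ and pairing them bijectively, which is the triple $(\Sigma,\bar\Sigma,\sigma)$. Each pairing produces a factor $\sum_{\mathbf k}$, i.e.\ an inner product in $\H$ of the paired components. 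The unpaired legs of $f$ come first, in their order, followed by the unpaired legs of $g$, in the order prescribed by $\sigma$ acting on $\mathbf j$.

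The first step is to partition the set of admissible triples $(\Sigma,\bar\Sigma,\sigma)$ according to whether the last leg of $g$, carried by $g_2$, is among the $p$ chosen legs. This requires $p\neq0$, and the case is empty otherwise. In the complementary case the last leg is unpaired, which requires $p\neq m$.

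\textbf{Case where the last leg is unpaired.} All $p$ paired legs of $g$ lie among the $m-1$ legs of $g_1$. Restricting $(\bar\Sigma,\sigma)$ to these legs gives a bijection with the data defining $f\star_p g_1$. The leg of $g_2$ then simply remains as the last free index, and this produces exactly $(f\star_p g_1)\otimes g_2$.

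\textbf{Case where the last leg is paired.} Here it is paired with some leg $\ell$ of $f$. Removing this pair leaves a pairing of $p-1$ legs of $f$ with $p-1$ legs of $g_1$, which is a term of $f\star_{p-1}g_1$. That term is a tensor whose first $n-p+1$ slots are the free legs of $f$, including leg $\ell$. Contracting $g_2$ against one of these slots, summed over all choices of the slot, is what $\star_1$ does in~\eqref{eq:def_star1}. The shuffle in $\frakS(1,n-p+1)$ selects the slot, and the inner sum over $j$ is the pairing with $g_2$.

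It remains to check that the map (pairing of size $p$ containing the last leg) $\mapsto$ (pairing of size $p-1$, choice of free $f$-leg to contract with $g_2$) is a bijection. It is: the inverse adds the pair $\{\ell,\text{last leg of }g\}$.

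The main obstacle is a bookkeeping subtlety. When $\star_1$ acts on $f\star_{p-1}g_1$, the free slots also include the $m-p$ free legs of $g_1$. Strictly read, \eqref{eq:def_star1} sums over all $n-1$ slot positions of the contracted tensor, which would allow $g_2$ to contract with a leg of $g_1$; such self-pairings within $g$ do not occur in $f\star_p g$. I would resolve this by interpreting $\star_1$ in the lemma as contraction with the $f$-slots only, which is consistent with the diagrammatic convention. Alternatively, note that after symmetrisation, i.e.\ under $\hat I$, the orderings agree. A second, purely index-level point is that $\sigma$ permutes the free legs of $g$. I would handle it by observing that the ordering convention for $\mathbf j$ is transported correctly under the bijection, so both sides list the free $g$-indices identically.
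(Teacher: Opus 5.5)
Your argument is the paper's own proof made rigorous. The paper likewise splits the pairings defining $f\star_p g$ according to whether the leg carried by $g_2$ is paired (possible only if $p\neq0$) or free (possible only if $p\neq m$), and reads the two terms off Figure~\ref{fig:pairings_n}. Your bijection, which adds or removes the pair containing the $g_2$-leg, is the precise version of that picture. The count $(p-1)!\binom{n}{p-1}\binom{m-1}{p-1}(n-p+1)=p!\binom{n}{p}\binom{m-1}{p-1}$ confirms it.

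The subtlety you raise is genuine, and the paper's graphical proof silently adopts your reading. With \eqref{eq:def_star1} taken literally, $(f\star_{p-1}g_1)\star_1 g_2$ also contains terms in which $g_2$ is contracted with one of the $m-p$ free legs of $g_1$. These have no counterpart in $f\star_p g$. Already for $n=1$, $m=2$, $p=1$, $f=e_a$, $g_1=e_b$, $g_2=e_c$ one finds $f\star_1 g=\delta_{ab}e_c+\delta_{ac}e_b$. The right-hand side, however, equals $\delta_{ac}e_b+\delta_{bc}e_a+\delta_{ab}e_c$.

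So the statement needs one of two repairs. The first is your restriction of $\star_1$ to the $f$-slots. The second is the hypothesis actually in force where the lemma is used, in the proof of Proposition~\ref{prop:mult_nm}: $g_2$ orthogonal to all components of $g_1$. Under that hypothesis every spurious term contains a factor $\langle g_2,\cdot\rangle$ against a component of $g_1$ and vanishes. The literal identity then holds, and your bijection proves it verbatim.

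Your second proposed fix does not work, and you should drop it. The spurious terms are additional contributions, not reorderings of existing ones, so they survive symmetrisation and the application of $\hat I$. In the example, $\hat I_1(\delta_{bc}e_a)=\delta_{bc}X_a\neq0$ when $b=c$.

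Your ``second, purely index-level point'' comes from a typo in \eqref{eq:def_starp}. There $\sigma\in\frakS(p)$ cannot act on $\mathbf j$, which has length $m-p$. It must act on $\mathbf k$, as the text says: ``$\sigma$ counts all ways of pairing the chosen legs''. With that reading, the free legs of $f$ and then of $g$ appear in their natural order on both sides, since $\star_1$ and $\otimes g_2$ both preserve the order of the remaining slots. Nothing needs to be transported.
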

\begin{proof}
We give a graphical proof. 
The first term on the right-hand side represents pairing $p-1$ 
legs of $f$ with $p-1$ legs of $g_1$, and one leg of $f$ with one 
leg of $g_2$, see Figure~\ref{fig:pairings_n} (a). 
This is only possible if $p\geqs1$. The second term 
on the right-hand side represents pairing $p$ legs of $f$ with $p$
legs of $g_1$, and leaving $g_2$ alone, see 
Figure~\ref{fig:pairings_n} (b). This is only possible 
if $p \leqs m-1$. 
\end{proof}

\begin{figure}
\begin{center}
\begin{tikzpicture}[>=stealth',main node/.style={circle,minimum
size=0.4cm,inner sep=2pt,fill=blue!25,draw},small node/.style={draw,circle,fill=white,minimum
size=3pt,inner sep=0pt},scale=1.2]
\draw[semithick] (0,0) -- (1,1.5);
\draw[semithick] (0,0) -- (1,1.0);
\draw[semithick] (0,0) -- (1,0.5);
\draw[semithick] (0,0) -- (1,0.0);
\draw[semithick] (0,0) -- (1,-0.5);
\draw[semithick] (0,0) -- (1,-1.0);
\draw[semithick] (0,0) -- (1,-1.5);

\draw[semithick] (3,1) -- (2,1.75);
\draw[semithick] (3,1) -- (2,1.25);
\draw[semithick] (3,1) -- (2,0.75);
\draw[semithick] (3,1) -- (2,0.25);

\draw[semithick] (3,-1) -- (2,-1);

\draw[thick,blue] (1,1.5) to[out=50,in=150] (2,1.75);
\draw[thick,blue] (1,1.0) to[out=40,in=160] (2,1.25);
\draw[thick,blue] (1,0.5) to[out=30,in=180] (2,-1);

\node[main node] (f) at (0,0) {$f$};

\node[main node] (g1) at (3,1) {$g_1$};
\node[main node] (g2) at (3,-1) {$g_2$};

\node[small node] (1) at (1,1.5) {};
\node[small node] (2) at (1,1.0) {};
\node[small node] (3) at (1,0.5) {};
\node[small node] (4) at (1,0.0) {};
\node[small node] (5) at (1,-0.5) {};
\node[small node] (6) at (1,-1.0) {};
\node[small node] (7) at (1,-1.5) {};

\node[small node] (8) at (2,1.75) {};
\node[small node] (9) at (2,1.25) {};
\node[small node] (10) at (2,0.75) {};
\node[small node] (11) at (2,0.25) {};

\node[small node] (12) at (2,-1) {};

\node[] at (-0.5,1.75) {(a)};
\end{tikzpicture}
\hspace{15mm}
\begin{tikzpicture}[>=stealth',main node/.style={circle,minimum
size=0.4cm,inner sep=2pt,fill=blue!25,draw},small node/.style={draw,circle,fill=white,minimum
size=3pt,inner sep=0pt},scale=1.2]
\draw[semithick] (0,0) -- (1,1.5);
\draw[semithick] (0,0) -- (1,1.0);
\draw[semithick] (0,0) -- (1,0.5);
\draw[semithick] (0,0) -- (1,0.0);
\draw[semithick] (0,0) -- (1,-0.5);
\draw[semithick] (0,0) -- (1,-1.0);
\draw[semithick] (0,0) -- (1,-1.5);

\draw[semithick] (3,1) -- (2,1.75);
\draw[semithick] (3,1) -- (2,1.25);
\draw[semithick] (3,1) -- (2,0.75);
\draw[semithick] (3,1) -- (2,0.25);

\draw[semithick] (3,-1) -- (2,-1);

\draw[thick,blue] (1,1.5) to[out=50,in=150] (2,1.75);
\draw[thick,blue] (1,1.0) to[out=40,in=160] (2,1.25);
\draw[thick,blue] (1,0.5) to[out=30,in=195] (2,0.75);

\node[main node] (f) at (0,0) {$f$};

\node[main node] (g1) at (3,1) {$g_1$};
\node[main node] (g2) at (3,-1) {$g_2$};

\node[small node] (1) at (1,1.5) {};
\node[small node] (2) at (1,1.0) {};
\node[small node] (3) at (1,0.5) {};
\node[small node] (4) at (1,0.0) {};
\node[small node] (5) at (1,-0.5) {};
\node[small node] (6) at (1,-1.0) {};
\node[small node] (7) at (1,-1.5) {};

\node[small node] (8) at (2,1.75) {};
\node[small node] (9) at (2,1.25) {};
\node[small node] (10) at (2,0.75) {};
\node[small node] (11) at (2,0.25) {};

\node[small node] (12) at (2,-1) {};

\node[] at (-0.5,1.75) {(b)};
\end{tikzpicture}
\end{center}
\vspace{-4mm}
\caption[]{Examples of pairings corresponding to 
$(f\star_{p-1} g_1)\star_1 g_2$ (a)
and to $(f\star_p g_1)\otimes g_2$ (b), when $n=7$, $m=5$ 
and $p=3$.}
\label{fig:pairings_n} 
\end{figure}

We can now give the proof of Proposition~\ref{prop:mult_nm}.

\begin{proof}[{\sc Proof of Proposition~\ref{prop:mult_nm}}]
We may assume $n\geqs m$. 
The proof is by induction on $m$. The base case $m=1$ is Lemma~\ref{lem:multn1}.
For the induction step, we can restrict by linearity to the case 
where $g = g_1\otimes g_2$ with $g_1\in\H^{\otimes(m-1)}$ and $g_2\in\H$. 
Assume that $g_2$ is orthogonal to all components of $g_1$. Then we claim that 
\begin{equation}
 g_1 \star_1 g_2 = 0\;.
\end{equation} 
Indeed, taking $g_2$ as a basis vector of $\H$, $g_1$ will be a linear 
combination of basis vectors different from $g_2$, so that the 
inner product in~\eqref{eq:def_star1} vanishes. It follows that 
\begin{equation}
 \hat I_{m-1}(g_1) \hat I_1(g_2) = \hat I_m(g_1\otimes g_2)\;.
\end{equation} 
Therefore, 
\begin{align}
 \hat I_n(f) \hat I_m(g)
 &= \hat I_n(f) \hat I_{m-1}(g_1) \hat I_1(g_2) \\
 &= \sum_{p=0}^{m-1} \hat I_{n+m-1-2p}(f\star_pg_1) \hat I_1(g_2) \\
 &= \sum_{p=0}^{m-1} 
 \hat I_{n+m-2p}\bigpar{(f\star_pg_1)\otimes g_2}
 + \sum_{p=0}^{m-1} \hat I_{n+m-1-2p}\bigpar{(f\star_pg_1)\star_1 g_2}\;,
\end{align}
where we have used the induction hypothesis in the second line, 
and Lemma~\ref{lem:multn1} in the third one. Making the index 
shift $p\mapsto p+1$ in the second sum and using Lemma~\ref{lem:pairing}
yields the result. 

In case it is not possible to find an orthogonal decomposition 
$g = g_1\otimes g_2$, by linearity it suffices to consider 
the case where $g = g_2^{\otimes m}$. This case can be reduced 
to an explicit computation, see Exercise~\ref{ex:multn} below. 
\end{proof}

\begin{remark}
If $f$  and $g$ are symmetric under permutations of their arguments, 
since $\frakS(p,n)$ has cardinality $\binom{n}{p}$, we have 
\begin{equation}
 (f\star_pg)(\mathbf{i},\mathbf{j}) 
 = p!\binom{n}{p} \binom{m}{p} 
 \sum_{\mathbf{k}\in\intset{1,N}^p} 
 f(\mathbf{k},\mathbf{i})g(\mathbf{k},\mathbf{j})\;. 
\end{equation} 
\end{remark}

\begin{exercise}
\label{ex:multn} 
Compute $\hat I_n(f)\hat I_m(g)$ when $f = h_1^{\otimes n}$ 
and $g = h_2^{\otimes m}$ with $h_1, h_2\in\H$. 
Argue that for the case left out in the proof of Proposition~\ref{prop:mult_nm}, 
is suffices to consider the case $h_1 = h_2$. Why is the result true 
in that case?
\end{exercise}

The leading term in the decomposition~\eqref{eq:chaos_multn} 
plays a special role, and is therefore given a name.

\begin{definition}[Wick product]
The \emph{Wick product} of $f\in\H^{\otimes n}$ and $g\in\H^{\otimes m}$
is defined by 
\begin{equation}
 \hat I_n(f) \diamond \hat I_m(g) = \hat I_{n+m}(f\otimes g)\;.
\end{equation} 
\end{definition}

Relation~\eqref{eq:chaos_multn} can thus be written
\begin{equation}
 \hat I_n(f)\hat I_m(g) 
 = \hat I_n(f) \diamond \hat I_m(g)
 + \sum_{p=1}^{n\wedge m} \hat I_{n+m-2p}(f\star_p g)\;.
\end{equation} 

%%%%%%%%%%%%%%%%%%%%%%%%%%%%%%%%%%%%%%%%%%%%%%%%%%%%%%%%%%%%%%%%%%%%%%%%%%%%%%%%

\section{Equivalence of moments}
\label{sec:equivalence_moments} 

The purpose of this section is to give a proof of the following, very 
useful result.

\begin{theorem}[Equivalence of moments]
\label{thm:equivalence_moments} 
Assume $F$ belongs to the $n$th Wiener chaos $\cH_n$. Then for any 
$p > 1$, one has 
\begin{equation}
\label{eq:equivalence_moments} 
 \expec{F^{2p}}^{1/2p} \leqs (2p-1)^{n/2} \expec{F^2}^{1/2}\;.
\end{equation} 
\end{theorem}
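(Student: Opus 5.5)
The plan is to obtain \eqref{eq:equivalence_moments} from Nelson's hypercontractivity of the Ornstein--Uhlenbeck semigroup $P_t=\e^{t\cL}$. Here $\cL=\sum_{i=1}^N(\partial_i^2-x_i\partial_i)$ is the $N$-dimensional version of the operator in~\eqref{eq:aadagger}. The first step is to check that every $F\in\cH_n$ is an eigenfunction of $\cL$ with eigenvalue $-n$. By Section~\ref{ssec:Wiener_Fock}, $\cH_n$ is spanned by the products $\Phi_k=\prod_i H_{k_i}(X_i)$ with $\abs{k}=n$. Since $\cL$ acts coordinatewise, Corollary~\ref{cor:Hermite_differential} gives $\cL\Phi_k=-\abs{k}\Phi_k$. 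Consequently $P_tF=\e^{-nt}F$ for every $t\geqs0$.

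The second step is the hypercontractive estimate
\begin{equation}
 \norm{P_tG}_{L^q}\leqs\norm{G}_{L^2}
 \qquad\text{whenever } q-1\leqs \e^{2t}\;.
\end{equation}
Applying it with $q=2p$ and $\e^{2t}=2p-1$ to $G=F$ gives
\begin{equation}
 \e^{-nt}\expec{F^{2p}}^{1/2p}=\norm{P_tF}_{L^{2p}}\leqs\expec{F^2}^{1/2}\;.
\end{equation}
Since $\e^{nt}=(2p-1)^{n/2}$, this is exactly the claim.

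The main work, and the main obstacle, is proving hypercontractivity itself, since nothing earlier in the paper provides it. I would proceed in three stages.
\begin{enumerate}
\item \textbf{Reduction to one dimension.} Hypercontractivity tensorises: $P_t$ is a product of one-dimensional semigroups, and an $L^2\to L^q$ contraction bound for each factor passes to the product via Minkowski's integral inequality. So it suffices to treat $N=1$.
\item \textbf{Two-point inequality.} In dimension one, I would first prove Bonami's inequality for a symmetric $\pm1$ variable $\varepsilon$:
\begin{equation}
 \expec{\abs{a+\rho b\varepsilon}^q}^{1/q}\leqs(a^2+b^2)^{1/2}
 \qquad\text{for } \rho\leqs(q-1)^{-1/2}\;.
\end{equation}
This follows from an elementary expansion in $b/a$.
\item \textbf{Passage to the Gaussian.} Tensorising the two-point inequality over $M$ independent signs and applying the central limit theorem to $X=M^{-1/2}\sum_j\varepsilon_j$ transfers the bound to polynomials in a Gaussian. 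The limit is justified because all moments of a normalised Rademacher sum converge to Gaussian moments. Polynomials then suffice, because $F$ is itself a polynomial.
\end{enumerate}

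If the semigroup route proves too heavy, there is a fallback that proves the inequality directly on polynomials. Write $F=\sum_{\abs{k}=n}c_k\Phi_k$. Let $T_\rho$ act on $\cH_m$ as multiplication by $\rho^m$. By the one-dimensional Bonami inequality and the central limit argument, $\norm{T_\rho G}_{2p}\leqs\norm{G}_2$ for $\rho=(2p-1)^{-1/2}$ and every polynomial $G$. Taking $G=F\in\cH_n$, so that $T_\rho F=\rho^nF$, gives $\rho^n\norm{F}_{2p}\leqs\norm{F}_2$, which again yields~\eqref{eq:equivalence_moments}.
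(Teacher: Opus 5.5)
Your reduction is the same as the paper's. For $F\in\cH_n$ the Ornstein--Uhlenbeck semigroup acts as multiplication by $\e^{-nt}$, and the $L^2\to L^{2p}$ bound at $\e^{2t}=2p-1$ then gives the claim. The two arguments differ only in how hypercontractivity is proved.

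The paper follows Nualart. Via Mehler's formula it writes $\expec{(T_tF)G}$ as $\expec{PQ}$ for functionals of two Brownian motions with correlation $\e^{-t}$. It represents $P^p$ and $Q^{q'}$ as stochastic integrals and applies It\^o's formula to $M(s)^{1/p}N(s)^{1/q'}$, whose drift is non-positive exactly when $q-1\leqs\e^{2t}(p-1)$. This gives Nelson's full $L^p\to L^{q(t)}$ theorem for every $p>1$ at once, directly in any dimension, at the price of stochastic calculus and martingale representation.

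You take Gross's route: Bonami's two-point inequality, tensorisation, then the central limit theorem. It avoids stochastic integration, proves only the $L^2\to L^q$ case you need, and gives the discrete Bonami--Beckner inequality as a by-product. The cost is a limiting argument and care at two places your sketch passes over too quickly. Your fallback is the same argument with $\rho=\e^{-t}$, so it does not avoid either point.

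\textbf{The two-point inequality.} For general real $q=2p>2$ it does not follow from a term-by-term expansion in $y=\rho b/a$. With $\rho^2=1/(q-1)$ you would compare $\sum_k\binom{q}{2k}y^{2k}$ with $\sum_k\binom{q/2}{k}(q-1)^ky^{2k}$. For $3<q<4$ the coefficient $\binom{q}{6}$ is positive while $\binom{q/2}{3}(q-1)^3$ is negative, so the comparison breaks down.

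The standard repair is duality. Since $T_\rho$ is self-adjoint, $\norm{T_\rho}_{2\to q}=\norm{T_\rho}_{q'\to 2}$. For $1<q'<2$ all coefficients $\binom{q'}{2k}$ are non-negative. Hence for $\abs{b}\leqs1$, Bernoulli's inequality gives $\expec{\abs{1+b\varepsilon}^{q'}}^{2/q'}\geqs\bigl(1+\tfrac{q'(q'-1)}{2}b^2\bigr)^{2/q'}\geqs 1+(q'-1)b^2$. The case $\abs{b}>1$ follows from $\abs{1+b\varepsilon}=\abs{b}\,\abs{1+b^{-1}\varepsilon}$.

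\textbf{The central limit step.} Convergence of the moments of $X_M=M^{-1/2}\sum_j\varepsilon_j$ does not by itself transfer the bound. You must say which function on $\{\pm1\}^M$ the tensorised inequality is applied to, and why the cube operator $T_\rho$ becomes the Ornstein--Uhlenbeck operator in the limit.

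For $G=\sum_m c_mH_m$, take $g_M=\sum_m c_m\,m!\,M^{-m/2}e_m(\varepsilon)$, with $e_m$ the $m$th elementary symmetric polynomial. Then $T_\rho$ multiplies the $m$th term exactly by $\rho^m$. Next check that $m!\,M^{-m/2}e_m(\varepsilon)-H_m(X_M)\to0$ in every $L^r$. For example, $H_3(X_M)=3!\,M^{-3/2}e_3(\varepsilon)-\frac{2}{M}X_M$; in general this follows from $\prod_j\bigl(1+t\varepsilon_j/\sqrt{M}\bigr)=\e^{tX_M-t^2/2+O(1/M)}$. Only after this does moment convergence (with uniform integrability for non-even $q$) close the argument.
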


This result states that the variance of a random variable 
$F = f(X_1,\dots,X_N)$ controls all $L^p$ norms of $F$ for 
$p > 1$. 

We will follow a proof given in~\cite[Section~1.4]{nualart2006malliavin}, based 
on hypercontractivity of the Ornstein--Uhlenbeck semigroup. Other proofs can be found 
in~\cite[Section 4]{daPrato_Tubaro} and in the lecture 
notes~\cite[Section~7]{Hairer_Malliavin26}.

%%%%%%%%%%%%%%%%%%%%%%%%%%%%%%%%%%%%%%%%%%%%%%%%%%%%%%%%%%%%%%%%%%%%%%%%%%%%%%%%

\subsection{Ornstein--Uhlenbeck semigroup}
\label{ssec:OU} 

\begin{definition}[Ornstein--Uhlenbeck semigroup]
The Ornstein--Uhlenbeck semigroup is the one-parameter semigroup 
$\setsuch{T_t}{t\geqs0}$ of contraction operators on $\cH$ defined by 
\begin{equation}
\label{eq:def_OU_semigroup} 
 T_t(F) = \sum_{n=0}^\infty \e^{-nt}P_n F
\end{equation} 
for any $F\in\cH$, where $P_n:\cH\to\cH_n$ denotes the orthogonal 
projection on the $n$th Wiener chaos. 
\end{definition}

We have already encountered the Ornstein--Uhlenbeck process in 
Section~\ref{ssec:differential}. Consider first the one-dimensional 
case ($N = 1$). Then this process is defined as the solution of the 
stochastic differential equation in $\R$ 
\begin{equation}
\label{eq:SDE_OU} 
 \6X_t = -X_t\6t + \sqrt{2}\6W_t\;, 
 \qquad 
 X_0 = x\;,
\end{equation} 
where $(W_t)_{t\geqs0}$ denotes standard Brownian motion. By Ito's formula, 
for any $f:\R\to\R$ of class $\cC^2$, the process $Y_t = f(x_t)$ satisfies 
\begin{equation}
 \6Y_t = \bigbrak{-X_t f'(X_t) + f''(X_t)}\6t + \sqrt{2} f'(X_t)\6W_t\;.
\end{equation} 
In particular, if $f(x) = H_n(x)$, then the term in brackets is 
$-nH_n(X_t)$ (see~\eqref{eq:Hermite_eigenvalue} in 
Corollary~\ref{cor:Hermite_differential}), so that 
\begin{equation}
 \6Y_t = -nY_t\6t + \sqrt{2}f'(X_t)\6W_t\;.
\end{equation}
Integrating from $0$ to $t$, we find 
\begin{equation}
 H_n(X_t) = H_n(x) - n\int_0^t H_n(X_s)\6s 
 + \sqrt{2}\int_0^t f'(X_s)\6W_s\;.
\end{equation} 
Since the expectation of the stochastic integral is zero, it follows that  
\begin{equation}
 \expec{H_n(X_t)} = H_n(x) - n\int_0^t \expec{H_n(X_s)}\6s\;,
\end{equation} 
which implies 
\begin{equation}
 \frac{\6}{\6t} \expec{H_n(X_t)} = -n\expec{H_n(X_t)}\;, 
 \qquad 
 H_n(X_0) = H_n(x)\;.
\end{equation} 
The solution of this ordinary differential equation is simply 
\begin{equation}
 \expec{H_n(X_t)} = H_n(x)\e^{-nt}\;.
\end{equation} 
Consider now a general $f$ of the form 
\begin{equation}
 f(x) = \sum_{n\geqs0} c_n H_n(x)\;.
\end{equation} 
Then by linearity, we obtain 
\begin{equation}
 \expec{f(X_t)} = \sum_{n\geqs0} c_n H_n(x) \e^{-nt}
 = \sum_{n\geqs0} (P_n f))(x) \e^{-nt}
 = T_t(f)(x)\;,
\end{equation}
explaining why $T_t$ in~\eqref{eq:def_OU_semigroup} is called Ornstein--Uhlenbeck 
semigroup. 

Consider now the $N$-dimensional case. The $N$-dimensional Ornstein--Uhlenbeck 
is defined as the solution of the stochastic differential equation~\eqref{eq:SDE_OU} 
when $x_t\in\R^N$ and $W_t$ is $N$-dimensional Brownian motion. If $Y_t = f(X_t)$
for a twice differentiable $f:\R^2 \to \R$, Ito's formula now reads 
\begin{equation}
 \6Y_t = \bigbrak{-\pscal{\nabla f(X_t)}{X_t} + \Delta f(X_t)}\6t 
 + \sqrt{2} \pscal{\nabla f(X_t)}{\6W_t}\;.
\end{equation} 
The fact that the associated semigroup is again of the form~\eqref{eq:def_OU_semigroup}
is a consequence of the following observation. 

\begin{lemma}[Eigenfunctions of the Ornstein--Uhlenbeck generator in $\R^N$]
For $k\in\H=\R^N$ let
\begin{equation}
 f(x) = \Phi_k = \prod_{i=1}^N H_{k_i}(x_i)\;.
\end{equation} 
Then 
\begin{equation}
\label{eq:OU_eigenfunctions} 
 -\pscal{\nabla f(x)}{x} + \Delta f(x) = -\abs{k} f(x)\;.
\end{equation} 
\end{lemma}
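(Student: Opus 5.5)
The identity is a direct computation that reduces to the one-dimensional eigenvalue relation~\eqref{eq:Hermite_eigenvalue} of Corollary~\ref{cor:Hermite_differential}. (Here $k$ should be understood as a multi-index in $\N_0^N$.) The plan is to write the operator on the left-hand side of~\eqref{eq:OU_eigenfunctions} as a sum of one-dimensional operators,
\begin{equation}
 -\pscal{\nabla f(x)}{x} + \Delta f(x) = \sum_{i=1}^N \bigbrak{\partial_i^2 f(x) - x_i\partial_i f(x)} = \sum_{i=1}^N (\cL_i f)(x)\;,
\end{equation}
where $\cL_i = \partial_{x_i}^2 - x_i\partial_{x_i}$ is the operator $\cL$ from~\eqref{eq:aadagger} acting on the variable $x_i$ only.

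Since $f = \Phi_k$ is a product of functions of a single variable each, $\cL_i$ acts only on the factor $H_{k_i}(x_i)$ and leaves all other factors unchanged. Thus
\begin{equation}
 (\cL_i f)(x) = \bigpar{\cL H_{k_i}}(x_i)\prod_{j\neq i} H_{k_j}(x_j) = -k_i \prod_{j=1}^N H_{k_j}(x_j) = -k_i f(x)\;,
\end{equation}
where the middle equality is exactly~\eqref{eq:Hermite_eigenvalue}. Summing over $i$ then gives $-\bigpar{\sum_i k_i} f(x) = -\abs{k} f(x)$, which is~\eqref{eq:OU_eigenfunctions}.

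The only potential obstacle is the bookkeeping: checking that the drift term $\pscal{\nabla f}{x}$ and the Laplacian split coordinatewise, which is immediate from their definitions, and that factors not depending on $x_i$ commute with $\partial_{x_i}$. No analytic subtleties arise, since all functions involved are polynomials. As a remark, this implies that $\cH_n$, which is spanned by the $\Phi_k$ with $\abs{k}=n$, is the eigenspace of the generator with eigenvalue $-n$. This is what identifies the semigroup generated by the Ornstein--Uhlenbeck process with~\eqref{eq:def_OU_semigroup} in dimension $N$.
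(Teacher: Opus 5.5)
Your proof is correct and is essentially the paper's argument: the paper writes $\partial_i f$ and $\partial_i^2 f$ as derivatives of the single factor $H_{k_i}(x_i)$ times the remaining product, and then implicitly applies the one-dimensional eigenvalue relation~\eqref{eq:Hermite_eigenvalue} in each coordinate and sums, exactly as you do. You are also right that $k$ should be read as a multi-index in $\N_0^N$ rather than as an element of $\R^N$.
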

\begin{proof}
This follows directly from the fact that 
\begin{equation}
 \frac{\partial f}{\partial x_i}
 = H_{k_i}'(x_i) \prod_{j\neq i} H_{k_j}(x_j)\;, \qquad 
 \frac{\partial f^2}{\partial x_i^2}
 = H_{k_i}''(x_i) \prod_{j\neq i} H_{k_j}(x_j)
\end{equation}
for any $i\in\intset{1,N}$. 
\end{proof}

\begin{exercise}
Check that~\eqref{eq:OU_eigenfunctions} implies that the semigroup of the 
$N$-dimensional Ornstein--Uhlenbeck process is given by~\eqref{eq:def_OU_semigroup}.
\end{exercise}

Using variation of constants, the solution of~\eqref{eq:SDE_OU} can be 
represented as 
\begin{equation}
 X_t = \e^{-t}x + \sqrt{2} \int_0^t \e^{-(t-s)}\6W_s\;,
\end{equation} 
as can be verified by applying Ito's formula. The second 
term on the right-hand side is Gaussian, centred, and by Ito's isometry, 
its variance is 
\begin{equation}
 2\int_0^t \e^{-2(t-s)} \6s = 1 - \e^{-2t}\;.
\end{equation} 
The Ornstein--Uhlenbeck process can thus be written as 
\begin{equation}
 X_t = \e^{-t}x + \sqrt{1-\e^{-2t}} X_t'\;,
\end{equation} 
where $X_t' \sim \cN(0,1)$ for any $t\geqs0$. 
This observation is the intuition behind the following result. 

\begin{proposition}[Mehler's formula]
\label{prop:Mehler} 
Let $W' = \setsuch{W'(h)}{h\in\H}$ be an independent copy of 
$W = \setsuch{W(h)}{h\in\H}$, where $W$ and $W'$ are 
defined on a product probability space 
$(\Omega\times\Omega',\cF\otimes\cF',\fP\times\fP')$. 
For $t>0$, consider the process $Z = \setsuch{Z(h)}{h\in\H}$ defined by 
\begin{equation}
  Z(h) = \e^{-t}W(h) + \sqrt{1-\e^{-2t}} W'(h)\;.
\end{equation} 
Then for any $F\in\cH$ of the form $F = f(W)$, one has 
\begin{equation}
\label{eq:Tt_Mehler} 
 T_t(F) = \E'\bigbrak{f(Z)}
\end{equation} 
where $\E'$ denotes the expectation with respect to the law $\fP'$ of $W'$. 
\end{proposition}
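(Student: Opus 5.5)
Both sides of \eqref{eq:Tt_Mehler} are linear in $F$ and bounded on $\cH$, so the plan is to check the identity on the total family of exponentials from Lemma~\ref{lem:Wiener_chaos_N}. Then I extend it to all of $\cH$ by linearity and density.

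\textbf{Step 1: contractivity.} First check that the right-hand side is an $L^2$ contraction. The map $F = f(W) \mapsto \E'[f(Z)]$ is linear. By Jensen,
\begin{equation}
\E\bigbrak{\E'[f(Z)]^2} \leqs \E\E'\bigbrak{f(Z)^2}.
\end{equation}
For every $h,g\in\H$ one has $\E\E'[Z(h)Z(g)] = (\e^{-2t} + 1 - \e^{-2t})\pscal{h}{g}_\H$. Hence $Z$ is a centred Gaussian family with the same covariance as $W$, so $Z$ has the same law as $W$ and $\E\E'[f(Z)^2] = \E[f(W)^2]$. This also shows the map is well defined on equivalence classes in $\cH$. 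On the other side, $T_t$ is a contraction by \eqref{eq:def_OU_semigroup}.

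\textbf{Step 2: exponentials.} Take $F = \e^{W(h)}$ with $\norm{h}_\H = 1$, and write it as $F = \e^{1/2}\,G(1,W(h))$, where $G(s,x) = \e^{sx - s^2/2}$. By Proposition~\ref{prop:Hermite_generating},
\begin{equation}
F = \e^{1/2}\sum_{n\geqs 0} \frac{1}{n!} H_n(W(h)).
\end{equation}
This series converges in $\cH$ because of the orthogonality relation \eqref{eq:Hermite_orthogonal}: the squared norms sum to $\sum_n 1/n! < \infty$. Each term $H_n(W(h))$ lies in $\cH_n$, so $P_n F = \e^{1/2} H_n(W(h))/n!$. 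Therefore
\begin{equation}
T_t F = \e^{1/2}\sum_{n} \frac{\e^{-nt}}{n!} H_n(W(h)) = \e^{1/2}\, G\bigpar{\e^{-t}, W(h)} = \exp\Bigpar{\e^{-t} W(h) + \tfrac12\bigpar{1 - \e^{-2t}}}.
\end{equation}
For the right-hand side, Example~\ref{example:Laplace} applied to $W'(h) \sim \cN(0,1)$ gives
\begin{equation}
\E'\bigbrak{\e^{Z(h)}} = \e^{\e^{-t}W(h)}\,\e^{(1-\e^{-2t})/2}.
\end{equation}
The two sides agree. The general case $h = \lambda u$ with $\norm{u}_\H = 1$ follows in the same way, either by using the scaled generating function $G(\lambda,x)$ or by scaling.

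\textbf{Step 3: extension.} By linearity the identity holds on the span of $\set{\e^{W(h)}}$. This span is dense in $\cH$ by Lemma~\ref{lem:Wiener_chaos_N}. Both sides are bounded linear operators (Step 1), so they agree on all of $\cH$.

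The main obstacle is Step 1. One must justify that $f(Z)$ makes sense and that the map is bounded, which rests on the claim that $Z$ has the same law as $W$. This follows from Proposition~\ref{prop:Laplace_Gaussian_ndim}, because $(Z(e_i))_i$ is Gaussian with identity covariance. A second point needing care is the term-by-term application of $T_t$ in Step 2, which is justified by the $L^2$ convergence established there.
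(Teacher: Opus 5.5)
Your proof is correct and takes essentially the paper's route. The right-hand side is a contraction because $Z$ has the same law as $W$ (Jensen). The identity is checked on exponential functionals by expanding them in Hermite polynomials and computing $\E'$ with the Gaussian Laplace transform, and it is then extended by density. The differences are cosmetic. The paper uses the normalised exponentials $\exp\{W(h)-\frac12\norm{h}_\H^2\}$ with the scaled Wiener isometry, and it asserts the contraction on every $L^p$ because hypercontractivity needs it later. You instead use $\e^{W(h)}$ with unit $h$ and rescale, and you make explicit the $L^2$ convergence of the Hermite series and the density step via Lemma~\ref{lem:Wiener_chaos_N}, which the paper leaves implicit.
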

\begin{proof}
The process $Z(h)$ is Gaussian, centred, with covariance 
\begin{align}
 \expec{Z(h_1)Z(h_2)}
 &= \e^{-2t} \expec{W(h_1)W(h_2)} + (1-\e^{-2t}) \expec{W'(h_1)W'(h_2)} \\
\label{eq:Z_covariance} 
 &= \pscal{h_1}{h_2}_\H \\
 &= \expec{W(h_1)W(h_2)}
\end{align}
by~\eqref{eq:Wh_covariance}.
The right-hand side of~\eqref{eq:Tt_Mehler} defines a linear contraction
on any $L^p(\Omega)$ with $p>1$, because by Jensen's inequality
\begin{align}
 \expec{\abs{T_t(F)}^p}
 &= \Bigexpec{\bigabs{\E'\bigbrak{f(\e^{-t}W(h) + \sqrt{1-\e^{-2t}} W'(h))}}^p} \\
 &\leqs 
 \Bigexpec{\E'\bigbrak{\abs{f(\e^{-t}W(h) + \sqrt{1-\e^{-2t}} W'(h))}^p}}
 = \bigexpec{\abs{F}^p}\;.
\end{align}
Therefore, it is sufficient to check that~\eqref{eq:Tt_Mehler} holds for 
the generating function of Hermite polynomials 
$F = f(W) = \exp\set{W(h)-\frac12\norm{h}_\H^2}$ with any $h\in\H$. 
On one hand, we have 
\begin{align}
 F = G(1,W(h))  
 &= \sum_{n\geqs0} \frac{1}{n!} H_n(W(h);\norm{h}_H) \\
 &= \sum_{n\geqs0} \frac{1}{n!} \hat I_n(h^{\otimes n})
\end{align}
by Remark~\ref{rem:Wiener_isometry_scaled}. This implies 
\begin{equation}
\label{eq:TtF1} 
 T_t(F) = \sum_{n\geqs0} \frac{\e^{-nt}}{n!} \hat I_n(h^{\otimes n})\;.
\end{equation} 
On the other hand, we have 
\begin{equation}
 \E'\bigbrak{f(Z)} 
 = \E'\bigbrak{\e^{a+bW'(h)}}\;, \qquad 
 a = \e^{-t}W(h) - \frac12\norm{h}_\H^2\;, \qquad 
 b = \sqrt{1-\e^{-2t}}\;.
\end{equation} 
By Proposition~\ref{prop:Laplace_Gaussian_ndim} (Laplace transform) and the 
expression~\eqref{eq:Z_covariance} for the covariance of $Z$, this yields 
\begin{align}
 \E'\bigbrak{f(Z)} 
 &= \e^a \E'\bigbrak{\e^{\pscal{b}{W(h)}}} \\
 &= \exp\biggset{\e^{-t}W(h) - \frac12\norm{h}_\H^2
 + \frac12(1-\e^{-2t})\norm{h}_\H^2}\\
 &= \exp\biggset{\e^{-t}W(h) - \frac12\e^{-2t}\norm{h}_\H^2} \\
 &= G(\e^{-t}, W(h)) \\
 &= \sum_{n\geqs0} \frac{\e^{-nt}}{n!} H_n(W(h); \norm{h}_\H) \\
 &= \sum_{n\geqs0} \frac{1}{n!} \hat I_n(h^{\otimes n})\;.
\end{align}
This is equal to~\eqref{eq:TtF1}, which concludes the proof. 
\end{proof}

%%%%%%%%%%%%%%%%%%%%%%%%%%%%%%%%%%%%%%%%%%%%%%%%%%%%%%%%%%%%%%%%%%%%%%%%%%%%%%%%

\subsection{Hypercontractivity}
\label{ssec:hypercontractivity} 

We will denote by $L^p(\Omega,\cF,\fP)$ the Banach space of random variables 
$F:\Omega\to\R$ satisfying 
\begin{equation}
 \norm{F}_p = \expec{F^p}^{1/p} < \infty\;.
\end{equation} 
The following result, originally due to Nelson, says that the 
Ornstein--Uhlenbeck semigroup is \emph{hypercontractive}, meaning 
that if $t > 0$, then $T_t$ maps $L^p(\Omega,\cF,\fP)$ into 
$L^q(\Omega,\cF,\fP)$ for some $q = q(t)$ strictly larger than $p$. 
We follow essentially the proof of~\cite[Theorem~1.4.1]{nualart2006malliavin}.

\begin{theorem}[Hypercontractivity of the Ornstein--Uhlenbeck semigroup]
For $p>1$ and $t>0$, let 
\begin{equation}
\label{eq:hyper_qp} 
 q(t) = \e^{2t}(p-1) + 1 > p\;.
\end{equation} 
Then for any $F\in L^p(\Omega,\cF,\fP)$, one has 
\begin{equation}
\label{eq:hypercontractivity} 
 \norm{T_tF}_{q(t)} \leqs \norm{F}_p\;.
\end{equation} 
\end{theorem}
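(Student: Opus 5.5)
We follow Nualart's approach via a coupling with Brownian motion and Itô's formula. Since $T_t$ preserves positivity and $|T_tF|\leqs T_t|F|$ (by Mehler's formula, Proposition~\ref{prop:Mehler}), it suffices to prove~\eqref{eq:hypercontractivity} for $F\geqs0$. By density we may further take $F = f(W(h_1),\dots,W(h_N))$ with $f$ smooth, bounded, and bounded below by a positive constant; in the present finite-dimensional setting this simply means $F=f(X_1,\dots,X_N)$ with such an $f$. The goal is then to show $\norm{T_tF}_{q(t)}\leqs\norm{F}_p$. This is equivalent, by duality, to
\begin{equation}
 \expec{(T_tF)\,G} \leqs \norm{F}_p\norm{G}_{q'}
\end{equation}
for all nice $G\geqs0$, where $q'=q/(q-1)$.

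\textbf{Step 1: Mehler coupling with Brownian motion.} Realise $X$ as $B_1$, where $B$ is an $N$-dimensional Brownian motion on $[0,1]$. Let $\tilde B$ be an independent Brownian motion, and set $\rho=\e^{-t}$, $C_s = \rho B_s+\sqrt{1-\rho^2}\,\tilde B_s$. Then $(B,C)$ is a pair of Brownian motions with cross-covariation $\6\pscal{B^i}{C^j}_s=\rho\delta_{ij}\6s$. By Mehler's formula,
\begin{equation}
\expec{(T_tF)G}=\expec{f(C_1)g(B_1)}.
\end{equation}

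\textbf{Step 2: Martingales and Itô's formula.} Introduce the martingales $M_s=\expec{f(C_1)^p\mid\cF_s}$ and $N_s=\expec{g(B_1)^{q'}\mid\cF_s}$, where $\cF_s$ is the joint filtration. By the martingale representation theorem (or explicitly, via the heat semigroup) they are Itô processes, $\6M_s=\pscal{\alpha_s}{\6C_s}$ and $\6N_s=\pscal{\beta_s}{\6B_s}$. Apply Itô's formula to $Y_s=M_s^{1/p}N_s^{1/q'}$. The drift is $M^{1/p}N^{1/q'}$ times the quadratic form
\begin{equation}
\tfrac1p(\tfrac1p-1)\tfrac{|\alpha|^2}{M^2}+\tfrac1{q'}(\tfrac1{q'}-1)\tfrac{|\beta|^2}{N^2}+\tfrac{2\rho}{pq'}\tfrac{\pscal{\alpha}{\beta}}{MN}.
\end{equation}
This form is nonpositive for all $\alpha,\beta$ if and only if its discriminant condition holds: $\rho^2\leqs(p-1)(q'-1)$. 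Using $q'-1=1/(q-1)$, this reads $\e^{-2t}\leqs(p-1)/(q-1)$, which is exactly the relation $q=\e^{2t}(p-1)+1$ in~\eqref{eq:hyper_qp}.

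\textbf{Step 3: Conclusion.} With a nonpositive drift, $Y$ is a supermartingale, so $\expec{Y_1}\leqs Y_0$. Here $Y_1=f(C_1)g(B_1)$, while $Y_0=\expec{F^p}^{1/p}\expec{G^{q'}}^{1/q'}$, since $C_1$ and $B_1$ are both standard Gaussian. This gives the duality bound and hence~\eqref{eq:hypercontractivity}.

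\textbf{Main obstacle.} The main obstacle is Step 2. One has to justify the Itô representation of $M$ and $N$, and check that the local martingale part of $Y$ is a true martingale. The lower bound on $f$ and $g$ together with boundedness handles this, because it keeps $M^{1/p}$ and $N^{1/q'}$ smooth away from zero. The remaining nontrivial point is the algebra of the quadratic form's sign condition, which is where the precise exponent $q(t)$ emerges.
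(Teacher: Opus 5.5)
Your proof is correct and follows the paper's argument essentially step by step. Both proofs use duality, reduce to nonnegative bounded $F,G$ bounded away from zero, realise the Mehler coupling through $\e^{-t}$-correlated Brownian motions on $[0,1]$, apply Itô's formula to $M^{1/p}N^{1/q'}$, and finish with the discriminant condition on the $2\times2$ quadratic form. Your version writes $M,N$ explicitly via the heat semigroup and carries out the sign and determinant algebra more carefully than the paper's sketch does; in particular, you use the correct cross-variation $\e^{-t}\,\6s$.
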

\begin{proof}
Let $q'$ be the H\"older conjugate of $q = q(t)$, that is, 
\begin{equation}
 \frac1q + \frac1{q'} = 1\;.
\end{equation} 
By duality, we have 
\begin{equation}
 \norm{T_tF}_q = \sup_{G\in L^{q'}} \frac{\pscal{T_tF}{G}}{\norm{G}_{q'}}
 = \sup_{G\in L^{q'}} \frac{\expec{(T_tF)G}}{\norm{G}_{q'}}\;.
\end{equation} 
We will thus have proved~\eqref{eq:hypercontractivity} if we manage 
to prove that 
\begin{equation}
 \expec{(T_tF)G} \leqs \norm{F}_p \norm{G}_{q'}
\end{equation} 
for all $G\in L^{q'}(\Omega,\cF,\fP)$. 
Since $T_t$ is non-negative, and thus $\abs{T_t F} \leqs T_t(\abs{F})$, 
we may assume that $F$ and $G$ are both non-negative. In fact, by an 
approximation argument, we may assume that $a \leqs F, G\leqs b < \infty$ 
for some $b \geqs a > 0$. 

The main idea is to use a kind of interpolation argument. 
Recall that both $F$ and $G$ are functions of the random variables 
$W(e_i) = X_i$ with $i\in\intset{1,N}$. By Mehler's 
identity~\eqref{eq:Tt_Mehler}, 
\begin{equation}
 T_t F = \E'\bigbrak{f(Z_1,\dots,Z_N)}\;,
\end{equation} 
where
\begin{equation}
 Z_i = \e^{-t} X_i + \sqrt{1-\e^{-2t}}X_i'\;,
\end{equation} 
the $X_i'$ being independent copies of the $X_i$. We can represent these 
variables as 
\begin{equation}
 X_i = \int_0^1 \6W_i(s)\;, \qquad 
 X_i' = \int_0^1 \6W_i'(s)\;,
\end{equation} 
where $(W_i(s))_{0\leqs s\leqs 1}$ and $(W_i'(s))_{0\leqs s\leqs 1}$ 
are independent Brownian motions. Then we have 
\begin{equation}
 Z_i = \int_0^1 \6 B_i(s)\;, \qquad 
 B_i(s) = \e^{-t} W_i(s) + \sqrt{1-\e^{-2t}} W_i'(s)\;.
\end{equation} 
Therefore, we can write 
\begin{equation}
 \expec{(T_tF)G} = \expec{PQ}\;,
\end{equation} 
where
\begin{equation}
 P = f(B_1,\dots,B_N)\;, \qquad 
 Q = g(W_1,\dots,W_N)\;.
\end{equation} 
By Ito's formula, $P^p$ and $Q^{q'}$ have integral representations of the form 
\begin{equation}
 P^p = \expec{P^p} + \int_0^1 \ph(s) \6B(s)\;, \qquad 
 Q^{q'} = \expec{Q^{q'}} + \int_0^1 \psi(s) \6W(s) 
\end{equation}
for some bounded, positive $\ph$ and $\psi$. 
We introduce two bounded, positive martingales 
\begin{equation}
 M(s) = \expec{P^p} + \int_0^s \ph(u) \6B(u)\;, \qquad 
 N(s) = \expec{Q^{q'}} + \int_0^s \psi(u) \6W(u)\;,
\end{equation} 
which satisfy 
\begin{align}
 M(0) &= \expec{P^p}\;,
 & N(0) &= \expec{Q^{q'}}\;, \\
 M(1) &= P^p\;, 
 & N(1) &= Q^{q'}\;.
\end{align} 
Define $f(x,y) = x^\alpha y^\beta$, with $\alpha = \frac1p$ and $\beta = \frac{1}{q'}$. 
Then $U(s) = f(M(s),N(s))$ satisfies 
\begin{equation}
 U(0) = \norm{P}_p \norm{Q}_{q'}\;, \qquad 
 U(1) = PQ\;.
\end{equation}
By Ito's formula, we have 
\begin{equation}
 \6U(s) = \frac{\partial f}{\partial x} \6M(s) 
 + \frac{\partial f}{\partial y} \6N(s)
 + \frac12\frac{\partial^2 f}{\partial x^2} \6M(s)^2 
 + \frac12\frac{\partial^2 f}{\partial y^2} \6N(s)^2 
 + \frac{\partial^2 f}{\partial x^2} \6M(s)\6N(s)\;,
\end{equation} 
where 
\begin{align}
 \6M(s)^2 &= \ph(s)^2 \6s\;, \qquad 
 \6N(s)^2 = \psi(s)^2 \6s\;, \\
 \6M(s)\6N(s) &= \ph(s)\psi(s) \6B(s)\6W'(s)
 = \ph(s)\psi(s) \e^{-s}\6s\;.
\end{align} 
Computing the partial derivatives of $f$, this leads to 
\begin{equation}
 \6U(s) = \alpha M(s)^{\alpha-1} N(s)^\beta \6M(s)
 + \beta M(s)^{\alpha} N(s)^{\beta-1} \6N(s)
 + \frac12 M(s)^\alpha N(s)^\beta A(s) \6s\;,
\end{equation} 
where
\begin{align}
 A(s) &= \alpha(\alpha-1) M(s)^{-2}\ph(s)^2 
 + \beta(\beta-1) N(s)^{-2}\psi(s)^2
 + 2\alpha\beta M(s)^{-1}N(s)^{-1} \ph(s)\psi(s) \e^{-t} \\
 &=
 \begin{pmatrix}
  \ph(s)M(s)^{-1} & \psi(s)N(s)^{-1}
 \end{pmatrix}
 \begin{pmatrix}
  \alpha(\alpha - 1) & \alpha\beta\e^{-t} \\
  \alpha\beta\e^{-t} & \beta(\beta-1)
 \end{pmatrix}
 \begin{pmatrix}
  \ph(s)M(s)^{-1} \\ \psi(s)N(s)^{-1}
 \end{pmatrix}\;.
\label{eq:hyper_matrix} 
\end{align} 
We have 
\begin{equation}
 PQ = U(1) = U(0) + \int_0^1 \6U(s) 
 = \norm{P}_p \norm{Q}_{q'}  + \int_0^1 \6U(s)\;.
\end{equation} 
Taking expectations, since $M(s)$ and $N(s)$ are martingales, we obtain 
\begin{equation}
\expec{(T_tF)G} = 
 \expec{PQ} = \norm{P}_p \norm{Q}_{q'} 
 + \frac12 \int_0^1 \expec{M(s)^\alpha N(s)^\beta A(s)} \6s\;.
\end{equation} 
The result will thus be proved if we manage to show that $A(s)\leqs0$
for all $s\in[0,1]$. 
Note that if $p > 1$, then one finds $\alpha(1-\alpha)<0$ and $\beta(1-\beta)<0$. 
It thus suffices to show that the determinant of the matrix in~\eqref{eq:hyper_matrix}
is non-negative. This is equivalent to~\eqref{eq:hyper_qp}. 
\end{proof}

It is now easy to prove equivalence of moments.

\begin{proof}[{\sc Proof of Theorem~\ref{thm:equivalence_moments}:}]
We make the change of index $q(t) \mapsto 2p$ and $p\mapsto 2$. Then~\eqref{eq:hypercontractivity}
becomes 
\begin{equation}
 \norm{T_tF}_{2p} \leqs \norm{F}_2\;,
\end{equation} 
where 
\begin{equation}
 \e^{2t} = 2p-1\;.
\end{equation} 
In particular, for $F\in\cH_n$, we have 
\begin{equation}
 \norm{T_tF}_{2p} = \e^{-nt} \norm{F}_{2p}\;.
\end{equation} 
This yields 
\begin{equation}
 \norm{F}_{2p} = \e^{nt} \norm{T_tF}_{2p}
 \leqs \e^{nt} \norm{F}_2
 = (2p-1)^{n/2} \norm{F}_2\;,
\end{equation} 
which is equivalent to~\eqref{eq:equivalence_moments}. 
\end{proof}

%%%%%%%%%%%%%%%%%%%%%%%%%%%%%%%%%%%%%%%%%%%%%%%%%%%%%%%%%%%%%%%%%%%%%%%%%%%%%%%%

\chapter{Gaussian fields}
\label{chap:fields} 

In this chapter, we extend the results of the previous chapter to the 
infinite-dimensional case, and discuss two particularly important cases 
of Gaussian fields, namely white noise and the Gaussian free field.

%%%%%%%%%%%%%%%%%%%%%%%%%%%%%%%%%%%%%%%%%%%%%%%%%%%%%%%%%%%%%%%%%%%%%%%%%%%%%%%%

\section{Isonormal Gaussian processes}
\label{sec:isonormal} 

In this chapter, we will mostly be concerned with the following set-up. 
Let $\Lambda = \T^d = (\R/\Z)^d$ be the $d$-dimensional torus, for some $d\geqs1$. Let 
\begin{equation}
 \H = L^2(\Lambda,\6x)
\end{equation} 
be the Hilbert space of square-integrable functions $h:\T^d\to\R$. 
Let $(e_i)_{i\geqs0}$ be an orthonormal basis of $\H$ -- we will typically 
choose a Fourier basis. We are then interested in random fields of the 
form 
\begin{equation}
 \phi(x) = \sum_{i\geqs0} X_i e_i(x)\;,
\end{equation} 
where the $X_i$ are independent, centred jointly Gaussian random variables of unit 
variance, defined on a common probability space $(\Omega,\cF,\fP)$. 
We will see that this is related to the following general construction.

\begin{definition}[Isonormal Gaussian process]
Let $\H$ be a separable Hilbert space. 
A stochastic process $W = \setsuch{W(h)}{h\in\H}$ defined on a complete 
probability space $(\Omega,\cF,\fP)$ is an \emph{isonormal Gaussian process} 
if $W$ is a centred Gaussian family of random variables such that 
\begin{equation}
 \expec{W(h_1)W(h_2)} = \pscal{h_1}{h_2}_\H
\end{equation} 
for all $h_1, h_2\in\H$. 
\end{definition}

Note that the map $h\mapsto W(h)$ is necessarily linear. This is because 
for any $h_1, h_2\in\H$, and any $\lambda, \mu\in\R$, 
\begin{align}
 \expec{\bigpar{W(\lambda h_1 + \mu h_2) - &\lambda W(h_1) - \mu W(h_2)}^2} \\
 ={}& \norm{\lambda h_1 + \mu h_2}_\H^2 + \lambda^2 \norm{h_1}_\H^2 + \mu^2 \norm{h_2}_\H^2 \\
 &{}- 2\lambda\pscal{\lambda h_1 + \mu h_2}{h_1}_\H
 - 2\mu\pscal{\lambda h_1 + \mu h_2}{h_2}_\H 
 + 2\lambda\mu\pscal{h_1}{h_2}_\H^2 
 = 0\;.
\end{align}
As a consequence, if each random variable is Gaussian and centred, the set 
$\setsuch{W(h)}{h\in\H}$ is automatically a family of jointly Gaussian random variables. 
The existence of the probability space $(\Omega,\cF,\fP)$ follows from 
Kolmogorov's continuity theorem. 

%%%%%%%%%%%%%%%%%%%%%%%%%%%%%%%%%%%%%%%%%%%%%%%%%%%%%%%%%%%%%%%%%%%%%%%%%%%%%%%%

\subsection{Wiener chaos expansion and Wiener isometry}
\label{ssec:Wiener_chaos_infdim} 

The definition of Wiener chaoses is the same in infinite dimension 
as in finite dimension. 

\begin{definition}[Wiener chaos, infinite-dimensional version]
For any $n\geqs 1$, we denote by $\cH_n$ the subspace of 
$\cH = L^2(\Omega,\cF,\fP)$ spanned by the random variables 
\begin{equation}
 \setsuch{H_n(W(h))}{h\in \H, \norm{h}_\H = 1}\;.
\end{equation} 
For $n = 0$, $\cH_0$ is the set
of constants, which is isomorphic to $\R$. Then $\cH_n$ is called the 
\emph{homogeneous Wiener chaos of order $n$}. 
The \emph{inhomogeneous Wiener chaos of order $n$} is defined as 
\begin{equation}
 \cH_{\leqs n} = \bigoplus_{k=0}^n \cH_k\;.
\end{equation} 
\end{definition}

We also have an analogue of Theorem~\ref{thm:Wiener_chaos_N}, with the same 
proof (see also~\cite[Theorem~1.1.1]{nualart2006malliavin}).

\begin{theorem}[Wiener chaos decomposition]
\label{thm:Wiener_chaos_inf} 
The Hilbert space $\cH$ can be decomposed into the infinite orthogonal sum 
\begin{equation}
 \cH = \bigoplus_{n=0}^\infty \cH_n\;.
\end{equation} 
\end{theorem}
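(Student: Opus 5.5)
The plan is to copy the argument of Theorem~\ref{thm:Wiener_chaos_N}. The one new ingredient is an infinite-dimensional version of Lemma~\ref{lem:Wiener_chaos_N}. Here $\cH = L^2(\Omega,\cF,\fP)$, and as in~\cite{nualart2006malliavin} I will take $\cF$ to be the $\sigma$-algebra generated by $\setsuch{W(h)}{h\in\H}$. This is the implicit meaning of the statement, since otherwise $\cH$ could contain variables independent of $W$.

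The first step is orthogonality of the $\cH_n$. Take $h,g\in\H$ with unit norm. The pair $(W(h),W(g))$ is centred Gaussian with unit variances and covariance $\rho=\pscal{h}{g}_\H$. Using the generating function~\eqref{eq:G_Hermite} and the Laplace transform (Proposition~\ref{prop:Laplace_Gaussian_ndim}), I get
\begin{equation}
 \bigexpec{G(t,W(h))G(s,W(g))} = \e^{ts\rho}\;.
\end{equation}
Comparing coefficients of $t^ns^m$ then gives $\expec{H_n(W(h))H_m(W(g))} = n!\rho^n\delta_{nm}$. Hence $\cH_n\perp\cH_m$ for $n\neq m$. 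The same computation also shows that each $H_n(W(h))$ lies in $\cH$.

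The second step is totality, i.e.\ the analogue of Lemma~\ref{lem:Wiener_chaos_N}. Let $Z\in\cH$ satisfy $\expec{Z\e^{W(h)}}=0$ for all $h\in\H$. Fix the orthonormal basis $(e_i)_{i\geqs0}$ and set $X_i=W(e_i)$. By linearity of $W$, $\expec{Z\exp\set{\sum_{i=0}^m t_iX_i}}=0$ for all $m$ and all $t\in\R^{m+1}$. The Laplace-transform uniqueness argument of Lemma~\ref{lem:Wiener_chaos_N} then gives $\expec{Z\indicator{B}(X_0,\dots,X_m)}=0$ for every Borel set $B\subset\R^{m+1}$. In other words, $\expec{Z\mid\cF_m}=0$, where $\cF_m=\sigma(X_0,\dots,X_m)$.

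The third step passes from the $\cF_m$ to $\cF$. For any $h\in\H$ we have $W(h)=\sum_i\pscal{h}{e_i}_\H X_i$, with convergence in $L^2$ by the isometry property. Hence $W(h)$ is measurable with respect to $\cF_\infty=\sigma(\bigcup_m\cF_m)$ up to null sets, and since $\fP$ is complete, $\cF=\cF_\infty$. By the martingale convergence theorem, $0=\expec{Z\mid\cF_m}\to\expec{Z\mid\cF}=Z$ in $L^2$. Therefore $Z=0$.

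The final step concludes. Suppose $Z\in\cH$ is orthogonal to every $\cH_n$. By~\eqref{eq:xn_Hn}, $\expec{ZW(h)^n}=0$ for all $n$ and all unit $h$. Since $W(h)$ is Gaussian, $\expec{\abs{Z}\e^{\abs{t W(h)}}}<\infty$ by Cauchy--Schwarz. Dominated convergence then lets me sum the exponential series, giving $\expec{Z\e^{tW(h)}}=0$ for all $t$, hence $\expec{Z\e^{W(h)}}=0$ for all $h$. The second and third steps then force $Z=0$, which proves the orthogonal decomposition. I expect the main obstacle to be the third step: justifying that finitely many coordinates suffice by means of the martingale/measurability argument, which has no counterpart in the finite-dimensional proof.
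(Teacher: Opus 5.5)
Your proof is correct and takes the same route as the paper. The paper proves this theorem by reusing the argument of Theorem~\ref{thm:Wiener_chaos_N}: orthogonality to every chaos gives vanishing moments via \eqref{eq:xn_Hn}, hence vanishing exponential moments, and totality of $\{\e^{W(h)}\colon h\in\H\}$ forces $Z=0$. Your explicit assumption $\cF=\sigma\{W(h)\colon h\in\H\}$, which the statement genuinely needs, and your martingale step from $\sigma(X_0,\dots,X_m)$ to $\cF$ (valid modulo null sets) supply exactly the details that the paper's ``same proof'' leaves implicit.
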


The construction of the Wiener isometry proceeds in the same way as we have 
seen in Section~\ref{ssec:Wiener_Fock}, except that one now assumes that 
the multiindex $k\in\N_0^\N$ has only finitely many non-zero entries. 
In this way, 
\begin{equation}
 \Phi_k = \prod_{i\geqs0\colon k_i>0} H_{k_i}(X_i)
\end{equation} 
is well-defined, as are 
\begin{equation}
 \abs{k} = \sum_{i\geqs0\colon k_i>0} \abs{k_i} 
 \qquad\text{and}\qquad 
 k! = \prod_{i\geqs0\colon k_i>0} k_i!\;.
\end{equation} 
The Wiener isometry, with its two normalisation conventions, is again defined as 
\begin{equation}
 I_n(e_k) = \frac{1}{\sqrt{n!}}\Phi_k\;, \qquad 
 \hat I_n(e_k) = \Phi_k
\end{equation} 
for all $k\in\N_0^\N$ with $\abs{k} = n$, where 
\begin{equation}
 e_k = \Pi \bigotimes_{i\geqs0} e_i^{\otimes k_i}\;,
\end{equation} 
$\Pi$ being the symmetrisation operator~\eqref{eq:def_Pi}. Note that we have slightly 
overloaded the notation for $e$, as we use the same letter for elements of $\H$ 
and elements of Fock space. It should always be clear from the context which 
basis vector is meant.

Let us give some simple examples for clarity. 

\begin{example}
\begin{itemize}
\item   Assume that $k$ has only one nonzero entry $k_i = 1$. Then $e_k = e_i$ 
and $\hat I_1(e_k) = H_1(X_i) = X_i$. 

\item   If $k$ has two nonzero entries $k_i = k_j = 1$ with $i\neq j$, then 
$e_k = \frac12(e_i\otimes e_j + e_j\otimes e_i)$, and 
$\hat I_2(e_k) = X_iX_j$.

\item   If $k$ has one nonzero entry $k_i = 2$, then 
$e_k = e_i\otimes e_i$ and $\hat I_2(e_k) = H_2(X_i)$. 
\end{itemize}
\end{example}

The bound showing equivalence of moments, cf.\ 
Theorem~\ref{thm:equivalence_moments}, can be proved in essentially 
the same way as we did in finite dimension, so we just repeat it here. 

\begin{theorem}[Equivalence of moments]
\label{thm:equivalence_moments_infdim} 
Assume $F$ belongs to the $n$th Wiener chaos $\cH_n$. Then for any 
$p > 1$, one has 
\begin{equation}
\label{eq:equivalence_moments_infdim} 
 \expec{F^{2p}}^{1/2p} \leqs (2p-1)^{n/2} \expec{F^2}^{1/2}\;.
\end{equation} 
\end{theorem}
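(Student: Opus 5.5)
The plan is to reduce Theorem~\ref{thm:equivalence_moments_infdim} to its finite-dimensional version, Theorem~\ref{thm:equivalence_moments}, by approximation. The constant $(2p-1)^{n/2}$ does not depend on the dimension $N$, so a limiting argument should carry the inequality over unchanged.

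\textbf{Step 1: finite-dimensional approximation.} Fix an orthonormal basis $(e_i)_{i\geqs0}$ of $\H$. Let $\cG_N$ be the $\sigma$-algebra generated by $X_0,\dots,X_{N-1}$, where $X_i = W(e_i)$. By the construction of the Wiener isometry, the family $\setsuch{\Phi_k}{\abs{k}=n}$, with $k$ finitely supported, is an orthogonal basis of $\cH_n$. Hence $F\in\cH_n$ has an $L^2$-convergent expansion
\begin{equation}
 F = \sum_{\abs{k}=n} c_k \Phi_k\;.
\end{equation}
Define $F_N$ by keeping only those $k$ supported in $\intset{0,N-1}$. Then $F_N\to F$ in $L^2$, and $F_N = \E[F\mid\cG_N]$, since $\Phi_k$ is orthogonal to $L^2(\cG_N)$ whenever $k$ charges an index $\geqs N$. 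Moreover, $F_N$ lies in the $n$th chaos of the $N$-dimensional Gaussian vector $(X_0,\dots,X_{N-1})$, whose chaos has the basis $\Phi_k$ with $\abs{k}=n$ (Section~\ref{ssec:Wiener_Fock}).

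\textbf{Step 2: apply the finite-dimensional bound.} Theorem~\ref{thm:equivalence_moments} gives
\begin{equation}
 \norm{F_N}_{2p} \leqs (2p-1)^{n/2}\norm{F_N}_2 \leqs (2p-1)^{n/2}\norm{F}_2\;,
\end{equation}
uniformly in $N$.

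\textbf{Step 3: pass to the limit.} Since $F_N\to F$ in $L^2$, a subsequence converges almost surely. Fatou's lemma then gives $\E[F^{2p}] \leqs \liminf \E[F_{N}^{2p}]$, which yields \eqref{eq:equivalence_moments_infdim}.

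An alternative route is to redo the hypercontractivity proof directly in infinite dimension. That would require Mehler's formula for infinitely many variables, and the approximation argument above avoids this entirely.

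The main point needing care is Step 1. Two facts must be justified:
\begin{itemize}
\item the $\Phi_k$ indeed span $\cH_n$ in infinite dimension; this uses Lemma~\ref{lem:hermite_multinomial} with $a_i = h_i$ to expand $H_n(W(h))$;
\item the truncation stays in the $n$th chaos.
\end{itemize}
Both follow from the orthogonality $\E[\Phi_k\Phi_\ell] = k!\,\delta_{k\ell}$ and the multinomial expansion. Once Step 1 is in place, the remaining steps are routine.
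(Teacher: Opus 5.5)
Your proof is correct, but it takes a different route from the paper. The paper gives no separate argument for the infinite-dimensional case. It says the bound follows ``in essentially the same way'' as Theorem~\ref{thm:equivalence_moments}: run Mehler's formula and Nelson's hypercontractivity estimate $\norm{T_tF}_{q(t)}\leqs\norm{F}_p$ directly for the Ornstein--Uhlenbeck semigroup $T_t=\sum_n\e^{-nt}P_n$ of the isonormal process, then take $\e^{2t}=2p-1$ and use $T_tF=\e^{-nt}F$ on $\cH_n$. You instead use the finite-dimensional theorem as a black box, exploiting that $(2p-1)^{n/2}$ does not depend on $N$, and pass to the limit through the truncations $F_N$ and Fatou's lemma.

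The paper's route yields a stronger statement: the infinite-dimensional semigroup is hypercontractive on all of $L^p$, not only on chaos elements. But it needs Mehler's formula with an independent copy of an infinite-dimensional isonormal process, and the stochastic-integral interpolation with infinitely many Brownian motions. In practice that is typically justified by reducing to functionals of finitely many $W(e_i)$ anyway. Your argument makes this approximation explicit and is more elementary.

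Beyond the finite-dimensional theorem, your only real input is that $\cH_n$ lies in the closed span of the $\Phi_k$ with $\abs{k}=n$. There, Lemma~\ref{lem:hermite_multinomial} should be used in $L^2$ rather than pointwise. By orthogonality, the series $\sum_{\abs{k}=n}\frac{n!}{k!}h^k\Phi_k$ converges in $L^2$ with squared norm $n!\,\norm{h}_\H^{2n}$. Its limit is identified with $H_n(W(h))$ by truncating $h$ to finitely many coordinates.

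Finally, the identification $F_N=\E[F\mid\mathcal{G}_N]$ is correct but not needed: $\norm{F_N}_2\leqs\norm{F}_2$ already follows from Bessel's inequality.
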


\begin{exercise}
\begin{itemize}
\item  Let $f, g\in\H$. Compute $\expec{\hat I_1(f)\hat I_1(g)}$. 

\item  Do the same for $\expec{\hat I_n(f)\hat I_m(g)}$, when 
$f\in\H^{\otimes n}$ and $g\in \H^{\otimes m}$. 
\end{itemize}
\end{exercise}

%%%%%%%%%%%%%%%%%%%%%%%%%%%%%%%%%%%%%%%%%%%%%%%%%%%%%%%%%%%%%%%%%%%%%%%%%%%%%%%%

\subsection{The case of $L^2(\T^d)$}
\label{ssec:gaussian_torus}

As mentioned above, we will mainly be concerned with the case $\H = L^2(\Lambda,\6x)$, 
with $\Lambda = \T^d$, endowed with a Fourier basis $(e_i)_{i\geqs0}$. Elements of 
$\H$ are thus functions $h:\Lambda\to\R$ that can be written as a Fourier series 
\begin{equation}
\label{eq:Fourier} 
 h(x) = \sum_{i\geqs0} \hat h(i) e_i(x)\;.
\end{equation} 
Recall that~\eqref{eq:Fourier} defines an isometry between $\H$ and the Hilbert 
space 
\begin{equation}
 \widehat\H = \ell^2 
 = \biggsetsuch{\hat h\in\R^{\N_0}}{\sum_{i\geqs 0}\hat h(i)^2 < \infty}\;.
\end{equation} 
Indeed, by Parseval's relation, we have 
\begin{equation}
 \norm{h}_{\H}^2 
 = \int_\Lambda h(x)^2 \6x 
 = \sum_{i\geqs 0}\hat h(i)^2 
 = \norm{\hat{h}}_{\widehat\H}^2\;.
\end{equation} 
It is thus equivalent to work in $\widehat \H$, which is essentially the 
same as what we did in the finite-dimensional case, or in $\H$. 

Elements of $\H^{\otimes n}$ can be written either as 
\begin{equation}
 h = h_1\otimes\dots\otimes h_n 
 = \sum_{i_1\geqs0,\dots,i_n\geqs0}
 \hat h_1(i_1)\dots \hat h_n(i_n) e_{i_1}\otimes\dots\otimes e_{i_n}\;,
\end{equation} 
or, equivalently, as 
\begin{equation}
\label{eq:f_tensor} 
 h(x_1,\dots,x_n) 
 = \sum_{i_1\geqs0,\dots,i_n\geqs0} \hat h(i_1,\dots,i_n) 
 e_{i_1}(x_{i_1})\dots e_{i_n}(x_{i_n})\;,
\end{equation} 
where $\hat h(i_1,\dots,i_n) = \hat h_1(i_1)\dots \hat h_n(i_n)$.
In this way, elements of $\Hsym{n}$ are represented by functions $h$ that 
are symmetric in all their arguments, that is, 
\begin{equation}
 h(\sigma(x_1,\dots,x_n)) = h(x_1,\dots,x_n) 
 \qquad 
 \forall \sigma\in\frakS_n\;.
\end{equation} 
The definition of contractions can then be rewritten in the following way.

\begin{lemma}
Let $f\in \H^{\otimes n}$ and $g\in \H^{\otimes m}$. For any $p\leqs n\wedge m$, 
all $x\in\Lambda^{n-p}$ and all $y\in\Lambda^{m-p}$, one has
\begin{equation}
 (f \star_p g)(x,y) 
 = \sum_{\substack{\Sigma\in\frakS(p,n)\\ \bar\Sigma\in\frakS(p,m)}}
 \sum_{\sigma\in\frakS(p)}
 \int_{\Lambda^p} f\bigpar{\Sigma(z,x)}g\bigpar{\bar\Sigma(z,\sigma(x))} \6z\;.
\end{equation} 
\end{lemma}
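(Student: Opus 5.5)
The plan is to translate the discrete definition \eqref{eq:def_starp} of $\star_p$, written in terms of Fourier coefficients $\hat f(i_1,\dots,i_n)$ and $\hat g(j_1,\dots,j_m)$, into position space via the isometry \eqref{eq:Fourier} between $\H$ and $\widehat\H=\ell^2$. In Proposition~\ref{prop:mult_nm} the arguments of $f$ and $g$ are indices in the basis $(e_i)$. So the object defined there is really $\widehat{f\star_p g}$, and the function $f\star_p g$ is obtained from it via the expansion \eqref{eq:f_tensor}. By linearity and continuity of both sides in $f$ and $g$ (each side is a finite sum of bounded multilinear operations, by Cauchy--Schwarz on $\Lambda^p$), it suffices to treat elementary tensors $f=f_1\otimes\dots\otimes f_n$ and $g=g_1\otimes\dots\otimes g_m$. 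In fact, one can work directly with the coefficient arrays $\hat f,\hat g$, which are square-summable.

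The key step is a single Parseval identity for each contracted slot. Fix $\Sigma$, $\bar\Sigma$ and $\sigma$, and consider the term
\begin{equation}
 \sum_{\mathbf{k}\in\N_0^p} \hat f(\Sigma(\mathbf{k},\mathbf{i}))\,\hat g(\bar\Sigma(\mathbf{k},\sigma(\mathbf{j})))\;.
\end{equation}
Multiply it by $e_{i_1}(x_1)\cdots e_{i_{n-p}}(x_{n-p})\,e_{j_1}(y_1)\cdots e_{j_{m-p}}(y_{m-p})$ and sum over $\mathbf{i},\mathbf{j}$. This reconstructs the uncontracted legs as functions of $x$ and $y$. For the contracted legs, use orthonormality,
\begin{equation}
 \sum_{k\geqs0} \hat a(k)\hat b(k) = \int_\Lambda a(z)b(z)\,\6z\;,
\end{equation}
applied in each of the $p$ contracted coordinates. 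This gives $\int_{\Lambda^p} f(\Sigma(z,x))\,g(\bar\Sigma(z,\sigma(\cdot)))\,\6z$. Summing over $\Sigma,\bar\Sigma,\sigma$ then yields the claimed formula. Note that the permutation $\sigma$ should act on the contracted variables (it pairs legs of $f$ with legs of $g$). I will check that the placement of $\sigma$ in the statement matches the placement in \eqref{eq:def_starp} once the relabelling of integration variables is done. After integrating over $z$, a permutation of $z$ is only a change of variables, so each $\sigma$ gives an equal contribution. The formula therefore holds with whichever convention is used consistently.

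The main obstacle is justifying the interchange of the infinite sums over $\mathbf{k}$, $\mathbf{i}$, $\mathbf{j}$ with the integral, which was automatic when $N$ was finite. I would handle this by first proving the identity for $f,g$ with finitely many nonzero Fourier coefficients, where everything is a finite sum. I would then extend by density in $\H^{\otimes n}$ and $\H^{\otimes m}$, using that $(f,g)\mapsto f\star_p g$ is bounded from $\H^{\otimes n}\times\H^{\otimes m}$ into $\H^{\otimes(n+m-2p)}$. The boundedness follows from Cauchy--Schwarz, giving $\norm{f\star_p g}\leqs C_{n,m,p}\norm{f}\norm{g}$, and it holds in both the coefficient and the position representations.
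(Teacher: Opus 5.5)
Your proposal is correct and follows the paper's route. The paper also expands $f$ and $g$ in the basis $(e_i)$ and uses $\int_\Lambda e_j(z)e_k(z)\,\6z=\delta_{jk}$ to turn each contracted index sum into an integral, but it only writes this out for $m=1$ and the identity shuffle; your reduction to finitely supported coefficient arrays, together with the Cauchy--Schwarz bound $\norm{f\star_p g}\leqs C_{n,m,p}\norm{f}\norm{g}$ on both sides, supplies the sum--integral interchange the paper leaves implicit and gives the identity in the natural almost-everywhere sense. One correction to your aside: the claim that each $\sigma$ contributes equally is false. For non-symmetric $f,g$ the terms $\int_{\Lambda^2} f(z_1,z_2)g(z_1,z_2)\,\6z$ and $\int_{\Lambda^2} f(z_1,z_2)g(z_2,z_1)\,\6z$ differ. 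The change of variables $z\mapsto\sigma(z)$ only shows that moving $\sigma$ from the $g$-side to the $f$-side (as $\sigma^{-1}$) leaves the full sum over $\frakS(p)$ unchanged, which is all that reconciling conventions requires. Your term-by-term matching for fixed $(\Sigma,\bar\Sigma,\sigma)$ does not use it at all.
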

\begin{proof}
Consider the case $m = 1$. Then~\eqref{eq:def_starp} yields 
\begin{equation}
\label{eq:proof_starp01} 
 (\hat f\star_1\hat g)(i_1,\dots,i_{n-1})
 = \sum_{\Sigma\in\frakS(1,n)} \sum_{j\geqs0} 
 \hat f(\Sigma(j,i_1,\dots,i_{n-1})) \hat g(j)\;.
\end{equation} 
By~\eqref{eq:f_tensor}, we have 
\begin{equation}
 (f\star_1 g)(x_1,\dots,x_{n-1}) 
 = \sum_{i_1,\dots,i_{n-1}\geqs0} (\hat f\star_1\hat g)(i_1,\dots,i_{n-1})
 e_{i_1}(x_{i_1})\dots e_{i_{n-1}}(x_{i_{n-1}})\;.
\end{equation} 
Consider the contribution of the identity permutation to~\eqref{eq:proof_starp01}, 
which is
\begin{equation}
\label{eq:proof_starp02} 
 \sum_{j,i_1,\dots,i_{n-1}\geqs0}
 \hat f_1(j) \hat f_2(i_1)\dots \hat f_n(i_{n-1}) \hat g(j)
 e_{i_1}(x_{i_1})\dots e_{i_{n-1}}(x_{i_{n-1}})\;.
\end{equation} 
On the other hand, we have 
\begin{align}
\label{eq:proof_starp03} 
 \int_\Lambda &f(z,x_1,\dots,x_{n-1})g(z)\6z \\
 &= \int_\Lambda \sum_{j,i_1,\dots,i_{n-1},k\geqs0}  
  \hat f_1(j) \hat f_2(i_1)\dots \hat f_n(i_{n-1}) \hat g(k) 
  e_j(z) e_k(z) e_{i_1}(x_{i_1})\dots e_{i_{n-1}}(x_{i_{n-1}})
  \6z\;.
\end{align}
Orthogonality of eigenfunctions implies 
\begin{equation}
 \int_\Lambda e_j(z) e_k(z) \6z = \delta_{jk}\;.
\end{equation} 
% The element $1\otimes e_i(x)$ can be canonically identified with $e_i(x)$.
Therefore, \eqref{eq:proof_starp03} is equal to \eqref{eq:proof_starp02}.
The argument is similar for other permutations, and for $m>1$. 
\end{proof}

%%%%%%%%%%%%%%%%%%%%%%%%%%%%%%%%%%%%%%%%%%%%%%%%%%%%%%%%%%%%%%%%%%%%%%%%%%%%%%%%

\subsection{A construction of Gaussian fields}
\label{ssec:gaussian_field} 

Consider now the following construction. For $h = \sum_{i\geqs0} 
\hat h(i)e_i \in\H$, we define 
\begin{equation}
\label{eq:def_Psi} 
 \Psi(h) = \sum_{i\geqs0} \hat h(i) W(e_i)e_i
 = \sum_{i\geqs0} \hat h(i) X_i e_i\;.
\end{equation} 
This is now an $\H$-valued random variable, whose value at point $x\in\Lambda$ is  
\begin{equation}
 \Psi(h)(x)  = \sum_{i\geqs0} \hat h(i) X_i e_i(x)\;.
\end{equation} 
Note furthermore that 
\begin{align}
 \norm{\Psi(h)}_\H^2 
 &= \int_\Lambda \Psi(h)(x)^2 \6x \\
 &= \sum_{i,j\geqs0} \hat h(i) \hat h(j) X_iX_j \int_\Lambda e_i(x)e_j(x)\6x \\
 &= \sum_{i\geqs0} \hat h(i)^2 X_i^2
\end{align} 
by orthonormality of the $e_i$. Therefore, 
\begin{equation}
 \bigexpec{\norm{\Psi(h)}_\H^2} 
 = \sum_{i\geqs0} \hat h(i)^2 
 = \norm{h}_{\H}^2\;.
\end{equation} 
This shows that the map $\Psi$ is an isometry from the Hilbert space $\H$ 
to a subset of the Hilbert space $\widetilde\cH$ of $\H$-valued random 
variables that have finite variance. 

In terms of Wiener chaos, since $\Psi(h)$ is defined on the same 
probability space $(\Omega,\cF,\fP)$ as the $X_i$, if makes sense to 
decompose $\widetilde\cH$ into Wiener chaoses, by viewing them 
as random variables indexed by $x\in\Lambda$. In particular,
$\Psi(h)$ belongs to the first Wiener chaos. This allows us to work 
within the framework of separable Hilbert space. For a general construction of 
Gaussian measures on separable Banach spaces, see 
Chapter~3 of Martin Hairer's lecture notes~\cite{Hairer_LN_2009}.

%%%%%%%%%%%%%%%%%%%%%%%%%%%%%%%%%%%%%%%%%%%%%%%%%%%%%%%%%%%%%%%%%%%%%%%%%%%%%%%%

\section{Gaussian white noise}
\label{sec:white_noise} 

%%%%%%%%%%%%%%%%%%%%%%%%%%%%%%%%%%%%%%%%%%%%%%%%%%%%%%%%%%%%%%%%%%%%%%%%%%%%%%%%

\subsection{Definition and basic properties}
\label{ssec:white_noise_prop} 

Consider the case where 
\begin{equation}
 \hat h = (1,1,1,\dots)
\end{equation} 
is the vector all of whose components are equal to $1$. Then~\eqref{eq:def_Psi}
becomes
\begin{equation}
\label{eq:def_xi} 
 \xi(x) := \Psi(h)(x) = \sum_{i\geqs0} X_i e_i(x)\;.
\end{equation} 
This random variable is called \emph{white noise} on $\Lambda$, because 
it means that every Fourier mode is random with the same variance. 
The trouble is that $h$ does not belong to $\H$, since $\hat h$ is not 
square-summable. As a result, $\xi$ does not have finite variance. 

One way to try to make sense of this definition is to set, for any 
finite $N\in\N$, 
\begin{equation}
 \hat h_N = (\underbrace{1,1,1,\dots,1}_{\text{$N$ components}}, 0, 0, \dots)\;.
\end{equation} 
In this way, we obtain
\begin{equation}
 \xi_N(x) := \Psi(h_N)(x) = \sum_{i=0}^N X_i e_i(x)\;.
\end{equation} 
Since $h_N$ is in $\H$ for any finite $N$, $\xi_N$ is a random function 
with finite variance for these $N$. It is called a \emph{mollification 
with cut-off $N$} of white noise. Of course, the variance of $\xi_N$ 
diverges as $N$ goes to infinity.

Another way to make sense of $\xi$ is to view it as a random 
distribution. Let $\ph:\Lambda\to\R$ be a sufficiently regular so-called 
\emph{test function}. Then we have, at least formally,  
\begin{align}
 \pscal{\xi}{\ph} 
 &= \int_\Lambda \xi(x) \ph(x) \6x 
 = \sum_{i\geqs0} X_i \int_\Lambda e_i(x) \ph(x) \6x 
 = \sum_{i\geqs0} X_i \hat\ph(i)\;.
\end{align}
This is indeed well-defined for $\ph\in\H$. Furthermore,
for any $\ph_1,\ph_2\in\H$, we have 
\begin{align}
 \bigexpec{\pscal{\xi}{\ph_1}\pscal{\xi}{\ph_2}}
 &= \sum_{i\geqs0} \sum_{j\geqs0} \expec{X_iX_j} \hat\ph_1(i)\hat\ph_2(j) \\
 &= \sum_{i\geqs0} \hat\ph_1(i)\hat\ph_2(i) 
 = \pscal{\ph_1}{\ph_2}_\H\;.
\end{align}
This motivates the following definition.

\begin{definition}[Gaussian white noise on the torus]
\label{def:white_noise} 
Gaussian white noise on $\T^d$ is the random distribution $\xi$ on 
$(\Omega,\cF,\fP)$ such that for any smooth test function $\ph\in\H$, 
$\pscal{\xi}{\ph}$ is a centred Gaussian random variable of variance 
$\norm{\ph}_\H^2$, while the covariance is given by  
\begin{equation}
\label{eq:white_noise_cov} 
 \bigexpec{\pscal{\xi}{\ph_1}\pscal{\xi}{\ph_2}} = \pscal{\ph_1}{\ph_2}_\H
\end{equation} 
for any two smooth test functions $\ph_1,\ph_2\in\H$. 
\end{definition}

One immediate consequence of this definition is that if $\ph_1$ and 
$\ph_2$ have disjoint support (meaning that they cannot be different 
from zero at the same point), then $\pscal{\ph_1}{\ph_2}_\H = 0$, and 
therefore $\pscal{\xi}{\ph_1}$ and $\pscal{\xi}{\ph_2}$ are independent
by Proposition~\ref{prop:Gaussian_basic}. The relation~\eqref{eq:white_noise_cov}
is sometimes formally written 
\begin{equation}
 \bigexpec{\xi(x)\xi(y)} = \delta(x-y)\;,
\end{equation} 
where $\delta(x-y)$ is the Dirac distribution, that can be formally obtained 
as a limit of scaled test functions localised at $x-y$. 

Another important property of white noise is related to scaling. To this end, 
define for $\lambda\in(0,1]$ a scaling operator $\cS^\lambda$ acting on 
$\ph\in\H$ by 
\begin{equation}
\label{eq:scaling_op} 
 (\cS^\lambda \ph)(x) = \frac{1}{\lambda^d} \ph\biggpar{\frac{x}{\lambda}}\;.
\end{equation} 
Let $\xi_\lambda$ be the distribution defined by 
\begin{equation}
 \pscal{\xi_\lambda}{\ph} = \pscal{\xi}{\cS^\lambda\ph}
\end{equation} 
for any test function $\ph$. 

\begin{exercise}
Show that if $h:\Lambda\to\R$ is an integrable function, then 
$h_\lambda(x) = h(\lambda x)$. 
\end{exercise}

\begin{proposition}[Scaling of white noise]
\label{prop:white_noise_scaling}
For any $\lambda\in(0,1]$, one has equality in law 
\begin{equation}
 \xi_\lambda \eqinlaw \frac{1}{\lambda^{d/2}} \xi\;.
 \label{eq:xi_scaling} 
\end{equation} 
\end{proposition}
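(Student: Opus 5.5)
Both sides of \eqref{eq:xi_scaling} are centred Gaussian random distributions, so it suffices to check that they have the same covariance structure. By Definition~\ref{def:white_noise} and linearity of $\ph\mapsto\pscal{\xi}{\ph}$, for any finite family of test functions $\ph_1,\dots,\ph_m$ the vector $(\pscal{\xi_\lambda}{\ph_1},\dots,\pscal{\xi_\lambda}{\ph_m}) = (\pscal{\xi}{\cS^\lambda\ph_1},\dots,\pscal{\xi}{\cS^\lambda\ph_m})$ is a centred Gaussian vector. The same holds for $\lambda^{-d/2}\xi$. By Proposition~\ref{prop:Laplace_Gaussian_ndim}, the law of a centred Gaussian vector is determined by its covariance matrix. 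Equality of all finite-dimensional distributions then gives equality in law of the random distributions.

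The key step is therefore the computation, using \eqref{eq:white_noise_cov},
\begin{equation}
 \bigexpec{\pscal{\xi_\lambda}{\ph_1}\pscal{\xi_\lambda}{\ph_2}}
 = \pscal{\cS^\lambda\ph_1}{\cS^\lambda\ph_2}_\H
 = \int \frac{1}{\lambda^{2d}}\,\ph_1\Bigl(\frac{x}{\lambda}\Bigr)\ph_2\Bigl(\frac{x}{\lambda}\Bigr)\6x
 = \frac{1}{\lambda^{d}}\pscal{\ph_1}{\ph_2}_\H\;,
\end{equation}
where the last equality comes from the change of variables $y = x/\lambda$, whose Jacobian is $\lambda^d$. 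The right-hand side is exactly $\expec{\pscal{\lambda^{-d/2}\xi}{\ph_1}\pscal{\lambda^{-d/2}\xi}{\ph_2}}$, which proves \eqref{eq:xi_scaling}.

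The main obstacle is making sense of $\cS^\lambda\ph$ on the torus. The function $x\mapsto\ph(x/\lambda)$ is not periodic in general. I would handle this by restricting to test functions supported in a small ball, viewed in a fundamental domain, say of diameter less than $\lambda$ around the origin, so that $\cS^\lambda\ph$ is a well-defined function supported in a single cell. For such functions the change of variables above is legitimate, and this is the regime in which the scaling statement is meaningful. Alternatively, one can interpret the argument $x/\lambda$ as a map from a fundamental cell, and the integral computation is unchanged. Everything else is routine.
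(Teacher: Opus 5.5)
Your proposal is correct and follows the same route as the paper: both sides are centred Gaussian, so it suffices to compare covariances, which you compute as $\pscal{\cS^\lambda\ph_1}{\cS^\lambda\ph_2}_\H = \lambda^{-d}\pscal{\ph_1}{\ph_2}_\H$ via the change of variables $y=x/\lambda$, and you handle the torus issue by localising test functions, which the paper does implicitly by taking them compactly supported. Your $\lambda$-dependent support restriction is stronger than needed: support in the open fundamental cell already suffices, because $\lambda\leqs1$ ensures that $\cS^\lambda$ keeps the support inside that cell.
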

\begin{proof}
Both processes are Gaussian and centred. Therefore, it suffices to show that 
they have the same covariance. Given two compactly supported test functions 
$\ph_1, \ph_2$, we have 
by~\eqref{eq:white_noise_cov} 
\begin{align}
\bigexpec{\pscal{\xi_{\lambda}}{\ph_1} \pscal{\xi_{\lambda}}{\ph_2}}
&= \bigexpec{\pscal{\xi}{\cS^{\lambda}\ph_1} 
{\pscal{\xi}{\cS^{\lambda}\ph_2}}} \\
&= \int_{\Lambda} (\cS^{\lambda}\ph_1)(x) 
(\cS^{\lambda}\ph_2)(x) \6x \\
&= \frac{1}{\lambda^{2d}} \int_{\Lambda} \ph_1\biggpar{\frac{x}{\lambda}}
\ph_2\biggpar{\frac{x}{\lambda}}\6x \\
&= \frac{1}{\lambda^{d}} \pscal{\ph_1}{\ph_2}_\cH\;,
\label{eq:proof_xi_scaling} 
\end{align}
where we have not changed the domain of integration, because the 
$\ph_i$ are compactly supported. 
This is indeed the covariance of $\lambda^{-d/2}\xi$. 
\end{proof}

The scaling property~\eqref{eq:xi_scaling} can be interpreted as a form of 
self-similarity, in a statistical sense. 

%%%%%%%%%%%%%%%%%%%%%%%%%%%%%%%%%%%%%%%%%%%%%%%%%%%%%%%%%%%%%%%%%%%%%%%%%%%%%%%%

\subsection{Regularity of white noise*}
\label{ssec:white_noise_reg} 

Even though white noise is very irregular, one can find function spaces to 
which it belongs. Two important such function spaces are fractional Sobolev spaces 
and Besov--H\"older spaces of negative regularity index.

In order to define fractional Sobolev spaces, it is more convenient to 
index basis elements of $\H = L^2(\Lambda)$ by their wave number. 
This means that they are of the form 
\begin{equation}
 e_k(x) = \e^{2\pi\icx \pscal{k}{x}}\;, \qquad 
 k\in\Z^d\;,
\end{equation}
where $k$ should not be confused with the index $k$ used in the Wiener isometry. 
We then write 
\begin{equation}
 f(x) = \sum_{k\in\Z^d} \hat f(k) e_k(x)
\end{equation} 
for the Fourier series of a function $f\in\H$. 

\begin{remark}[Complex versus real Fourier series]
\label{rem:Fourier_complex} 
When one uses complex Fourier series, because of the reality condition 
$\smash{\overline{\hat f(k)}} = \hat f(-k)$, one should not take independent $X_k$ in the 
underlying probability space. Instead, they should satisfy 
\begin{equation}
 \expec{X_kX_\ell} = \delta_{k,-\ell}\;.
\end{equation} 
It is also possible to use real instead of complex Fourier series and independent 
$X_k$, but this makes some computation slightly more tedious.  
\end{remark}

Note that we have 
\begin{equation}
\label{eq:def_lambdak} 
 (\id - \Delta) e_k(x) = \lambda_k e_k(x)\;, \qquad 
 \lambda_k = 1 + (2\pi)^d \norm{k}^2\;.
\end{equation} 
The eigenvalues $\lambda_k$ of the positive operator $\id - \Delta$ play 
a natural role as weights in Sobolev spaces. 

\begin{definition}[Fractional Sobolev spaces]
\label{def:Sobolev_space} 
For $s\geqs0$, the \emph{fractional Sobolev space} $H^s(\Lambda)$ is the 
space of functions $f\in \H = L^2(\Lambda)$ such that 
\begin{equation}
\label{eq:Sobolev_norm} 
 \norm{f}_{H^s}^2 := \sum_{k\in\Z^d} \lambda_k^s \abs{\hat f(k)}^2 < \infty\;.
\end{equation} 
In particular, $H^0(\Lambda) = L^2(\Lambda)$. For $s<0$, $H^s(\Lambda)$ is 
the closure of $L^2(\Lambda)$ under the norm \eqref{eq:Sobolev_norm}.
\end{definition}

Note that in the definition~\eqref{eq:Sobolev_norm} of the fractional Sobolev 
norm, one may replace the weight $\lambda_k^s$ by $(1 + \norm{k}^2)^s$.
The resulting norm is equivalent. 

\begin{proposition}[Sobolev regularity of white noise on the torus]
\label{prop:xi_Sobolev} 
White noise $\xi$ belongs to $H^s$ for any $s < -\frac{d}{2}$, in the 
sense that 
\begin{equation}
 \bigexpec{\norm{\xi}_{H^s}^2} < \infty 
 \qquad 
 \text{for all $s < -\dfrac{d}{2}$}\;.
\end{equation} 
\end{proposition}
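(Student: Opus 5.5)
The plan is a direct computation in Fourier space. We expand $\xi$ in the complex Fourier basis, $\xi = \sum_{k\in\Z^d} X_k e_k$, so that formally $\hat\xi(k) = X_k$. Here the $X_k$ are centred Gaussians with $\expec{X_kX_\ell} = \delta_{k,-\ell}$, as in Remark~\ref{rem:Fourier_complex}, and the reality condition $\overline{X_k} = X_{-k}$ holds. This gives $\expec{\abs{X_k}^2} = \expec{X_kX_{-k}} = 1$ for every $k$. Alternatively, $\hat\xi(k) = \pscal{\xi}{\overline{e_k}}$, and the covariance formula~\eqref{eq:white_noise_cov}, extended sesquilinearly, gives $\expec{\abs{\hat\xi(k)}^2} = \norm{e_k}_\H^2 = 1$.

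To avoid making sense of $\norm{\xi}_{H^s}$ directly, we work with the mollification $\xi_N$ with cut-off $N$, obtained by keeping the modes with $\norm{k}\leqs N$. This is a genuine $L^2$ function, so Definition~\ref{def:Sobolev_space} applies. By Tonelli (all terms are non-negative) we get
\begin{equation}
 \bigexpec{\norm{\xi_N}_{H^s}^2} = \sum_{\norm{k}\leqs N} \lambda_k^s\, \bigexpec{\abs{X_k}^2} = \sum_{\norm{k}\leqs N} \lambda_k^s\;.
\end{equation}

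The key step is showing that $\sum_{k\in\Z^d}\lambda_k^s < \infty$ exactly when $s<-d/2$. Since $\lambda_k \asymp 1+\norm{k}^2$ (see the remark after Definition~\ref{def:Sobolev_space}), this reduces to convergence of $\sum_k (1+\norm{k}^2)^{s}$. We handle it by grouping lattice points into dyadic shells $2^j\leqs \norm{k} < 2^{j+1}$. Each shell contains $\Order{2^{jd}}$ points, and on it the summand is $\Order{2^{2js}}$. The total is therefore bounded by $1+\sum_{j\geqs0} C\,2^{j(d+2s)}$, which is a convergent geometric series when $d+2s<0$. One could equally compare with the integral $\int_{\R^d}(1+\abs{x}^2)^s\,\6x$, which is finite iff $2s<-d$.

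Finally, we pass to the limit. The partial sums $\xi_N$ form a Cauchy sequence in $L^2(\Omega; H^s)$, because
\begin{equation}
 \expec{\norm{\xi_N-\xi_M}_{H^s}^2} = \sum_{M<\norm{k}\leqs N}\lambda_k^s \to 0\;.
\end{equation}
Hence $\xi$ is defined as an $H^s$-valued random variable (in the closure sense of Definition~\ref{def:Sobolev_space}), and monotone convergence gives $\expec{\norm{\xi}_{H^s}^2} = \sum_k\lambda_k^s < \infty$.

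The only real obstacle is the lattice-point counting in the dyadic-shell estimate; the rest is bookkeeping. The constant $(2\pi)^d$ in $\lambda_k$ does not affect convergence, since it only changes the norm up to equivalence.
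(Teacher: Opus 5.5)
Your proof is correct and follows the same route as the paper. In both, the Fourier expansion gives $\expec{\norm{\xi}_{H^s}^2}=\sum_{k}\lambda_k^s$, and one checks that this sum converges when $s<-\frac{d}{2}$. The differences are cosmetic: the paper compares the sum directly with $\int_1^\infty r^{2s}r^{d-1}\6r$ instead of using dyadic shells, and it skips the truncation and Cauchy-sequence argument you add for rigour.
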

\begin{proof}
It follows from the Fourier representation~\eqref{eq:def_xi} of white noise 
that 
\begin{equation}
 \norm{\xi}_{H^s}^2 = \sum_{k\in\Z^d} \lambda_k^s X_k^2\;,
\end{equation} 
where the $X_k$ are independent standard Gaussians. Taking the expectation, 
we obtain 
\begin{equation}
 \bigexpec{\norm{\xi}_{H^s}^2} = \sum_{k\in\Z^d} \lambda_k^s\;.
\end{equation} 
The sum is comparable to the integral 
\begin{equation}
 \int_{\R^d} (1+\norm{y}^2)^s \6y 
 \asymp \int_1^\infty r^{2s} r^{d-1} \6r\;,
\end{equation} 
which is convergent if and only if $s < -\frac{d}{2}$. 
Here we write $x\asymp y$ to indicate that $c^{-1}x \leqs y \leqs cx$
for some $c\geqs1$. 
\end{proof}

A second scale of function spaces that are useful when working with white noise 
are so-called H\"older--Besov spaces. To define them, we first introduce a 
generalisation of the scaling operator~\eqref{eq:scaling_op} given by 
\begin{equation}
\label{eq:scaling_op_x} 
 (\cS^\lambda_x \ph)(y) = \frac{1}{\lambda^d} \ph\biggpar{\frac{y-x}{\lambda}}\;.
\end{equation} 
For $r\in\N$, we denote by $B_r$ the set of smooth test functions $\ph:\Lambda\to\R$, 
supported on a ball or radius $1$, whose partial derivatives up to order $r$ are 
bounded by $1$. 

\begin{definition}[H\"older--Besov spaces]
For $\alpha < 0$, the space $\cC^\alpha(\Lambda)$ consists in all Schwartz 
distributions $\zeta\in\cS'(\Lambda)$ such that 
\begin{equation}
\label{eq:Holder_negative} 
 \norm{\zeta}_{\cC^\alpha} 
 = \sup_{x\in\Lambda} \sup_{\ph\in B_r} \sup_{\lambda\in(0,1]} 
 \biggabs{\frac{\pscal{\zeta}{\cS^\lambda_x\ph}}{\lambda^\alpha}} < \infty\;,
\end{equation} 
where $r = \lceil -\alpha \rceil$. 
\end{definition}

This definition says that if $\zeta\in\cC^\alpha$, then 
$\pscal{\zeta}{\cS^\lambda_x\ph}$ diverges at most like $\lambda^{-\alpha}$ 
for any $x\in\Lambda$. It is thus a measure of how far the distribution is 
from admitting a finite value at $x$. 

\begin{proposition}[H\"older--Besov regularity of white noise on the torus]
\label{prop:xi_Holder} 
White noise $\xi$ belongs to $\cC^\alpha$ for any $\alpha < -\frac{d}{2}$.
\end{proposition}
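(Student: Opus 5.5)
The plan is the standard Kolmogorov-type argument: control each rescaled test $\pscal{\xi}{\cS^\lambda_x\ph}$ pointwise with Gaussian moments, then pass to the suprema in~\eqref{eq:Holder_negative} by discretisation and a chaining or wavelet argument.

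First, for fixed $x$, $\ph\in B_r$ and $\lambda\in(0,1]$, the random variable $\pscal{\xi}{\cS^\lambda_x\ph}$ is centred Gaussian. By~\eqref{eq:white_noise_cov} and the computation in the proof of Proposition~\ref{prop:white_noise_scaling}, its variance is $\norm{\cS^\lambda_x\ph}_\H^2 = \lambda^{-d}\norm{\ph}_\H^2 \leqs C\lambda^{-d}$, uniformly over $\ph\in B_r$ because these functions have support in a unit ball and are bounded by $1$. Gaussian moments (Proposition~\ref{prop:gaussian_moments}), or equivalently Theorem~\ref{thm:equivalence_moments_infdim} with $n=1$, then give for every $p\geqs1$
\begin{equation}
 \bigexpec{\abs{\pscal{\xi}{\cS^\lambda_x\ph}}^{2p}} \leqs C_p \lambda^{-pd}\;.
\end{equation}
So each single test behaves like $\lambda^{-d/2}$. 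The whole task is to control the supremum over $x$, $\ph$ and $\lambda$ at the cost of an arbitrarily small loss $\lambda^{-\kappa}$.

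Second, I discretise. I take dyadic scales $\lambda=2^{-n}$ and, at scale $n$, points $x$ on the grid $\Lambda_n=2^{-n}\Z^d\cap\Lambda$, which has about $2^{nd}$ points. To handle the supremum over the infinite-dimensional class $B_r$, the cleanest route is a wavelet characterisation of $\cC^\alpha$ (as in Hairer's regularity-structures work). Fix a compactly supported wavelet basis of regularity larger than $r$ with father $\varphi$ and mothers $\psi$. One shows that for $\alpha<0$
\begin{equation}
 \norm{\zeta}_{\cC^\alpha}\lesssim \sup_{n\geqs0}\sup_{y\in\Lambda_n}\sup_{\psi}2^{-n\alpha}\bigabs{\pscal{\zeta}{\psi^n_y}}\;,
\end{equation}
where $\psi^n_y=2^{nd}\psi(2^n(\cdot-y))$ (plus the analogous father term at $n=0$). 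This step uses only the smoothness of the test functions in $B_r$: one expands $\cS^\lambda_x\ph$ in the wavelet basis and uses vanishing moments of the mothers to bound the coefficients at finer scales. I expect this approximation lemma to be the main technical obstacle, and I would cite it rather than prove it in full.

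Third, I take a union bound with moments. For any $p$,
\begin{equation}
 \Bigexpec{\sup_{n}\sup_{y\in\Lambda_n}2^{-2pn\alpha}\abs{\pscal{\xi}{\psi^n_y}}^{2p}}
 \leqs \sum_{n\geqs0}\sum_{y\in\Lambda_n}2^{-2pn\alpha}C_p2^{pnd}
 \lesssim \sum_{n\geqs0}2^{n(d-2p\alpha+pd)}\;.
\end{equation}
Write $\alpha=-\frac d2-\kappa$ with $\kappa>0$. The exponent is then $n(d-2p\kappa)$, which is negative once $p>d/(2\kappa)$. Hence $\expec{\norm{\xi}_{\cC^\alpha}^{2p}}<\infty$, and in particular $\xi\in\cC^\alpha$ almost surely.

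Note that the stated range $\alpha<-\frac d2$ matches the variance scaling $\lambda^{-d/2}$. The small loss $\kappa$ is exactly what pays for the union over the $2^{nd}$ grid points, once $p$ is taken large.
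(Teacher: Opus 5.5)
Your proposal is correct and follows essentially the same route as the paper's sketch. Both reduce the suprema in \eqref{eq:Holder_negative} to dyadic scales and grid points via a cited lemma (the paper uses a single test function following Caravenna--Zambotti, you use a wavelet family), bound the supremum by a sum, use the $\lambda^{-d}$ variance together with Gaussian moments, and take $p$ large so the geometric series converges. One sign slip needs fixing: with $\psi^n_y=2^{nd}\psi(2^n(\cdot-y))$ the weight must be $2^{n\alpha}=\lambda^{-\alpha}$, not $2^{-n\alpha}$. The exponent is then $n(d+2p\alpha+pd)=n(d-2p\kappa)$, as you conclude, whereas the formula as you displayed it would give a divergent series.
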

\begin{proof}[{\sc Sketch of proof}]
We follow the argument outlined in~\cite[Theorem~2.7]{Chandra_Weber_LN17}.
Given $k\in\N_0$, we introduce a dyadic lattice discretisation of 
$\Lambda$ on scale $2^{-k}$, given by 
\begin{equation}
 \Lambda_k = (2^{-k}\Z)^d\cap\Lambda\;.
\end{equation} 
For any $\alpha\in\R$, one can show the existence of a test function 
$\ph$ and a constant $C$ such that 
\begin{equation}
 \norm{\xi}_{\cC^\alpha} 
 \leqs C \sup_{k\geqs0} \sup_{x\in\Lambda_k} 2^{k\alpha}
 \bigabs{\pscal{\xi}{\cS^{2^{-k}}_x\ph}}\;.
\end{equation} 
See for instance~\cite[Section12]{Caravenna_Zambotti_20}.
Bounding the suprema by sums, and taking the $p$th power, we arrive at 
\begin{equation}
 (\norm{\xi}_{\cC^\alpha})^p 
 \leqs C^p \sum_{k\geqs0} \sum_{x\in\Lambda_k} 2^{k\alpha p}
 \bigabs{\pscal{\xi}{\cS^{2^{-k}}_x\ph}}^p\;.
\label{eq:proof_Holder_xi01} 
\end{equation} 
Since $\pscal{\xi}{\cS^{2^{-k}}_x\ph}$ belongs to the first Wiener chaos, 
its $p$th power belongs to the $p$th (inhomogeneous) chaos. 
Using the scaling property~\eqref{eq:xi_scaling} and~\eqref{eq:scaling_op}, 
we get 
\begin{equation}
 \bigexpec{\pscal{\xi}{\cS^{2^{-k}}_x\ph}^2} 
 = \frac{1}{\lambda^d} \bigexpec{\pscal{\xi}{\phi}^2}
 = \frac{1}{\lambda^d} \norm{\ph}_\H^2\;.
\end{equation} 
The equivalence of moments~\eqref{eq:equivalence_moments_infdim} implies 
\begin{equation}
 \bigexpec{\pscal{\xi}{\cS^{2^{-k}}_x\ph}^p}
 \leqs \frac{C_p}{\lambda^{dp/2}} \norm{\ph}_\H^p
\end{equation} 
for some constant $C_p$ depending only on $p$. Plugging this 
into the expectation of~\eqref{eq:proof_Holder_xi01} with $\lambda = 2^{-k}$ 
and using the fact that $\Lambda_k$ has $2^{kd}$ points, we obtain 
\begin{equation}
 \bigexpec{(\norm{\xi}_{\cC^\alpha})^p}
 \leqs C'_p \sum_{k\geqs0} 2^{kd} 2^{k\alpha p} 2^{kdp/2}
 = C'_p \sum_{k\geq0} 2^{k(d + \alpha p + dp/2)}
\end{equation} 
where $C'_p$ depends only on $p$. 
If $\alpha p < -d(1 + \frac{p}{2})$, the geometric series can be summed. 
It then follows by a version of Kolmogorov's continuity theorem (see 
Theorem~\ref{thm:Kolmogorov} below) that 
there exists a version of $\xi$ with bounded $\cC^\alpha$ norm.
Since $p$ can be taken arbitrarily large, the condition reduces to 
$\alpha < -\smash{\frac{d}{2}}$.
\end{proof}

\begin{remark}[Besov spaces]
Fractional Sobolev and H\"older--Besov spaces are particular instances 
of a general class of functional spaces called \emph{Besov spaces}. 
The Besov space $\cB^\alpha_{p,q}$ is a Banach space for all $\alpha\in\R$, 
and all $p,q\in[1,\infty]$, where $\alpha$ measures regularity, and 
$p$ and $q$ measure integrability. Sobolev and H\"older--Besov spaces 
correspond to the particular cases 
\begin{equation}
 H^s = \cB^s_{2,2}
 \qquad \text{and} \qquad 
 \cC^\alpha = \cB^\alpha_{\infty,\infty}\;.
\end{equation} 
\end{remark}

%%%%%%%%%%%%%%%%%%%%%%%%%%%%%%%%%%%%%%%%%%%%%%%%%%%%%%%%%%%%%%%%%%%%%%%%%%%%%%%%

\section{The Gaussian free field}
\label{sec:GFF}

%%%%%%%%%%%%%%%%%%%%%%%%%%%%%%%%%%%%%%%%%%%%%%%%%%%%%%%%%%%%%%%%%%%%%%%%%%%%%%%%

\subsection{Definition and basic properties}
\label{ssec:GFF_prop} 

Consider now the case where $h$ is given by 
\begin{equation}
 \hat h(k) = \frac{1}{\sqrt{\lambda_k}}\;,
 \qquad 
 k \in\Z^d\;,
\end{equation} 
where $\lambda_k$ is defined in~\eqref{eq:def_lambdak}.
The associated Gaussian field is 
\begin{equation}
\label{eq:def_GFF} 
 \GFF(x) = \sum_{k\in\Z^d} \frac{X_k}{\sqrt{\lambda_k}}e_k(x)\;.
\end{equation} 
This time, we have 
\begin{equation}
\label{eq:GFF_L2} 
 \norm{h}_\H^2 = \sum_{k\in\Z^d} \frac{1}{\lambda_k}
 \asymp \int_{\R^d} \frac{1}{1+\norm{y}^2} \6y 
 \asymp \int_1^\infty \frac{r^{d-1}\6r}{r^2} 
 = \int_1^\infty \frac{\6r}{r^{3-d}}\;,
\end{equation} 
which converges if $d < 2$. We conclude that the Gaussian field~\eqref{eq:def_GFF}
has a finite variance in dimension $d = 1$, but not in higher dimension. Still,
this is somewhat better than for white noise. Its covariance is given by 
\begin{align}
 \bigexpec{\GFF(x)\GFF(y)}
 &= \sum_{k,\ell \in \Z^d}
 \frac{\expec{X_k X_\ell}}{\sqrt{\lambda_k\lambda_\ell}}
 e_k(x) e_\ell(y) \\
 &= \sum_{k\in\Z^d} \frac{e_k(x)e_{-k}(y)}{\lambda_k} \\
 &= \sum_{k\in\Z^d} \frac{e_k(x-y)}{\lambda_k}
 =: G(x-y) 
\label{eq:def_Green} 
\end{align}
by Remark~\ref{rem:Fourier_complex} and the fact that $e_k(x)e_{-k}(y) 
= e_k(x-y)$. 
Note that while $G(0)$ is defined only for $d=1$, one can show that 
$G(x)$ is defined for all $d$ if $x\neq0$. 
The function $G$ has the following property, which states that it 
can be considered as the inverse of the linear operator $(\id - \Delta)$.

\begin{lemma}
For any $g\in\H$, the function $f$ defined by 
\begin{equation}
 f(x) = \int_\Lambda G(x-y) g(y) \6y\;,
\end{equation} 
if it exists, satisfies 
\begin{equation}
 (\id - \Delta) f(x) = g(x)\;.
\end{equation} 
\end{lemma}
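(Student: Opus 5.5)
The plan is to work entirely in Fourier space, where $(\id-\Delta)$ is diagonal with eigenvalues $\lambda_k$ by~\eqref{eq:def_lambdak}, and $G$ is by definition~\eqref{eq:def_Green} the function with Fourier coefficients $\hat G(k) = 1/\lambda_k$. Writing $g(y) = \sum_{\ell\in\Z^d} \hat g(\ell) e_\ell(y)$ with $\hat g\in\ell^2$, I will first compute the Fourier coefficients of the convolution $f = G * g$, then apply $(\id-\Delta)$ termwise.

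The first step is the convolution identity. Inserting the series for $G$ and $g$ gives
\begin{equation}
 f(x) = \sum_{k,\ell\in\Z^d} \frac{\hat g(\ell)}{\lambda_k} \int_\Lambda e_k(x-y) e_\ell(y)\6y
 = \sum_{k,\ell\in\Z^d} \frac{\hat g(\ell)}{\lambda_k} e_k(x) \int_\Lambda e_{\ell-k}(y)\6y
 = \sum_{k\in\Z^d} \frac{\hat g(k)}{\lambda_k} e_k(x)\;,
\end{equation}
using $e_k(x-y) = e_k(x)e_{-k}(y)$ (as in the computation leading to~\eqref{eq:def_Green}) and orthonormality $\int_\Lambda e_{\ell-k}\6y = \delta_{k\ell}$. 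Thus $\hat f(k) = \hat g(k)/\lambda_k$. The second step is then immediate: since $(\id-\Delta)e_k = \lambda_k e_k$, applying the operator termwise gives $\sum_k \hat g(k) e_k = g$.

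The main obstacle is justifying the interchange of sum and integral, since $G$ need not be in $L^2$ (indeed $\sum_k \lambda_k^{-1}$ diverges for $d\geqs2$, cf.~\eqref{eq:GFF_L2}). I would handle this by truncation: replace $G$ by $G_N(x) = \sum_{\norm{k}\leqs N} e_k(x)/\lambda_k$, for which all sums are finite and the computation above is exact, giving $\hat f_N(k) = \indicator{\norm{k}\leqs N}\hat g(k)/\lambda_k$. Since $\sum_k \abs{\hat g(k)}^2/\lambda_k^2 < \infty$, $f_N\to f$ in $L^2$ (this is where the hypothesis ``if it exists'' is interpreted: $f$ is the $L^2$ limit, or equals it when the integral converges). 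Then $(\id-\Delta)f_N = \sum_{\norm{k}\leqs N}\hat g(k)e_k \to g$ in $L^2$, so the identity holds in the sense of distributions (equivalently in $H^{-2}$, with $f\in H^2$ by Definition~\ref{def:Sobolev_space}), which is the appropriate meaning of $(\id-\Delta)f = g$ for $g\in\H$ merely square-integrable.
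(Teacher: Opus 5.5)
Your proof is correct and follows essentially the paper's argument. The paper applies $(\id-\Delta)$ termwise under the integral to the series for $G$, cancels the factor $1/\lambda_k$, and resums $\sum_k\hat g(k)e_k=g$ by invoking Dirichlet's theorem. You instead compute $\hat f(k)=\hat g(k)/\lambda_k$ first and then apply the operator. Your truncation argument and the observation $f\in H^2$ supply the justification that the paper's formal computation leaves implicit, together with the appropriate non-pointwise meaning of the identity when $g$ is merely in $L^2$.

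One small slip: divergence of $\sum_k\lambda_k^{-1}=G(0)$ shows that $G$ is unbounded, not that $G\notin L^2$, since $\norm{G}_{L^2}^2=\sum_k\lambda_k^{-2}<\infty$ for $d\leqs3$. This does not affect your argument.
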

\begin{proof}
Assuming the integral is well-defined, we have 
\begin{align}
 (\id - \Delta) f(x) 
 &= \int_\Lambda \sum_{k\in\Z^d} \frac{(\id-\Delta)e_k(x)e_{-k}(y)}{\lambda_k}
 g(y)\6y \\
 &= \sum_{k\in\Z^d} \int_\Lambda e_k(x)e_{-k}(y) g(y) \6y \\
 &= \sum_{k\in\Z^d} \hat g(k) e_k(x) 
 = g(x)
\end{align}
by Dirichlet's theorem on Fourier series. 
\end{proof}

This property motivates the following definition. 

\begin{definition}[Gaussian free field and Green function]
\begin{itemize}
\item   The function $G$ defined by~\eqref{eq:def_Green} is called 
the \emph{Green function} associated with the operator $(\id - \Delta)$.
It is also written $G = (\id - \Delta)^{-1}$. 

\item   The Gaussian field defined by~\eqref{eq:def_GFF} is called 
the \emph{Gaussian free field (GFF) with covariance $(\id-\Delta)^{-1}$}.
\end{itemize}
\end{definition}

\begin{exercise}
Show that for any test functions $\ph_1,\ph_2\in\H$, one has 
\begin{equation}
 \bigexpec{\pscal{\GFF}{\ph_1}\pscal{\GFF}{\ph_2}}
 = \pscal{\ph_1}{(\id-\Delta)^{-1}\ph_2}_\H\;.
\end{equation} 
\end{exercise}

We can now easily adapt the proof of Proposition~\ref{prop:xi_Sobolev} to 
the GFF~\eqref{eq:def_GFF}.

\begin{proposition}[Sobolev regularity of the Gaussian free field]
\label{prop:GFF_Sobolev} 
The Gaussian free field $\GFF$ belongs to $H^s$ for any $s < 1-\frac{d}{2}$, in the 
sense that 
\begin{equation}
 \bigexpec{\norm{\GFF}_{H^s}^2} < \infty 
 \qquad 
 \text{for all $s < 1-\dfrac{d}{2}$}\;.
\end{equation} 
\end{proposition}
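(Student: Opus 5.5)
We follow the proof of Proposition~\ref{prop:xi_Sobolev} almost verbatim, the only change being the additional factor $\lambda_k^{-1}$ coming from the definition~\eqref{eq:def_GFF}. The first step is to compute the Fourier coefficients of $\GFF$: by~\eqref{eq:def_GFF}, $\hat\GFF(k) = X_k/\sqrt{\lambda_k}$. Inserting this into the Sobolev norm~\eqref{eq:Sobolev_norm} gives
\begin{equation}
 \norm{\GFF}_{H^s}^2 = \sum_{k\in\Z^d} \lambda_k^{s} \frac{\abs{X_k}^2}{\lambda_k}
 = \sum_{k\in\Z^d} \lambda_k^{s-1} \abs{X_k}^2\;.
\end{equation}
Because of the reality condition in Remark~\ref{rem:Fourier_complex}, we have $\abs{X_k}^2 = X_kX_{-k}$, and hence $\expec{\abs{X_k}^2} = \expec{X_kX_{-k}} = 1$.

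Next, we take expectations term by term. Since all terms are non-negative, monotone convergence allows us to exchange sum and expectation, so that
\begin{equation}
 \bigexpec{\norm{\GFF}_{H^s}^2} = \sum_{k\in\Z^d} \lambda_k^{s-1}\;.
\end{equation}
This is exactly the sum that appeared in the proof of Proposition~\ref{prop:xi_Sobolev}, with $s$ replaced by $s-1$.

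Finally, we compare the lattice sum with an integral. Since $\lambda_k \asymp 1+\norm{k}^2$, the sum is comparable to
\begin{equation}
 \int_{\R^d} (1+\norm{y}^2)^{s-1}\6y \asymp \int_1^\infty r^{2(s-1)+d-1}\6r\;,
\end{equation}
which converges if and only if $2(s-1)+d<0$, that is, $s<1-\frac d2$.

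The only step needing a little care is this sum--integral comparison. It can be justified by bounding each term $\lambda_k^{s-1}$ by the integral of $(1+\norm{y}^2)^{s-1}$ over the unit cube adjacent to $k$, up to a constant. This works because the function is radially monotone and, on unit cubes, its values vary by a bounded factor. Everything else is identical to the white-noise case.
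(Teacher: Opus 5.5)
Your proposal is correct and follows essentially the same route as the paper: compute $\bigexpec{\norm{\GFF}_{H^s}^2} = \sum_{k\in\Z^d}\lambda_k^{s-1}$ as in the white-noise case, then compare the lattice sum with $\int_1^\infty r^{2(s-1)+d-1}\6r$. Your additional remarks — monotone convergence to exchange sum and expectation, $\expec{X_kX_{-k}}=1$ from the reality condition, and the cube-by-cube sum--integral comparison — make explicit details that the paper leaves implicit.
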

\begin{proof}
A similar computation as before shows that 
\begin{equation}
 \bigexpec{\norm{\GFF}_{H^s}^2}
 = \sum_{k\in\Z^d} \frac{1}{\lambda_k^{1-s}}
 \asymp \int_{\R^d} \frac{\6y}{(1+\norm{y}^2)^{1-s}} 
 \asymp \int_1^\infty \frac{\6r}{r^{3-d-2s}}\;.
\end{equation} 
This is finite if and only if $s < 1-\frac{d}{2}$. 
\end{proof}

For $d=1$, we obtain a bit more regularity than $L^2 = H^0$, that we got in 
the estimate~\eqref{eq:GFF_L2}. For dimensions $d\geqs2$, on the other hand, 
we find again that the GFF is less regular than a function. We will examine 
these two cases more closely in the next two sections. 

%%%%%%%%%%%%%%%%%%%%%%%%%%%%%%%%%%%%%%%%%%%%%%%%%%%%%%%%%%%%%%%%%%%%%%%%%%%%%%%%

\subsection{The Gaussian free field on the circle $\T^1$}
\label{ssec:GFF1} 

Proposition~\ref{prop:GFF_Sobolev} shows that the one-dimensional GFF 
is a function that has better regularity properties than being merely 
square-integrable. In fact, one can show that it enjoys some H\"older 
regularity. Recall the definition of classical H\"older spaces.

\begin{definition}[H\"older spaces of regularity $\alpha\in(0,1)$]
For $0 < \alpha < 1$, the space $\cC^\alpha(\Lambda)$ consists in all functions 
$f:\Lambda\to\R$ such that 
\begin{equation}
\label{eq:Holder_positive} 
 \norm{f}_{\cC^\alpha} 
 = \sup_{x\in\Lambda} \abs{f(x)}
 +\sup_{\substack{x,y\in\Lambda \\x\neq y}}
 \frac{\abs{f(x)-f(y)}}{\norm{x-y}^\alpha} < \infty\;.
\end{equation} 
\end{definition}

A classical result allowing the obtain H\"older regularity of a 
stochastic process is due to Kolmogorov.

\begin{theorem}[Kolmogorov continuity criterion]
\label{thm:Kolmogorov} 
Let $(\phi(x))_{x\in[0,L]}$ be a stochastic process such that 
\begin{equation}
 \bigexpec{\norm{\phi(y) - \phi(x)}^\mu} 
 \leqs C \abs{y-x}^{1+\nu}
\end{equation} 
for all $x,y\in[0,L]$, for some constants $\mu, \nu>0$. Then 
there exists a modification of the process $(\phi(x))_{x\in[0,L]}$ whose 
paths belong to $\cC^\alpha$ for all $\alpha < \frac{\nu}{\mu}$.
\end{theorem}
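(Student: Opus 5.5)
The plan is the classical dyadic chaining argument. By an affine change of variable we may assume $L=1$. Fix $\alpha<\nu/\mu$. For $n\geqs0$ let $D_n=\setsuch{k2^{-n}}{0\leqs k\leqs 2^n}$ and $D=\bigcup_n D_n$ the dyadic rationals of $[0,1]$. We first control increments between neighbouring dyadic points at a fixed level, using Markov's inequality and the moment assumption:
\begin{equation}
 \fP\Bigset{\max_{1\leqs k\leqs 2^n}\abs{\phi(k2^{-n})-\phi((k-1)2^{-n})}\geqs 2^{-n\alpha}}
 \leqs 2^n\cdot C\,2^{-n(1+\nu)}2^{n\alpha\mu}
 = C\,2^{-n(\nu-\alpha\mu)}\;.
\end{equation}
Since $\nu-\alpha\mu>0$, the right-hand side is summable in $n$. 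By the Borel--Cantelli lemma there is an almost surely finite random index $n_0(\omega)$ such that for all $n\geqs n_0$ all neighbouring increments at level $n$ are bounded by $2^{-n\alpha}$.

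The second step, which is the heart of the matter, is to propagate this to arbitrary pairs $s,t\in D$. Take $s<t$ in $D$ with $2^{-(m+1)}\leqs t-s<2^{-m}$, where $m\geqs n_0$. Write $t$ as a chain
\begin{equation}
 t_m\leqs t_{m+1}\leqs\dots\to t
\end{equation}
with $t_j\in D_j$ and $t_{j+1}-t_j\in\set{0,2^{-(j+1)}}$ (binary expansion), and proceed similarly for $s$. The points $s_m,t_m$ are then either equal or neighbours in $D_m$. Summing the geometric series gives
\begin{equation}
 \abs{\phi(t)-\phi(s)}\leqs 2^{-m\alpha}+2\sum_{j>m}2^{-j\alpha}\leqs K\,2^{-m\alpha}\leqs K'\abs{t-s}^\alpha\;.
\end{equation}
Pairs with $t-s\geqs 2^{-n_0}$ are handled by splitting into at most $2^{n_0}$ steps, so $\phi$ restricted to $D$ is almost surely $\alpha$-Hölder, with a random constant. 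Because $D$ is dense and $\phi\vert_D$ is uniformly continuous, it extends uniquely to a continuous $\tilde\phi$ on $[0,1]$ with the same Hölder bound; on the null set where this fails, we set $\tilde\phi=0$. The uniform bound on $\sup\abs{\tilde\phi}$ follows from compactness.

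Finally, $\tilde\phi$ is a modification of $\phi$. For fixed $x$, choose $x_n\in D$ with $x_n\to x$. The moment bound gives $\phi(x_n)\to\phi(x)$ in probability, while $\phi(x_n)=\tilde\phi(x_n)\to\tilde\phi(x)$ almost surely, so $\tilde\phi(x)=\phi(x)$ almost surely. Since $\alpha<\nu/\mu$ was arbitrary, we may take a sequence $\alpha_j\uparrow\nu/\mu$; the continuous extensions all coincide, which gives a single modification lying in every $\cC^\alpha$. The main obstacle is the chaining step, namely getting the bookkeeping of the binary expansions right so that only neighbouring increments at levels $j\geqs m$ appear.
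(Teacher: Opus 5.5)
The notes do not prove this theorem. It is quoted as a classical result and then applied, with $\mu=2p$ and $\nu=p-1$, in Proposition~\ref{prop:GFF_Holder}, so there is no in-paper argument to compare with. Your proof is the standard dyadic chaining argument, and it is correct.

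\begin{itemize}
\item The level-$n$ estimate holds: Markov's inequality plus a union bound over the $2^n$ neighbouring pairs gives the summable bound $C2^{-n(\nu-\alpha\mu)}$, and Borel--Cantelli then produces $n_0(\omega)$.
\item The chaining is sound. With $t_j=\lfloor 2^j t\rfloor 2^{-j}$ and $s_j=\lfloor 2^j s\rfloor 2^{-j}$, the condition $t-s<2^{-m}$ forces $s_m,t_m$ to be equal or neighbours in $D_m$. Only neighbour increments at levels $j>m\geq n_0$ remain, and $2^{-m\alpha}\leq 2^{\alpha}(t-s)^\alpha$ closes the bound.
\item Identifying $\tilde\phi(x)=\phi(x)$ almost surely works. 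Convergence in $L^\mu$, hence in probability, along dyadic approximants, combined with almost sure convergence of $\tilde\phi$, is exactly what is needed. Intersecting the full-measure events for a sequence $\alpha_j\uparrow\nu/\mu$ correctly gives a single modification.
\end{itemize}

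The only slip is cosmetic. For $t-s\geq 2^{-n_0}$ you may need up to $2^{n_0}+2$ steps rather than $2^{n_0}$: two boundary pieces of length $<2^{-n_0}$ to reach $D_{n_0}$, then neighbour steps inside $D_{n_0}$. This changes nothing beyond the random constant.

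The hypothesis is written with a norm. If $\phi$ is vector-valued, the argument goes through verbatim, with completeness of the target space used in the extension step. Your first step is the same mechanism the notes sketch in Proposition~\ref{prop:xi_Holder}: a dyadic lattice, a moment bound at each scale, and a geometric series. Your chaining step is what actually converts that into a continuity statement.
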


We then have the following result, which shows that the one-dimensional 
GFF has the same regularity as Brownian motion. Its proof can be seen 
as a much easier relative of the proof of Proposition~\ref{prop:xi_Holder}. 

\begin{proposition}[H\"older regularity of the GFF on $\T^1$]
\label{prop:GFF_Holder} 
The GFF on the circle belongs to $\cC^\alpha(\T^1)$ for all $\alpha < \frac12$.  
\end{proposition}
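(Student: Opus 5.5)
The plan is to apply the Kolmogorov continuity criterion (Theorem~\ref{thm:Kolmogorov}) to increments $\GFF(y)-\GFF(x)$. Equivalence of moments (Theorem~\ref{thm:equivalence_moments_infdim}) will upgrade a second-moment bound to arbitrarily high moments. Since $\GFF(y)-\GFF(x)$ is a linear combination of the $X_k$, it lies in the first Wiener chaos. So once we have a bound of the form
\begin{equation}
 \bigexpec{(\GFF(y)-\GFF(x))^2} \leqs C\abs{y-x}\;,
\end{equation}
Theorem~\ref{thm:equivalence_moments_infdim} with $n=1$ gives
$\expec{(\GFF(y)-\GFF(x))^{2p}} \leqs (2p-1)^{p} C^p \abs{y-x}^{p}$ for every $p>1$. (Alternatively, the exact Gaussian moment formula of Proposition~\ref{prop:gaussian_moments} gives the same bound.) Kolmogorov's criterion with $\mu=2p$ and $1+\nu=p$ then yields a modification in $\cC^\alpha$ for all $\alpha<\frac{p-1}{2p}$. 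Letting $p\to\infty$ gives every $\alpha<\frac12$. Periodicity lets us work on $[0,1]$ and identify the circle with it. The sup-norm part of~\eqref{eq:Holder_positive} is then automatic for a continuous path on a compact set, or follows since $\expec{\GFF(0)^2}=G(0)<\infty$ in $d=1$.

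The main step is the variance estimate. By the covariance formula~\eqref{eq:def_Green},
\begin{equation}
 \bigexpec{(\GFF(y)-\GFF(x))^2} = 2\bigpar{G(0)-G(y-x)}
 = \sum_{k\in\Z}\frac{2-2\cos(2\pi k h)}{\lambda_k}
 = \sum_{k\in\Z}\frac{4\sin^2(\pi k h)}{1+(2\pi)k^2}\;,
\end{equation}
where $h=y-x$. We may take $0<\abs{h}\leqs\frac12$. We split the sum at $\abs{k}\approx 1/\abs{h}$. For $\abs{k}\leqs 1/\abs{h}$ we use $\sin^2(\pi kh)\leqs \pi^2k^2h^2$; this range contributes at most a constant times $h^2\cdot(1/\abs{h})=\abs{h}$. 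For $\abs{k}>1/\abs{h}$ we use $\sin^2\leqs1$; this range contributes at most a constant times $\sum_{\abs{k}>1/\abs{h}}k^{-2}\lesssim\abs{h}$. Together these give the bound $C\abs{h}$.

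The main obstacle is this splitting estimate. It is elementary but is where the exponent $\frac12$ comes from. The rest is bookkeeping: checking that the complex Fourier convention of Remark~\ref{rem:Fourier_complex} gives a real field with the stated covariance, and that Theorem~\ref{thm:equivalence_moments_infdim} applies to the first-chaos element $\sum_k \lambda_k^{-1/2}X_k(e_k(y)-e_k(x))$. That element is an $L^2$ limit of finite sums, and equivalence of moments passes to the limit by Fatou's lemma.
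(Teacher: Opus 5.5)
Your proposal is correct and follows the same route as the paper: bound the increment variance $\sum_k \lambda_k^{-1}|e_k(y)-e_k(x)|^2$ by a constant times $|y-x|$ by splitting the Fourier sum at $|k|\approx 1/|y-x|$, upgrade to $2p$-th moments via equivalence of moments in the first chaos, and apply Kolmogorov's criterion with $\mu=2p$, $\nu=p-1$ before letting $p\to\infty$. (Incidentally, your $\sin^2(\pi k h)$ is the correct form of the half-angle identity; the paper's display has $\sin^2(2\pi k(y-x))$, although the bound it then uses is valid either way.)
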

\begin{proof}
For $x,y\in\T^1$, we have 
\begin{align}
 \bigexpec{\bigpar{\GFF(y) - \GFF(x)}^2}
 &= \sum_{k,\ell\in\Z} \frac{\expec{X_kX_\ell}}{\sqrt{\lambda_k\lambda_\ell}}
 \bigpar{e_k(y) - e_k(x)} \bigpar{e_\ell(y) - e_\ell(x)} \\
 &= \sum_{k\in\Z} \frac{1}{\lambda_k} \abs{e_k(y) - e_k(x)}^2\;.
\end{align} 
The trigonometric identity $\abs{\e^{\icx\theta}-1}^2 = 4\sin^2\Bigpar{\frac\theta2}$
yields 
\begin{equation}
 \abs{e_k(y) - e_k(x)}^2
 = \abs{\e^{2\pi\icx k (y-x)} - 1}^2
 = 4\sin^2(2\pi k(y-x))
 \leqs 4\bigbrak{(2\pi k)^2(y-x)^2\wedge 1}\;,
\end{equation} 
where $a\wedge b = \min\set{a,b}$. 
Therefore, if $\abs{y-x}\leqs1$, one has 
\begin{equation}
 \bigexpec{\bigpar{\GFF(y) - \GFF(x)}^2 }
 \lesssim \sum_{k\in\Z} \frac{(k^2(y-x)^2)\wedge 1}{1+k^2}
 \asymp \int_1^{1/\abs{y-x}} (y-x)^2\6r + \int_{1/\abs{y-x}}^\infty \frac{\6r}{r^2}
 \asymp \abs{y-x}\;.
\end{equation} 
On the other hand, if $\abs{y-x} > 1$, we can simply bound 
$\abs{e_k(y) - e_k(x)}^2$ by $4$. We conclude that there exists a constant 
$C_0 > 0$ such that 
\begin{equation}
 \bigexpec{\bigpar{\GFF(y) - \GFF(x)}^2 } \leqs C_0 \abs{y-x}
\end{equation} 
for all $x,y\in\T^1$. 
By the equivalence of moments bound~\eqref{eq:equivalence_moments_infdim}, 
it follows that for any $p>0$, there exists a constant $C(p)$ such that 
\begin{equation}
 \bigexpec{\bigpar{\GFF(y) - \GFF(x)}^{2p}}
 \leqs C(p) \abs{x-y}^p\;.
\end{equation} 
Kolmogorov's criterion thus applies with $\mu=2p$ and $\nu = p-1$, showing that 
$\GFF(x)$ is (up to a modification) H\"older continuous with exponent 
$\frac12(1-\frac1p)$. Since $p$ can be taken arbitrarily large, the result 
follows. 
\end{proof}

\begin{remark}[Link between $\alpha > 0$ and $\alpha < 0$]
The definition~\eqref{eq:Holder_positive} of the H\"older norm for 
positive $\alpha$ looks very different from the 
definition~\eqref{eq:Holder_negative}. However, one can show 
that it is equivalent to 
\begin{equation}
\label{eq:norm_C-alpha} 
 \norm{f}_{\cC^\alpha}
 = \sup_{x\in\Lambda} \sup_{\ph\in B_0} \sup_{\lambda\in(0,1]}
 \biggabs{\frac{\pscal{f-f(x)}{\cS^\lambda_x\ph}}{\lambda^\alpha}}\;.
\end{equation} 
\end{remark}

We can also easily estimate moments of the GFF. By translation invariance, 
these do not depend on the point $x$. The second moment is given by 
\begin{equation}
\label{eq:variance_GFF1} 
 \bigexpec{\GFF(x)^2}
 = \sum_{k\in\Z} \frac{1}{\lambda_k}
 = G(0)\;,
\end{equation}
which is finite for $d=1$, cf.~\eqref{eq:GFF_L2}. Odd moments of the GFF 
are equal to zero, while even moments behave as follows.

\begin{proposition}[Moments of the GFF on $\T^1$]
For any $p>1$, there exists a constant $C(p)$ such that 
\begin{equation}
\label{eq:moments_GFF} 
 \bigexpec{\GFF(x)^{2p}} \leqs C(p) \bigexpec{\GFF(x)^2}^p\;.
\end{equation} 
\end{proposition}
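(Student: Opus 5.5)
Since $\GFF(x)$ is a centred Gaussian random variable, the plan is to reduce the statement to the one-dimensional moment formula, with the equivalence of moments as a fallback. First, $\GFF(x)$ belongs to the first Wiener chaos. By~\eqref{eq:def_GFF}, $\GFF(x) = W(h_x)$, where $\hat h_x(k) = e_k(x)/\sqrt{\lambda_k}$. By the computation~\eqref{eq:GFF_L2}, $h_x\in\H$ when $d=1$, and its norm squared is $\sigma^2 := G(0)$, as in~\eqref{eq:variance_GFF1}. The partial sums over $\abs{k}\leqs K$ are finite linear combinations of the $X_k$, hence centred Gaussian (with the reality convention of Remark~\ref{rem:Fourier_complex} the sum is real). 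They converge in $L^2(\Omega)$, and $L^2$ limits of centred Gaussians are centred Gaussian, so $\GFF(x)\sim\cN(0,G(0))$.

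With this, there are two routes to~\eqref{eq:moments_GFF}.

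\emph{Route 1 (direct).} By Proposition~\ref{prop:Gaussian_basic}, write $\GFF(x) = \sigma Y$ with $Y\sim\cN(0,1)$. For integer $p$, Proposition~\ref{prop:gaussian_moments} gives
\begin{equation}
 \bigexpec{\GFF(x)^{2p}} = (2p-1)!!\,\sigma^{2p} = (2p-1)!!\,\bigexpec{\GFF(x)^2}^p\;,
\end{equation}
so $C(p) = (2p-1)!!$ works, and the bound is in fact an equality. For non-integer $p>1$, one can either use the Gamma-function expression $\expec{\abs{Y}^{2p}} = 2^p\Gamma(p+\frac12)/\sqrt{\pi}$ or bound the moment by Jensen/H\"older using the moment of order $2\lceil p\rceil$. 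Either way one obtains $C(p)=\expec{\abs{Y}^{2p}}$.

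\emph{Route 2 (as the notes intend).} Apply Theorem~\ref{thm:equivalence_moments_infdim} with $n=1$ to $F=\GFF(x)\in\cH_1$. This gives $\expec{F^{2p}}^{1/2p}\leqs (2p-1)^{1/2}\expec{F^2}^{1/2}$, and raising to the power $2p$ yields~\eqref{eq:moments_GFF} with $C(p)=(2p-1)^p$, uniformly in $x$.

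The only genuine point, and the main obstacle, is justifying that $\GFF(x)$ is a well-defined element of $\cH_1$, or equivalently a Gaussian variable with finite variance. This is exactly where $d=1$ is used, through the convergence of $\sum_k\lambda_k^{-1}$. I will handle it by the $L^2$-limit argument above. Both constants are independent of $x$, consistent with translation invariance.
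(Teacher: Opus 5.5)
Your proposal is correct. Route 2 is exactly the first argument in the paper's proof: equivalence of moments with $n=1$, giving $C(p)=(2p-1)^p$.

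For the sharper constant, the paper does not first identify the law of $\GFF(x)$. Instead it expands $\expec{\GFF(x)^{2p}}$ as a sum over $k_1,\dots,k_{2p}\in\Z$ of $\expec{X_{k_1}\cdots X_{k_{2p}}}/\sqrt{\lambda_{k_1}\cdots\lambda_{k_{2p}}}$. It then applies Isserlis' theorem (Theorem~\ref{thm:Isserlis}) to the Fourier coefficients and counts the $(2p-1)!!$ perfect matchings, each contributing $\bigl(\sum_k\lambda_k^{-1}\bigr)^p=\expec{\GFF(x)^2}^p$.

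Your Route 1 instead shows $\GFF(x)\sim\cN(0,G(0))$ by an $L^2$-limit argument and then uses the one-variable moment formula (Proposition~\ref{prop:gaussian_moments}). This reaches the same constant $C(p)=(2p-1)!!$, with equality.

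Your version is somewhat cleaner. It avoids exchanging the infinite mode sums with the expectation, and it avoids the bookkeeping of coinciding indices in Isserlis' formula. It also covers non-integer $p$, which the paper's pairing count implicitly excludes.

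The paper's version, however, is the pairing computation on Fourier modes that is reused later for the Feynman-diagram expansions of the $\Phi^4$ model. There one must track several modes and several space points at once, and knowing the one-point law alone does not suffice. So it is the more instructive template for what follows.
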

\begin{proof}
A direct application of the equivalence of moments 
bound~\eqref{eq:equivalence_moments_infdim} shows that~\eqref{eq:moments_GFF}
holds with $C(p) = (2p-1)^p$. 
We can actually get a slightly better bound by a direct computation. 
Indeed, we have 
\begin{equation}
\label{eq:expec_GFF2p} 
 \bigexpec{\GFF(x)^{2p}} 
 = \sum_{k_1,\dots,k_{2p}\in\Z} \frac{\expec{X_{k_1}\dots X_{k_{2p}}}}
 {\sqrt{\lambda_{k_1}\dots\lambda_{k_{2p}}}}\;.
\end{equation} 
By Isserlis' theorem (Theorem~\ref{thm:Isserlis}), the expectation vanishes 
unless the $k_i$ are pairwise equal, in which case it has value $1$. Since 
there are $(2p-1)!!$ pairwise matchings, we get 
\begin{equation}
 \bigexpec{\GFF(x)^{2p}} 
 = (2p-1)!! \sum_{k_1,\dots,k_p\in\Z}
 \frac{1}{\lambda_{k_1}\dots\lambda_{k_p}}
 = (2p-1)!! \bigexpec{\GFF(x)^2}^p\;.
\end{equation} 
We thus obtain~\eqref{eq:moments_GFF} with $C(p) = (2p-1)!!$. 
Using~\eqref{eq:double_factorial} and Stirling's formula, we 
find that $(2p-1)!!$ behaves asymptotically like 
$\sqrt{2}(2p)^p\e^{-p}$, which is slightly better that 
$(2p-1)^p$. 
\end{proof}

%%%%%%%%%%%%%%%%%%%%%%%%%%%%%%%%%%%%%%%%%%%%%%%%%%%%%%%%%%%%%%%%%%%%%%%%%%%%%%%%

\subsection{The Gaussian free field on the torus $\T^2$}
\label{ssec:GFF2} 

We have seen that the GFF on the two-dimensional torus has infinite variance, 
and belongs only to Sobolev spaces $H^s$ with $s < 0$. By an argument similar 
to the one used in Proposition~\ref{prop:xi_Holder}, one can also show 
that it belongs to the Besov--H\"older spaces $\cC^\alpha$ with $\alpha < 0$. 
For this reason, it is not possible to define powers of $\GFF$. 

To circumvent this difficulty, we can use the idea introduced at the 
beginning of Section~\ref{ssec:white_noise_prop}, which is to work with 
a cut-off $N$. 

\begin{definition}[Truncated two-dimensional Gaussian free field]
For $N\geqs1$, let $\cK_N = \setsuch{k\in\Z^2}{\abs{k} \leqs N}$,
where $\abs{k} = \abs{k_1} + \abs{k_2}$. 
The \emph{truncated two-dimensional Gaussian free field (GFF)} with covariance 
$(\id-\Delta_N)^{-1}$ on $\Lambda$ is defined as 
\begin{equation}
 \GFFN(x) := \sum_{k\in\cK_N}
 \frac{X_k}{\sqrt{\lambda_k}} e_k(x)\;.
\end{equation} 
Here $\Delta_N$ is the restriction of $\Delta$ to the subspace $E_N$ of $\H$
spanned by Fourier basis functions $e_k$ with $\abs{k} \leqs N$. 
\end{definition}

The name is justified by the fact that the same computation as 
in~\eqref{eq:def_Green} gives 
\begin{equation}
\bigexpec{\GFFN(x)\GFFN(y)} 
 = \sum_{k\in\cK_N} \frac{1}{\lambda_k} e_k(x-y)
 =: G_N(x-y)\;,
\label{eq:cov_GFFN} 
\end{equation}
where $G_N$ is the Green function of $(\id-\Delta_N)$. 
In particular, the variance at any $x$ is given by 
\begin{equation}
C_N := \bigexpec{\GFFN(x)^2} 
 = G_N(0)
 = \sum_{k\in\cK_N} \frac{1}{\lambda_k}
 = \Tr\bigbrak{(\id-\Delta_N)^{-1}}\;.
\label{eq:def_CN} 
\end{equation}
It is not hard to see that $C_N$ diverges like $\log(N)$ as $N\to\infty$. 

\begin{exercise}
\label{exo:CN_2D} 
Show that 
$C_N = \dfrac{\log N}{2\pi} + \Order{1}$ 
as $N\to\infty$, by viewing~\eqref{eq:def_CN} as a Riemann sum. 
% 
% \noindent
% \Hint View~\eqref{eq:def_CN} as a Riemann sum and integrate 
% using polar coordinates. 
\end{exercise}

Since the variance $C_N$ does not depend on $x$, we can define for any $n\geqs1$
the quantity
\begin{equation}
 \Wick{\GFFN^n(x)} =
 \Wick{\GFFN^n(x)}_{C_N} := H_n(\GFFN(x);C_N)\;,
\end{equation} 
called \emph{$n$th Wick power} of the truncated GFF, 
where $H_n(\phi;C_N)$ denotes the scaled Hermite polynomial 
introduced in~\eqref{eq:Hermite_scaling}. For every $x\in\Lambda$,
$\Wick{\GFFN^n(x)}$ is a random variable belonging to the $n$th 
homogeneous Wiener chaos $\cH_n$. The function $\Wick{\GFF^n}$ is 
also an $\H$-valued random variable. These random variables 
are independent for different $n$, while their variance is 
bounded uniformly in the cut-off $N$, as shows the following 
very useful result.

\begin{proposition}[Uniform bound on the variance of Wick powers]
\label{prop:variance_Wick_GFF2} 
For every $n\geqs1$, 
\begin{equation}
 \sup_{N\geqs1} 
 \biggexpec{\biggpar{\int_\Lambda \Wick{\GFFN^n(x)} \6x}^2}
 < \infty\;.
\end{equation} 
\end{proposition}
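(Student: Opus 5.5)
The plan is to compute the second moment exactly via Fubini and the covariance of Wick powers, and then bound the result uniformly in $N$. Expanding the square and exchanging expectation and integrals (legitimate since $\GFFN$ is a finite sum of Gaussians), we get
\begin{equation}
 \biggexpec{\biggpar{\int_\Lambda \Wick{\GFFN^n(x)} \6x}^2}
 = \int_\Lambda\int_\Lambda \bigexpec{H_n(\GFFN(x);C_N)H_n(\GFFN(y);C_N)}\6x\6y\;.
\end{equation}
The key identity we need is that for a centred Gaussian pair $(U,V)$ with $\expec{U^2}=\expec{V^2}=\sigma^2$ and $\expec{UV}=c$, one has $\expec{H_n(U;\sigma^2)H_m(V;\sigma^2)} = n!\,c^n\delta_{nm}$. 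We obtain it from generating functions as in the proof of Proposition~\ref{prop:Hermite_orthogonal}. By Proposition~\ref{prop:Laplace_Gaussian_ndim},
\begin{equation}
 \expec{\e^{tU-\sigma^2t^2/2}\e^{sV-\sigma^2s^2/2}} = \e^{tsc}\;,
\end{equation}
and comparing coefficients of $t^ns^m$ gives the claim. Applying this with $c=G_N(x-y)$ from~\eqref{eq:cov_GFFN} yields
\begin{equation}
 \biggexpec{\biggpar{\int_\Lambda \Wick{\GFFN^n(x)} \6x}^2}
 = n!\int_\Lambda\int_\Lambda G_N(x-y)^n\6x\6y = n!\int_\Lambda G_N(z)^n\6z\;,
\end{equation}
using translation invariance and $\abs{\Lambda}=1$.

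It therefore remains to show $\sup_N\int_\Lambda G_N(z)^n\6z<\infty$. This is the main step. One could expand $G_N^n$ in Fourier modes, which gives a constrained sum $\sum_{k_1+\dots+k_n=0,\,k_i\in\cK_N}\prod_i\lambda_{k_i}^{-1}$. That sum is bounded uniformly in $N$ by the full lattice sum, and $n$-fold convolutions of $(1+\norm{k}^2)^{-1}$ in $\Z^2$ are only marginally divergent, so this route is delicate. I would instead use a pointwise estimate in real space:
\begin{equation}
 \abs{G_N(z)} \leqs C\bigpar{1+\abs{\log\norm{z}}}
\end{equation}
uniformly in $N$ and $z\neq 0$, with $\norm{z}$ the torus distance. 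Since $\abs{\log r}^n$ is integrable near $0$ in two dimensions, $\int_\Lambda G_N^n$ is then bounded uniformly in $N$.

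To prove the logarithmic bound, I would split the Fourier sum at $\norm{k}\sim 1/\norm{z}$. The low modes contribute at most $\sum_{\norm{k}\leqs 1/\norm{z}}\lambda_k^{-1}\lesssim 1+\log(1/\norm{z})$, as in Exercise~\ref{exo:CN_2D}. The high modes require cancellation from oscillation, and the sharp cutoff $\cK_N$ is the technical nuisance here. I would handle it by summation by parts, for instance Abel summation along one coordinate direction using $\abs{\sum_{a\leqs k_1\leqs b}\e^{2\pi\icx k_1z_1}}\lesssim 1/\norm{z_1}$, combined with the monotone decay of $\lambda_k^{-1}$. This gives a tail contribution of order $1$ after treating separately the regions where $\abs{z_1}$ or $\abs{z_2}$ is comparable to $\norm{z}$. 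This Fourier-tail estimate, uniform in the truncation, is where I expect the real work to lie.

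Alternatively, one can avoid the pointwise bound. Apply H\"older in the form $\int\abs{G_N}^n\leqs\norm{G_N}_{L^{2}}^{2}\norm{G_N}_{L^\infty}^{n-2}$, which loses powers of $\log N$ and is insufficient. So the pointwise logarithmic bound is the step to commit to.
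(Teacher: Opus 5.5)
Your proof is correct, but its main step takes a different route from the paper's.

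\textbf{The common first step.} The reduction to $n!\int_\Lambda G_N(z)^n\6z$ appears in both. Your generating-function derivation of $\expec{H_n(U;\sigma^2)H_m(V;\sigma^2)}=n!\,c^n\delta_{nm}$ is in fact the right justification for it; the paper only cites the single-variable orthogonality relation.

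\textbf{The paper's main step.} The paper stays in Fourier space. It writes $\int_\Lambda G_N^n$ as the positive constrained sum $\sum_{k_1+\dots+k_n=0,\,k_i\in\cK_N}\prod_i\lambda_{k_i}^{-1}$ and drops the constraint $k_i\in\cK_N$. It then bounds the iterated lattice convolutions $S(m,k)$ by induction, using the Young-type Lemma~\ref{lem:Young}.

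\textbf{Your main step.} You work in real space with the pointwise bound $\abs{G_N(z)}\lesssim 1+\abs{\log\norm{z}}$, uniform in $N$. Your Abel-summation sketch does close for the diamond $\cK_N$. When $\abs{z_1}\geq\abs{z_2}$, each row $k_2$ of high modes contributes at most $\abs{z_1}^{-1}$ times its largest weight. That weight is $\lesssim\norm{z}^2$ for $\abs{k_2}\leq 1/\norm{z}$ and $\lesssim k_2^{-2}$ beyond, so the tail totals $O(\norm{z}/\abs{z_1})=O(1)$.

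\textbf{What each route buys.} Yours gives a reusable, cutoff-uniform real-space bound on the covariance, and a transparent reason for finiteness: $\abs{\log r}^n$ is integrable in $d=2$. Its cost is that it relies on oscillatory cancellation and careful handling of the sharp cutoff. The paper's route needs no cancellation and does not care about the shape of the cutoff. It also transfers verbatim to the tail sums over $\cK_M\setminus\cK_N$ needed for Nelson's estimate (Lemma~\ref{lem:Nelson}), where pointwise bounds on $G_M-G_N$ would be unpleasant.

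\textbf{On your worry about the Fourier route.} It is overstated. The lattice convolutions are not divergent at all; they only pick up logarithms, $S(m,k)\lesssim\log^{m-1}(2+\norm{k})/\lambda_k$, which is summable against $\lambda_k^{-1}$. Your instinct that something is marginal is not baseless, though. The paper's intermediate claim $S(2,k)\leq C(2)\lambda_k^{-1}$ misses exactly such a factor $\log\norm{k}$. Lemma~\ref{lem:Young} requires $d>n,m$ and so does not cover exponents $2,2$ in $d=2$. The conclusion $S(n,0)<\infty$ is unaffected.
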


To prove this result, we will need the following inequality.

\begin{lemma}[Young-type inequality]
\label{lem:Young} 
Fix integers $d > n,m > 0$ such that $n+m > d$. Then there 
exists a constant $C>0$, independent of $k$, such that 
\begin{equation}
 \sum_{\substack{k_1,k_2\in\Z^d\setminus\set{0}\\k_1+k_2 = k}}
 \frac{1}{\norm{k_1}^n\norm{k_2}^m}
 \leqs \frac{C}{\norm{k}^{n+m-d}}
\end{equation} 
for all $k\in\Z^d$. 
\end{lemma}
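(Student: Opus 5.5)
We bound the lattice sum by splitting $\Z^d\setminus\set{0}$ according to which of $k_1$, $k_2=k-k_1$ is close to zero, and compare each piece with an integral. The case $k=0$ is excluded or trivial: the sum becomes $\sum_{k_1\neq0}\norm{k_1}^{-(n+m)}$, which converges because $n+m>d$. We then read the bound as $C$ for $k=0$, or restrict to $k\neq0$. So fix $k\neq0$ and set $R=\norm{k}\geqs1$.

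We decompose the range of $k_1$ into three regions:
\begin{itemize}
\item $A_1=\set{\norm{k_1}\leqs R/2}$. Here $\norm{k_2}\geqs R/2$, so $\norm{k_2}^{-m}\leqs 2^mR^{-m}$.
\item $A_2=\set{\norm{k_2}\leqs R/2}$. Here symmetrically $\norm{k_1}^{-n}\leqs 2^nR^{-n}$.
\item $A_3$, the rest, where both $\norm{k_1}$ and $\norm{k_2}$ exceed $R/2$.
\end{itemize}

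On $A_1$ the contribution is at most $2^mR^{-m}\sum_{0<\norm{k_1}\leqs R/2}\norm{k_1}^{-n}$. Since $n<d$, comparison with $\int_{1\leqs\norm{y}\leqs R}\norm{y}^{-n}\6y\asymp\int_1^R r^{d-1-n}\6r$ shows that this lattice sum is $\lesssim R^{d-n}$. The lattice-to-integral comparison is justified by the fact that $\norm{y}^{-n}$ is comparable on each unit cube not containing the origin. This gives $\lesssim R^{d-n-m}$. Region $A_2$ is identical with $n$ and $m$ exchanged, using $m<d$.

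On $A_3$ we note that $\norm{k_2}\leqs\norm{k_1}+R\leqs3\norm{k_1}$, and likewise $\norm{k_1}\leqs 3\norm{k_2}$, so the two norms are comparable. The summand is therefore $\lesssim\norm{k_1}^{-(n+m)}$, and we get
\begin{equation}
 \sum_{\norm{k_1}>R/2}\norm{k_1}^{-(n+m)}\lesssim\int_{R/2}^\infty r^{d-1-n-m}\6r\asymp R^{d-n-m}\;,
\end{equation}
which converges exactly because $n+m>d$.

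Summing the three contributions gives the claim with a constant depending only on $d$, $n$ and $m$. The only slightly delicate step is the uniform comparison of lattice sums with integrals near the boundary of the balls, including the case where $R/2<1$ and $A_1$ is empty. This is routine: bound each lattice point's term by the integral over its adjacent unit cube, with constants independent of $R$.
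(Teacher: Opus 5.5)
Your proof is correct and follows essentially the same route as the paper. The paper first restricts to $\norm{k_1}\geqs\norm{k_2}$, then splits into the region where the smaller vector has norm at most $\norm{k}/2$ and the region where both norms exceed $\norm{k}/2$; this is your three-region decomposition with the symmetric reduction unfolded. Your version has two minor advantages. It treats $n\neq m$ explicitly, whereas the paper's ``multiply by $2$'' step tacitly requires exchanging the roles of $n$ and $m$ in the other half. It also spells out the case $k=0$ and the lattice-to-integral comparison, which the paper leaves implicit.
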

\begin{proof}
We may restrict the sum to $(k_1,k_2)$ such that $\norm{k_1}\geqs\norm{k_2}$, and 
multiply the end result by $2$. Since $k_1 + k_2 = k$, we cannot have both 
$\norm{k_1} < \frac12\norm{k}$ and $\norm{k_2} < \frac12\norm{k}$. 
The half sum can thus be decomposed as 
\begin{equation}
S_1 + S_2
= \sum_{\substack{k_1,k_2\in\Z^d\setminus\set{0}\\k_1+k_2 = k\\\norm{k_2}\leqs \norm{k_1} \wedge \norm{k}/2}} \frac{1}{\norm{k_1}^n\norm{k_2}^m}
 +  \sum_{\substack{k_1,k_2\in\Z^d\setminus\set{0}\\k_1+k_2 = k\\\norm{k_1}\geqs 
 \norm{k_2}> \norm{k}/2}}
 \frac{1}{\norm{k_1}^n\norm{k_2}^m}\;.
\end{equation} 
Since $\norm{k_2} \leqs \frac12\norm{k}$ implies $\norm{k_1} \geqs \frac12\norm{k}$, 
we have 
\begin{equation}
 S_1 \leqs \frac{2^n}{\norm{k}^n}
 \sum_{\substack{k_2\in\Z^d\setminus\set{0}\\ \norm{k_2}\leqs\norm{k}/2}}
 \frac{1}{\norm{k_2}^m}
 \lesssim \frac{2^n}{\norm{k}^n}\int_1^{\norm{k}/2} \frac{r^{d-1}\6r}{r^m}
 \lesssim \frac{1}{\norm{k}^{n+m-d}}\;.
\end{equation} 
As for $S_2$, it satisfies
\begin{equation}
 S_2 \leqs 
 \sum_{\substack{k_2\in\Z^d\setminus\set{0}\\ \norm{k_2}>\norm{k}/2}}
 \frac{1}{\norm{k_2}^{n+m}}
 \lesssim \int_{\norm{k}/2}^\infty \frac{r^{d-1}\6r}{r^{n+m}}
 \lesssim \frac{1}{\norm{k}^{n+m-d}}\;,
\end{equation} 
which yields the result. 
\end{proof}

\begin{proof}[{\sc Proof of Proposition~\ref{prop:variance_Wick_GFF2}}]
We have 
\begin{align}
 \biggexpec{\biggpar{\int_\Lambda \Wick{\GFFN^n(x)} \6x}^2} 
 &= \int_\Lambda \int_\Lambda \bigexpec{\Wick{\GFFN^n(x)}\Wick{\GFFN^n(y)}}
 \6x\6y \\
 &= n! \int_\Lambda \int_\Lambda \bigexpec{\GFF(x)\GFF(y)}^n \6x\6y \\
 &= n! \int_\Lambda \int_\Lambda 
 \biggpar{\sum_{k\in\cK_N}\frac{1}{\lambda_k}e_k(x-y)}^n 
 \6x\6y \\
 &= n! \sum_{k_1,\dots,k_n\in\cK_N} \frac{1}{\lambda_{k_1}\dots\lambda_{k_n}}
 \biggabs{\int_\Lambda e_{k_1}(x)\dots e_{k_n}(x)\6x}^2\\
 &= n! \sum_{\substack{k_1,\dots,k_n\in\cK_N\\k_1 + \dots + k_n = 0}} 
 \frac{1}{\lambda_{k_1}\dots\lambda_{k_n}}\;,
 \label{eq:proof_moments01} 
\end{align}
where we have used Proposition~\ref{prop:Hermite_orthogonal} to get the second 
line, and~\eqref{eq:cov_GFFN} to get the third line. 
For $k\in\Z^d$ and $n\geqs2$, let 
\begin{equation}
 S(n,k) = \sum_{\substack{k_1,\dots,k_n\in\Z^d\\k_1 + \dots + k_n = k}} 
 \frac{1}{\lambda_{k_1}\dots\lambda_{k_n}}\;.
\end{equation} 
Then Lemma~\ref{lem:Young}, together with an index shift, shows that 
$S(2,k) \leqs C(2)\lambda_k^{-1}$for some constant $C(2)$, and by induction one gets 
\begin{equation}
 S(n,k) = \sum_{k_1\in\Z^d} \frac{1}{\lambda_{k_1}} 
 S(n-1,k-k_1)
 \leqs \frac{C(n)}{\lambda_k}
\end{equation} 
for some finite $C(n)$. In particular, $S(n,0)$ is bounded. 
Since this provides an upper bound uniform in $N$ for~\eqref{eq:proof_moments01}, 
the result is proved. 
\end{proof}

By the equivalence of moments bound~\eqref{eq:equivalence_moments_infdim}, we also 
have
\begin{equation}
 \biggexpec{\biggpar{\int_\Lambda \Wick{\GFFN^n(x)} \6x}^{2p}}
 \leqs (2p-1)^{np} 
 \biggexpec{\biggpar{\int_\Lambda \Wick{\GFFN^n(x)} \6x}^2}^p\;,
\end{equation} 
which is bounded uniformly in the cut-off $N$ for all $p>1$. 
We thus conclude that all moments of Wick powers of the two-dimensional 
GFF are well-defined, when viewed as limits as $N\to\infty$ of the 
truncated GFF. Note that the same holds in dimensions $d>2$, the difference 
being that the variance $C_N$ computed in~\eqref{eq:def_CN} diverges 
like $N^{d-2}$ instead of $\log(N)$. This faster divergence causes new 
problems for non-linear fields, as we will see in Section~\ref{sec:phi43}.

%%%%%%%%%%%%%%%%%%%%%%%%%%%%%%%%%%%%%%%%%%%%%%%%%%%%%%%%%%%%%%%%%%%%%%%%%%%%%%%%

\chapter{The $\Phi^4$ model}
\label{chap:phi4} 

The $\Phi^4$ model on the $d$-dimensional torus is arguably the simplest 
example of non-linear field theory. It originated as a toy model in 
Euclidean Quantum Field Theory, and its behaviour is very different depending 
on the dimension $d$. 

%%%%%%%%%%%%%%%%%%%%%%%%%%%%%%%%%%%%%%%%%%%%%%%%%%%%%%%%%%%%%%%%%%%%%%%%%%%%%%%%

\section{Definition of the model}
\label{sec:def_phi4} 

Let $\Lambda = \T^d$ be the $d$-dimensional torus for some $d\geqs1$. 
Given constants $\alpha \geqs 0$, $m>0$ and a function $\phi: \Lambda\to\R$, 
we define its energy 
\begin{equation}
\label{eq:phi4_H} 
 \cH_{d,\alpha}(\phi) 
 = \int_{\Lambda} \biggbrak{\norm{\nabla\phi(x)}^2 + \frac{m^2}2 \phi(x)^2 
 + \alpha\phi(x)^4} \6x\;.
\end{equation} 
The name \emph{$\Phi^4$ model} is due to the term $\phi(x)^4$ in the integral. 
It is called the $\Phi^4_d$ model if we want to emphasize the value of the 
dimension. The parameter $m$ has the physical interpretation of a mass. 
We will take it equal to $1$ in what follows. 

The $\Phi^4_d$ measure is the probability measure $\mu_{d,\alpha}$ on 
$\H = L^2(\Lambda,\6x)$ (or on a suitable a space of functions $\phi:\Lambda\to\R$), 
formally defined by 
\begin{equation}
 \mu_{d,\alpha} \sim \frac{1}{\cZ_{d,\alpha}} \e^{-\cH_{d,\alpha}(\phi)} \6\phi\;,
\end{equation} 
where $\cZ_{d,\alpha}$ is the normalisation. This is called a \emph{Gibbs measure}, 
and $\cZ_{d,\alpha}$ is known in statistical physics as the \emph{partition function}. 

As such, this definition does not make sense, since there is no such thing as
Lebesgue measure on $\H$. Note however that for $\alpha = 0$, we can 
integrate by parts, to obtain 
\begin{align}
 \cH_{d,0}(\phi)
 &= \frac12 \int_{\Lambda} \bigbrak{-\Delta \phi(x) \phi(x) + \phi(x)^2} \6x \\
 &= \frac12 \pscal{\phi}{[\id-\Delta]\phi}_\H\;.
\end{align} 
In view of the expression~\eqref{eq:Gaussian_ndim} of the density of 
a finite-dimensional Gaussian measure, we can interpret 
$\mu_{d,0}(\6\phi)$ as the law of a centred Gaussian field with 
covariance $(\id-\Delta)^{-1}$ that we have studied in Section~\ref{sec:GFF}. 
This means that for a random variable $F:\H\to\R$, we can try to define 
its expectation under $\mu_{d,0}$ as 
\begin{equation}
 \expecin{\mu_{d,0}}{F} = 
 \biggexpec{F\biggpar{\sum_{k\in\Z^d}\frac{X_k}{\sqrt{\lambda_k}}e_k}}\;,
\end{equation} 
where the $X_i$ are independent, identically distributed standard 
Gaussians, and the $\lambda_k = 1 + (2\pi)^d\norm{k}^2$ are eigenvalues 
of the operator $\id - \Delta$. 

\begin{example}
\begin{itemize}
\item  Let $F(\phi) = \norm{\phi}_{L^2}^2$. Then 
\begin{equation}
\label{eq:expec_mud0} 
 \bigexpecin{\mu_{d,0}}{ \norm{\phi}_{L^2}^2}
 = \biggexpec{\sum_{k\in\Z^d}\frac{X_k^2}{\lambda_k}}
 = \sum_{k\in\Z^d}\frac{1}{\lambda_k}\;,
\end{equation} 
which we have seen is the variance of the Gaussian free field. 
This is finite for $d=1$ (cf.~\eqref{eq:variance_GFF1}), but not for $d\geqs2$. 
By translation invariance, \eqref{eq:expec_mud0} is also the expectation 
of $\phi(x)^2$ for any $x\in\Lambda$. 

\item   Let $F(\phi) = \phi(x)\phi(y)$ for $x,y\in\Lambda$. 
Then we have seen in~\eqref{eq:def_Green} that 
\begin{equation}
\label{eq:covariance_phi41} 
 \bigexpecin{\mu_{d,0}}{\phi(x)\phi(y)}
 = G(x-y)
\end{equation} 
is the Green function, which is defined for all $x,y$ if $d=1$, and 
for all $d$ if $x\neq y$. 
\end{itemize}
\end{example}

This suggests using $\mu_{d,0}$ as a reference measure, instead of the 
non-existent Lebesgue measure on $\H$, and to define expectations 
under $\mu_{d,\alpha}$ with $\alpha>0$ by 
\begin{equation}
\label{eq:expec_mudalpha} 
 \expecin{\mu_{d,\alpha}}{F} = \frac{\cZ_{d,0}}{\cZ_{d,\alpha}} 
 \biggexpecin{\mu_{d,0}}{F(\phi) 
 \exp\biggset{-\alpha\int_{\Lambda}\phi(x)^4\6x}}\;.
\end{equation} 
In particular, taking $F = 1$, we obtain 
\begin{equation}
\label{eq:Zratio} 
 \frac{\cZ_{d,\alpha}}{\cZ_{d,0}} = 
 \biggexpecin{\mu_{d,0}}{\exp\biggset{-\alpha\int_{\Lambda}\phi(x)^4\6x}}\;.
\end{equation} 
If we manage to compute this ratio, at least perturbatively for small 
$\alpha$, it gives us access to more general expectations of the 
form~\eqref{eq:expec_mudalpha}. 

One example of random variables $F$ that are physically relevant is 
$F = \phi(x_1)\dots\phi(x_n)$, for given $x_1,\dots,x_n\in\Lambda$, 
leading to so-called $n$-point functions 
\begin{equation}
G_{n,d,\alpha}(x_1,\dots,x_n) 
 = \bigexpecin{\mu_{d,\alpha}}{\phi(x_1)\dots\phi(x_n)}\;.
\end{equation} 
But one can also consider $F$ depending on an integral involving $\phi$, 
such as the energy itself. 

\begin{exercise}
Compute the $n$-point function $G_{n,d,0}(x_1,\dots,x_n)$ in the case 
$\alpha = 0$. 
\end{exercise}

In what follows, we will focus on the ratio~\eqref{eq:Zratio} of
partition functions, since this is the first step in computing expectations
of more general random variables. 

%%%%%%%%%%%%%%%%%%%%%%%%%%%%%%%%%%%%%%%%%%%%%%%%%%%%%%%%%%%%%%%%%%%%%%%%%%%%%%%%

\section{The $\Phi^4_1$ model}
\label{sec:phi41} 

In this section, we consider the $\Phi^4$ model on the one-dimensional 
torus $\Lambda = \T^1 = \T$, that is, the circle. As we have seen, all moments of the GFF are 
well-defined in this case. The ratio of partition functions \eqref{eq:Zratio} becomes 
\begin{equation}
\label{eq:Zratio_1} 
 \frac{\cZ_{1,\alpha}}{\cZ_{1,0}} = 
 \biggexpecin{\mu_{1,0}}{\exp\biggset{-\alpha\int_{\Lambda}\phi(x)^4\6x}}\;.
\end{equation} 
One way to proceed is to expand the exponential, leading to 
\begin{equation}
\label{eq:sum_phi41} 
 \frac{\cZ_{1,\alpha}}{\cZ_{1,0}} 
 \asymp \sum_{n\geqs0} \frac{(-\alpha)^n}{n!}
 \biggexpecin{\mu_{1,0}}{\biggpar{\int_{\Lambda}\phi(x)^4\6x}^n}\;,
\end{equation} 
where we use the symbol $\asymp$ because we do not know if this series 
is convergent (in fact, one can show that it is not!). 

\begin{figure}
\begin{center}
 \begin{tikzpicture}[>=stealth',main node/.style={circle,minimum
size=0.25cm,inner sep=1pt,fill=blue!25,draw},scale=1.2]
\node[main node] (1) at (0,0) {1};
\node[main node] (2) at (1,0) {2};
\node[main node] (3) at (2,0) {3};
\node[main node] (4) at (3,0) {4};
\node[main node] (5) at (0,-1) {5};
\node[main node] (6) at (1,-1) {6};
\node[main node] (7) at (2,-1) {7};
\node[main node] (8) at (3,-1) {8};
\draw[thick,violet,->] (1) --node[above]{$k_1$} (2);  
\draw[thick,violet,->] (3) --node[above]{$k_3$} (4);  
\draw[thick,violet,->] (5) --node[above]{$k_5$} (6);  
\draw[thick,violet,->] (7) --node[above]{$k_8$} (8);  

\node[main node] (1) at (6,0) {1};
\node[main node] (2) at (7,0) {2};
\node[main node] (3) at (8,0) {3};
\node[main node] (4) at (9,0) {4};
\node[main node] (5) at (6,-1) {5};
\node[main node] (6) at (7,-1) {6};
\node[main node] (7) at (8,-1) {7};
\node[main node] (8) at (9,-1) {8};
\draw[thick,violet,->] (1) --node[left]{$k_1$} (5);  
\draw[thick,violet,->] (2) --node[above]{$k_2$} (3);  
\draw[thick,violet,->] (6) --node[above]{$k_6$} (7);  
\draw[thick,violet,->] (8) --node[right]{$k_1$} (4);  

\end{tikzpicture}
\end{center}
\vspace{-4mm}
\caption[]{Two pairwise matchings contributing to the 
sum~\eqref{eq:matchings_phi41}. An oriented arrow from vertex 
$i$ to vertex $j$ means that $k_j = -k_i$.}
\label{fig:pairings_phi41} 
\end{figure}

The term $n=1$ in the sum~\eqref{eq:sum_phi41} can be computed 
using~\eqref{eq:expec_GFF2p}. By Isserlis' theorem, we have 
\begin{equation}
 \bigexpecin{\mu_{1,0}}{\phi(x)^4}
 = \sum_{k_1,k_2,k_3,k_4\in\Z} 
 \frac{\expec{X_{k_1}X_{k_2}X_{k_3}X_{k_4}}}
 {\sqrt{\lambda_{k_1}\lambda_{k_2}\lambda_{k_3}\lambda_{k_4}}}
 = 3 \sum_{k_1,k_2\in\Z} \frac{1}{\lambda_{k_1}\lambda_{k_2}}
 = 3G(0)^2\;,
\end{equation} 
where the factor $3$ counts the number of pairwise matchings of 
the four indices. Therefore, we also have 
\begin{equation}
 \biggexpecin{\mu_{1,0}}{\biggpar{\int_{\Lambda}\phi(x)^4\6x}}
 = 3G(0)^2\;. 
\end{equation} 
For the term $n=2$, we find 
\begin{align}
 \biggexpecin{\mu_{1,0}}{\biggpar{\int_{\Lambda}\phi(x)^4\6x}^2}
 &= \biggexpecin{\mu_{1,0}}{\int_{\Lambda}\phi(x)^4\6x\int_{\Lambda}\phi(y)^4\6y} \\
 &= \sum_{k_1,\dots,k_8\in\Z} 
 \frac{\expec{X_{k_1}\dots X_{k_8}}}{\sqrt{\lambda_{k_1}\dots \lambda_{k_8}}} 
 \int_\Lambda \int_\Lambda e_{k_1}(x)\dots e_{k_4}(x) e_{k_5}(y)\dots e_{k_8}(y)
 \6x\6y \\
 &= \sum_{\substack{k_1,\dots,k_8\in\Z\\k_1+k_2+k_3+k_4=0\\k_5+k_6+k_7+k_8=0}} 
 \frac{\expec{X_{k_1}\dots X_{k_8}}}{\sqrt{\lambda_{k_1}\dots \lambda_{k_8}}}\;.
 \label{eq:matchings_phi41} 
\end{align}
The combinatorics is now more complicated, since we have to sum over all 
pairwise matchings of the $k_i$ that satisfy the two sum constraints. 
Figure~\ref{fig:pairings_phi41} gives two examples of such pairings.

%%%%%%%%%%%%%%%%%%%%%%%%%%%%%%%%%%%%%%%%%%%%%%%%%%%%%%%%%%%%%%%%%%%%%%%%%%%%%%%%

\subsection{Wick calculus and Feynman diagrams}
\label{ssec:phi41_wick} 

So far, we have not used the power of the Wiener chaos decomposition. One way 
to do this is to modify the energy~\eqref{eq:phi4_H} to
\begin{equation}
\label{eq:phi4_H_Wick} 
 \cH_{1,\alpha}^{\text{Wick}}(\phi) 
 = \int_{\Lambda} \biggbrak{\norm{\nabla\phi(x)}^2 + \frac12 \phi(x)^2 
 + \alpha\Wick{\phi(x)^4}} \6x\;.
\end{equation} 
This is now a different model, since we have replaced the fourth 
power $\phi(x)^4$ by the fourth Wick power 
$\Wick{\phi(x)^4} = H_4(\phi(x)^4;C)$, where $C$ should be taken equal 
to the covariance $G(0)$. One could transform this into the original 
model by adding a suitable multiple of the second Wick power 
$\Wick{\phi(x)^2}$ and a suitable constant, but we will not explore 
this further here, and work with the new energy. The ratio of partition 
functions~\eqref{eq:Zratio_1} now becomes
\begin{equation}
\label{eq:Zratio_1_Wick} 
 \frac{\cZ_{1,\alpha}}{\cZ_{1,0}} = 
 \biggexpecin{\mu_{1,0}}{\exp\biggset{-\alpha\int_{\Lambda}\Wick{\phi(x)^4}\6x}}
 \asymp \sum_{n\geqs0} \frac{(-\alpha)^n}{n!}
 \biggexpecin{\mu_{1,0}}{\biggpar{\int_{\Lambda}\Wick{\phi(x)^4}\6x}^n}\;.
\end{equation} 
The coefficient $n=1$ is simply
\begin{equation}
\label{eq:expec_phi4} 
 \biggexpecin{\mu_{1,0}}{\int_{\Lambda}\Wick{\phi(x)^4}\6x} = 0\;,
\end{equation} 
since Wick powers are centred. 
The coefficient $n=2$ is given by 
\begin{align}
 \biggexpecin{\mu_{1,0}}{\biggpar{\int_{\Lambda}\Wick{\phi(x)^4}\6x}^2}
 &= \int_{\Lambda}\int_{\Lambda} 
 \bigexpecin{\mu_{1,0}}{\Wick{\phi(x)^4}\Wick{\phi(y)^4}} \6x\6y \\
 &= 4! \int_{\Lambda}\int_{\Lambda} 
 \bigexpecin{\mu_{1,0}}{\phi(x)\phi(y)}^4 \6x\6y \\
 &= 4! \int_{\Lambda}\int_{\Lambda} G(x-y)^4 \6x\6y\;,
\label{eq:expec_phi4_squared} 
\end{align}
where we have used Proposition~\ref{prop:Hermite_orthogonal} 
and~\eqref{eq:covariance_phi41}.
Note that since $G$ is translation invariant, 
one can replace the double integral in~\eqref{eq:expec_phi4_squared} by 
a simple integral of $G(x)^4$, but this will not be important here. 
We can represent~\eqref{eq:expec_phi4_squared} graphically as 
\begin{equation}
 \biggexpecin{\mu_{1,0}}{\biggpar{\int_{\Lambda}\Wick{\phi(x)^4}\6x}^2}
 = 4!\Pi\Bigpar{\FGIV}\;,
\end{equation} 
where the two vertices of the graph indicate the two integration variables 
$x$ and $y$ in~\eqref{eq:expec_phi4_squared}, while the four edges indicate 
the four factors $G(x-y)$. The map $\Pi$ stands for evaluation of the integral. 
This is a first example of~\emph{Feynman diagram}. 

In order to generalise this computation to higher powers, we reformulate it 
in terms of the Wiener isometry. For given $x\in\Lambda$, considered 
as a parameter, define 
\begin{equation}
 h_x = \sum_{k\in\Z} \hat h_x(k) e_k\;, 
 \qquad\text{where }
 \hat h_x(k) := \frac{e_k(x)}{\sqrt{\lambda_k}}\;.
\end{equation} 
For every $x\in\Lambda$, $h_x$ is an element of $\H = L^2(\Lambda,\6x)$. 
Furthermore,
\begin{equation}
 \hat I_1(h_x) = \sum_{k\in\Z} \hat h_x(k) X_k 
 = \sum_{k\in\Z} \frac{X_k}{\sqrt{\lambda_k}} e_k(x) = \phi(x)
\end{equation} 
is the Gaussian free field. It follows that 
\begin{equation}
 \Wick{\phi(x)^4}
 = H_4(\phi(x);C) 
 = \hat I_4(h_x^{\otimes 4})\;,
\end{equation} 
so that Proposition~\ref{prop:mult_nm} yields 
\begin{align}
 \Wick{\phi(x)^4}\Wick{\phi(y)^4}
 &= \hat I_4(h_x^{\otimes 4}) \hat I_4(h_y^{\otimes 4}) \\
 &= \sum_{p=0}^4 \hat I_{8-2p} (h_x^{\otimes 4} \star_p h_y^{\otimes 4})\;.
\end{align}
Taking the expectation, we obtain
\begin{equation}
 \bigexpecin{\mu_{1,0}}{\Wick{\phi(x)^4}\Wick{\phi(y)^4}}
 = \hat I_0 (h_x^{\otimes 4} \star_4 h_y^{\otimes 4})\;,
\end{equation} 
and the definition~\eqref{eq:def_starp} of the contraction $f\star_p g$ yields 
\begin{align}
 \hat I_0 (h_x^{\otimes 4} \star_4 h_y^{\otimes 4}) 
 &= 4! \sum_{k_1,\dots,k_4\in\Z} 
 \overline{\hat h_x^{\otimes 4}(k_1,\dots,k_4)} \hat h_y^{\otimes 4}(k_1,\dots,k_4) \\
 &= 4! \biggpar{\sum_{k\in\Z} \overline{\hat h_x(k)} \hat h_y(k)}^4 \\
 &= 4! \biggpar{\sum_{k\in\Z} \frac{e_k(x-y)}{\lambda_k}}^4 \\
 &= 4! G(x-y)^4\;.
\end{align}
Here we have used complex conjugates in the inner products because we work with 
complex Fourier series. 
We thus recover~\eqref{eq:expec_phi4_squared} in a way that may seem more 
convoluted, but allows for generalisation to higher powers. 
Indeed, in the case $n = 3$ we obtain 
\begin{align}
\Wick{\phi(x)^4}\Wick{\phi(y)^4}\Wick{\phi(z)^4}
 &= \hat I_4(h_x^{\otimes 4}) \hat I_4(h_y^{\otimes 4}) \hat I_4(h_z^{\otimes 4}) \\
 &= \sum_{p=0}^4 \hat I_{8-2p} (h_x^{\otimes 4} \star_p h_y^{\otimes 4}) 
 \hat I_4(h_z^{\otimes 4}) \\
 &= \sum_{p=0}^4 \sum_{q=0}^{(8-2p)\wedge4} \hat I_{12-2p-2q} 
 \bigpar{(h_x^{\otimes 4} \star_p h_y^{\otimes 4}) \star_q h_z^{\otimes 4}}\;.
\end{align}
\begin{figure}
\begin{center}
\begin{tikzpicture}[>=stealth',main node/.style={circle,minimum
size=1cm,inner sep=2pt,fill=blue!25,draw},small node/.style={draw,circle,fill=white,minimum
size=3pt,inner sep=0pt},scale=1.2]
\draw[semithick] (0,0) -- (0.2,1);
\draw[semithick] (0,0) -- (0.7,0.8);
\draw[semithick] (0,0) -- (1,0.3);
\draw[semithick] (0,0) -- (1,-0.3);

\draw[semithick] (3,0) -- (2.8,1);
\draw[semithick] (3,0) -- (2.3,0.8);
\draw[semithick] (3,0) -- (2,0.3);
\draw[semithick] (3,0) -- (2,-0.3);

\draw[semithick] (1.5,2.2) -- (0.6,1.7);
\draw[semithick] (1.5,2.2) -- (1.1,1.3);
\draw[semithick] (1.5,2.2) -- (1.9,1.3);
\draw[semithick] (1.5,2.2) -- (2.4,1.7);

\draw[thick,blue] (0.2,1) to[out=80,in=-150] (0.6,1.7);
\draw[thick,blue] (0.7,0.8) to[out=40,in=-120] (1.1,1.3);
\draw[thick,blue] (2.3,0.8) to[out=140,in=-60] (1.9,1.3);
\draw[thick,blue] (2.8,1) to[out=100,in=-30] (2.4,1.7);
\draw[thick,blue] (1,0.3) to[out=20,in=160] (2,0.3);
\draw[thick,blue] (1,-0.3) to[out=-20,in=-160] (2,-0.3);

\node[main node] (x) at (0,0) {$h_x^{\otimes 4}$};
\node[main node] (y) at (1.5,2.2) {$h_y^{\otimes 4}$};
\node[main node] (z) at (3,0) {$h_z^{\otimes 4}$};

\node[small node] (1) at (0.2,1) {};
\node[small node] (2) at (0.7,0.8) {};
\node[small node] (3) at (1,0.3) {};
\node[small node] (4) at (1,-0.3) {};

\node[small node] (5) at (2.8,1) {};
\node[small node] (6) at (2.3,0.8) {};
\node[small node] (7) at (2,0.3) {};
\node[small node] (8) at (2,-0.3) {};

\node[small node] (9) at (0.6,1.7) {};
\node[small node] (10) at (1.1,1.3) {};
\node[small node] (11) at (2.4,1.7) {};
\node[small node] (12) at (1.9,1.3) {};
\end{tikzpicture}
\end{center}
\vspace{-4mm}
\caption[]{One of the pairings occurring in the 
expectation~\eqref{eq:expec_phi_xyz}.}
\label{fig:n=3} 
\end{figure}
When taking the expectation, only terms with $2p+2q = 12$ remain. There is 
actually only one option, which is to take $p = 2$ and $q = 4$, yielding 
\begin{equation}
\label{eq:expec_phi_xyz} 
 \bigexpecin{\mu_{1,0}}{\Wick{\phi(x)^4}\Wick{\phi(y)^4}\Wick{\phi(z)^4}}
 = \hat I_0\bigpar{(h_x^{\otimes 4} \star_2 h_y^{\otimes 4}) \star_4 h_z^{\otimes 4}}\;.
\end{equation} 
The contraction operations can be represented graphically, as we did in 
Section~\ref{ssec:Wick_product}. The functions $h_x^{\otimes 4}$, $h_y^{\otimes 4}$ 
and $h_z^{\otimes 4}$ are represented each by a vertex with four legs. 
The operation $\star_2$ corresponds to pairing two legs of $h_x^{\otimes 4}$
with two legs of $h_y^{\otimes 4}$, while the operation $\star_4$ 
corresponds to pairing the remaining four legs of $h_x^{\otimes 4}$ 
and $h_y^{\otimes 4}$ with the four legs of $h_z^{\otimes 4}$, see
Figure~\ref{fig:n=3}. The result is 
\begin{align}
 \bigexpecin{\mu_{1,0}}{\Wick{\phi(x)^4}\Wick{\phi(y)^4}\Wick{\phi(z)^4}}
 &= 2!\binom{4}{2}^2 4! 
 \sum_{k_1,\dots,k_6\in\Z} 
 h_x^{\otimes 4}(k_1,k_2,k_3,k_4)
 h_y^{\otimes 4}(k_1,k_2,k_5,k_6)
 h_z^{\otimes 4}(k_3,k_4,k_5,k_6) \\
 &= 1728 \biggpar{\sum_{k_1\in\Z} \frac{e_{k_1}(x-y)}{\lambda_{k_1}}}^2
 \biggpar{\sum_{k_2\in\Z} \frac{e_{k_2}(y-z)}{\lambda_{k_2}}}^2
 \biggpar{\sum_{k_3\in\Z} \frac{e_{k_3}(x-z)}{\lambda_{k_3}}}^2 \\
 &= 1728 \, G(x-y)^2 G(y-z)^2 G(x-z)^2\;.
\end{align}
It follows that 
\begin{align}
 \biggexpecin{\mu_{1,0}}{\biggpar{\int_{\Lambda}\Wick{\phi(x)^4}\6x}^3}
 &= 1728 \int_{\Lambda^3} G(x-y)^2 G(y-z)^2 G(x-y)^2 \6x\6y\6z \\
 &= 1728 \,\Pi\Bigpar{\FGVI}\;.
\end{align} 
With these examples, the pattern should have become clear. The terms in 
the expansion~\eqref{eq:Zratio_1_Wick} can be written as some combinatorial 
coefficients, times an integral of a product of Green functions. To 
formalise this, we make the following definition.

\begin{definition}[Vacuum Feynman diagram]
A \emph{vacuum diagram} is a multigraph $\Gamma = (\cV,\cE)$, meaning there can 
be multiple edges between vertices. Its \emph{valuation} is defined by 
\begin{equation}
 \Pi(\Gamma) = \int_{\Lambda^\cV} \prod_{e\in\cE} 
 G(x_{e_+} - x_{e_-}) \6x\;,
\end{equation} 
where $e_\pm$ are the vertices connected by the edge $e$.
\end{definition}

The general principle behind the above examples is as follows.

\begin{proposition}[Expansion of moments into Feynman diagrams]
For any $n\geqs2$, 
\begin{equation}
\label{eq:expansion_Feynman_graphs} 
 \biggexpecin{\mu_{1,0}}{\biggpar{\int_{\Lambda}\Wick{\phi(x)^4}\6x}^n}
 = \sum_k \Pi(\Gamma_{n,k})\;,
\end{equation} 
where the sum runs over all vacuum diagrams $\Gamma_{n,k}$ with $n$ 
vertices and $2n$ edges, obtained as perfect pairwise matchings of
$n$ vertices of arity $4$ (each vertex belongs to four edges), 
when matchings of different legs are counted as different terms. 
\end{proposition}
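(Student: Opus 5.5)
We will prove \eqref{eq:expansion_Feynman_graphs} by first establishing a pointwise identity for the integrand, and then integrating over $\Lambda^n$. Writing the $n$th power of the integral as an integral over $(x_1,\dots,x_n)\in\Lambda^n$ and exchanging expectation and integral (justified since $d=1$ and all moments of $\phi(x)$ are bounded uniformly in $x$, cf.~\eqref{eq:moments_GFF}), it suffices to show that
\begin{equation}
 \bigexpecin{\mu_{1,0}}{\Wick{\phi(x_1)^4}\cdots\Wick{\phi(x_n)^4}}
 = \sum_{\cP} \prod_{\set{a,b}\in\cP} G(x_{v(a)}-x_{v(b)})\;,
\end{equation}
where the sum runs over all perfect matchings $\cP$ of the set of $4n$ legs $\set{(i,r)\colon 1\leqs i\leqs n,\ 1\leqs r\leqs 4}$ such that no pair joins two legs of the same vertex $i$, and $v(a)$ denotes the vertex carrying leg $a$. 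Each such $\cP$ defines a multigraph with $n$ vertices of degree $4$ and $2n$ edges, and integrating the product over $\Lambda^n$ gives exactly $\Pi(\Gamma_{n,k})$, with distinct leg-matchings counted separately.

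To prove the pointwise identity, we use the representation $\Wick{\phi(x)^4}=\hat I_4(h_x^{\otimes4})$ and iterate Proposition~\ref{prop:mult_nm}, as in the cases $n=2,3$ above. By induction on $n$, the product $\prod_{i\leqs n}\hat I_4(h_{x_i}^{\otimes4})$ equals a sum over sequences $(p_2,\dots,p_n)$ of terms
\begin{equation}
 \hat I_{4n-2\sum p_j}\bigpar{(\cdots(h_{x_1}^{\otimes4}\star_{p_2}h_{x_2}^{\otimes4})\cdots)\star_{p_n}h_{x_n}^{\otimes4}}\;.
\end{equation}
Taking expectations kills every term with positive chaos order, because $\expec{\hat I_m(f)}=0$ for $m\geqs1$, while $\hat I_0$ of a scalar is that scalar. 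The combinatorial content is the graphical reading of $\star_p$ from \eqref{eq:def_starp} and Lemma~\ref{lem:pairing}: $f\star_pg$ is a sum over all ways of choosing $p$ free legs of $f$, $p$ legs of $g$ and a bijection between them, each paired couple contributing $\sum_k\overline{\hat h_x(k)}\hat h_y(k)=G(x-y)$. Unrolling the iteration therefore produces, for each final full contraction, a sum over matchings in which the legs of vertex $j$ are paired either to still-free legs of vertices $1,\dots,j-1$ (via $\star_{p_j}$) or left free for later vertices. Legs of the same vertex are never paired with each other, since $\star$ only pairs legs of $g$ with legs of $f$; this is exactly where Wick ordering removes self-loops.

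The main obstacle is the bijection step. We must check that every perfect matching $\cP$ of the $4n$ legs without self-pairings arises exactly once across all sequences $(p_j)$ and all choices of shuffles $\Sigma,\bar\Sigma$ and pairings $\sigma$. Given $\cP$, $p_j$ is forced to be the number of legs of vertex $j$ matched to vertices $<j$, and the shuffles and bijection are then determined uniquely. Conversely, distinct choices yield distinct matchings. I would set this up by induction on $n$ using Lemma~\ref{lem:pairing}, checking multiplicities against the example coefficient $2!\binom42^2 4!=1728$. A secondary point is that the factors $G$ are real and even, so the complex conjugations coming from complex Fourier modes cause no trouble. The product of Green functions also makes the interchange with $\int_{\Lambda^n}$ harmless, since $G$ is bounded for $d=1$.
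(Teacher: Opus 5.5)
Your proposal is correct and follows the same route as the paper: write each Wick power as $\hat I_4(h_{x_i}^{\otimes 4})$, iterate Proposition~\ref{prop:mult_nm}, keep only the zeroth-chaos component, read each contraction as a pairing of legs that contributes a factor $G(x_a-x_b)$, and integrate over the $x_i$. Your bijection argument, with $p_j$ fixed as the number of legs of vertex $j$ matched to earlier vertices, and your observation that Wick ordering excludes self-pairings, supply details the paper's short proof leaves implicit.
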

\begin{proof}
For any $n\geqs2$, we write 
\begin{equation}
 \prod_{i=1}^n\Wick{\phi(x_i)^4}
 = \prod_{i=1}^n\hat I_4(h_{x_i}^{\otimes 4}))\;.
\end{equation} 
Taking the expectation, we are left with the component in the zeroth 
Wiener chaos, which by a repeated application of Proposition~\ref{prop:mult_nm} 
can be represented as the sum over all pairwise matchings of $n$ vertices 
with $4$ legs each. Each pairing gives rise to a Green function, and the 
result follows by integrating over all $x_i$. 
\end{proof}

% The following exercise shows that the Green function $G = (\id - \Delta)^{-1}$ is indeed 
% bounded in dimension $d=1$. 

\begin{exercise}
\label{ex:bound_G1} 
Show that for $g$ periodic,
the unique periodic solution of $f''(x) = f(x) - g(x)$ is given by 
\begin{equation}
\begin{pmatrix}
f(x) \\ f'(x)
\end{pmatrix}
= -\bigbrak{U(-1)-{\protect \one}}^{-1} 
\int_x^{x+1} U(x-y) 
\begin{pmatrix}
 0 \\ g(y) 
\end{pmatrix}\6y\;,
\qquad 
U(x) = 
\begin{pmatrix}
 \cosh(x) & \sinh(x) \\
 \sinh(x) & \cosh(x)
\end{pmatrix}\;.
\end{equation} 
Deduce that the Green function $G = (\id-\Delta)^{-1}$ is bounded 
in dimension $d=1$. 
\end{exercise}

\begin{exercise}
\label{ex:bound_Gamma_nk} 
Give an upper bound on the number of vacuum diagrams $\Gamma_{n,k}$ 
occurring in~\eqref{eq:expansion_Feynman_graphs} for given $n$, by allowing the 
graphs to have loops (edges connecting a vertex with itself). 
Assuming this bound has the right order of magnitude, what does this 
say about the convergence of the series~\eqref{eq:Zratio_1_Wick}?
\end{exercise}

%%%%%%%%%%%%%%%%%%%%%%%%%%%%%%%%%%%%%%%%%%%%%%%%%%%%%%%%%%%%%%%%%%%%%%%%%%%%%%%%

\subsection{The linked-cluster theorem}
\label{ssec:phi41_cluster} 

The vacuum diagrams occurring in the $n$th power of the expansion need not 
be connected. For instance, the $4$th power contains terms of the form 
$\FGIV^2$, that arise from pairing the legs of two vertices between each 
other, and of the other two vertices among themselves. However, a rather 
remarkable result known in quantum field theory as 
\emph{linked-cluster 
theorem}~\cite{Brouder09, Rivasseau_09, Salmhofer_Renormalization} states 
that the \emph{logarithm} of the ratio of partition functions, that is, 
its cumulant expansion, contains only the connected graphs. More precisely, 
it is obtained by keeping only the connected diagrams in the expansion. 

\begin{theorem}[Linked-cluster theorem]
The cumulant expansion of the ratio of partition functions is given by 
\begin{equation}
\label{eq:cumulant_Feynman_graphs} 
 \log\frac{\cZ_{1,\alpha}}{\cZ_{1,0}}
 \asymp
 \sum_{n\geqs0} \frac{(-\alpha)^n}{n!}
 \sum_{k\colon \Gamma_{n,k} \text{connected}} \Pi(\Gamma_{n,k})\;.
\end{equation} 
\end{theorem}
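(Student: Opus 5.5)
The plan is to work at the level of formal power series in $\alpha$, using the convolution algebra of Section~\ref{ssec:convolution}. Write $Y = -\int_\Lambda \Wick{\phi(x)^4}\6x$. By~\eqref{eq:Zratio_1_Wick}, the ratio $\cZ_{1,\alpha}/\cZ_{1,0}$ is formally the moment generating function $\Lambda(\mu_Y)(\alpha)$ of $Y$. By~\eqref{eq:Lambda_logstar}, its logarithm is $\Lambda(\kappa_Y)(\alpha)$, with $\kappa_Y = \log_\ast\mu_Y$. So it suffices to show that the $n$th cumulant satisfies
\begin{equation}
 \kappa_Y(x^n) = (-1)^n \sum_{k\colon\Gamma_{n,k}\text{ connected}} \Pi(\Gamma_{n,k})\;,
\end{equation}
that is, $\kappa_Y$ is the connected part of the moments $\mu_Y(x^n) = (-1)^n\sum_k\Pi(\Gamma_{n,k})$ given by~\eqref{eq:expansion_Feynman_graphs}. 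The latter identity extends trivially to $n=0$ (empty graph, value $1$) and $n=1$ (no perfect matching, since self-pairings are absent for Wick powers, value $0$).

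The first step is to decompose each diagram into connected components. A diagram on the labelled vertex set $\intset{1,n}$, viewed as a matching of the $4n$ labelled legs with no leg paired to a leg of the same vertex, determines a set partition $\set{B_1,\dots,B_j}$ of $\intset{1,n}$ into the vertex sets of its components. Conversely, a choice of such a partition together with a connected diagram on each block gives back a unique diagram. The valuation $\Pi$ factorises over components, because the integral over $\Lambda^{\cV}$ splits into a product of integrals over the blocks. Writing $c_m = (-1)^m\sum_{\text{connected}}\Pi(\Gamma_{m,k})$, this gives
\begin{equation}
 \mu_Y(x^n) = \sum_{\text{partitions } \set{B_1,\dots,B_j} \text{ of } \intset{1,n}} \prod_{i=1}^j c_{\abs{B_i}}\;.
\end{equation}
The signs are consistent because $(-1)^n = \prod_i(-1)^{\abs{B_i}}$.

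The second step is to regroup this sum. Counting set partitions with block sizes $n_1,\dots,n_j$ gives $\frac{1}{j!}\frac{n!}{n_1!\dots n_j!}$ ordered choices, so the formula becomes exactly~\eqref{eq:expstar}: $\mu_Y = \exp_\ast(c)$, with $c\in\cL_0$ since $c_0 = 0$. Since $\log_\ast$ inverts $\exp_\ast$, we get $\kappa_Y = c$. Applying $\Lambda$ and Proposition~\ref{prop:Lambda} then yields~\eqref{eq:cumulant_Feynman_graphs}, where $\asymp$ is understood as an equality of formal series, or as an asymptotic expansion in $\alpha$.

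I expect the main obstacle to be the bookkeeping in the first step. One has to check carefully that "matchings of different legs are counted as different terms" is compatible with the factorisation: the matchings of the full leg set restricted to the blocks are in bijection with tuples of matchings on each block, with no extra combinatorial factor. This holds because vertices and legs are labelled, so no symmetry factors arise. A secondary point is justifying the formal manipulation: strictly, the left-hand side is $\log$ of an asymptotic series. Here I would note that $\log$ of an asymptotic expansion is the asymptotic expansion of $\log$, since the ratio is close to $1$ for small $\alpha$, and that the coefficients are computed exactly by the algebraic identity above.
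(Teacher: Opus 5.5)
Your proposal is correct and is essentially the paper's argument (attributed there to Dimitri Faure). Both decompose each labelled diagram into connected components, recognise the moment sum as $\exp_\ast$ of the connected sum (the multinomial factor choosing the vertices of each block, the $1/k!$ accounting for unordered blocks), and conclude with $\log_\ast$ and Proposition~\ref{prop:Lambda}. Your version is somewhat more explicit than the paper's, which phrases the last step as ``assume the claim and compare coefficients.'' Your remark on how to read $\asymp$ for the logarithm, via the asymptotic expansion of a ratio close to $1$, addresses a point the paper leaves implicit.
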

\begin{proof}
This elegant proof is due to Dimitri Faure. We use the formalism of 
convolution algebras introduced in Section~\ref{ssec:convolution}. 
For an abstract variable $x$ (unrelated to coordinates on $\Lambda$), 
we write $\psi(x^n)$ for the coefficient of $(-\alpha)^n/n!$ in the cumulant  expansion~\eqref{eq:cumulant_Feynman_graphs}. This means that 
\begin{equation}
 \log\frac{\cZ_{1,\alpha}}{\cZ_{1,0}}
 \asymp
 \Lambda(\psi)(-\alpha)\;,
\end{equation} 
where $\Lambda$ is the map introduced in~\eqref{eq:def_Lambda}. 
By Proposition~\ref{prop:Lambda}, we have 
\begin{equation}
 \frac{\cZ_{1,\alpha}}{\cZ_{1,0}}
 \asymp
 \Lambda(\ph)(-\alpha)
 = \sum_{n\geqs0} \frac{(-\alpha)^n}{n!}\ph(x^n)\;,
\end{equation} 
where
\begin{equation}
\label{eq:Feynman_ph_xn} 
 \ph(x^n) 
 = \exp_\ast(\psi)(x^n) 
 = \sum_{k=0}^n \frac{1}{k!}
 \sum_{\substack{n_1, \dots, n_k\geqs1\\n_1+\dots+n_k=n}}
 \frac{n!}{n_1!\dots n_k!}\psi(x^{n_1})\dots\psi(x^{n_k})\;.
\end{equation} 
To deal with combinatorics, we assume that all vertices and edges of 
the diagrams are numbered. Assuming~\eqref{eq:cumulant_Feynman_graphs} 
is true, each $\psi(x^n)$ is a linear combination of connected graphs 
with $n$ vertices. Then~\eqref{eq:Feynman_ph_xn} says that the coefficient 
of $(-\alpha)^n/n!$ in the expansion~\eqref{eq:expansion_Feynman_graphs} 
is obtained by all possible disjoint unions of connected graphs 
such that the total number of vertices is $n$. The multinomial coefficient 
accounts for the choices of vertices in the subgraphs, while the factor 
$1/k!$ accounts for the fact that the order of the subgraphs is irrelevant. 
Since this yields \emph{all} pairings of $n$ vertices, the result follows 
from uniqueness of coefficients of power series. 
\end{proof}

\begin{example}
Since $\psi(x) = 0$ by~\eqref{eq:expec_phi4}, we obtain 
\begin{equation}
 \exp_\ast(\psi)(x^4)
 = \psi(x^4) + \binom{4}{2} \psi(x^2)^2\;,
\end{equation} 
since the only allowed decompositions of $4$ are $4$ and $2+2$. 
This means that the term of order $\alpha^4$ in the 
expansion~\eqref{eq:expansion_Feynman_graphs} differs from the corresponding 
term in the cumulant expansion by a term $\frac12\binom{4}{2} \FGIV^2$. 
The factor $\frac12\binom{4}{2} = 3$ is precisely the number of pairwise 
matchings of four vertices. 
\end{example}

%%%%%%%%%%%%%%%%%%%%%%%%%%%%%%%%%%%%%%%%%%%%%%%%%%%%%%%%%%%%%%%%%%%%%%%%%%%%%%%%

\subsection{Asymptotic expansion}
\label{ssec:phi41_asymp} 

So far, we have not shown that the expansion~\eqref{eq:expansion_Feynman_graphs} 
is a genuine asymptotic expansion. We do this now, by proving the following 
result.

\begin{proposition}[Asymptotic series]
\label{prop:asymptotic} 
For every $n\geqs0$ there exists a constant $M_n$ such that the ratio of partition 
functions satisfies 
\begin{equation}
\label{eq:asymptotic_expansion} 
 \biggabs{\frac{\cZ_{1,\alpha}}{\cZ_{1,0}}
 - \sum_{m=0}^n \frac{(-\alpha)^m}{m!}
 \biggexpecin{\mu_{1,0}}{\biggpar{\int_{\Lambda}\Wick{\phi(x)^4}\6x}^m}}
 \leqs M_n \alpha^{n+1}\;.
\end{equation} 
\end{proposition}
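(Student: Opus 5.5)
We write $Y = \int_\Lambda \Wick{\phi(x)^4}\6x$. The plan is to use Taylor's theorem with remainder for the exponential, applied pathwise, and then take expectations. The remainder is controlled by the moments of $Y$ together with a lower bound on $Y$.

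For real $u$, Taylor's formula gives
\begin{equation}
 \e^{u} - \sum_{m=0}^n \frac{u^m}{m!} = \frac{u^{n+1}}{(n+1)!}\,\e^{\theta u}
\end{equation}
for some $\theta\in[0,1]$. We apply this with $u=-\alpha Y$. The remainder is then bounded by $\frac{\alpha^{n+1}\abs{Y}^{n+1}}{(n+1)!}\,\e^{\alpha (Y)_-}$, where $(Y)_-$ is the negative part of $Y$. Taking expectations under $\mu_{1,0}$ and using the Cauchy--Schwarz inequality, the left-hand side of~\eqref{eq:asymptotic_expansion} is at most
\begin{equation}
 \frac{\alpha^{n+1}}{(n+1)!}\,\bigexpec{Y^{2n+2}}^{1/2}\,\bigexpec{\e^{2\alpha (Y)_-}}^{1/2}\;.
\end{equation}

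Moments of $Y$ pose no problem. Since $Y\in\cH_4$, Theorem~\ref{thm:equivalence_moments_infdim} gives $\expec{Y^{2n+2}}\leqs (2n+1)^{4(n+1)}\expec{Y^2}^{n+1}$. By~\eqref{eq:expec_phi4_squared}, $\expec{Y^2}=4!\int\!\!\int G(x-y)^4\6x\6y$, which is finite because $G$ is bounded in $d=1$ (Exercise~\ref{ex:bound_G1}, or simply $\abs{G}\leqs G(0)<\infty$ by~\eqref{eq:variance_GFF1}).

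The main point is the exponential factor, and this is where we use $d=1$. From~\eqref{eq:scaled_Wick}, with $C=G(0)$,
\begin{equation}
 \Wick{\phi^4}=\phi^4-6C\phi^2+3C^2=(\phi^2-3C)^2-6C^2\geqs -6C^2
\end{equation}
pointwise. Since $\abs{\Lambda}=1$, this gives $Y\geqs -6G(0)^2$ almost surely. Hence $(Y)_-\leqs 6G(0)^2$, and $\expec{\e^{2\alpha(Y)_-}}\leqs \e^{12\alpha G(0)^2}$. This is bounded by a constant for $\alpha\in[0,1]$.

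For $\alpha\geqs1$ the Taylor argument is not needed, because~\eqref{eq:asymptotic_expansion} holds trivially. The left-hand side is bounded by $\e^{6\alpha G(0)^2}$ plus a polynomial of degree $n$ in $\alpha$. So the statement is only meaningful, and only needs proof, for $\alpha\in[0,1]$. On that range the constant
\begin{equation}
 M_n=\frac{(2n+1)^{2(n+1)}\expec{Y^2}^{(n+1)/2}\,\e^{6G(0)^2}}{(n+1)!}
\end{equation}
works.

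The one technical point to check is that the pathwise manipulations are legitimate. The GFF on $\T^1$ has a continuous version by Proposition~\ref{prop:GFF_Holder}, so $Y$ is a well-defined random variable and the pointwise lower bound applies. If this is in doubt, one can carry out the argument for the truncated field $\phi_N$, where every bound holds uniformly in $N$ (using $C_N\leqs G(0)$), and then pass to the limit $N\to\infty$ in $L^2$.
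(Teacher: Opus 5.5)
Your proof is correct for $\alpha$ in any bounded interval (see the caveat in the next paragraph), and it takes a different route from the paper at the key step.

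The paper controls the exponential moment through Lemma~\ref{lem:exponential_exp}. It writes $\bX=\bX_0-6C\bY_0+3C^2$, absorbs $6C\alpha\bY_0$ into a Gaussian measure with covariance $((1-6C\alpha)\id-\Delta)^{-1}$, and compares normalisations mode by mode. This needs $\alpha<\alpha_0$, but gives the sharper bound $\expecin{\mu_{1,0}}{\e^{-\alpha\bX}}\leqs 1+\Order{\alpha}$. The paper then splits according to the sign of $\bX$, and gets finiteness of $\expec{\bX^{2n}}$ from boundedness of $G$.

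You instead use the deterministic bound $Y\geqs-6G(0)^2$, which follows from $\Wick{\phi^4}=(\phi^2-3C)^2-6C^2$. The paper only uses this observation later, in Nelson's estimate (Proposition~\ref{prop:Nelson}), where it no longer suffices because $C_N\to\infty$. In $d=1$ it settles the exponential factor at once. You do not even need Cauchy--Schwarz, since $\e^{\alpha(Y)_-}\leqs\e^{6\alpha G(0)^2}$ can be pulled out of the expectation.

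Your Lagrange remainder gives the power $\alpha^{n+1}$ directly, with no parity distinction. The bounds on $D_n$ displayed in the paper are written with $(-t)^n/n!$; as written they only give $\Order{\alpha^n}$ and must be taken one order higher. Finally, hypercontractivity gives an explicit $M_n$ of order $K^n n!$, which matches the Gevrey-$1$ behaviour stated after the proof.

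The one wrong step is your claim that the inequality holds trivially for $\alpha\geqs1$. A bound of the form $\e^{6\alpha G(0)^2}$ plus a polynomial is not $\leqs M_n\alpha^{n+1}$ with $M_n$ independent of $\alpha$. In fact no such $M_n$ exists on $[0,\infty)$. Since $\expec{Y}=0$ and $\expec{Y^2}>0$, there is $\varepsilon>0$ with $\fP(Y<-\varepsilon)>0$. Hence $\expec{\e^{-\alpha Y}}\geqs\fP(Y<-\varepsilon)\e^{\alpha\varepsilon}$ grows exponentially, while the subtracted sum is only a polynomial.

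So the proposition has to be read for $\alpha\in[0,\alpha_0]$. The paper's proof is restricted in the same way, through Lemma~\ref{lem:exponential_exp}. Your argument covers every such interval, with $\e^{6\alpha_0 G(0)^2}$ in place of $\e^{6G(0)^2}$ in $M_n$. Replace the $\alpha\geqs1$ paragraph by this explicit restriction.
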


We start by showing an a-priori bound on the Laplace transform. 
To lighten notations, we will write
\begin{equation}
\label{eq:def_X} 
 \bX = \int_{\Lambda}\Wick{\phi(x)^4}\6x\;.
\end{equation} 

\begin{lemma}
\label{lem:exponential_exp} 
There exists $\alpha_0 > 0$ such that for all $\alpha\in[0,\alpha_0)$,
one has 
\begin{equation}
 0 \leqs \bigexpecin{\mu_{1,0}}{\e^{-\alpha \bX}} \leqs 1 + \Order{\alpha}\;.
\end{equation} 
\end{lemma}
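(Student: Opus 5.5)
The idea is to exploit the fact that the Wick power is bounded below pointwise. Indeed, with $C = G(0)<\infty$ (finite in $d=1$), the scaled Hermite polynomial satisfies
\begin{equation}
 \Wick{\phi(x)^4} = H_4(\phi(x);C) = \phi(x)^4 - 6C\phi(x)^2 + 3C^2
 = \bigpar{\phi(x)^2 - 3C}^2 - 6C^2 \geqs -6C^2\;.
\end{equation}
Integrating over $\Lambda=\T^1$, which has volume $1$, gives $\bX \geqs -6C^2$ almost surely. Non-negativity of $\expecin{\mu_{1,0}}{\e^{-\alpha\bX}}$ is trivial, and the lower bound on $\bX$ immediately yields the crude bound $\e^{-\alpha\bX}\leqs \e^{6C^2\alpha}$. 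This already shows that the expectation is finite for every $\alpha\geqs0$.

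To obtain the sharper form $1+\Order{\alpha}$, I would Taylor expand with remainder. Writing $\e^{-u} = 1 - u + R(u)$, one has $\abs{R(u)} \leqs \frac{u^2}{2}\e^{\max(-u,0)}$. Applying this with $u = \alpha\bX$ and using $\bX\geqs -6C^2$ gives
\begin{equation}
 \e^{-\alpha\bX} \leqs 1 - \alpha\bX + \frac{\alpha^2\bX^2}{2}\,\e^{6C^2\alpha}\;.
\end{equation}
Taking expectations, the linear term vanishes by~\eqref{eq:expec_phi4}. The quadratic term is $\expecin{\mu_{1,0}}{\bX^2} = 4!\int_{\Lambda^2}G(x-y)^4\6x\6y$ by~\eqref{eq:expec_phi4_squared}, and this is finite because $G$ is bounded in $d=1$ (Exercise~\ref{ex:bound_G1}, or simply $\abs{G}\leqs G(0)$ since $G$ has non-negative Fourier coefficients). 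Hence for $\alpha<\alpha_0$ (any $\alpha_0$ works) one gets $\expecin{\mu_{1,0}}{\e^{-\alpha\bX}} \leqs 1 + \Order{\alpha^2}$, which is stronger than what is required.

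The main (mild) obstacle is justifying the pointwise lower bound and the finiteness of all the quantities involved. Both rely on $C=G(0)<\infty$, which holds because $\sum_k \lambda_k^{-1}<\infty$ in dimension one, cf.~\eqref{eq:variance_GFF1}. If one prefers to work with the truncated field, the same argument applies with $C$ replaced by $C_N\leqs G(0)$, uniformly in $N$. Finally, the fact that $\bX$ lies in $L^2$ (indeed in every $L^p$ by Theorem~\ref{thm:equivalence_moments_infdim}) justifies taking expectations term by term.
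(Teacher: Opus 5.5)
Your proof is correct, and it takes a genuinely different and more elementary route than the paper. The paper splits $\bX = \bX_0 - 6C\bY_0 + 3C^2$ with $\bX_0=\int_\Lambda\phi(x)^4\,\6x\geqs0$. It then absorbs the factor $\e^{6C\alpha\bY_0}$ into a Gaussian free field with shifted mass, i.e.\ with covariance $((1-6C\alpha)\id-\Delta)^{-1}$. It controls the ratio of normalisations through a convergent infinite product, using $\sum_k\lambda_k^{-1}<\infty$. Finally it uses $\e^{-\alpha\bX_0}\leqs 1$ under the new measure. The smallness condition $\alpha<\alpha_0$ in the statement comes from this change of measure, since the shifted covariance must remain positive.

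You instead complete the square pointwise, $H_4(\phi;C)=(\phi^2-3C)^2-6C^2$. This gives $\bX\geqs-6C^2$ and hence $\e^{-\alpha\bX}\leqs\e^{6C^2\alpha}$. That one line already proves the lemma, for any choice of $\alpha_0$, with no Gaussian change of measure and no determinant computation. Your Taylor refinement to $1+\Order{\alpha^2}$ is also correct, though more than is needed. The ingredients all hold: the remainder bound $\frac{u^2}{2}\e^{\max(-u,0)}$, the identity $\expec{\bX}=0$, and $\expec{\bX^2}=4!\int G^4<\infty$ via $\abs{G}\leqs G(0)$.

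What the paper's route buys is an exact representation of the quadratic part as a change of the Gaussian reference measure, a technique that is useful in other contexts. It does not extend to $d=2$ any better than yours: both $\e^{6C_N^2\alpha}$ and the shifted mass $1-6C_N\alpha$ degenerate as $C_N\to\infty$. In fact your pointwise lower bound is exactly the starting point of the paper's proof of Nelson's estimate (Proposition~\ref{prop:Nelson}). There the divergence of $C_N$ forces the additional argument based on equivalence of moments and a cut-off comparison.
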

\begin{proof}
In order to exploit signs, we write $\bX = \bX_0 - 6C\bY_0 + 3C^2$, where
$C=G(0)$ and 
\begin{equation}
 \bX_0 = \int_{\Lambda}\phi(x)^4\6x \geqs 0\;, \qquad 
 \bY_0 = \int_{\Lambda}\phi(x)^2\6x \geqs 0\;. 
\end{equation} 
Therefore we have 
\begin{equation}
 \bigexpecin{\mu_{1,0}}{\e^{-\alpha \bX}}
 = \e^{-3C^2\alpha} \bigexpecin{\mu_{1,0}}{\e^{-\alpha \bX_0 + 6C\alpha \bY_0}}
 = \e^{-3C^2\alpha} \frac{\widetilde \cZ(\alpha)}{\widetilde \cZ(0)}
 \bigexpecin{\tilde\mu(\alpha)}{\e^{-\alpha \bX_0}}\;,
\end{equation} 
where $\tilde\mu(\alpha)$ is a GFF with covariance 
$((1-6C\alpha)\id - \Delta)^{-1}$, and $\widetilde\cZ(\alpha)$ denotes the normalisation
of this measure. Then we have 
\begin{align}
 \log\frac{\widetilde \cZ(0)}{\widetilde \cZ(\alpha)}
 &= \frac12 \log\prod_{k\in\Z} \frac{1-6\alpha+2\pi k^2}{1+2\pi k^2} \\
 &= \frac12 \log\prod_{k\in\Z} 
 \biggpar{1 - \frac{6\alpha}{1+2\pi k^2}} \\
 &= \frac12 \sum_{k\in\Z} 
 \log\biggpar{1 - \frac{6\alpha}{1+2\pi k^2}} 
 \asymp -3\alpha \sum_{k\in\Z} \frac{1}{1+2\pi k^2}\;.
\end{align}
Since the sum converges, we obtain 
$\widetilde\cZ(\alpha) = \widetilde\cZ(0)[1+\Order{\alpha)}$, 
where $\widetilde\cZ(0) = \cZ_{1,0}$. As a consequence,
\begin{equation}
 \bigexpecin{\mu_{1,0}}{\e^{-\alpha \bX}} = 
\bigexpecin{\tilde\mu(\alpha)}{\e^{-\alpha \bX_0}}
\bigpar{1+\Order{\alpha}}\;.
\end{equation} 
Since $\bX_0$ is positive, we have 
$0\leqs \bigexpecin{\tilde\mu(\alpha)}{\e^{-\alpha \bX_0}} \leqs 1$,
which concludes the proof. 
\end{proof}

\begin{proof}[{\sc Proof of Proposition~\ref{prop:asymptotic}}]
For $n\geqs0$ and $t\in\R$, let 
\begin{equation}
 D_n(t) = \e^{-t} - \sum_{m=0}^n \frac{(-t)^m}{m!}\;.
\end{equation} 
If $n$ is even, we have 
\begin{equation}
\label{eq:bounds_Dn} 
\begin{cases}
0 \leqs D_n(t) 
\leqs \dfrac{(-t)^n}{n!} 
& \text{if $t \geqs 0$\;,} \\[10pt]
0 \leqs D_n(t)
\leqs \dfrac{(-t)^n}{n!} \e^{-t}
& \text{if $t < 0$\;.} 
\end{cases}
\end{equation}
It follows that 
\begin{equation}
 \bigexpecin{\mu_{1,0}}{\abs{D_n(\alpha\bX)}\indicator{\bX\geqs0}}
 \leqs \frac{\alpha^n}{n!} \expec{\bX^n}\;,
\end{equation} 
while 
\begin{align}
 \bigexpecin{\mu_{1,0}}{\abs{D_n(\alpha\bX)}\indicator{\bX<0}}
 &\leqs \frac{\alpha^n}{n!} \expec{\bX^n\e^{-\alpha \bX}\indicator{\bX<0}}\\
 &\leqs \frac{\alpha^n}{n!} 
 \sqrt{\expec{\bX^{2n}} \expec{\e^{-2\alpha \bX}\indicator{\bX<0}}}\;,
\end{align} 
by the Cauchy--Schwarz inequality. We know from Exercise~\ref{ex:bound_G1} 
that $\expec{\bX^{2n}}$ is finite, while Lemma~\ref{lem:exponential_exp} 
shows that $\expec{\e^{-2\alpha \bX}\indicator{\bX<0}}$ is bounded as well. 
A similar argument applies for odd $n$, with some signs reversed 
in~\eqref{eq:bounds_Dn}. 
\end{proof}

With the computations made in Section~\ref{ssec:phi41_wick}, we have thus 
obtained that the ratio of partition functions satisfies  
\begin{equation}
 \frac{\cZ_{1,\alpha}}{\cZ_{1,0}}
 = 1 + 12\alpha^2 \Pi\Bigpar{\FGIV} + 288 \alpha^3 \Pi\Bigpar{\FGVI}
 + \Order{\alpha^4}\;,
\end{equation} 
and more terms can be computed if needed. This expansion does not converge, 
however, since the number of pairwise matchings grows like $(4n-1)!!$ 
(cf.~Exercise~\ref{ex:bound_Gamma_nk}), which by~\eqref{eq:expec_X2k}
and Stirling's formula behaves like $(n!)^2$. The constant $M_n$ 
in~\eqref{eq:asymptotic_expansion} thus grows like $n!$. Such an expansion is called 
\emph{Gevrey-$1$}. The non-convergence of the expansion does not mean that 
it is useless, but it means that there is an optimal value of $n$, 
depending on $\alpha$, at which the expansion should be stopped to 
obtain the smallest possible error bound. 

\begin{exercise}
Let $r(n) = n!\alpha^n$. By extending $r$ to real arguments via $n!=\Gamma(n+1)$ and 
using Stirling's formula, estimate the minimal value of $r(n)$ for small 
$\alpha$. For what $n$ is this minimal value reached?
\end{exercise}

%%%%%%%%%%%%%%%%%%%%%%%%%%%%%%%%%%%%%%%%%%%%%%%%%%%%%%%%%%%%%%%%%%%%%%%%%%%%%%%%

\subsection{Two-point function}
\label{ssec:phi41_two_point} 

\begin{figure}
\begin{center}
\begin{tikzpicture}[>=stealth',main node/.style={circle,minimum
size=0.5cm,inner sep=1pt,fill=blue!25,draw},small node/.style={draw,circle,fill=white,minimum
size=3pt,inner sep=0pt},scale=1.2]

\node[main node] (x) at (0,0) {$x$};
\node[main node] (z1) at (1.5,0) {$z_1$};
\node[main node] (z2) at (3,0) {$z_2$};
\node[main node] (y) at (4.5,0) {$y$};

\draw[thick,blue] (x) to (z1);
\draw[thick,blue] (z1) to (z2);
\draw[thick,blue] (z1) to[out=60,in=120] (z2);
\draw[thick,blue] (z1) to[out=-60,in=-120] (z2);
\draw[thick,blue] (z2) to (y);
\end{tikzpicture}
\end{center}
\vspace{-4mm}
\caption[]{Pairing corresponding to the term $n=2$ in the expansion
of the two-point function.}
\label{fig:two_point} 
\end{figure}

Let us briefly outline how the two-point function 
\begin{equation}
 G_{2,1,\alpha}(x,y) = \expecin{\mu_{1,\alpha}}{\phi(x)\phi(y)}
\end{equation} 
can be computed by a similar procedure. By~\eqref{eq:expec_mudalpha} we have 
\begin{equation}
 G_{2,1,\alpha}(x,y) = \frac{\cZ_{1,0}}{\cZ_{1,\alpha}}
 \bigexpecin{\mu_{1,0}}{\phi(x)\phi(y)\e^{-\alpha \bX}}\;, 
\end{equation} 
where $\bX$ denotes the integral of the fourth Wick power, cf.~\eqref{eq:def_X}. 
Expanding the exponential, we get 
\begin{equation}
 \bigexpecin{\mu_{1,0}}{\phi(x)\phi(y)\e^{-\alpha \bX}}
 \asymp \sum_{n\geqs0} \frac{(-\alpha)^n}{n!}
 \bigexpecin{\mu_{1,0}}{\phi(x)\phi(y)\bX^n}\;.
\end{equation} 
We already know that the term $n=0$ is equal to $G(x-y)$. For $n=1$, we 
find 
\begin{equation}
 \bigexpecin{\mu_{1,0}}{\phi(x)\phi(y)\bX}
 = \int_\Lambda \bigexpecin{\mu_{1,0}}{\phi(x)\phi(y)\Wick{\phi(z)^4}} \6z 
= \int_\Lambda \bigexpecin{\mu_{1,0}}{\hat I_1(h_x)\hat I_1(h_y)
\hat I_4(h_z^{\otimes 4})} \6z\;.
\end{equation} 
This is equal to zero, because there is no perfect matching, leaving no 
free legs, of two vertices with one leg each and one vertex with four legs. 
For $n=2$, we obtain 
\begin{equation}
 \bigexpecin{\mu_{1,0}}{\phi(x)\phi(y)\bX^2}
= \int_\Lambda \int_\Lambda
\bigexpecin{\mu_{1,0}}{\hat I_1(h_x)\hat I_1(h_y)
\hat I_4(h_{z_1}^{\otimes 4})\hat I_4(h_{z_2}^{\otimes 4})} \6z_1\6z_2\;.
\end{equation} 
This is now different from zero, because there exist perfect matchings, as 
show in Figure~\ref{fig:two_point}. We thus conclude that the 
expectation of $\phi(x)\phi(y)\e^{-\alpha \bX}$ can be computed in a 
similar way as for the ratio of partition functions, except that it now 
involves Feynman diagrams with two free legs, labeld $x$ and $y$.

\begin{remark}
\label{rem:BFS} 
There exists a slightly different representation of the two-point function, 
that avoids having to divide by the ratio of partition functions, based on the Schwinger--Dyson equations~\eqref{eq:ibp}~\cite{Brydges_Frohlich_Sokal_CMP83_RW}. 
\end{remark}

%%%%%%%%%%%%%%%%%%%%%%%%%%%%%%%%%%%%%%%%%%%%%%%%%%%%%%%%%%%%%%%%%%%%%%%%%%%%%%%%

\section{The $\Phi^4_2$ model}
\label{sec:phi42} 

In this section, we consider the $\Phi^4$ model on the two-dimensional torus
$\Lambda = \T^2$. We have seen in Section~\ref{ssec:GFF_prop} that the GFF 
in dimension $2$ has infinite variance. In fact, one can show that the Green 
function behaves like
\begin{equation}
\label{eq:Green_GFF2} 
 G(x) \asymp \bigabs{\log(\norm{x})}
 = \log(\norm{x}^{-1})\;.
\end{equation} 
This can be seen by computing the Green function $G_{\R^2}$ of the full plane 
$\R^2$ using polar coordinates, and then periodising it via 
\begin{equation}
 G_{\T^2} (x) = \sum_{k\in\Z^2} G_{\R^2}(x - k)\;. 
\end{equation} 
We know however that the truncated two-dimensional GFF has finite moments. 
This suggests considering the modified energy 
\begin{equation}
\label{eq:phi42_H_Wick} 
 \cH_{2,\alpha,N}^{\text{Wick}}(\phi_N) 
 = \int_{\Lambda} \biggbrak{\norm{\nabla\phi_N(x)}^2 + \frac12 \phi_N(x)^2 
 + \alpha\Wick{\phi_N(x)^4}_{C_N}} \6x\;,
\end{equation} 
defined for the truncated field 
\begin{equation}
 \phi_N(x) = \sum_{k\in\cK_N} \frac{X_k}{\sqrt{\lambda_k}} e_k(x)\;,
\end{equation} 
which has variance 
\begin{equation}
\label{eq:CN_phi42} 
 C_N = \sum_{k\in\cK_N} \frac{1}{\lambda_k}
 \asymp \log(N)\;.
\end{equation} 
We recall that $\cK_N = \setsuch{k\in\Z^2}{\abs{k} \leqs N}$, 
where $\abs{k} = \abs{k_1} + \abs{k_2}$. 

The situation is now similar to the one we have encountered in dimension $1$, 
except for the important difference that the constant $C_N$ occurring 
in~\eqref{eq:phi42_H_Wick} depends on the cut-off $N$. The model thus changes 
with $N$. This is an instance of what is called \emph{renormalisation} in 
quantum field theory, and $C_N$ is known as a \emph{counterterm}. 

The computations from Section~\ref{ssec:phi41_two_point} can now be 
repeated in the same way, and result in an expansion of the form 
\begin{equation}
\label{eq:Zratio_2_Wick} 
 \frac{\cZ_{2,\alpha,N}}{\cZ_{2,0,N}}  
 \asymp \sum_{n\geqs0} \frac{(-\alpha)^n}{n!}
 \biggexpecin{\mu_{2,0,N}}{\biggpar{\int_{\Lambda}\Wick{\phi_N(x)^4}_{C_N}\6x}^n}
\end{equation} 
of the ratio of partition functions, where the expectations of powers of 
the fourth Wick power are given by a sum of valuations of the same Feynman 
vacuum diagrams as in~\eqref{eq:expansion_Feynman_graphs}. The only difference 
is that the diagrams involve the Green function satisfying~\eqref{eq:Green_GFF2}. 
It is not immediately obvous that the diagrams all have a finite valuation, 
but we will show in Section~\ref{ssec:phi43_Hepp} below that this is indeed 
the case, uniformly in the cut-off $N$. 

The linked-cluster theorem is also true in this case. However, the proof of 
the a priori bound on the Laplace transform given in 
Lemma~\ref{lem:exponential_exp} does not work here, because of the diverging 
constant $C_N$ in the energy~\eqref{eq:phi42_H_Wick}. Fortunately, there 
is an alternative proof of that bound, due to Nelson.

%%%%%%%%%%%%%%%%%%%%%%%%%%%%%%%%%%%%%%%%%%%%%%%%%%%%%%%%%%%%%%%%%%%%%%%%%%%%%%%%

\subsection{Nelson's estimate}
\label{ssec:phi42_Nelson} 

In order to bound the Laplace transform of the fourth Wick power $\bX$, 
we first derive the following consequence of the equivalence of moments 
bound~\eqref{eq:equivalence_moments_infdim}.

\begin{lemma}
\label{lem:Nelson} 
Fix two cut-offs $M > N \geqs 1$. Then for any $p > 1$ and $n\geqs2$, 
there exists a constant $K_n$ depending only on $n$ such that 
\begin{equation}
 \biggexpecin{\mu_{2,0,N}}{
 \biggpar{\int_\Lambda \Wick{\phi_M(x)^n}_{C_M}\6x - 
 \int_\Lambda \Wick{\phi_N(x)^n}_{C_N}\6x}^{2p}}^{1/(2p)} 
 \leqs K_n(2p-1)^{n/2} \frac{(\log N)^{n-2}}{N}\;.
\end{equation} 
\end{lemma}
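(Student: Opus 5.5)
The plan is to reduce the $2p$-th moment to the second moment, and then bound the second moment by a Fourier computation. Throughout, both fields are understood to be built from the same Gaussian family $(X_k)$, so that $\phi_N$ is the projection of $\phi_M$ on the modes in $\cK_N$. The difference
\begin{equation}
 \bD := \int_\Lambda \Wick{\phi_M(x)^n}_{C_M}\6x - \int_\Lambda \Wick{\phi_N(x)^n}_{C_N}\6x
\end{equation}
then belongs to the $n$th homogeneous Wiener chaos $\cH_n$. Indeed, with $h^{(M)}_x := \sum_{k\in\cK_M}\lambda_k^{-1/2}\overline{e_k(x)}e_k$, Remark~\ref{rem:Wiener_isometry_scaled} gives $\Wick{\phi_M(x)^n}_{C_M} = \hat I_n((h^{(M)}_x)^{\otimes n})$, and similarly for $N$. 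By Theorem~\ref{thm:equivalence_moments_infdim}, it therefore suffices to prove
\begin{equation}
 \expec{\bD^2} \leqs K_n^2 \frac{(\log N)^{2(n-2)}}{N^2}\;.
\end{equation}
Multiplying the square root of this bound by $(2p-1)^{n/2}$ then gives the claim.

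To compute $\expec{\bD^2}$, expand the square into three terms. Each term has the form
\begin{equation}
 \int_{\Lambda^2} \expec{\Wick{\phi_A(x)^n}\Wick{\phi_B(y)^n}}\6x\6y \;=\; n!\int_{\Lambda^2} \expec{\phi_A(x)\phi_B(y)}^n\6x\6y\;,
\end{equation}
where the equality follows from the orthogonality of chaoses (Proposition~\ref{prop:mult_nm}, where only the $p=n$ contraction survives). Since $\expec{\phi_M(x)\phi_N(y)} = G_N(x-y)$, the same Fourier computation as in the proof of Proposition~\ref{prop:variance_Wick_GFF2} yields
\begin{equation}
 \expec{\bD^2} = n!\biggbrak{\sum_{\substack{k_i\in\cK_M\\ \sum k_i=0}} - \sum_{\substack{k_i\in\cK_N\\ \sum k_i=0}}}\frac{1}{\lambda_{k_1}\cdots\lambda_{k_n}}\;.
\end{equation}
The remaining sum runs over $n$-tuples with $\sum k_i=0$, all $k_i\in\cK_M$, and at least one $k_i\notin\cK_N$. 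By a union bound and symmetry, we may assume $k_1$ satisfies $\abs{k_1}>N$, at the cost of a factor $n$.

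The main step is the bound
\begin{equation}
 \sum_{\abs{k_1}>N} \frac{1}{\lambda_{k_1}}\, S_M(n-1,-k_1)\;,
 \qquad
 S_M(m,k) := \sum_{\substack{k_i\in\cK_M\\ k_2+\dots+k_{m+1}=k}} \prod_{i=2}^{m+1}\frac{1}{\lambda_{k_i}}\;.
\end{equation}
Lemma~\ref{lem:Young} is borderline in $d=2$: $\lambda^{-1}\sim\norm{k}^{-2}$ has exponent equal to $d$, so its hypothesis $n,m<d$ fails. I would instead prove by induction on $m$ a logarithmic version,
\begin{equation}
 S_M(m,k) \lesssim \frac{(1+\log(1+\norm{k}))^{m-1}}{1+\norm{k}^2}\;.
\end{equation}
This should follow by the same splitting $\norm{k_2}\leqs\norm{k}/2$ versus the complement as in Lemma~\ref{lem:Young}. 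The near region produces an extra factor $\int_1^{\norm{k}}r^{-1}\6r = \log\norm{k}$. The far region produces a tail sum $\sum_{\norm{q}>\norm{k}/2}(\log q)^{m-1}/\norm{q}^4 \lesssim (\log k)^{m-1}/\norm{k}^2$. I expect this uniform-in-$M$ log estimate to be the main obstacle, in particular handling cleanly the region where $k_2$ is close to $k$ (where $\lambda_{k-k_2}$ is small). The cutoff to $\cK_M$ only removes positive terms, so the bound is uniform in $M$.

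Plugging this into the main sum gives
\begin{equation}
 \sum_{\abs{k_1}>N}\frac{(\log\norm{k_1})^{n-2}}{\norm{k_1}^4}\lesssim \int_N^\infty \frac{(\log r)^{n-2}}{r^3}\6r \lesssim \frac{(\log N)^{n-2}}{N^2}\;.
\end{equation}
This is the square of the desired rate. Taking square roots and absorbing $n!$, the factor $n$ and the implicit constants into $K_n$ completes the argument.
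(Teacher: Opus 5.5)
Your proof is correct and follows essentially the paper's route: you reduce to the second moment via Theorem~\ref{thm:equivalence_moments_infdim}, write that moment in Fourier space, and bound it by a logarithmic Young-type induction in $d=2$. The region $\norm{k-k_2}\leqs\norm{k}/2$ that you flag is harmless, since there $\lambda_{k_2}^{-1}\lesssim(1+\norm{k})^{-2}$, while the induction hypothesis gives $\sum_{\norm{j}\leqs\norm{k}/2}S_M(m-1,j)\lesssim(1+\log(1+\norm{k}))^{m-1}$.

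Your bookkeeping is in fact cleaner than the paper's, which writes the difference of second moments as a sum over $k_i\in\cK_M\setminus\cK_N$ rather than over tuples with at least one $k_i\notin\cK_N$. Also, your bound $(\log N)^{n-2}/N^2$ is not the square of the target rate but better than it, so the claim follows for all $N\geqs2$. For $N=1$ and $n\geqs3$ the stated right-hand side vanishes, so the statement itself needs $N\geqs2$ or $1+\log N$ in place of $\log N$.
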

\begin{proof}
By the same computation as in the proof of Proposition~\ref{prop:variance_Wick_GFF2},
see~\eqref{eq:proof_moments01}, we have 
\begin{equation}
 \biggexpec{\int_\Lambda \Wick{\phi_M^n(x)}_{C_M} \6x
 \int_\Lambda \Wick{\phi_N^n(x)}_{C_N} \6x} 
 = n! \sum_{\substack{k_1,\dots,k_n\in\cK_N\\k_1 + \dots + k_n = 0}} 
 \frac{1}{\lambda_{k_1}\dots\lambda_{k_n}}\;.
\end{equation}
It follows by expanding the square that 
\begin{equation}
 \biggexpec{\biggpar{\int_\Lambda \Wick{\phi_M^n(x)}_{C_M} \6x 
 - \int_\Lambda \Wick{\phi_N^n(x)}_{C_N} \6x}^2}
 = n! \sum_{\substack{k_1,\dots,k_n\in\cK_M \setminus \cK_N\\k_1 + \dots + k_n = 0}} 
 \frac{1}{\lambda_{k_1}\dots\lambda_{k_n}}\;.
%  - n! \sum_{\substack{k_1,\dots,k_n\in\cK_N\\k_1 + \dots + k_n = 0}} 
%  \frac{1}{\lambda_{k_1}\dots\lambda_{k_n}}\;.
\end{equation}
By a similar argument as in Lemma~\ref{lem:Young}, one can show by induction on $n$ that 
\begin{equation}
 \sum_{\substack{k_1,\dots,k_n\in\cK_M \setminus \cK_N\\k_1 + \dots + k_n = k}} 
 \frac{1}{\lambda_{k_1}\dots\lambda_{k_n}}
 \leqs K_n \min\Biggset{\frac{(\log N)^{n-2}}{N^2}, \frac{(\log \norm{k})^{n-2}}{\norm{k}^2}}\;,
\end{equation} 
where $K_n$ does not depend on $N$ or $M$. Therefore, the result follows by taking $k=0$
in the above bound and using equivalence of moments 
(Theorem~\eqref{eq:equivalence_moments_infdim}). 
\end{proof}

\begin{proposition}[Nelson's estimate]
\label{prop:Nelson}
For any $\alpha \geqs 0$, 
there exists a constant $K > 0$, independent of $N$, such that for all $N\in\N$, one has 
\begin{equation}
 0 \leqs \biggexpecin{\mu_{2,0,N}}
 {\exp\biggset{-\alpha \int_\Lambda \Wick{\phi_N(x)^4}\6x}} 
 \leqs K\;.
\end{equation} 
\end{proposition}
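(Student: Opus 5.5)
The plan is Nelson's classical argument, which combines a deterministic lower bound on the Wick power at a lower cut-off with the tail estimate of Lemma~\ref{lem:Nelson}. Write $\bX_N = \int_\Lambda \Wick{\phi_N(x)^4}_{C_N}\6x$. The lower bound $0\leqs$ is trivial, so only the upper bound requires work. The first step is a pointwise lower bound. For every $M$, completing the square gives
\begin{equation}
 H_4(\phi;C_M) = \phi^4 - 6C_M\phi^2 + 3C_M^2 = (\phi^2-3C_M)^2 - 6C_M^2 \geqs -6C_M^2\;.
\end{equation}
Hence $\bX_M \geqs -6C_M^2 \geqs -c(\log M)^2$ by~\eqref{eq:CN_phi42}, using $\abs{\Lambda}=1$. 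This bound is deterministic but blows up with $M$, so it cannot be applied directly at $M = N$.

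The second step estimates the tail by comparing cut-offs. We have
\begin{equation}
 \E\bigl[\e^{-\alpha\bX_N}\bigr] \leqs \e + \sum_{j\geqs1}\e^{j+1}\,\fP\bigl(\bX_N \leqs -j/\alpha\bigr)\;,
\end{equation}
so it suffices to show that $\fP(\bX_N\leqs -t)$ decays faster than any exponential in $t$, uniformly in $N$. Fix $t$ large and choose a lower cut-off $M=M(t)$ such that $6C_M^2 \leqs t/2$. This requires $(\log M)^2\asymp t$, that is, $M = \exp(c\sqrt t)$. If $N\leqs M$, the probability vanishes by the first step. If $N>M$, then
\begin{equation}
 \fP(\bX_N\leqs -t) \leqs \fP\bigl(\abs{\bX_N-\bX_M}\geqs t/2\bigr)\;.
\end{equation}
By Markov's inequality and Lemma~\ref{lem:Nelson} with $n=4$ (the smaller cut-off $M$ in the role of $N$ there), this is at most
\begin{equation}
 \Bigl(\frac{2}{t}\Bigr)^{2p} K_4^{2p}(2p-1)^{4p}\frac{(\log M)^{4p}}{M^{2p}}\;,
\end{equation}
for every $p>1$.

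The third step, which I expect to be the main obstacle, is optimising over $p$. Because the factor $(2p-1)^{4p}$ grows superexponentially in $p$, one cannot let $p\to\infty$ freely. Substituting $M = \e^{c\sqrt t}$, the bound becomes roughly $\exp\bigl(4p\log p + O(p\log t) - 2pc\sqrt t\bigr)$. Choosing $p = \e^{\delta\sqrt t}$ with $\delta$ small enough that $4\log p \leqs c\sqrt t$ yields
\begin{equation}
 \fP(\bX_N\leqs -t)\leqs \exp\bigl(-\e^{\delta\sqrt t}\bigr)
\end{equation}
for large $t$, a double-exponential decay that beats $\e^{\alpha t}$ for any fixed $\alpha$. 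The bookkeeping must be done carefully: $\delta$ has to be small compared with $c$ and with the constant in~\eqref{eq:CN_phi42}, and the lower-order $\log t$ terms must be absorbed. With this choice the series in the second step converges to a bound depending on $\alpha$ but not on $N$, which gives the constant $K$.
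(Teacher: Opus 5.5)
Your proposal is correct and follows essentially the same route as the paper. Both arguments use the deterministic bound $\bX_M \geqs -6C_M^2$ at a lower cut-off with $\log M(t)\asymp\sqrt t$, and then Markov's inequality with Lemma~\ref{lem:Nelson} applied to $\bX_N-\bX_M$ at a $t$-dependent moment $p(t)$. The only difference is the choice of $p(t)$. The paper takes $p(t)\asymp t^\beta$ with $\beta>\frac12$, which gives a tail of order $\exp(-c\,t^{\beta+1/2})$. Your choice $p=\e^{\delta\sqrt t}$ gives the sharper double-exponential tail. Either one beats $\e^{\alpha t}$.
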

\begin{proof}
By the definition of scaled Hermite polynomials, 
\begin{equation}
 H_4(\phi_N(x);C_N) = 
 \bigpar{\phi_N(x)^2 - 3C_N^2}^2 - 6C_N^2\;,
\end{equation} 
which shows that 
\begin{equation}
 \bX_N := \int_\Lambda \Wick{\phi_N(x)^4}_{C_N}\6x
 \geqs -6C_N^2 =: -D_N\;.
\end{equation} 
Since $\bigexpecin{\mu_{2,0,N}}{\e^{-\alpha\bX_N}\indicator{\bX_N \geqs 0}}
\leqs \bigprobin{\mu_{2,0,N}}{\bX_N \geqs 0} \leqs 1$, it is sufficient to bound
\begin{align}
 \bigexpecin{\mu_{2,0,N}}{\e^{-\alpha\bX_N}\indicator{\bX_N < 0}}
 &= \bigprobin{\mu_{2,0,N}}{\bX < 0} 
 + \alpha\int_0^\infty \e^{\alpha t} \bigprobin{\mu_{2,0,N}}{-\bX_N > t} \6t \\
 &\leqs \e^\alpha + \int_1^\infty \e^{\alpha t} \bigprobin{\mu_{2,0,N}}{-\bX_N > t} \6t\;.
\label{eq:integral_proof_Nelson} 
\end{align}
If $t \geqs D_N$, then $\bigprobin{\mu_{2,0,N}}{-\bX_N > t} = 0$. Otherwise 
we have, for any $M\in\N$ and any choice of even $p(t)$, 
\begin{align}
 \bigprobin{\mu_{2,0,N}}{-\bX_N > t}
 &\leqs \bigprobin{\mu_{2,0,N}}{\bX_M-\bX_N > t - D_M} \\
 &\leqs \bigprobin{\mu_{2,0,N}}{\abs{\bX_M-\bX_N}^{2p(t)} > \abs{t - D_M}^{2p(t)}}\;.
\end{align}
We now apply this inequality with $M=M(t)$ satisfying 
\begin{equation}
 t - D_{M(t)} \geqs 1\;,
\end{equation} 
which by~\eqref{eq:CN_phi42} is possible with $\log(M(t))$ of order $t^{1/2}$, and 
implies $M(t) < N$. By Markov's inequality and Lemma~\ref{lem:Nelson} with $n = 4$, 
\begin{align}
 \bigprobin{\mu_{2,0,N}}{-\bX_N > t}
 &\leqs \bigexpecin{\mu_{2,0,N}}{\abs{\bX_M(t)-\bX_N}^{p(t)}} \\
 &\leqs K_4 \bigpar{p(t)-1}^{2p(t)}
 \frac{(\log N)^{4p(t)}}{N^{2p(t)}}\\
 &\leqs K(\eta) \frac{(p(t)-1)^{2p(t)}}{M(t)^{2(1-\eta)p(t)}}
\end{align}
with a finite $K(\eta)$ for any $\eta > 0$. 
Choosing $p(t)$ of order $t^\beta$ for $\beta > \frac12$, we obtain 
\begin{equation}
 \log \Bigpar{\e^{\alpha t}\bigprobin{\mu_{2,0,N}}{-\bX_N > t}}
 \leqs \alpha t + c_1\beta t^\beta \log(t) - c_2(1-\eta) t^{\beta+1/2}
\end{equation} 
for constants $c_1, c_2 > 0$. Since the term $-c_2(1-\eta) t^{\beta+1/2}$ 
dominates for large $t$, this leads to a convergent integral 
in~\eqref{eq:integral_proof_Nelson}. 
\end{proof}

As a consequence, we obtain the following analogue of Proposition~\ref{prop:asymptotic}, 
by the same proof. 

\begin{corollary}[Asymptotic series]
\label{cor:asymptotic} 
For every $n\geqs0$ and $N\geqs1$, there exists a constant $M_n$, independent of $N$,
such that the ratio of partition functions satisfies 
\begin{equation}
\label{eq:asymptotic_expansion_2d} 
 \biggabs{\frac{\cZ_{2,\alpha,N}}{\cZ_{2,0,N}}
 - \sum_{m=0}^n \frac{(-\alpha)^m}{m!}
 \biggexpecin{\mu_{2,0,N}}{\biggpar{\int_{\Lambda}\Wick{\phi_N(x)^4}\6x}^m}}
 \leqs M_n \alpha^{n+1}\;.
\end{equation} 
\end{corollary}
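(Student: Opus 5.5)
We follow the proof of Proposition~\ref{prop:asymptotic}, replacing the a priori bound of Lemma~\ref{lem:exponential_exp} by Nelson's estimate (Proposition~\ref{prop:Nelson}), and checking that every constant is uniform in the cut-off $N$. Write $\bX_N = \int_\Lambda \Wick{\phi_N(x)^4}_{C_N}\6x$, so that the ratio of partition functions equals $\expecin{\mu_{2,0,N}}{\e^{-\alpha\bX_N}}$. The difference in~\eqref{eq:asymptotic_expansion_2d} is then exactly $\expecin{\mu_{2,0,N}}{D_n(\alpha\bX_N)}$, with $D_n(t)=\e^{-t}-\sum_{m=0}^n (-t)^m/m!$. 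Here we use that all moments of $\bX_N$ are finite, so that the Taylor polynomial may be integrated term by term.

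We bound $\abs{D_n(t)}$ by Taylor's theorem with Lagrange remainder:
\begin{equation}
 \abs{D_n(t)} \leqs \frac{\abs{t}^{n+1}}{(n+1)!}\max\set{1,\e^{-t}}\;.
\end{equation}
This treats even and odd $n$ at once and directly gives the power $n+1$, instead of the $n$ produced by~\eqref{eq:bounds_Dn}. We then split according to the sign of $\bX_N$.

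\begin{itemize}
\item On $\set{\bX_N\geqs0}$, the expectation is at most $\frac{\alpha^{n+1}}{(n+1)!}\expec{\abs{\bX_N}^{n+1}}$.
\item On $\set{\bX_N<0}$, the Cauchy--Schwarz inequality gives the bound
\begin{equation}
 \frac{\alpha^{n+1}}{(n+1)!}\,\expec{\bX_N^{2n+2}}^{1/2}\,\expec{\e^{-2\alpha\bX_N}}^{1/2}\;.
\end{equation}
\end{itemize}

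It remains to check two uniform bounds.

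\begin{itemize}
\item \textbf{Moments.} Proposition~\ref{prop:variance_Wick_GFF2} bounds $\expec{\bX_N^2}$ uniformly in $N$. Since $\bX_N\in\cH_4$, Theorem~\ref{thm:equivalence_moments_infdim} then gives $\expec{\bX_N^{2p}}\leqs (2p-1)^{4p}\sup_N\expec{\bX_N^2}^p$. This bounds $\expec{\bX_N^{2n+2}}$ and, by Jensen, $\expec{\abs{\bX_N}^{n+1}}$, both uniformly in $N$.
\item \textbf{Exponential moment.} Proposition~\ref{prop:Nelson}, applied with $2\alpha$ in place of $\alpha$, bounds $\expec{\e^{-2\alpha\bX_N}}$ by a constant $K(2\alpha)$ independent of $N$.
\end{itemize}

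The main point to watch is that $M_n$ must not depend on $\alpha$ in an uncontrolled way. If $K(\alpha)$ depends on $\alpha$, we restrict to $\alpha$ in a bounded interval $[0,\alpha_0]$, which is the regime relevant to an asymptotic expansion. We take the supremum of $K(2\alpha)$ over that interval, checking from the proof of Nelson's estimate that $K$ is monotone in $\alpha$; this holds because the integrand $\e^{\alpha t}\fP(-\bX_N>t)$ is increasing in $\alpha$.

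Collecting the bounds gives $M_n = \frac{1}{(n+1)!}\bigl(\sup_N\expec{\abs{\bX_N}^{n+1}} + \sup_N\expec{\bX_N^{2n+2}}^{1/2}K(2\alpha_0)^{1/2}\bigr)$, which is independent of $N$.
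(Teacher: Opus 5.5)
Your argument is correct and is essentially the paper's proof, which reruns the proof of Proposition~\ref{prop:asymptotic}: split on the sign of $\bX_N$, apply Cauchy--Schwarz on $\set{\bX_N<0}$, use uniform moment bounds, and replace Lemma~\ref{lem:exponential_exp} by Nelson's estimate. Your Lagrange-remainder bound improves the bookkeeping, since it gives the power $\alpha^{n+1}$ for both parities at once whereas~\eqref{eq:bounds_Dn} as written only yields $\alpha^n$; also, uniformity in $\alpha\in[0,\alpha_0]$ follows more simply from the pointwise monotonicity in $\alpha$ of $\e^{-2\alpha\bX_N}\indicator{\bX_N<0}$, without reopening Nelson's proof.
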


As a consequence, we obtain again an expansion of the form 
\begin{equation}
\label{eq:Z_ratio_2d} 
 \frac{\cZ_{2,\alpha,N}}{\cZ_{2,0,N}}
 = 1 + 12\alpha^2 \Pi_N\Bigpar{\FGIV} + 288 \alpha^3 \Pi_N\Bigpar{\FGVI}
 + \Order{\alpha^4}
\end{equation} 
for the ration of partition functions, and a similar expansion holds for 
the two-point function and other random variables. The main difference with 
the one-dimensional case, besides the fact that the model itself depends 
on $C_N$, is that the valuations of Feynman diagrams depend 
on $N$, via the cut-off Green function $G_N$, given by~\eqref{eq:cov_GFFN}. 
One can however show that these valuations all converge to a finite 
limit as $N\to\infty$. 

\begin{remark}
A non-perturbative proof of the existence of the ratio of partition 
functions~\eqref{eq:Z_ratio_2d}, based on a Girsanov formula, has been obtained 
by Barashkov and Gubinelli~\cite{Barashkov_Gubinelli_18}. 
\end{remark}

%%%%%%%%%%%%%%%%%%%%%%%%%%%%%%%%%%%%%%%%%%%%%%%%%%%%%%%%%%%%%%%%%%%%%%%%%%%%%%%%

\section{The $\Phi^4_3$ model*}
\label{sec:phi43} 

We consider now the $\Phi^4$ model on the three-dimensional torus 
$\Lambda = \T^3$. One can show that the Green function now behaves as 
\begin{equation}
 G(x) \asymp \frac{1}{\norm{x}}\;,
\end{equation} 
while the variance of the Gaussian free field satisfies 
\begin{equation}
 C_N = G_N(0) = \sum_{k\in\cK_N} \frac{1}{\lambda_k}
\asymp N
\end{equation} 
with $\cK_N = \setsuch{k\in\Z^3}{\abs{k}\leqs N}$. One also checks that 
the truncated Green function satisfies  
\begin{equation}
\label{eq:Green_GFF3_N} 
 G_N(x) = \sum_{k\in\cK_N} \frac{1}{\lambda_k} e_k(x) 
 \asymp \frac{1}{\norm{x} + N^{-1}}\;.
\end{equation} 
A natural guess would be that the energy~\eqref{eq:phi42_H_Wick}, transposed 
to the three-dimensional setting, would still lead to well-defined asymptotic 
expansions of expectations. Unfortunately, this is not the case. The actual 
result is as follows. 

\begin{theorem}[Renormalisation of the $\Phi^4_3$ model]
\label{thm:phi43} 
Define the energy by 
\begin{equation}
\label{eq:phi43_H_Wick} 
 \cH_{3,\alpha,N}^{\Phi^4}(\phi_N) 
 = \int_{\Lambda} \biggbrak{\norm{\nabla\phi_N(x)}^2 
 + \frac12 \phi_N(x)^2 
 + \alpha\Wick{\phi_N(x)^4}_{C_N}
 + \beta_N(\alpha)\Wick{\phi_N(x)^2}_{C_N} + \gamma_N(\alpha)} \6x\;,
\end{equation} 
where the additional counterterms are 
\begin{align}
 \beta_N(\alpha) &= -48\alpha^2 \Pi_N\bigpar{\FGIII}\;, \\
 \gamma_N(\alpha) &= 12\alpha^2 \Pi_N\Bigpar{\FGIV}
 - 288\alpha^3 \Pi_N\Bigpar{\FGVI}\;.
\end{align}
Then the $n$-point functions admit asymptotic expansions in $\alpha$, 
all of whose terms are uniformly bounded in the cut-off $N$. 
\end{theorem}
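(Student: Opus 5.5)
The plan is to redo, in dimension $3$, the Feynman diagram expansion of Section~\ref{ssec:phi41_wick}, now with an interaction that also contains the counterterms. Then a power-counting argument (Weinberg/Hepp-type) shows that, once these counterterms are included, every coefficient of the expansion stays bounded as $N\to\infty$.

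\textbf{Step 1: expansion into diagrams.} Write $\bX_N=\int_\Lambda\Wick{\phi_N^4}_{C_N}$ and $\bY_N=\int_\Lambda\Wick{\phi_N^2}_{C_N}$. The partition-function ratio, or an $n$-point function, is the Gaussian expectation of $\phi_N(x_1)\cdots\phi_N(x_k)\exp\set{-\alpha\bX_N-\beta_N(\alpha)\bY_N-\gamma_N(\alpha)}$. Since $\beta_N$ and $\gamma_N$ are themselves of order $\alpha^2$, expanding the exponential and regrouping by powers of $\alpha$ leaves, at each fixed order, a finite sum. Each term is a product of Wiener isometries $\hat I_4(h_z^{\otimes4})$, $\hat I_2(h_z^{\otimes 2})$ and $\hat I_1(h_{x_i})$. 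Repeated use of Proposition~\ref{prop:mult_nm} turns the expectation into a sum of diagram valuations with the truncated propagator $G_N$. Wick ordering removes all self-loops (tadpoles). The linked-cluster theorem then reduces everything to connected diagrams, with the vacuum part divided out. Whether the resulting series is truly asymptotic is a separate, non-perturbative question that needs a Nelson-type bound; I would only check that the argument of Corollary~\ref{cor:asymptotic} adapts, and concentrate on the boundedness of the coefficients.

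\textbf{Step 2: power counting.} By \eqref{eq:Green_GFF3_N}, $G_N(x)\lesssim(\norm{x}+N^{-1})^{-1}$. A connected diagram with $V$ internal quartic vertices, $E$ internal edges and $E_{\mathrm{ext}}$ external legs has superficial degree of divergence $\omega=3(V-1)-E$, counted as in the proof of Proposition~\ref{prop:variance_Wick_GFF2}. Using $4V=2E+E_{\mathrm{ext}}$ this gives $\omega=V-3+\tfrac{E_{\mathrm{ext}}}{2}\cdot(\text{sign conventions})$, and one checks that only three types are divergent:
\begin{itemize}
\item the vacuum diagrams $\FGIV$ (degree $3$ in position space, divergence like $N$);
\item the vacuum diagram $\FGVI$ (logarithmic divergence);
\item the two-leg sunset $\FGIII$ (divergence like $\log N$).
\end{itemize}
Diagrams with four or more external legs are superficially convergent. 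The analogous count in $d=2$ gives only logarithmic integrable singularities, via Lemma~\ref{lem:Young}, which is why $\Phi^4_2$ needs no extra counterterms.

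\textbf{Step 3: counterterms cancel divergent subdiagrams.} The vacuum divergences are constants, and they are cancelled by $\gamma_N$ once one normalises by the partition function, which is exactly how $\gamma_N$ is chosen. The sunset divergence must be removed wherever $\FGIII$ occurs as a subdiagram. Contracting two legs of $h_{z_1}^{\otimes4}$ with two legs of $h_{z_2}^{\otimes4}$ through the sunset produces, after localisation,
\[
\Pi_N\bigpar{\FGIII}\int\Wick{\phi_N(z)^2}\6z
\]
plus a remainder. The remainder carries a factor $G_N(z_1-z_2)^3\bigpar{\phi_N(z_1)-\phi_N(z_2)}$-type difference, which gains a power of $\norm{z_1-z_2}$ by the H\"older regularity argument of Proposition~\ref{prop:GFF_Holder} and is therefore integrable. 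The combinatorial factor $48=\tfrac12\cdot 4\cdot4\cdot3!$ (two choices of order, times the legs and pairings) has to match the $\beta_N$ term, so that the local parts cancel exactly.

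\textbf{Main obstacle.} The hard part is making Step 3 uniform over all diagrams. Sunsets can be nested or overlapping, and each one must be subtracted by its own $\beta_N$ insertion. This is a BPHZ forest-formula argument, and the natural tool is Hepp sectors: order the internal vertex distances hierarchically, then check that in every sector each subgraph has negative renormalised degree. I would first try to prove a general Weinberg-type bound sector by sector, by induction on the number of vertices, following the Young-type inequality. In $\Phi^4_3$ the only divergent subgraphs are sunsets and vacuum bubbles, and sunsets cannot overlap nontrivially (two sunsets sharing a vertex would form a diagram of negative degree). So the forests are disjoint unions, which keeps the combinatorics manageable.
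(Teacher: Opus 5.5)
\textbf{Analytic gap in Step~3.} The reason you give for why the localisation remainder is integrable fails in $d=3$. Proposition~\ref{prop:GFF_Holder} is a statement about $\T^1$. On $\T^3$ the truncated field has no H\"older regularity uniform in $N$: in fact $\expec{(\phi_N(z_1)-\phi_N(z_2))^2}=2(G_N(0)-G_N(z_1-z_2))$ is of order $N$ as soon as $\norm{z_1-z_2}\gg N^{-1}$. So multiplying $G_N(z_1-z_2)^3$ by a field difference gains nothing. The gain exists only at the level of diagrams. After the remaining legs are contracted, the difference sits on the propagator $G_N(w-z_2)-G_N(w-z_1)$ joining the sunset to another vertex $w$. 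It is then controlled by $\abs{\nabla G_N(x)}\lesssim(\norm{x}+N^{-1})^{-2}$, as in~\eqref{eq:BPHZ_Taylor}, in the Hepp sectors where $\norm{z_1-z_2}\ll\norm{w-z_1}$; in the other sectors the subtracted and unsubtracted terms are bounded separately. You must then check that the extra singularity placed on the outer propagator is absorbed by the strictly positive degree of $\Gamma/\FGIII$ and of its subgraphs (positive, not negative, in the convention~\eqref{eq:def_degree}). Your Hepp-sector programme in the last paragraph is the right framework, but it has to be run with this propagator estimate. This is the analytic content of Theorem~\ref{thm:BPHZ}, which the paper quotes rather than reproves.

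\textbf{Combinatorial gap, and the paper's route.} You check that the $\beta_N$-insertions reproduce the sunset subtractions only at order $\alpha^2$. The all-order statement is exactly what the paper isolates as Theorem~\ref{thm:commutative}. Sunsets are vertex-disjoint, simply because a quartic vertex cannot carry triple edges to two different vertices; your degree-count justification is not the reason. Hence extracting $k$ sunsets from $n$ quartic vertices amounts to choosing $k$ disjoint pairs, which is the pairing combinatorics of Hermite polynomials from Section~\ref{ssec:combinatorics}. The BPHZ map on $\cP(\bX^n)$ therefore corresponds to $\bX^n\mapsto H_n(\bX;\cdot)$ with $\bX^2$ replaced by a multiple of $\bY$. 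The generating function of Proposition~\ref{prop:Wick_map} resums this into $\e^{-\alpha\bX-\beta\bY}$, and Lemma~\ref{lem:BPHZ} then turns the coefficients of $\log(\cZ_{3,\alpha,N}/\cZ_{3,0,N})$ into $-\Pi_N\cA(\cP(\bX^n))$, $n\geqs4$. Note that the paper carries this out for the partition function only.

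Doing this matching explicitly, rather than saying it ``has to match'', also reveals a sign problem. In the two-point function, the local part of the $\frac{\alpha^2}{2}\bX^2$ term is $+96\alpha^2\Pi_N(\FGIII)\int G_N(x-z)G_N(z-y)\6z$. The insertion $-\beta_N\bY$ contributes $-2\beta_N\int G_N(x-z)G_N(z-y)\6z$. So with the weight $\e^{-\cH}$, cancellation requires $\beta_N=+48\alpha^2\Pi_N(\FGIII)$; with the sign as printed, the divergences add.

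Two smaller points. First, the sunset comes from $\star_3$ (three legs of each vertex, giving $3!\cdot4\cdot4=96$), not from contracting two legs. Second, the argument of Corollary~\ref{cor:asymptotic} does not adapt to $d=3$, since already $\expec{\bX_N^2}=4!\int G_N^4\sim N$. Uniform control of the remainders is the genuinely non-perturbative part of $\Phi^4_3$, and the paper's coefficient-level argument does not address it either.
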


The counterterm $\beta_N(\alpha)$ is called \emph{mass renormalisation}, 
while the counterterm $\gamma_N(\alpha)$ is called \emph{energy renormalisation}.
The latter is not crucial for most computations, since it will cancel out 
when taking ratios of partition functions. These counterterms were not 
needed in dimension $2$, because their value actually converges to finite 
limits as $N\to\infty$. However, as we shall see in Section~\ref{ssec:phi43_Hepp}, 
they diverge in dimension $3$, either like $N$ or like $\log N$. This is 
a symptom of the fact that the $\Phi^4_3$ measure is not absolutely 
continuous with respect to the three-dimensional Gaussian free field. 

\begin{exercise}
Determine how the values $\Pi_N\bigpar{\FGIII}$ 
and $\Pi_N\Bigpar{\FGIV}$ scale with $N$ as $N\to\infty$, 
for $d = 3$, by using~\eqref{eq:Green_GFF3_N} and spherical coordinates. 
Compare with the case $d = 2$. 
\end{exercise}

Theorem~\ref{thm:phi43} is an important result in Euclidean Quantum Field theory, 
which has a long history. The earliest works by Glimm and Jaffe and by Feldman 
approached the problem via a detailed combinatorial analysis of Feynman 
diagrams~\cite{Glimm_Jaffe_68,Glimm_Jaffe_73,Feldman74,Glimm_Jaffe_81}. The 
works~\cite{BCGNOPS78,BCGNOPS_80} introduced the idea of using a 
renormalisation group approach, consisting in a decomposition of the covariance 
of the GFF into scales, which then allows to integrate successively over 
one scale after the other. This method was further perfected 
in~\cite{Brydges_Dimock_Hurd_CMP_95}, using polymers to control error terms, an 
approach based on ideas from Statistical Physics~\cite{Gruber_Kunz_71}. 

In another direction, the approach provided 
in~\cite{Brydges_Frohlich_Sokal_CPM83,Brydges_Frohlich_Sokal_CMP83_RW} allows 
to bound correlation functions without having to compute the partition 
function explicitly, by using the Schwinger--Dyson equations (see also 
Remark~\ref{rem:BFS}). This involves the derivation of so-called skeleton 
inequalities, which were obtained up to third order 
in~\cite{Brydges_Frohlich_Sokal_CPM83}, and later extended to all orders 
in~\cite{Bovier_Felder_CMP84}. A relatively compact 
derivation of bounds on the partition function 
based on a Girsanov formula was recently 
obtained in~\cite{Barashkov_Gubinelli_18}. 

We are not going to present a full proof of Theorem~\ref{thm:phi43} here, 
as all its versions are quite technical. However, in the next sections, we 
shall outline some key ideas of a proof.

%%%%%%%%%%%%%%%%%%%%%%%%%%%%%%%%%%%%%%%%%%%%%%%%%%%%%%%%%%%%%%%%%%%%%%%%%%%%%%%%

\subsection{Hepp sectors and subdivergences*}
\label{ssec:phi43_Hepp} 

In this section, we provide a simple way to determine whether 
the value of a vacuum diagram diverges as $N\to\infty$, or not. 
Given a diagram $\Gamma = (\cV,\cE)$, define its degree by 
\begin{equation}
\label{eq:def_degree} 
 \deg(\Gamma) := d(\abs{\cV} - 1) - (d-2)\abs{\cE}\;.
\end{equation}
We will call a diagram \emph{divergent} if $\deg(\Gamma)\leqs 0$. 
Here we have defined the degree for a general dimension $d$, though the main 
focus of this section is the case $d=3$. 

\begin{exercise}
\label{ex:degree} 
Compute the degrees of the diagrams occurring in Theorem~\ref{thm:phi43}
for $d=2$ and $d=3$. 
\end{exercise}

\begin{figure}[t]
\begin{center}
\begin{tikzpicture}[>=stealth',main 
node/.style={draw,circle,fill=white,minimum size=1pt,inner sep=1pt},
line/.style={draw,shorten <=0.5pt,shorten >=0.5pt}]

% grid to help positioning
%\draw[help lines] (-5,-4) grid (5,1);

\draw[thin,fill=blue!10] (0,0) rectangle (6,4); 

\node[main node,semithick,fill=white,
label={[xshift=-0.2cm,yshift=-0.1cm]$x_1$}] (z1) at (1,2) {};

\node[main node,semithick,fill=white,
label={[xshift=-0.2cm,yshift=-0.5cm]$x_2$}] (z2) at (1.5,1.6) {};

\node[main node,semithick,fill=white,
label={[xshift=0.3cm,yshift=-0.5cm]$x_3$}] (z3) at (4.6,1) {};

\node[main node,semithick,fill=white,
label={[xshift=0.3cm,yshift=-0.15cm]$x_4$}] (z4) at (4.2,2.5) {};

\node[main node,semithick,fill=white,
label={[xshift=-0.2cm,yshift=-0.1cm]$x_5$}] (z5) at (4,2.3) {};

%spanning tree
\draw[line] (z1) -- (z2);
\draw[line] (z5) -- (z4);
\draw[line] (z2) -- (z5);
\draw[line] (z3) -- (z5);

\node[] at (0.25,3.7) {$\Lambda$};

\end{tikzpicture}
\hspace{10mm}
\begin{tikzpicture}[>=stealth',main 
node/.style={draw,circle,fill=white,minimum size=1pt,inner sep=1pt}]

% grid to help positioning
%\draw[help lines] (-5,-4) grid (5,1);

% leaves
\node[main node,semithick,fill=white,
label={[xshift=0cm,yshift=-0.6cm]$1$}] (z1) at (0,-3) {};

\node[main node,semithick,fill=white,
label={[xshift=0cm,yshift=-0.6cm]$2$}] (z2) at (1,-3) {};

\node[main node,semithick,fill=white,
label={[xshift=0cm,yshift=-0.6cm]$3$}] (z3) at (2,-3) {};

\node[main node,semithick,fill=white,
label={[xshift=0cm,yshift=-0.6cm]$4$}] (z4) at (3,-3) {};

\node[main node,semithick,fill=white,
label={[xshift=0cm,yshift=-0.6cm]$5$}] (z5) at (4,-3) {};

%inner nodes
\node[main node,semithick,
label={[xshift=0cm,yshift=0cm]$\bno = 0$}] (O) at (2,0) {};

\node[main node,semithick,
label={[xshift=0.5cm,yshift=-0.1cm]$\bn_b = 1$}] (A) at (3,-1) {};

\node[main node,semithick,
label={[xshift=-0.5cm,yshift=-0.1cm]$\bn_a = 2$}] (B) at (0.5,-2) {};

\node[main node,semithick,
label={[xshift=0.5cm,yshift=-0.1cm]$\bn_c = 3$}] (C) at (3.5,-2) {};

% tree
\draw[semithick] (z1) -- (B) -- (z2);
\draw[semithick] (z4) -- (C) -- (z5);
\draw[semithick] (C) -- (A) -- (z3);
\draw[semithick] (B) -- (O) -- (A);

\node[] at (0.5,-0.5) {$T$};

\end{tikzpicture}
\end{center}
\vspace{-3mm}
 \caption[]{A point configuration $x\in\Lambda^5$, its minimal spanning tree 
(left), and the associated Hepp tree $\bT = (T,\bn)$ (right). The inner 
nodes of $T$ are labelled $\varnothing, a, b, c$. For instance, $\bn_{1\wedge 
2}=\bn_a=2$, so that $x_1$ and $x_2$ are at a distance of order $2^{-2}$, while 
$\bn_{3\wedge 5}=\bn_b=1$, so that $x_3$ and $x_5$ are at a distance of order 
$2^{-1}$.} 
\label{fig:Heppsector}
\end{figure}

The following theorem was obtained by Weinberg~\cite{Weinberg60}.

\begin{theorem}[Criterion for non-divergence]
\label{thm:Weinberg} 
Assume $G_N(x) \asymp (\norm{x} + N^{-1})^{d-2}$. 
If\/ $\Gamma$ satisfies $\deg(\bar\Gamma) > 0$ for any subgraph $\bar\Gamma$ 
of\/ $\Gamma$, then $\Pi_N(\Gamma)$ is bounded uniformly in $N$.  
\end{theorem}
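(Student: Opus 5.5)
The plan is to decompose the integration domain $\Lambda^{\cV}$ into \emph{Hepp sectors}, as suggested by Figure~\ref{fig:Heppsector}, and to bound $\Pi_N(\Gamma)$ sector by sector. (The hypothesis is understood as $G_N(x)\asymp(\norm{x}+N^{-1})^{-(d-2)}$, which matches~\eqref{eq:Green_GFF3_N}; without loss of generality $\Gamma$ is connected, otherwise we factorise.)

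First, we may bound $G_N(x)\lesssim 2^{(d-2)\min(n,n_N)}$ whenever $\norm{x}\asymp 2^{-n}$, where $2^{n_N}\asymp N$. To each configuration $x\in\Lambda^{\cV}$ we associate a Hepp tree $\bT=(T,\bn)$. Here $T$ is a binary tree whose leaves are the vertices $\cV$, obtained from the hierarchical clustering of the points. The labels $\bn$ are increasing integers along the tree, and $\norm{x_i-x_j}\asymp 2^{-\bn_{i\wedge j}}$. The set of configurations with a given $\bT$ has volume at most $\prod_{v\in\mathring T} 2^{-d\bn_v}$: each inner node merges two clusters, and this costs one relative position of scale $2^{-\bn_v}$. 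Along each edge $e=\set{e_-,e_+}$, the kernel is bounded by $2^{(d-2)\bn_{e_-\wedge e_+}}$. Since the number of trees $T$ is finite (it depends only on $\abs{\cV}$), we get
\begin{equation}
 \Pi_N(\Gamma)\lesssim \sum_{T}\sum_{\bn}\prod_{v\in\mathring T}2^{-d\bn_v}\prod_{e\in\cE}2^{(d-2)\bn_{e_-\wedge e_+}}.
\end{equation}

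Next we reorganise the exponent by telescoping along the tree. For an inner node $v$, let $\Gamma_v$ be the subgraph induced by the leaves below $v$. Writing $\bn_v=\sum_{w\leqs v}(\bn_w-\bn_{w^\uparrow})$, with $\bn_{\text{parent of root}}=0$, the exponent becomes $-\sum_{w}(\bn_w-\bn_{w^\uparrow})\deg(\Gamma_w)$. This holds because the number of inner nodes below $w$ equals $\abs{\cV(\Gamma_w)}-1$, and the edges with $e_-\wedge e_+\geqs w$ are exactly the edges of $\Gamma_w$. The increments $\bn_w-\bn_{w^\uparrow}\geqs0$ are free summation variables.

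Finally, we sum over the increments, which gives a product of geometric series $\prod_w\sum_{m\geqs0}2^{-m\deg(\Gamma_w)}$. These converge uniformly in $N$ precisely when every $\deg(\Gamma_w)>0$, and each $\Gamma_w$ is a subgraph of $\Gamma$. The cut-off only improves the bounds, since the labels above $n_N$ contribute less. At the root, $\bn_\varnothing$ is bounded by the diameter of the torus, so only finitely many values occur.

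The main obstacle is the geometric step: making the volume bound for a Hepp sector rigorous and checking that the sectors cover $\Lambda^{\cV}$ with bounded overlap. I would first try the minimal spanning tree construction (Kruskal ordering of pairwise distances, rounded to dyadic scales). Within it, I would verify that $\norm{x_i-x_j}$ is comparable to $2^{-\bn_{i\wedge j}}$ up to a constant depending only on $\abs{\cV}$, via the ultrametric inequality along the tree.
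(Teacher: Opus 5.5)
Your proposal is correct and is essentially the paper's proof. Hepp sectors, the volume bound $\prod_v 2^{-d\bn_v}$ and the kernel bound $2^{(d-2)\bn_{e_-\wedge e_+}}$ reduce everything to $\sum_{T,\bn}\prod_v 2^{-\eta_v\bn_v}$, and the key fact is $\sum_{w\geqs v}\eta_w=\deg(\Gamma_v)>0$; your telescoping change of variables to increments just makes explicit the paper's ``induction from the leaves''.

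One small slip: $\bn_\varnothing$ is bounded only \emph{below} by the torus diameter and takes infinitely many values, since the points may cluster arbitrarily closely. This is harmless, because the root factor $\sum_{m\geqs0}2^{-m\deg\Gamma}$ is already in your product and converges, $\Gamma$ itself being one of the subgraphs assumed to have positive degree.
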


This result can be proved quite easily, using an idea introduced by 
Hepp~\cite{Hepp66}.

\begin{definition}[Hepp sector]
\index{Hepp sector}%
Fix a constant $C>1$. Let $T$ be a binary tree with $m = \abs{\cV}$ leaves, and 
let\/ $\bn$ be a node decoration on the vertices of\/ $T$, which is 
non-decreasing on any path from the root of\/ $T$ to a leaf of\/ $T$. We write 
$\bT = (T,\bn)$ and define a subset of $\Lambda^m$ by 
\begin{equation}
 D_{\bT} := 
 \Bigsetsuch{x\in\Lambda^m}{C^{-1}2^{-\bn_{i\wedge j}} \leqs \norm{x_i-x_j} 
\leqs C2^{-\bn_{i\wedge j}} \; \forall i,j\in \set{1,\dots,m}}\;,
\end{equation} 
where $i\wedge j$ denotes the last common ancestor of $i$ and $j$ in the tree 
$T$. Then $D_{\bT}$ is called the \emph{Hepp sector} associated with $\bT$. 
\end{definition}

Given a point configuration $x=(x_1,\dots,x_m)\in\Lambda^m$, one can associate 
with it a Hepp sector $D_{\bT}$ in the following way (\figref{fig:Heppsector}). 
One starts by finding a minimal spanning tree of $x_1, \dots, x_m$. Pairs of 
points which are closest are children of a common node in $T$, which is 
labelled by the power of $2$ which is closest to the length of the edge in the spanning 
tree. The construction is then iterated until all leaves of $T$ are connected 
to the root. 

\begin{proof}[{\sc Proof of Theorem~\ref{thm:Weinberg}}]
One can check that for sufficiently large $C$, $\Lambda^m$ is covered by the 
union of all Hepp sectors $D_{\bT}$. The bound on $G_N(x)$ implies 
\begin{equation}
\label{eq:Hepp_integral} 
 \abs{\Pi_N(\Gamma)}
 \lesssim \sum_{T,\bn}
 \int_{D_{T,\bn}} \prod_{e\in\cE} 
\frac{1}{\bigpar{\norm{x_{e_+}-x_{e_-}} + N^{-1}}^{d-2}}\6x\;.
\end{equation} 
It follows from the definition of Hepp sectors that the term 
$\norm{x_{e_+}-x_{e_-}}$ in \eqref{eq:Hepp_integral} is bounded below by 
$\smash{C^{-1}2^{(d-2)\bn_{\eup}}}$, where $\smash{\eup := e_+\wedge e_-}$ 
denotes the last common ancestor of the two vertices incident to $e$. We 
thus obtain that, uniformly in the cut-off $N$, one has 
\begin{equation}
 \abs{\Pi_N(\Gamma)} \lesssim \sum_{T,\bn} 
 \prod_{e\in\cE} 2^{(d-2)\bn_{\eup}} \int_{D_{T,\bn}} \6x\;.
\end{equation} 
The volume of the Hepp sector, given by the integral over $D_{T,\bn}$, is 
easily seen to have order $\prod_{v\in T} 2^{-d\bn_v}$, where the product runs 
over all inner nodes of $T$. It follows that 
\begin{equation}
\label{eq:bound_PiN_Gamma} 
 \abs{\Pi_N(\Gamma)} \lesssim \sum_{T,\bn} 
 \prod_{v\in T} 2^{-\eta_v\bn_v}\;,
 \quad\text{where}\quad 
 \eta_v := d - (d-2)\sum_{e\in\cE} \indicator{\eup}(v)\;.
\end{equation}
Let $\geqs$ be the partial order on inner vertices of $T$ given by descendancy: 
$w\geqs v$ if and only if the unique path from the root of $T$ to $w$ contains 
$v$. We claim that 
\begin{equation}
 \sum_{w\geqs v} \eta_w > 0
\end{equation} 
holds uniformly in $v\in T$. Indeed, this expression is the degree of a 
subgraph of $\Gamma$, which is positive by assumption. Using this observation, 
it is not difficult to show that $\abs{\Pi_N(\Gamma)}$ is uniformly bounded, by 
induction starting from the leaves of $T$. 
\end{proof}

\begin{example}
Consider the case of the tree depicted in \figref{fig:Heppsector}, with the 
inner vertices of $T$ denoted $\varnothing, a, b, c$. We have $\eta_a > 0$, 
$\eta_c > 0$, $\eta_b + \eta_c > 0$ and $\eta_\varnothing + \eta_a + \eta_b + 
\eta_c > 0$ (this last sum being the degree of $\Gamma$). The sum over all node 
decorations is given by 
\begin{equation}
 \sum_{\bno \geqs 0} 2^{-\eta_\varnothing}
 \sum_{\bn_a \geqs \bno} 2^{- \eta_a \bn_a}
 \sum_{\bn_b \geqs \bno} 2^{- \eta_b \bn_b}
 \sum_{\bn_c \geqs \bn_b} 2^{-\eta_c\bn_c}\;.
\end{equation} 
Performing first the sums over $\bn_a$ and $\bn_c$, then the sum over $\bn_b$, 
and finally the sum over $\bno$ yields indeed a finite quantity. 
Since there are finally many binary trees with $5$ leaves, the result follows. 
\end{example}

The examples seen in Exercice~\ref{ex:degree} may suggest that the large-$N$ 
behaviour of $\Pi_N(\Gamma)$ is governed by the degree of $\Gamma$, in the 
sense that $\abs{\Pi_N(\Gamma)}$ is bounded uniformly in $N$ if 
$\deg(\Gamma) > 0$, diverges like $\log(N)$ if $\deg(\Gamma) = 0$, 
and diverges like $N^{-\deg(\Gamma)}$ if $\deg(\Gamma) < 0$. 
Unfortunately, the reality is a bit more complex. For example, we have 
\begin{equation}
\label{eq:subdivergence} 
 \deg\Bigpar{\FGIIIplus} = 2d - 5(d-2) = 10 - 3d
\end{equation} 
which is strictly positive for $d\leqs3$. However, the diagram contains the 
subgraph $\FGIII$, which is divergent in dimension $3$. As a result, one 
can show that the diagram is divergent as well. This is an instance 
of a \emph{subdivergence}, which is a serious source of complication for 
the analysis. Note that this does not contradict Theorem~\ref{thm:Weinberg}. 

%%%%%%%%%%%%%%%%%%%%%%%%%%%%%%%%%%%%%%%%%%%%%%%%%%%%%%%%%%%%%%%%%%%%%%%%%%%%%%%%

\subsection{BPHZ renormalisation*}
\label{ssec:phi43_BPHZ} 

BPHZ renormalisation, named after Bogoliubov, Parasiuk, Hepp, and 
Zimmermann~\cite{Bogoliubov56,Hepp66,Zimmermann69} is a procedure 
allowing to systematically analyse the divergent behavior of diagrams 
that are either divergent, or contain sub-divergences, and to 
determine the counterterms needed to obtain well-defined asymptotic 
expansions. The key result is the following. 

\begin{theorem}[BPHZ renormalisation]
\label{thm:BPHZ} 
There exists a linear map $\cA$, acting on Feynman diagrams, such that 
\begin{equation}
\label{eq:BPHZ} 
 \Pi_N(\cA(\Gamma)) \asymp 
 \begin{cases}
  N^{-\deg(\Gamma)} & \text{if $\deg(\Gamma)<0$\;,} \\
  \log(N)^\zeta  & \text{if $\deg(\Gamma)=0$\;,}
 \end{cases}
\end{equation}
for a finite integer $\zeta$, while $\Pi_N(\cA(\Gamma))$ is bounded 
uniformly in $N$ if $\deg(\Gamma) > 0$. 
\end{theorem}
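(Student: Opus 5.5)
The map $\cA$ will be built recursively from the subdivergences of $\Gamma$, in the spirit of Zimmermann's forest formula, and the bounds will be proved by reusing the Hepp sector analysis from the proof of Theorem~\ref{thm:Weinberg}. First I define, for a divergent subgraph $\bar\Gamma\subset\Gamma$ (that is, $\deg(\bar\Gamma)\leqs0$), a local Taylor subtraction operator. It replaces the product of Green functions $\prod_{e\in\bar\cE}G_N(x_{e_+}-x_{e_-})$ by its Taylor expansion around a collapsed configuration, where all vertices of $\bar\Gamma$ are placed at one reference vertex. The expansion is taken to order $-\deg(\bar\Gamma)$ in the external variables. Graphically, the zeroth-order term is $\Pi_N(\bar\Gamma)$ times the contracted diagram $\Gamma/\bar\Gamma$. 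This is exactly how the counterterms $\beta_N$ and $\gamma_N$ of Theorem~\ref{thm:phi43} arise: from $\FGIII$ and from the vacuum diagrams $\FGIV$ and $\FGVI$. I then set
\begin{equation}
 \cA(\Gamma) = \sum_{\cF} \prod_{\bar\Gamma\in\cF} \bigpar{-T_{\bar\Gamma}}\,\Gamma\;,
\end{equation}
where the sum runs over forests $\cF$, meaning families of divergent subgraphs that are pairwise nested or disjoint. The operators are applied from the innermost subgraphs outward. For vacuum diagrams, the whole graph $\Gamma$ itself is deliberately excluded from the forests. This is what leaves the overall divergence $N^{-\deg(\Gamma)}$ or $\log(N)^\zeta$ in~\eqref{eq:BPHZ}, instead of cancelling it.

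The key steps come in the following order.

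1. I decompose $\Pi_N(\cA(\Gamma))$ into Hepp sectors $D_{\bT}$, exactly as in~\eqref{eq:Hepp_integral}.

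2. For a fixed Hepp tree $\bT$, I regroup the forest sum. The subgraphs that are \emph{compatible with} $\bT$ are those whose vertex set is exactly the set of leaves below some inner node $v$. For each such subgraph, the term with $T_{\bar\Gamma}$ and the term without it are paired, giving a factor $(1-T_{\bar\Gamma})$. The remaining, incompatible subtractions are estimated crudely, since on that sector they are not singular.

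3. On $D_{\bT}$, I apply Taylor's remainder theorem. The factor $(1-T_{\bar\Gamma})$ then gains a factor $2^{-(1-\deg\bar\Gamma)(\bn_{\text{parent}}-\bn_v)}$. This gain comes from the smallness of the internal distances of $\bar\Gamma$ relative to the external distances, combined with the derivative bound $|\nabla^k G_N(x)|\lesssim(\norm{x}+N^{-1})^{-(d-2)-k}$.

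4. The effect is to shift the exponents $\eta_v$ in~\eqref{eq:bound_PiN_Gamma}. The subtree sums $\sum_{w\geqs v}\eta_w$ become strictly positive for every $v$ except the root. The same leaf-to-root induction as in the proof of Theorem~\ref{thm:Weinberg} then bounds every sum except the one over $\bno$.

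5. The cutoff restricts $\bno\lesssim\log_2N$. Summing $2^{\deg(\Gamma)\bno}$ up to this cutoff gives $N^{-\deg(\Gamma)}$ when $\deg(\Gamma)<0$ and gives boundedness when $\deg(\Gamma)>0$. When $\deg(\Gamma)=0$, the sum gives a power of $\log N$, namely the number of nested marginal levels, which I take as $\zeta$. The matching lower bound in the $\asymp$ comes from restricting to one dominant Hepp sector, where all vertices are within $N^{-1}$ of each other, and checking that the integrand is positive there.

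The main obstacle is step 3, the combinatorial one. The forest formula has to be reorganised, for each $\bT$, so that every divergent subgraph that is "dangerous" on that sector actually comes with its own subtraction. This is the classical overlapping-divergences problem. I would first try Zimmermann's argument in simplified form. For each $\bT$, I pick the maximal forest of $\bT$-compatible divergent subgraphs. Then I show that summing over the remaining forests factorises into products of $(1-T)$ operators indexed by the nodes of $T$. The gain from step 3 should remain valid even when nested subtractions act simultaneously, because the Taylor operators act on disjoint sets of scale variables.
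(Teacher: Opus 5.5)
The paper gives no proof of this theorem. It defines $\cA$ as the Connes--Kreimer antipode, shows the Taylor gain on one example, and defers to Hairer's analyst's proof. Your forest-formula route is the classical counterpart of that. Unfolding the recursion $\cA(\Gamma)=-\Gamma-\sum_{\bar\Gamma}\cA(\bar\Gamma)\cdot(\Gamma/\bar\Gamma)$ gives exactly minus your sum over forests of proper divergent subgraphs, with zeroth-order contractions. Your steps 1 and 3 are the right mechanism, but steps 2 and 4 contain a genuine gap.

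\textbf{The gap.} You claim that counterterms $T_{\bar\Gamma}$ for subgraphs that are not clusters of the Hepp tree ``are not singular on that sector''. This is false. In such a counterterm every edge leaving $\bar\Gamma$ is reattached at the reference vertex. Its singular structure is therefore that of $\bar\Gamma\sqcup(\Gamma/\bar\Gamma)$, and the power counting of $\Gamma$ on the sector says nothing about it.

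Concretely, take $d=4$ and vertices $x_1,x_2,x_3$, with a double edge $x_1x_2$ and single edges $x_1x_3$, $x_2x_3$. Let $\bar\Gamma$ be the double edge with reference vertex $x_1$. Then $\deg\bar\Gamma=\deg\Gamma=\deg(\Gamma/\bar\Gamma)=0$. Consider the tree where $x_1,x_3$ merge at scale $\mathbf{n}_{13}$ and $x_2$ joins at the root scale $\mathbf{n}_\varnothing\leqslant\mathbf{n}_{13}$.
\begin{itemize}
\item $\Gamma$ contributes only $O(\log N)$ there, since the node $\{x_1,x_3\}$ has degree $2$.
\item The counterterm integrand $G_N(x_1-x_2)^2G_N(x_1-x_3)^2$ contributes $\sum_{0\leqslant\mathbf{n}_\varnothing\leqslant\mathbf{n}_{13}\leqslant\log_2N}1\asymp(\log N)^2$.
\end{itemize}
Summing over all sectors gives $\Pi_N(\cA(\Gamma))\asymp(\log N)^2$, i.e.\ $\zeta=2$. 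Your steps 3--5, with all non-root margins strictly positive, can never give more than one power of $\log N$. The higher powers that the theorem allows come precisely from the terms you discard.

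\textbf{What is missing} is the standard treatment of these ``useless'' counterterms.
\begin{itemize}
\item Each counterterm must be expanded in its own scale decomposition, in which $\bar\Gamma$ is a cluster attached at the reference point.
\item $T_{\bar\Gamma}$ is paired with the unsubtracted term only when the internal scale of $\bar\Gamma$ is finer than the scale at which it attaches to the rest.
\item The remaining counterterms are bounded through the power counting of contracted graphs, $\deg(\Gamma'/\bar\Gamma)=\deg\Gamma'-\deg\bar\Gamma$. This produces polynomial factors in the scale indices. Strictly positive margins absorb them below the root, but at a marginal root they survive as extra logarithms.
\end{itemize}
These terms live in different geometries, and nested Taylor operators can act on the same edge (an edge from an inner subgraph to outside the outer one). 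So your ``disjoint scale variables'' remark does not settle this. Organising it consistently is what Zimmermann's forest-partition argument, or its Hopf-algebraic version in Hairer's paper, achieves. Choosing a maximal compatible forest in the Hepp sectors of the original configuration does not.

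\textbf{Smaller issues.} Your lower bound does not work. $\Pi_N(\cA(\Gamma))$ is a signed combination; already the integrand of $(1-T_{\bar\Gamma})\Gamma$ has no sign, so restricting to one region gives no lower bound. Moreover, for $\deg\Gamma=0$ the region where all vertices lie within $N^{-1}$ contributes only $O(1)$, so it cannot produce $(\log N)^\zeta$.

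Two exponents also have the wrong sign. With Taylor order $r=\lfloor-\deg\bar\Gamma\rfloor$ (degrees need not be integers), the gain is $2^{-(r+1)(\mathbf{n}_v-\mathbf{n}_{\mathrm{parent}})}$. The root sum is $\sum_{\mathbf{n}_\varnothing}2^{-\deg(\Gamma)\mathbf{n}_\varnothing}$.
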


For a modern exposition, see~\cite{Hairer_BPHZ}. Slightly sharper bounds
have been obtained for a different model in~\cite{BB19}. While the proof 
is quite involved, the basic mechanism can be understood in a simple example.

\begin{example}
Consider again the diagram in~\eqref{eq:subdivergence}. In that case, 
\begin{equation}
 \cA\Bigpar{\FGIIIplus}
 = - \FGIIIplus + \FGIII \cdot \FGII\;.
\end{equation} 
This means that while 
\begin{equation}
\label{eq:PiN_FGIIIplus} 
 \Pi_N\biggpar{\FGIIIplus} 
 = \iiint_{\Lambda^3} 
 G_N(x_2-x_1)^3 G_N(x_3-x_2) G_N(x_3-x_1)\6x_1\6x_2\6x_3\;,
\end{equation} 
one has 
\begin{equation}
\label{eq:PiN_AFGIIIplus} 
 \Pi_N\biggpar{\cA \FGIIIplus} 
 = -\iiint_{\Lambda^3} 
 G_N(x_2-x_1)^3 G_N(x_3-x_2) \bigbrak{G_N(x_3-x_2) - G_N(x_3-x_1)}\6x_1\6x_2\6x_3\;.
\end{equation} 
The crucial observation is that by Taylor's formula, 
\begin{equation}
\label{eq:BPHZ_Taylor} 
 \abs{G_N(x_3-x_2) - G_N(x_3-x_1)} 
 \lesssim \abs{(x_2 - x_1) \cdot \nabla G_N(x_3-x_1)}
 \lesssim \frac{\norm{x_2-x_1}}{(\norm{x_3-x_1} + N^{-1})^{2}}\;.
\end{equation} 
If $\norm{x_2 - x_1} \ll \norm{x_3-x_1}$, this is smaller than the 
contribution of $G_N(x_3-x_1)$ to~\eqref{eq:PiN_FGIIIplus}. 
This gain is enough to make the integral~\eqref{eq:PiN_AFGIIIplus}
convergent. 
\end{example}

The linear map $\cA$ in~\eqref{eq:BPHZ} has an algebraic meaning: it is actually 
the so-called \emph{antipode} 
of the \emph{Connes--Kreimer extraction--contraction Hopf 
algebra}~\cite{Connes_Kreimer_2000a,Connes_Kreimer_2001}. To explain its 
construction, we introduce the following sets: 
\begin{itemize}
\item   $\F$ is the set of all \emph{connected} multigraphs whose vertices 
have arity $2$, $3$ or $4$ (that   is, $2$, $3$ or $4$ edges meet at each vertex);
\item   $\Fminus \subset \F$ is the subset of all divergent multigraphs in~$\F$;  
\item   $\cF$ is the algebra generated by~$\F$ with respect to the disjoint union 
product~$\cdot$ -- note that $\cF$ also contains \emph{non-connected} Feynman diagrams;
\item   $\cFminus \subset \cF$ is the subalgebra of~$\cF$ generated by~$\Fminus$; 
in particular, for any~$\Gamma \in \cFminus$, all connected components are divergent. 
\end{itemize}
We also denote by $\spanF$, $\spanFminus$, $\spancF$, $\spancFminus$ the linear spans, 
respectively, of $\F$, $\Fminus$, $\cF$ and $\cFminus$. 
The neutral element for multiplication is the empty graph, which we denote by $\unit$. 
Note that the valuation $\Pi_N$ is multiplicative, meaning that 
\begin{equation}
 \Pi_N(\Gamma_1\cdot\Gamma_2) = \Pi_N(\Gamma_1)\Pi_N(\Gamma_2)
 \qquad \text{for all $\Gamma_1,\Gamma_2\in\cF$\;.}
\end{equation} 

\begin{definition}[Connes--Kreimer extraction-contraction coproduct]
The \emph{Connes--Kreimer extraction-contraction 
co\-product} $\DeltaCK: \spanF \to \spancFminus \otimes \spanF$ is defined by 
\begin{equation}
 \label{eq:CK_coproduct} 
 \DeltaCK(\Gamma) = \Gamma\otimes\unit + \unit\otimes\Gamma 
+ \sum_{\substack{\unit\neq\overline\Gamma\subsetneq\Gamma, \overline\Gamma \in \Fminus}} 
\overline\Gamma\otimes(\Gamma/\overline\Gamma)\;, 
\end{equation} 
where the sum ranges over all \emph{divergent} subgraphs $\overline\Gamma$, and 
$\Gamma/\overline\Gamma$ denotes the graph obtained by replacing $\overline\Gamma$
by a single vertex. The subgraphs have to be \emph{full}, in the sense that if an edge
$e$ belongs to $\overline\Gamma$, all edges connecting the same vertices also belong 
to $\overline\Gamma$. 
\end{definition}

The coproduct can be extended multiplicatively to a map 
$\DeltaCK: \spancF \to \spancFminus \otimes \spancF$.

\begin{example}
We have 
\begin{equation}
 \DeltaCK \Bigpar{\FGIIIplus} 
 = \FGIIIplus \otimes \unit 
 + \unit \otimes \FGIIIplus 
 + \FGIII \otimes \FGII\;.
\end{equation} 
\end{example}

 We endow $\spanF$ with two more linear maps. A \emph{counit} $\unit^\star: \spanF \to\R$, given by projection on the unit $\unit$, and an antipode, defined as follows. 
 
\begin{definition}[Antipode]
The \emph{antipode} $\cA:\spanF\to\spancF$ is defined inductively by $\cA(\unit)=\unit$ and 
\begin{align}
 \cA(\Gamma) &= -\Gamma - 
 \sum_{\substack{\unit\neq\overline\Gamma\subsetneq\Gamma, \overline\Gamma \in \Fminus}} 
\cA(\overline\Gamma)\cdot(\Gamma/\overline\Gamma)\\
&= - \Gamma - \cM(\cA\otimes\id) \DeltaCKred(\Gamma)\;.
\label{eq:antipode} 
\end{align} 
Here $\DeltaCKred = \DeltaCK - \Gamma\otimes\unit - \unit\otimes\Gamma$ 
denotes the \emph{reduced coproduct}, and the map $\cM:\spancF\otimes \spancF\to\spancF$ denotes multiplication, defined 
by $\cM(\Gamma_1\otimes\Gamma_2) = \Gamma_1 \cdot \Gamma_2$. 
\end{definition}

Both the counit $\unit^\star$ and the antipode $\cA$ can be extended multiplicatively 
to the whole algebra $\cF$. The space $(\cF,\cdot,\DeltaCK,\unit,\unit^\star,\cA)$ constructed 
in this way is a Hopf algebra, called \emph{Connes--Kreimer extraction-contraction Hopf algebra}. 
This means in particular that we have 
\begin{align}
 (\id\otimes\Delta)\Delta\Gamma &= (\Delta\otimes\id)\Delta\Gamma\;, \\
 \cM(\cA\otimes\id)\Delta\Gamma &= \cM(\id\otimes\cA)\Delta\Gamma
 = \unit^\star(\Gamma)\unit
\end{align}
for all $\Gamma\in\cF$. We have already encountered the first property, 
called~\emph{co-associativity}, in the case of polynomials, see Remark~\ref{rem:Hopf}. 

To define BPHZ renormalisation, we first introduce the \emph{twisted antipode},  
defined as 
\begin{equation}
 \tilde \cA(\Gamma) = \cA(\Gamma) 1_{\deg\Gamma\leqs0}\;.
\end{equation} 
Note that if $\deg(\Gamma)\leqs 0$, then one has 
\begin{equation}
\label{eq:twisted_antipode} 
 \tilde \cA(\Gamma) = - \Gamma - \cM(\tilde\cA\otimes\id) \DeltaCKred(\Gamma)\;,
\end{equation} 
because $\DeltaCKred$ produces only divergent terms on the left of the tensor product. 

A \emph{character} on $\spancF$ is a linear map $g:\spancF\to\R$ which is multiplicative 
in the sense that we have $g(\Gamma_1\cdot\Gamma_2) = g(\Gamma_1)g(\Gamma_2)$ for all $\Gamma_1, \Gamma_2\in\spancF$. 
With any character $g$, one can associate a linear map 
$M^g$ defined by 
\begin{equation}
 M^g(\Gamma) = (g\otimes\id)\DeltaCK(\Gamma)\;,
\end{equation} 
and the set of these maps is known to form a group. 
The \emph{BPHZ character} is the linear map $g^{\text{BPHZ}}: \spancF\to\R$ given by 
\begin{equation}
 g^{\text{BPHZ}}(\Gamma) = \Pi_N \tilde\cA(\Gamma)\;.
\end{equation} 
The fact that $g^{\text{BPHZ}}$ is indeed a character follows from multiplicativity 
of $\cA$ and $\Pi_N$. The map $\smash{M^{g^{\text{BPHZ}}}}$ is called 
\emph{BPHZ renormalisation map}. 
It defines a \emph{renormalised valuation} given by 
\begin{equation}
\label{eq:PiNBPHZ} 
 \PiNBPHZ(\Gamma) 
 = \Pi_N M^{g^{\text{BPHZ}}}(\Gamma)
 = (g^{\text{BPHZ}}\otimes\Pi_N)\DeltaCK(\Gamma)
 = (\Pi_N\tilde \cA \otimes \Pi_N)\DeltaCK(\Gamma)\;.
\end{equation} 
The interest of this construction is the following result. 

\begin{lemma}
\label{lem:BPHZ} 
The BPHZ renormalised valuation satisfies 
\begin{equation}
 \PiNBPHZ(\Gamma) = 
 \begin{cases}
  0 & \text{if\/ $\deg\Gamma\leqs0$\;,}\\
  -\Pi_N\cA(\Gamma) & \text{if\/ $\deg\Gamma > 0$\;.}
 \end{cases}
\end{equation} 
\end{lemma}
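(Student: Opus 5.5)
We would prove Lemma~\ref{lem:BPHZ} by writing out the definition~\eqref{eq:PiNBPHZ} with the explicit form~\eqref{eq:CK_coproduct} of the coproduct, and comparing the result with the recursive definitions~\eqref{eq:antipode} of $\cA$ and~\eqref{eq:twisted_antipode} of $\tilde\cA$. Expanding $\DeltaCK(\Gamma)$ into its three types of terms gives
\begin{equation}
 \PiNBPHZ(\Gamma)
 = \Pi_N\tilde\cA(\Gamma)\,\Pi_N(\unit)
 + \Pi_N\tilde\cA(\unit)\,\Pi_N(\Gamma)
 + \sum_{\substack{\unit\neq\overline\Gamma\subsetneq\Gamma, \overline\Gamma \in \Fminus}}
 \Pi_N\tilde\cA(\overline\Gamma)\,\Pi_N(\Gamma/\overline\Gamma)\;.
\end{equation}
We use $\Pi_N(\unit) = 1$ (the empty product) and $\tilde\cA(\unit) = \unit$. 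Since every $\overline\Gamma$ in the sum is divergent, $\tilde\cA(\overline\Gamma) = \cA(\overline\Gamma)$ there. Multiplicativity of $\Pi_N$ then turns each product $\Pi_N(\cA(\overline\Gamma))\Pi_N(\Gamma/\overline\Gamma)$ into $\Pi_N\bigpar{\cA(\overline\Gamma)\cdot(\Gamma/\overline\Gamma)}$. We will check that this is legitimate: $\cA(\overline\Gamma)$ is a linear combination of elements of $\cF$, and $\Pi_N$ is linear and multiplicative on $\spancF$. The sum thus equals $\Pi_N\cM(\cA\otimes\id)\DeltaCKred(\Gamma)$.

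We then split into the two cases of the statement. If $\deg\Gamma\leqs0$, then $\tilde\cA(\Gamma) = \cA(\Gamma)$. By~\eqref{eq:antipode},
\begin{equation}
 \Pi_N\cA(\Gamma) = -\Pi_N(\Gamma) - \Pi_N\cM(\cA\otimes\id)\DeltaCKred(\Gamma)\;,
\end{equation}
so the three terms add up to zero. If $\deg\Gamma>0$, then $\tilde\cA(\Gamma)=0$ and only the last two terms remain. By~\eqref{eq:antipode} again, these equal $\Pi_N(\Gamma) + \Pi_N\cM(\cA\otimes\id)\DeltaCKred(\Gamma) = -\Pi_N\cA(\Gamma)$.

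The only delicate point is the bookkeeping in the sum over subgraphs. The left factors of $\DeltaCKred$ must all lie in $\Fminus$ (or in $\cFminus$ if one uses the multiplicative extension), so that $\tilde\cA$ and $\cA$ agree on them. The condition $\overline\Gamma\subsetneq\Gamma$ must also exclude $\overline\Gamma=\Gamma$, so that the $\Gamma\otimes\unit$ term is counted exactly once. For disconnected divergent $\overline\Gamma$ we would invoke the multiplicative extension of $\tilde\cA$ and note that each connected component is divergent. We expect this verification to be routine; no analytic estimate is needed.
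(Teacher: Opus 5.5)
Your proposal is correct and is essentially the paper's proof: you expand $\DeltaCK(\Gamma)$ into $\Gamma\otimes\unit$, $\unit\otimes\Gamma$ and the reduced part, use linearity and multiplicativity of $\Pi_N$, and close with the antipode recursion, using $\tilde\cA(\Gamma)=\cA(\Gamma)$ if $\deg\Gamma\leqs0$ and $\tilde\cA(\Gamma)=0$ otherwise. The only cosmetic difference is that in the divergent case the paper invokes the twisted recursion~\eqref{eq:twisted_antipode}, which rests on the same observation you make: $\tilde\cA=\cA$ on the divergent left factors of $\DeltaCKred$.
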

\begin{proof}
In the case $\deg\Gamma\leqs0$, using~\eqref{eq:twisted_antipode} we get 
\begin{align}
 \PiNBPHZ(\Gamma) 
 ={}& (\Pi_N\otimes\Pi_N)(\tilde\cA\otimes\id)
 \bigbrak{\Gamma\otimes\unit + \unit\otimes\Gamma + \DeltaCKred(\Gamma)}\\
 ={}& (\Pi_N\otimes\Pi_N)\bigbrak{\tilde\cA(\Gamma)\otimes\unit + \unit\otimes\Gamma 
 + (\tilde\cA\otimes\id)\DeltaCKred(\Gamma)} \\
 ={}& (\Pi_N\otimes\Pi_N)
 \bigbrak{-\Gamma\otimes\unit 
 - \cM(\tilde\cA\otimes\id) \DeltaCKred(\Gamma)\otimes\unit 
 + \unit\otimes\Gamma + (\tilde\cA\otimes\id)\DeltaCKred(\Gamma)}\;,
\end{align}
which vanishes by multiplicativity of $\Pi_N$. 
In the case $\deg\Gamma > 0$, using $\tilde\cA(\Gamma) = 0$ in the second line of the 
above computation, we obtain 
\begin{align}
 \PiNBPHZ(\Gamma)
 &= (\Pi_N\otimes\Pi_N)\bigbrak{\unit\otimes\Gamma 
 + (\tilde\cA\otimes\id)\DeltaCKred(\Gamma)} \\
 &= \Pi_N(\Gamma) + (\Pi_N\tilde\cA \otimes \Pi_N)\DeltaCKred(\Gamma) \\
 &= \Pi_N(\Gamma) + \Pi_N \cM(\tilde\cA \otimes \id)\DeltaCKred(\Gamma)
\end{align}
again by multiplicativity of $\Pi_N$. This is equal to $-\Pi_N\cA(\Gamma)$
by~\eqref{eq:antipode}. 
\end{proof}

If follows from Theorem~\ref{thm:BPHZ} that the renormalized valuation $\PiNBPHZ$
is bounded uniformly in the cut-off $N$ for any $\Gamma\in\spancF$. 

%%%%%%%%%%%%%%%%%%%%%%%%%%%%%%%%%%%%%%%%%%%%%%%%%%%%%%%%%%%%%%%%%%%%%%%%%%%%%%%%

\subsection{Wick map*}
\label{ssec:phi43_Wick} 

We now indicate how the above results allow to prove Theorem~\ref{thm:phi43}.
We again focus on the problem of computing the ratio of partition 
functions. Writing as before 
\begin{equation}
 \bX = \int_{\Lambda}\Wick{\phi(x)^4}_{C_N}\6x\;,
 \qquad 
 \bY = \int_{\Lambda}\Wick{\phi(x)^2}_{C_N}\6x\;,
\end{equation} 
this ratio is given by 
\begin{equation}
\label{eq:Zratio_3} 
 \frac{\cZ_{3,\alpha,N}}{\cZ_{3,0,N}}  
 = \bigexpecin{\mu_{3,0,N}}{\e^{-\alpha\bX-\beta\bY-\gamma}}
 = \e^{-\gamma}\bigexpecin{\mu_{3,0,N}}{\e^{-\alpha\bX-\beta\bY}}\;,
\end{equation} 
where $\beta = \beta_N(\alpha)$ and $\gamma = \gamma_N(\alpha)$. 
A naive approach would be to expand first the exponential, and then 
use Newton's binomial formula to get
\begin{align}
 \bigexpecin{\mu_{3,0,N}}{\e^{-\alpha\bX-\beta\bY}} 
 &\asymp 
 \sum_{n\geqs0} \frac{1}{n!}
 \bigexpecin{\mu_{3,0,N}}{(-\alpha\bX-\beta\bY)^n} \\
 &= \sum_{n\geqs0} 
 \sum_{k=0}^n \frac{(-\alpha)^k}{k!} 
 \frac{(-\beta)^{n-k}}{(n-k)!} 
 \bigexpecin{\mu_{3,0,N}}{\bX^k \bY^{n-k}}\;.
\end{align} 
The first terms of this expansion are 
\begin{align}
 \bigexpecin{\mu_{3,0,N}}{\e^{-\alpha\bX-\beta\bY}}
 \asymp  
 1 &{}+ 4! \alpha^2\, \Pi_N\bigpar{\FGIV} + 2! \beta^2\, \Pi_N\bigpar{\FGII} \\
 &{}-\binom{4}{2}^3 2^3 \alpha^3 \Pi_N\bigpar{\FGVI} 
 - 3 (4^2\cdot 2\cdot 3!)\alpha^2\beta\,\Pi_N\bigpar{\FGIIIplus} \\
 &{}- 3\cdot  4! \alpha\beta^2\, \Pi_N\bigpar{\FGIItwice}
 - 8\beta^3 \,\Pi_N\bigpar{\FGtriangle} + \dots\;.
 \label{eq:expansion_eXY3} 
\end{align}
The problem is that some Feynman diagrams are divergent, while others 
are not, and $\beta$, being of order $\alpha^2\log(N)$ is itself 
divergent. Therefore, the whole asymptotic series can have non-divergent 
coefficients only if there are many cancellations between divergent 
terms. The combinatorics of this is very hard to keep track of. 

An alternative reformulation is as follows. We introduce a 
linear map $\cP:\R[\bX,\bY] \to \spanF$ associating with a monomial 
$\bX^n\bY^m$ all possible diagrams obtained by perfect pairwise matchings 
of $n$ vertices with $4$ legs each and $m$ vertices with $2$ legs each, 
and projecting on connected diagrams. In this way, by the linked-cluster 
theorem, 
\begin{equation}
\label{eq:log_expec_mu3} 
\log \bigexpecin{\mu_{3,0,N}}{\e^{-\alpha\bX-\beta\bY}}
 = \Pi_N \circ \cP(\e^{-\alpha\bX-\beta\bY})\;.
\end{equation} 
The following theorem, inspired by ideas in~\cite{EFPTZ18,bruned2025renormalisingfeynmandiagramsmultiindices}, 
has been proved in~\cite{BKT}. 

\begin{theorem}[Commutative diagram]
\label{thm:commutative} 
There exist linear maps $\W:\R[\bX]\to\R[\bX,\bY]$, 
and $\Theta_\tF: \spanF\to\spanF$ such that the following 
diagram commutes:
\begin{equation}  
\label{comdiag:C[X]G}
\begin{tikzcd}[column sep=large, row sep=large]
\R[\bX]
\arrow[r, "\cP"] 
\arrow[d, "\W"']
& \spanF
\arrow[d, "(\Pi_N\tilde{\cA}\otimes\id)\DeltaCK + \Theta_\tF"] 
\\
\displaystyle
\R[\bX,\bY]
\arrow[r, "\cP"] 
& \spanF
\end{tikzcd}
\end{equation}
The map $\W$, called \emph{Wick map}, satisfies 
\begin{equation}
 \W(\bX^n) = H_n(\bX;-\beta\bY) \quad\forall n\geqs2
 \qquad\text{and}\qquad
 \W(\e^{-\alpha\bX}) = \e^{-\alpha\bX-\beta\bY}\;,
\end{equation} 
where $H_n$ is the $n$th scaled Hermite polynomial, 
while the map $\Theta_\tF$ satisfies 
\begin{equation}
 (\Pi_N\Theta_\tF\circ\cP)(\e^{-\alpha \bX})
 = - (\Pi_N\tilde\cA \circ \cP)(\e^{-\alpha \bX})
 = \gamma\;.
 \label{eq:Theta} 
\end{equation} 
\end{theorem}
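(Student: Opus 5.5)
The plan is to construct $\W$ directly from the scaled Hermite generating function, and then check that the diagram~\eqref{comdiag:C[X]G} commutes monomial by monomial, by classifying divergent subgraphs in dimension $3$.

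\textbf{Definition of $\W$.} Write $\beta=\beta_N(\alpha)=c_N\alpha^2$ with $c_N=-48\,\Pi_N(\FGIII)$. The generating function of scaled Hermite polynomials, $G(t,x;\sigma^2)=\e^{tx-\sigma^2t^2/2}$, taken at $t=-\alpha$, $x=\bX$ and $\sigma^2=2c_N\bY$, is exactly $\e^{-\alpha\bX-\beta\bY}$. So I define $\W$ on monomials by reading off the coefficient of $(-\alpha)^n/n!$, that is $\W(\bX^n)=H_n(\bX;2c_N\bY)$, interpreting $\sigma^2$ as a formal polynomial variable. (The statement writes this as $H_n(\bX;-\beta\bY)$ up to the normalisation convention.) With this choice $\W(\e^{-\alpha\bX})=\e^{-\alpha\bX-\beta\bY}$ holds by construction. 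By~\eqref{eq:scaled_Wick},
\[
 \W(\bX^n)=\sum_k \frac{n!}{2^kk!(n-2k)!}\,(-2c_N)^k\,\bX^{n-2k}\bY^k ,
\]
so the term $\bX^{n-2k}\bY^k$ counts ways to choose $k$ disjoint pairs among $n$ quartic vertices. This is the combinatorial structure I will match on the diagram side.

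\textbf{Classifying divergent connected subgraphs.} In $d=3$ one has $\deg(\Gamma)=3(\abs{\cV}-1)-\abs{\cE}$. For a connected subgraph with $n$ quartic vertices and $\ell$ external legs, $\abs{\cE}=(4n-\ell)/2$, hence $\deg=n-3+\ell/2$.
\begin{itemize}
\item Four or more external legs give $\deg>0$.
\item Two external legs give $\deg\leqs0$ only for $n=2$, which is the sunset $\FGIII$.
\item Vacuum subgraphs ($\ell=0$) are divergent for $n\leqs3$, namely $\FGIV$ and $\FGVI$.
\end{itemize}
$\cP$ projects onto connected diagrams, so a vacuum subgraph can only be all of $\Gamma$. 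Consequently $\DeltaCK\Gamma$ has two kinds of terms: extractions of families of disjoint sunsets, each contracted to a $2$-valent vertex, and the term $\Gamma\otimes\unit$ for a divergent vacuum $\Gamma$. The sunset has no divergent proper subgraph, so~\eqref{eq:antipode} gives $\Pi_N\tilde\cA(\FGIII)=-\Pi_N(\FGIII)$.

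\textbf{Matching the two sides.} On the sunset part, $(\Pi_N\tilde\cA\otimes\id)\DeltaCK\cP(\bX^n)$ is a sum over connected pairings of $n$ quartic vertices together with a choice of $k$ disjoint sunsets. Each sunset is weighted by $-\Pi_N(\FGIII)$ and replaced by a $2$-vertex. I will show this equals $\cP(\W(\bX^n))$ by exhibiting a bijection:
\begin{itemize}
\item On one side, take a pair of quartic vertices, the $3!$ ways of joining them by three edges, and the $4\cdot4$ choices of their two remaining legs.
\item On the other side, a $2$-vertex of $\bY$ together with its leg matchings.
\end{itemize}
The resulting multiplicities should reproduce the factor $48=2\cdot4^2\cdot3!/2$ and the Hermite coefficient above. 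I then define $\Theta_\tF$ on the vacuum part: it must cancel the $\Gamma\otimes\unit$ contributions of $\FGIV$ and $\FGVI$, which are the $\Gamma$'s appearing in $\cP(\bX^2)$ and $\cP(\bX^3)$. For instance I can set $\Theta_\tF(\Gamma)=-\Pi_N\tilde\cA(\Gamma)\,\unit$ on those graphs and $0$ elsewhere. Summing against $(-\alpha)^n/n!$ then gives~\eqref{eq:Theta}, and comparing with the definition of $\gamma_N$ fixes the sign and the $12\alpha^2$ and $288\alpha^3$ coefficients. This uses $\Pi_N\tilde\cA(\FGIV)=-\Pi_N(\FGIV)$, and a short computation for $\FGVI$ accounting for its sunset subgraphs.

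\textbf{Main obstacle.} I expect the hard step to be the bookkeeping in the bijection. Overlapping sunset choices must be handled correctly, since the Connes--Kreimer sum only allows families of disjoint full subgraphs. Symmetry factors must match exactly, since diagrams on the right are counted as leg matchings. And the $\FGVI$ term is delicate, because its own sunset subgraphs interact with $\Theta_\tF$. I would first verify the identity by hand for $n=2,3,4$, using the expansion~\eqref{eq:expansion_eXY3} as a check, and then organise the general count through the exponential formula of Section~\ref{ssec:convolution}, as in the proof of the linked-cluster theorem.
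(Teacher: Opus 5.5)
Your plan is the right one. It is essentially the mechanism the notes sketch after the statement; the proof itself is deferred to~\cite{BKT}. In $d=3$ the only divergent full subgraphs of a connected graph in $\cP(\bX^n)$ are sunsets $\FGIII$, plus, for $n=2,3$, the whole graph $\FGIV$ or $\FGVI$. Extracting $k$ sunsets matches Hermite's choice of $k$ disjoint pairs. Setting $\Theta_\tF(\Gamma)=-\Pi_N\tilde\cA(\Gamma)\,\unit$ gives~\eqref{eq:Theta} with exactly the displayed $\gamma_N(\alpha)$, since $\cP(\bX^2)=4!\,\FGIV$, $\cP(\bX^3)=1728\,\FGVI$, and $\tilde\cA$ vanishes on $\cP(\bX^n)$ for $n\geqs4$.

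\textbf{The multiplicity check fails with your constant.} This is the one computation that carries the proof, and you only say it \emph{should} work. Each extracted sunset comes with $4\cdot4\cdot3!=96$ leg configurations (your $2\cdot4^2\cdot3!/2$ equals $96$, not $48$). Each carries the weight $\Pi_N\tilde\cA(\FGIII)=-\Pi_N(\FGIII)$. So the $k$-sunset part of $(\Pi_N\tilde\cA\otimes\id)\DeltaCK\cP(\bX^n)$ is
\[
\frac{n!}{2^kk!(n-2k)!}\bigl(-96\,\Pi_N(\FGIII)\bigr)^k\,\cP(\bX^{n-2k}\bY^k).
\]
Matching this with your coefficient $\frac{n!}{2^kk!(n-2k)!}(-2c_N)^k$ forces $c_N=+48\,\Pi_N(\FGIII)$. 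Equivalently, $\W(\bX^n)=H_n(\bX;96\,\Pi_N(\FGIII)\bY)$ and $\beta=+48\alpha^2\Pi_N(\FGIII)$. With your $c_N=-48\,\Pi_N(\FGIII)$, every term with an odd number of sunsets has the wrong sign. Already for $n=4$ you get $+576\,\Pi_N(\FGIII)$ instead of $-576\,\Pi_N(\FGIII)$ in front of $\cP(\bX^2\bY)$. Commutativity forces the $\bX^2\bY$-coefficient of $\W(\bX^4)$, so the statement cannot hold with $\beta_N$ as displayed in Theorem~\ref{thm:phi43}. You must flip that sign and say so, just as you already corrected the variance in $H_n(\bX;-\beta\bY)$. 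The flip is consistent with the heuristic that $-\beta\bY$ must cancel the divergent part $48\alpha^2\Pi_N(\FGIII)\bY$ of $\frac{\alpha^2}{2}\bX^2$.

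\textbf{The obstacles you anticipate are not real.} Two sunsets cannot share a vertex, since that would require arity $6$, so every family of sunsets is automatically disjoint. $\FGVI$ contains no sunset at all: each pair of its vertices is joined by exactly two edges. Hence $\tilde\cA(\FGVI)=-\FGVI$ and nothing delicate happens there.

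\textbf{Two points you do not mention are needed.} First, the fullness condition. Without it, $\FGIV$ would contain four sunsets whose contraction is a $2$-valent vertex with a self-loop, whereas $\cP(\W(\bX^2))=4!\,\FGIV$ because $\cP(\bY)=0$. In your bijection this is the fact that the two external legs of an extracted sunset are never joined, matching the no-self-loop rule for $\bY$-vertices. Second, $\DeltaCK$ must be read as a sum over forests of disjoint divergent subgraphs, as you implicitly do. The displayed formula literally sums over single connected ones, and then the $\bY^k$ terms with $k\geqs2$ of $\W(\bX^n)$ have no counterpart.

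With these corrections the bijection proves the identity for all $n$ at once, with no need for the exponential formula. Concretely, choose the $k$ pairs and the $96^k$ internal structures, and identify the external legs of the sunset on $\{u<v\}$ with the two labelled legs of a $\bY$-vertex.
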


The main idea of this result is that the complicated map 
$(\Pi_N\tilde{\cA}\otimes\id)\DeltaCK$, acting on Feynman diagrams, can be 
replaced by the much simpler map $\W$, acting on polynomials. The Wick map 
is of the form 
\begin{equation}
 \W = (\exp_\ast(-\kappa)\otimes\id)\Delta
\end{equation} 
that we have encountered in~\eqref{eq:Wick_expo}, where 
\begin{equation}
 \kappa(x^n) = 
 \begin{cases}
  \beta \bY & \text{if $n=2$\;,}\\
  0 & \text{otherwise\;.}
 \end{cases}
\end{equation} 
The intuition behind this is that the only subdivergences are graphs 
containing one or several \lq\lq bubbles\rq\rq\ $\FGIII$ as subgraphs. 
The effect of the map $(\Pi_N\tilde{\cA}\otimes\id)\DeltaCK$ is to 
extract bubbles and replace them by $\beta$ times a vertex of arity 
$2$, which can be seen as replacing $\bX^2$ by $\beta\bY$. This is also 
compatible with the combinatorial interpretation of Hermite polynomials 
we have seen in Section~\ref{ssec:combinatorics}. 
As for the relation $\W(\e^{-\alpha\bX}) = \e^{-\alpha\bX-\beta\bY}$,
it is a consequence of Proposition~\ref{prop:Wick_map}. 
One should note that we are working here with a \lq\lq second 
level Wick renormalisation\rq\rq, the first level being associated 
with using Wick powers in the energy, and the second level taking
care of the remaining subdivergences.

Together with the definition~\eqref{eq:PiNBPHZ} of the BPHZ valuation, 
Theorem~\ref{thm:commutative} implies that the following diagram commutes: 
\begin{equation}  
\label{comdiag:BPHZ}
\begin{tikzcd}[column sep=large, row sep=large]
\e^{-\alpha \bX}
\arrow[r, "\cP", mapsto] 
\arrow[d, "\W"', mapsto]
& \cP(\e^{-\alpha \bX})
\arrow[d, "(\Pi_N\tilde{\cA}\otimes\id)\DeltaCK + \Theta_\tF", mapsto] 
\arrow[rrd, bend left, "\PiNBPHZ + \Pi_N\Theta_\tF"]
\\
\e^{-\alpha \bX - \beta \bY}
\arrow[r, "\cP", mapsto] 
& \cP(\e^{-\alpha \bX - \beta \bY})
\arrow[rr, "\Pi_N", mapsto] 
&& \R
\end{tikzcd}
\end{equation}
This has the following consequence. On the one hand, \eqref{eq:Zratio_3} 
and~\eqref{eq:log_expec_mu3} show that 
\begin{equation}
\label{eq:logZ_sum} 
 \log\frac{\cZ_{3,\alpha,N}}{\cZ_{3,0,N}}
 = \Pi_N\circ\cP(\e^{-\alpha \bX-\beta \bY}) -\gamma\;.
%  = \PiNBPHZ\circ\cP(\e^{-\alpha \bX})\;.
\end{equation} 
On the other hand, by commutativity and~\eqref{eq:Theta}, we have 
\begin{align}
\Pi_N\circ\cP(\e^{-\alpha \bX-\beta \bY})
&= \bigpar{\PiNBPHZ + \Pi_N\Theta_\tF}\circ\cP(\e^{-\alpha\bX}) \\
&= \PiNBPHZ \circ \cP(\e^{-\alpha\bX}) + \gamma\;.
\end{align}
It follows that 
\begin{equation}
\label{eq:log_Zratio3} 
 \log\frac{\cZ_{3,\alpha,N}}{\cZ_{3,0,N}} 
 = \PiNBPHZ \circ \cP(\e^{-\alpha\bX}) 
 \asymp \sum_{n\geqs1} \frac{(-\alpha)^n}{n!} \PiNBPHZ\circ \cP(\bX^n)\;.
\end{equation} 
Now Lemma~\ref{lem:BPHZ} shows that $\PiNBPHZ\circ \cP(\bX^n)$ is different 
from $0$ only if $\cP(\bX^n)$ has strictly positive degree, in which case 
it is equal to $-\Pi_N\cA\circ\cP(\bX^n)$. By Theorem~\ref{thm:BPHZ}, 
these terms are bounded uniformly in $N$. This completes the proof 
of Theorem~\ref{thm:phi43} in the case of the ratio of partition functions. 

In fact, all Feynman diagrams in $\cP(\bX^n)$ have $n$ vertices and $2n$ 
edges, as they result from pairing $4n$ legs, or half-edges. Therefore,
\begin{equation}
 \deg(\cP(\bX^n)) = 3(n-1)-2n = n - 3\;,
\end{equation} 
which is strictly positive for all $n\geqs4$. It follows that the 
sum~\eqref{eq:log_Zratio3} starts at $n=4$. On the other hand, the terms 
of order $\alpha^2$ and $\alpha^3$ in~\eqref{eq:expansion_eXY3} are 
compensated by $\gamma$.

%%%%%%%%%%%%%%%%%%%%%%%%%%%%%%%%%%%%%%%%%%%%%%%%%%%%%%%%%%%%%%%%%%%%%%%%%%%%%%%%

\section{The $\Phi^4_{4-\eps}$ model*}
\label{sec:phi44} 

Now that the $\Phi^4_d$ model is understood in dimensions $1$, $2$ and $3$, 
one may wonder what happens in higher dimensions. In fact, Fr\"ohlich has 
shown~\cite{Froehlich_1982} that the model in trivial for any $d > 4$, 
while Aizenmann and Duminil--Copin have shown~\cite{Aizenmann-Duminil-Copin} 
that it is trivial for $d = 4$ as well, for any reasonable renormalisation procedure. 
Here \emph{trivial} means that in the limit $N\to\infty$, the $n$-point functions 
are the same as for the Gaussian model, as given by Isserlis' theorem. 

This leaves the question of what happens in dimensions $d\in(3,4)$. Non-integer 
dimensions in that interval can be interpreted as working on the three-dimensional torus, 
but changing the behaviour of the Green function to
\begin{equation}
 G(x) \asymp \frac{1}{\norm{x}^{d-2}}\;.
\end{equation} 
The degree of $\cP(\bX^n)$, computed via~\eqref{eq:def_degree}, becomes 
\begin{equation}
 \deg\cP(\bX^n) = 4n - (n+1)d\;.
\end{equation} 
This means that more diagrams in $\cP(\bX^n)$ can become divergent when 
$d$ increases. In fact, we have 
\begin{equation}
 \deg \cP(X^n) \leqs 0
 \qquad \Leftrightarrow \qquad 
 d \geqs d^*_{\textup{e}}(n) 
 := 4 - \frac{4}{n+1}\;.
\end{equation} 
Note that these threshold accumulate at $d = 4$ as $n\to\infty$. 
Furthermore, one can check that new subdivergences appear whenever $d$ 
crosses one of the thresholds
\begin{equation}
 d^*_{\textup{m}}(n) 
 = d^*_{\textup{e}}(2n-1)
 = 4 - \frac{2}{n}\;.
\end{equation} 
Table~\ref{tab:subdivergence} shows the first few of these new subdivergences. 
We introduce the notations 
\begin{equation}
  n^*_{\textup{e}}(d) = \biggl\lfloor \frac{d}{4-d} \biggr\rfloor
 \qquad \text{and} \qquad 
 n^*_{\textup{m}}(d) = \biggl\lfloor \frac{2}{4-d} \biggr\rfloor
\end{equation} 
for the inverse thresholds of $d^*_{\textup{m}}(n)$ and $d^*_{\textup{e}}(n)$.

\begin{table}[t]
  \centering
\begin{tabular}{|c|c|c|c|}
\hline
Graphs & Degree & Critical $d$ & Minimal $n$ \\
\hline
\vrule height 16pt depth 8pt width 0pt
$\FGIII$ &  $6 - 2d$ & $3 = d^*_{\text{m}}(2)$ & $4$ \\
\hline
\vrule height 20pt depth 8pt width 0pt
$\FGTwoTwoOne$ $\FGIIIplus$ &  $10 - 3d$ & $\frac{10}3 = d^*_{\text{m}}(3)$ & $5$ \\ 
% \vrule height 16pt depth 8pt width 0pt
% $\FGIIIplus$ &   &  &  \\ 
\hline
\vrule height 20pt depth 12pt width 0pt
$\FGQuadA$ $\FGQuadB$ $\FGQuadD$ $\FGQuadC$ &  $14 - 4d$ & $\frac72 = d^*_{\text{m}}(4)$ & $6$ \\ 
% \vrule height 16pt depth 12pt width 0pt
% $\FGQuadC$ &   &  &  \\ 
\hline
\end{tabular}
\caption{List of the first divergent subdiagrams of the $\Phi^4_d$ model, with their degree, 
the value of $d$ for which they become divergent, and the minimal value of $n$ such that they 
occur in $\cP(\bX^n)$.}
\label{tab:subdivergence} 
\end{table}

In order to deal with these subdivergences, we have to allow for more general 
replacement rules than $\bX^2 \mapsto \beta\bY$. Therefore, the relevant Wick map 
$\W$ no longer involves Hermite polynomials, but more general so-called 
\emph{Bell polynomials}. 
 
%%%%%%%%%%%%%%%%%%%%%%%%%%%%%%%%%%%%%%%%%%%%%%%%%%%%%%%%%%%%%%%%%%%%%%%%%%%%%%%%

\subsection{Bell polynomials*}
\label{ssec:phi44_Bell} 

Bell polynomials can be defined by starting with cumulants 
\begin{equation}
\kappa(x^n) = 
\begin{cases}
0 & \text{if $n=1$\;,}\\
y_n & \text{otherwise\;,}
\end{cases}
\end{equation} 
where the $y_n$ are considered for now as parameters. 
The associated Wick map is
\begin{equation}
 \W(t,x) = \e^{tx - K(t)}
 = \exp\biggset{tx - \sum_{n\geqs2}y_n\frac{t^n}{n!}}\;,
\end{equation} 
where $K(t) = \Lambda(\kappa)(t)$, see~\eqref{eq:def_Lambda}. 

\begin{definition}[Bell polynomials]
The Wick map $\W(t,x)$ is the generating function of Bell polynomials, 
in the sense that 
\begin{equation}
 \W(t,x) = \sum_{n\geqs0} B_n(x, -y_2, \dots, -y_n) \frac{t^n}{n!}\;.
\end{equation} 
$B_n$ is called the \emph{$n$th complete Bell polynomial}. It can be decomposed 
as 
\begin{equation}
 B_n(x, -y_2, \dots, -y_n)
 = \sum_{k=1}^n B_{n,k}(x, -y_2, \dots, -y_{n-k+1})\;,
\end{equation} 
where each $B_{n,k}$ is the homogeneous part of degree $k$ of $B_n$, and 
is called \emph{incomplete Bell polynomial} of order $(n,k)$. 
\end{definition}

Using the fact (see Section~\ref{ssec:convolution}) that
\begin{equation}
 B_n(x, -y_2, \dots, -y_n)
= \W(x^n) 
= \sum_{k=0}^n \binom{n}{k} \mu^{-1}(x^k)x^{n-k}
\end{equation} 
where $\mu^{-1} = \exp_\ast(-\kappa)$, one finds the explicit expression 
\begin{equation}
 B_n(x, -y_2, \dots, -y_n) 
 = n! \sum_{k=0}^n \sum_{p=1}^k \frac{(-1)^p}{p!}
 \sum_{\substack{n_1,\dots,n_p\geqs2\\n_1 + \dots + n_p=k}}
 \frac{y_{n_1}}{n_1!} \dots \frac{y_{n_p}}{n_p!}
 \frac{x^{n-k}}{(n-k)!}\;.
\end{equation} 
The incomplete Bell polynomial $B_{n,k}$ is simply the $k$th term in this sum.
In particular, comparing with the explicit expression~\eqref{eq:scaled_Wick}
of scaled Wick polynomials, we recover 
\begin{equation}
 B_n(x,-y_2,0,\dots,0) = H_n(x;y_2)\;.
\end{equation} 
Bell polynomials also have a simple combinatorial interpretation: the coefficients
of $B_{n,k}$ count the number of partitions of a set of cardinality $n$ into 
$k$ subsets, whose sizes are encoded into the monomial. 

\begin{example}
One has 
\begin{equation}
 B_{5,3}(x,y_2,y_3) = 15xy_2^2 + 10x^2y_3\;.
\end{equation} 
% $B_{5,3}(x,y_2,y_3) = 15xy_2^2 + 10x^2y_3$. 
Therefore, there are $15$ 
ways of partitioning a set of $5$ elements into $3$ subsets of sizes $1$, $2$ 
and $2$, and $10$ ways of partitioning it into $3$ subsets of sizes $1$, $1$ 
and $3$. The polynomial $B_{5,3}(x,y_2,y_3)$ is also obtained by applying 
the substitutions $x^2\mapsto y_2$ and $x^3\mapsto y_3$ to the monomial
$x^5$ in all possible ways, and keeping only terms of degree $3$. 
\end{example}

%%%%%%%%%%%%%%%%%%%%%%%%%%%%%%%%%%%%%%%%%%%%%%%%%%%%%%%%%%%%%%%%%%%%%%%%%%%%%%%%

\subsection{Wick map*}
\label{ssec:phi44_Wick} 

% We introduce the notations 
% \begin{equation}
%   n^*_{\textup{e}}(d) = \biggl\lfloor \frac{d}{4-d} \biggr\rfloor
%  \qquad \text{and} \qquad 
%  n^*_{\textup{m}}(d) = \biggl\lfloor \frac{2}{4-d} \biggr\rfloor
% \end{equation} 
% for the inverse thresholds of $d^*_{\textup{m}}(n)$ and $d^*_{\textup{e}}(n)$.
Theorem~\ref{thm:commutative} admits the following generalisation, also 
shown in~\cite{BKT}. 

\begin{theorem}[Commutative diagram]
\label{thm:commutative_4eps} 
There exist linear maps $\W:\R[\bX]\to\R[\bX,\bY]$
and $\Theta_\tF: \spanF\to\spanF$,
and constants $\sigma_n(N)$, diverging like $N^{2-(4-d)n}$, 
such that the diagram~\eqref{comdiag:C[X]G} commutes.
% \begin{equation}  
% \label{comdiag:C[X]G2}
% \begin{tikzcd}[column sep=large, row sep=large]
% \R[\bX]
% \arrow[r, "\cP"] 
% \arrow[d, "\W"']
% & \spanF
% \arrow[d, "(\Pi_N\tilde{\cA}\otimes\id)\DeltaCK + \Theta_\tF"] 
% \\
% \displaystyle
% \R[\bX,\bY]
% \arrow[r, "\cP"] 
% & \spanF
% \end{tikzcd}
% \end{equation}
The map Wick map $\W$ satisfies 
\begin{equation}
 \W(\e^{-\alpha\bX}) = \e^{-\alpha\bX-\beta\bY}
\end{equation}where 
\begin{equation}
 \beta = \beta_{N,d}(\alpha) 
 = \sum_{n=2}^{n^*_{\textup{m}}(d)} \frac{(-\alpha)^n}{n!}\sigma_n(N)\;.
\end{equation} 
Furthermore, 
\begin{equation}
 \W(\bX^n) = B_n(\bX,-\sigma_2(N)\bY,\dots,-\sigma_n(N)\bY)
\end{equation} 
holds for any $n\geqs2$, where $B_n$ is the $n$th complete Bell polynomial, 
while the map $\Theta_\tF$ satisfies 
\begin{equation}
 (\Pi_N\Theta_\tF\circ\cP)(\e^{-\alpha \bX})
 = - (\Pi_N\tilde\cA \circ \cP)(\e^{-\alpha \bX})
 = \gamma\;.
 \label{eq:Theta2} 
\end{equation} 
\end{theorem}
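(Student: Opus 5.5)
Following the strategy used for Theorem~\ref{thm:commutative}, I would define $\W$ algebraically and show that it mirrors, on the polynomial side, what the map $(\Pi_N\tilde\cA\otimes\id)\DeltaCK$ does on the graph side.

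\textbf{Defining the Wick map.} Set $\W=(\exp_\ast(-\kappa)\otimes\id)\Delta$ as in~\eqref{eq:Wick_expo}, with the $\bY$-valued cumulant
\begin{equation}
 \kappa(x^n)=\sigma_n(N)\bY \quad (2\leqs n\leqs n^*_{\textup{m}}(d)), \qquad \kappa(x^n)=0 \text{ otherwise}.
\end{equation}
The constants are $\sigma_n(N)=\Pi_N\tilde\cA$ applied to the sum, with multiplicities, of all connected diagrams with $n$ four-valent vertices and exactly two external legs. These are the two-leg (mass-type) divergent subgraphs listed in Table~\ref{tab:subdivergence}.

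Two facts then follow. First, the Bell-polynomial form $\W(\bX^n)=B_n(\bX,-\sigma_2\bY,\dots)$ is immediate from the explicit formula for $\W(x^n)$ and the definition of $B_n$. Second, Proposition~\ref{prop:Wick_map}, applied with $K(t)=\sum_n\sigma_n\bY t^n/n!$, gives $\W(\e^{-\alpha\bX})=\e^{-\alpha\bX-K(-\alpha)\bY}$, which is $\e^{-\alpha\bX-\beta\bY}$ with the stated $\beta$. The divergence rate of $\sigma_n(N)$ comes from Theorem~\ref{thm:BPHZ}: such a diagram has $\deg=d(n-1)-(d-2)(2n-1)=2-(4-d)n$, so $\Pi_N\tilde\cA$ behaves like $N^{2-(4-d)n}$. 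This quantity is positive exactly for $n\leqs n^*_{\textup{m}}(d)$, which explains the cutoff of the sum.

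\textbf{Commutativity.} To compare both sides of~\eqref{comdiag:C[X]G} on $\bX^n$, I would fix a diagram $\Gamma$ in $\cP(\bX^n)$ and expand $\DeltaCK\Gamma$ as a sum over families of disjoint divergent full subgraphs. Since $\Gamma$ is a vacuum $\Phi^4$ graph, each divergent subgraph is of one of two kinds:
\begin{itemize}
\item two-leg subgraphs with at most $n^*_{\textup{m}}$ vertices, whose contraction produces a two-valent vertex;
\item the whole graph (vacuum), when $\deg\Gamma\leqs0$.
\end{itemize}
Four-leg subgraphs have degree $4n-(n+1)d+\dots$; I need to check that these never become divergent for $d<4$ in the sense used here, or else include them in $\Theta_\tF$. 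I must also verify that $\tilde\cA$ factorizes multiplicatively over disjoint two-leg insertions.

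Replacing each extracted two-leg subgraph on $k$ vertices by $\Pi_N\tilde\cA(\cdot)$ times a two-valent vertex corresponds to substituting $x^k\mapsto\sigma_k\bY$ on the polynomial side. Summing over the ways of choosing which vertices go into which subgraph reproduces the set-partition combinatorics of $B_n$ (the multinomial and $1/p!$ factors in~\eqref{eq:expstar}), exactly as in the linked-cluster proof. The sign $(-1)^p$ is supplied by the minus signs in $\exp_\ast(-\kappa)$, to be matched with the signs of $\tilde\cA$.

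The leftover terms, in which the whole graph is extracted, are vacuum graphs contracted to a point. I collect these into $\Theta_\tF$, defined as the linear map sending $\Gamma$ to $-(\Pi_N\tilde\cA\Gamma)\unit$ corrected by those terms. This gives~\eqref{eq:Theta2} by construction, with $\gamma$ defined as that value.

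\textbf{Main obstacle.} The hard step is the bijection. I must show that pairing legs after substituting $\bX^k\mapsto\sigma_k\bY$ gives each contracted graph $\Gamma/\bar\Gamma$ with the same multiplicity as the sum over pairs ($\Gamma$, family of subgraphs $\bar\Gamma$) on the graph side. Nested two-leg subdivergences complicate this, because $\tilde\cA$ recursively subtracts inner bubbles. I would first try to absorb this nesting into the definition of $\sigma_k$ through the recursion~\eqref{eq:twisted_antipode}, and check by induction on $n$ that only maximal extracted subgraphs need to be counted.
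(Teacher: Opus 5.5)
Your strategy is the right one: it is the heuristic the notes give, and the notes themselves defer the proof to~\cite{BKT}. However, the step you call the main obstacle is misdiagnosed, and your proposed remedy is half automatic and half false.

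$\DeltaCK$ only extracts families of \emph{pairwise disjoint} divergent subgraphs. All nesting is confined to the left tensor factor, where $\Pi_N\tilde\cA(\bar\Gamma)$ is a single number that already contains every nested subtraction and depends only on the isomorphism class of $\bar\Gamma$. So absorbing nesting into $\sigma_k$ is just the definition, and no induction is needed. Fix a set partition $\pi$ of the $n$ labelled vertices and a labelled connected pairing $\Gamma'$ in $\cP(\bX^s\bY^m)$. The pairs $(\Gamma,\{\bar\Gamma_i\})$ with the $V(\bar\Gamma_i)$ equal to the non-singleton blocks and $\Gamma/\{\bar\Gamma_i\}=\Gamma'$ are obtained by choosing independently, for each block, a connected pairing of its $4k$ legs leaving two legs free, and a way of attaching these to the two legs of the corresponding $\bY$-vertex. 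The weight factorises as $\prod_i\Pi_N\tilde\cA(\bar\Gamma_i)$, which is exactly the block structure of $B_n$; blocks larger than $n^*_{\textup{m}}(d)$ contribute zero on both sides. Counting only maximal subgraphs would be wrong: a bubble inside a three-vertex two-leg subgraph (both divergent for $d\ge10/3$) yields two distinct terms of $\DeltaCK\Gamma$, matched by two different set partitions. Overlapping divergences, such as two consecutive pairs of bubbles in a ring of three bubbles (divergent for $d\ge7/2$), are harmless for the same reason.

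Second, your $\sigma_k$ has the wrong sign. A block of size $k\ge2$ contributes $-\sigma_k\bY$ to $B_n(\bX,-\sigma_2\bY,\dots)$, whereas extracting $\bar\Gamma$ contributes $+\Pi_N\tilde\cA(\bar\Gamma)$. Hence $\sigma_k$ must be \emph{minus} the suitably normalised sum of $\Pi_N\tilde\cA$ over two-leg structures on $k$ labelled vertices. With your choice, the single-bubble terms of $\cP(\bX^4)$ already fail to match for $d=3$, since $\tilde\cA$ maps the bubble to minus itself.

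There are also some smaller points:
\begin{itemize}
\item The degree of a two-leg graph on $k$ vertices is $(4-d)k-2$, not its negative; the rate $N^{2-(4-d)k}$ is $N^{-\deg}$.
\item Four-leg subgraphs have degree $(4-d)(k-1)>0$, so your open check is immediate.
\item You should require subgraphs to be vertex-induced. With the literal definition of ``full'' in the notes, $\Gamma$ minus a simple edge (e.g.\ a connecting edge in a ring of two bubbles) is a divergent two-leg subgraph for $d\ge7/2$. Its contraction is a vertex with a self-loop, which has no counterpart in the image of $\cP$. Moving it into $\Theta_\tF$ would add terms $\Pi_N\tilde\cA(\Gamma\setminus e)\,G_N(0)$ incompatible with~\eqref{eq:Theta2}.
\end{itemize}
With these fixes, set $\Theta_\tF(\Gamma)=-\Pi_N\tilde\cA(\Gamma)\,\unit$. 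This removes the term coming from $\Gamma\otimes\unit$, which has no counterpart since $B_n$ has no constant term. Then~\eqref{eq:Theta2} holds by the definition~\eqref{eq:gamma} of $\gamma$.
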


The counterterms $\sigma_n(N)$ can again be written explicitly in terms of 
valuations of divergent Feynman diagrams, the first of which are shown 
in Table~\ref{tab:subdivergence}. For given $n$, these diagrams 
have $n$ vertices, which are either two vertices of arity $3$ and $n-2$ 
vertices of arity $4$, or one vertex of arity $2$, and $n-1$ vertices of 
arity $4$. Using the same arguments as in Section~\ref{ssec:phi43_Wick}, 
we arrive at the following result. 

\begin{corollary}[Renormalisation of the $\Phi^4_d$ model with $d\in(3,4)$]
Let 
\begin{equation}
\label{eq:gamma} 
\gamma =  \gamma_{N,d}(\alpha) = 
 - \sum_{n=2}^{n^*_{\textup{e}}(d)}
 \frac{(-\alpha)^n}{n!} \Pi_N\tilde\cA(\cP(\bX^n))\;. 
\end{equation} 
Then the $\Phi^4_d$ model with $d\in(3,4)$, mass counterterm $\beta$ 
and energy counterterm $\gamma$ satisfies 
\begin{equation}
\label{eq:log_Zratio4} 
 \log\frac{\cZ_{d,\alpha,N}}{\cZ_{d,0,N}} 
%  = \PiNBPHZ \circ \cP(\e^{-\alpha\bX}) 
 \asymp -\sum_{n\geqs n^*_{\textup{e}}(d)} \frac{(-\alpha)^n}{n!} 
 \Pi_N\cA(\cP(\bX^n))\;,
\end{equation} 
where the coefficients are bounded uniformly in the cut-off $N$. 
\end{corollary}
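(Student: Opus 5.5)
The plan is to copy the argument given in Section~\ref{ssec:phi43_Wick} for $d=3$, using Theorem~\ref{thm:commutative_4eps} in place of Theorem~\ref{thm:commutative}.

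\textbf{Step 1: express the log-ratio through the commutative diagram.} Start from the analogue of~\eqref{eq:Zratio_3}. Because the counterterm $\gamma$ enters the energy as a constant, we have
\begin{equation}
 \log\frac{\cZ_{d,\alpha,N}}{\cZ_{d,0,N}}
 = \Pi_N\circ\cP(\e^{-\alpha\bX-\beta\bY}) - \gamma\;.
\end{equation}
Here the linked-cluster theorem is used in the form~\eqref{eq:log_expec_mu3}; its convolution-algebra proof does not depend on the dimension. Next, Theorem~\ref{thm:commutative_4eps} gives $\W(\e^{-\alpha\bX}) = \e^{-\alpha\bX-\beta\bY}$ with the stated $\beta$. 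Commutativity of~\eqref{comdiag:C[X]G}, together with the definition~\eqref{eq:PiNBPHZ}, then yields
\begin{equation}
 \Pi_N\circ\cP(\e^{-\alpha\bX-\beta\bY}) = \PiNBPHZ\circ\cP(\e^{-\alpha\bX}) + \Pi_N\Theta_\tF\circ\cP(\e^{-\alpha\bX})\;.
\end{equation}
By~\eqref{eq:Theta2}, the last term equals $-(\Pi_N\tilde\cA\circ\cP)(\e^{-\alpha\bX})$. It remains to check that this coincides with $\gamma$ as defined in~\eqref{eq:gamma}. Since $\tilde\cA$ vanishes on graphs of positive degree, only the $n$ with $\deg\cP(\bX^n)\leqs 0$ contribute, i.e.\ $n\leqs n^*_{\textup{e}}(d)$. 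The term $n=1$ vanishes because Wick powers are centred (cf.~\eqref{eq:expec_phi4}). Hence the two $\gamma$'s cancel, leaving $\log(\cZ_{d,\alpha,N}/\cZ_{d,0,N}) = \PiNBPHZ\circ\cP(\e^{-\alpha\bX})$.

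\textbf{Step 2: expand and apply Lemma~\ref{lem:BPHZ}.} Expanding the exponential term by term gives $\sum_n \frac{(-\alpha)^n}{n!}\PiNBPHZ(\cP(\bX^n))$. Every diagram in $\cP(\bX^n)$ has $n$ vertices and $2n$ edges, so all of them share the degree $\deg\cP(\bX^n)=4n-(n+1)d$. Lemma~\ref{lem:BPHZ} then kills every term with nonpositive degree. Every term with positive degree becomes $-\Pi_N\cA(\cP(\bX^n))$, and these are exactly the terms with $n$ beyond the threshold $n^*_{\textup{e}}(d)$. This produces~\eqref{eq:log_Zratio4}. Uniform boundedness in $N$ of the remaining coefficients follows from Theorem~\ref{thm:BPHZ}, because these are diagrams of strictly positive degree.

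\textbf{Step 3: justify ``$\asymp$'' as an asymptotic series.} One has to show that truncating the series leaves a remainder of order $\alpha^{n+1}$, uniformly in $N$. I expect this to be the main obstacle, as it is for $\Phi^4_3$. The formal algebra above is routine, but the analytic control is not: we need an a priori bound on $\expecin{\mu_{d,0,N}}{\e^{-\alpha\bX-\beta\bY}}$ of the type in Lemma~\ref{lem:exponential_exp} or Proposition~\ref{prop:Nelson}. With a divergent mass counterterm $\beta$, Nelson's argument no longer works directly. My first attempt would be to treat the corollary at the level of formal power series, identifying coefficients of $\alpha^n$, exactly as the notes do for $d=3$. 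If time allows, I would then sketch how a Barashkov--Gubinelli-type variational bound could supply the missing remainder estimate.

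A secondary point to check is the boundary index $n=n^*_{\textup{e}}(d)$. When $\deg\cP(\bX^n)=0$ at the threshold, that term is divergent and must be absorbed into $\gamma$ rather than kept in the sum. So the ranges in~\eqref{eq:gamma} and~\eqref{eq:log_Zratio4} should be read with one inclusive and the other strict, according to the sign of the degree.
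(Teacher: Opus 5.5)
Your proposal is correct and is essentially the paper's own proof: it reruns the $d=3$ argument of Section~\ref{ssec:phi43_Wick} with Theorem~\ref{thm:commutative_4eps} in place of Theorem~\ref{thm:commutative}, at the same formal power series level, and the paper supplies no a priori remainder bound either. Your boundary remark is also right, and it applies always, not only when the degree vanishes: $n^*_{\textup{e}}(d)$ is by definition the largest $n$ with $\deg\cP(\bX^n)\leqs 0$, so that term always belongs to $\gamma$, and the sum in~\eqref{eq:log_Zratio4} should start at $n^*_{\textup{e}}(d)+1$, consistent with the sum starting at $n=4$ when $d=3$.
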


%\listoftheorems[ignoreall,show=theorem]

% \bibliographystyle{plain}
\cleardoublepage
\phantomsection
\bibliographystyle{alpha}
\addcontentsline{toc}{chapter}{Bibliography}
\bibliography{Lammi}

@Preamble{
"\def\cprime{$'$} "
}

@Article{AF,
  Title                    = {Topological properties of linearly coupled expanding map lattices},
  Author                   = {Afraimovich, Valentin and Fernandez, Bastien},
  Journal                  = {Nonlinearity},
  Year                     = {2000},
  Number                   = {4},
  Pages                    = {973--993},
  Volume                   = {13},

  Fjournal                 = {Nonlinearity}
}

@Article{Barashkov_Gubinelli_18,
  author    = {Barashkov, Nikolay and Gubinelli, Massimiliano},
  title     = {A variational method for $\Phi^4_3$},
  journal   = {Duke Mathematical Journal},
  year      = {2020},
  volume    = {169},
  number    = {17},
  pages     = {3339 -- 3415},
  note      = {\texttt{arXiv:1805.1081}},
  doi       = {10.1215/00127094-2020-0029},
  publisher = {Duke University Press},
  url       = {https://doi.org/10.1215/00127094-2020-0029},
}

@Article{BCGNOPS_80,
  Title                    = {Ultraviolet stability in {E}uclidean scalar field theories},
  Author                   = {Benfatto, G. and Cassandro, M. and Gallavotti, G. and Nicol\`o, F. and Olivieri, E. and Presutti, E. and Scacciatelli, E.},
  Journal                  = {Comm. Math. Phys.},
  Year                     = {1980},
  Number                   = {2},
  Pages                    = {95--130},
  Volume                   = {71},

  Fjournal                 = {Communications in Mathematical Physics},
  ISSN                     = {0010-3616},
  Mrclass                  = {81E10 (60G20)},
  Mrnumber                 = {560344},
  Mrreviewer               = {Francesco Guerra},
  Url                      = {http://projecteuclid.org/euclid.cmp/1103907452}
}

@Article{BCGNOPS78,
  Title                    = {Some probabilistic techniques in field theory},
  Author                   = {Benfatto, G. and Cassandro, M. and Gallavotti, G. and Nicol\`o,
 F. and Olivieri, E. and Presutti, E. and Scacciatelli, E.},
  Journal                  = {Comm. Math. Phys.},
  Year                     = {1978},
  Number                   = {2},
  Pages                    = {143--166},
  Volume                   = {59},

  Fjournal                 = {Communications in Mathematical Physics},
  ISSN                     = {0010-3616},
  Mrclass                  = {81E15 (60G20 60H15 60J45 82A05)},
  Mrnumber                 = {491611},
  Mrreviewer               = {Sergio Albeverio},
  Url                      = {http://projecteuclid.org/euclid.cmp/1103901608}
}

@Book{Bogoliubov56,
  Title                    = {On a new form of adiabatic perturbation theory in the problem of particle interaction with a quantum field},
  Author                   = {Bogoliubov, N. N.},
  Publisher                = {Translated by Morris D. Friedman, 572 California St., Newtonville 60, Mass.},
  Year                     = {1956}
}

@Article{Bovier_Felder_CMP84,
  Title                    = {Skeleton inequalities and the asymptotic nature of perturbation theory for {$\Phi ^{4}$}-theories in two and three dimensions},
  Author                   = {Bovier, Anton and Felder, Giovanni},
  Journal                  = {Comm. Math. Phys.},
  Year                     = {1984},
  Number                   = {2},
  Pages                    = {259--275},
  Volume                   = {93},

  Coden                    = {CMPHAY},
  Fjournal                 = {Communications in Mathematical Physics},
  ISSN                     = {0010-3616},
  Mrclass                  = {81E25 (81E08 81E15 82A68)},
  Mrnumber                 = {742195},
  Mrreviewer               = {Alan D. Sokal},
  Url                      = {http://projecteuclid.org/euclid.cmp/1103941056}
}

@Article{Brydges_Dimock_Hurd_CMP_95,
  Title                    = {The short distance behavior of {$(\phi^4)_3$}},
  Author                   = {Brydges, D. and Dimock, J. and Hurd, T. R.},
  Journal                  = {Comm. Math. Phys.},
  Year                     = {1995},
  Number                   = {1},
  Pages                    = {143--186},
  Volume                   = {172},

  Fjournal                 = {Communications in Mathematical Physics},
  ISSN                     = {0010-3616},
  Mrclass                  = {81T08 (81T17)},
  Mrnumber                 = {1346375},
  Mrreviewer               = {Domingos H. U. Marchetti},
  Url                      = {http://projecteuclid.org/euclid.cmp/1104273962}
}

@Article{Brydges_Frohlich_Sokal_CMP83_RW,
  Title                    = {The random-walk representation of classical spin systems and correlation inequalities. {II}. {T}he skeleton inequalities},
  Author                   = {Brydges, David C. and Fr{\"o}hlich, J{\"u}rg and Sokal, Alan D.},
  Journal                  = {Comm. Math. Phys.},
  Year                     = {1983},
  Number                   = {1},
  Pages                    = {117--139},
  Volume                   = {91},

  Coden                    = {CMPHAY},
  Fjournal                 = {Communications in Mathematical Physics},
  ISSN                     = {0010-3616},
  Mrclass                  = {81E08 (81E25 82A67)},
  Mrnumber                 = {719815},
  Mrreviewer               = {Gerhard C. Hegerfeldt},
  Url                      = {http://projecteuclid.org/euclid.cmp/1103940478}
}

@Article{Brydges_Frohlich_Sokal_CPM83,
  Title                    = {A new proof of the existence and nontriviality of the continuum {$\varphi ^{4}_{2}$} and {$\varphi ^{4}_{3}$} quantum field theories},
  Author                   = {Brydges, David C. and Fr{\"o}hlich, J{\"u}rg and Sokal, Alan D.},
  Journal                  = {Comm. Math. Phys.},
  Year                     = {1983},
  Number                   = {2},
  Pages                    = {141--186},
  Volume                   = {91},

  Coden                    = {CMPHAY},
  Fjournal                 = {Communications in Mathematical Physics},
  ISSN                     = {0010-3616},
  Mrclass                  = {81E08},
  Mrnumber                 = {723546},
  Mrreviewer               = {Gerhard C. Hegerfeldt},
  Url                      = {http://projecteuclid.org/euclid.cmp/1103940528}
}

@Article{Chandra_Weber_LN17,
  Title                    = {Stochastic {PDE}s, regularity structures, and interacting
 particle systems},
  Author                   = {Chandra, Ajay and Weber, Hendrik},
  Journal                  = {Ann. Fac. Sci. Toulouse Math. (6)},
  Year                     = {2017},
  Number                   = {4},
  Pages                    = {847--909},
  Volume                   = {26},

  Doi                      = {10.5802/afst.1555},
  Fjournal                 = {Annales de la Facult\'{e} des Sciences de Toulouse. Math\'{e}matiques.
 S\'{e}rie 6},
  ISSN                     = {0240-2963},
  Mrclass                  = {35R60 (35B65 35K59 60H15)},
  Mrnumber                 = {3746645},
  Mrreviewer               = {Alp O. Eden},
  Url                      = {https://doi.org/10.5802/afst.1555}
}

@Article{Feldman74,
  Title                    = {The {$\lambda \varphi ^{4}_{3}$} field theory in a finite
 volume},
  Author                   = {Feldman, Joel},
  Journal                  = {Comm. Math. Phys.},
  Year                     = {1974},
  Pages                    = {93--120},
  Volume                   = {37},

  Fjournal                 = {Communications in Mathematical Physics},
  ISSN                     = {0010-3616},
  Mrclass                  = {81.46},
  Mrnumber                 = {0384003},
  Mrreviewer               = {S. Aramaki},
  Url                      = {http://projecteuclid.org/euclid.cmp/1103859849}
}

@Book{Glimm_Jaffe_81,
  Title                    = {Quantum physics. A functional integral point of view},
  Author                   = {Glimm, James and Jaffe, Arthur},
  Publisher                = {Springer-Verlag, New York-Berlin},
  Year                     = {1981},

  ISBN                     = {0-387-90562-6},
  Mrclass                  = {81-02 (35R15 60K35 81C35 81E10 82-02)},
  Mrnumber                 = {628000},
  Mrreviewer               = {Paul Federbush},
  Pages                    = {xx+417}
}

@Article{Glimm_Jaffe_73,
  Title                    = {Positivity of the {$\phi ^{4}_{3}$} {H}amiltonian},
  Author                   = {Glimm, James and Jaffe, Arthur},
  Journal                  = {Fortschr. Physik},
  Year                     = {1973},
  Pages                    = {327--376},
  Volume                   = {21},

  Fjournal                 = {Fortschritte der Physik. Progress of Physics},
  ISSN                     = {0015-8208},
  Mrclass                  = {81.47},
  Mrnumber                 = {0408581},
  Mrreviewer               = {R. Hoegh-Krohn}
}

@Article{Glimm_Jaffe_68,
  Title                    = {A {$\lambda \phi^{4}$} quantum field without cutoffs. {I}},
  Author                   = {Glimm, James and Jaffe, Arthur},
  Journal                  = {Phys. Rev. (2)},
  Year                     = {1968},
  Pages                    = {1945--1951},
  Volume                   = {176},

  Mrclass                  = {81.47},
  Mrnumber                 = {0247845},
  Mrreviewer               = {K. Veseli\'{c}}
}

@Article{Gruber_Kunz_71,
  Title                    = {General properties of polymer systems},
  Author                   = {Gruber, C. and Kunz, H.},
  Journal                  = {Comm. Math. Phys.},
  Year                     = {1971},
  Pages                    = {133--161},
  Volume                   = {22},

  Fjournal                 = {Communications in Mathematical Physics},
  ISSN                     = {0010-3616},
  Mrclass                  = {82.62},
  Mrnumber                 = {321473},
  Mrreviewer               = {H. S. Green},
  Url                      = {http://projecteuclid.org/euclid.cmp/1103857447}
}

@InProceedings{Hairer_BPHZ,
  Title                    = {An Analyst's Take on the {BPHZ} Theorem},
  Author                   = {Hairer, Martin},
  Booktitle                = {Comput. Combin. Dyn. Stoch. Control},
  Year                     = {2018},

  Address                  = {Cham},
  Pages                    = {429--476},
  Publisher                = {Springer International Publishing},
  ISBN                     = {978-3-030-01593-0}
}

@TechReport{Hairer_LN_2009,
  author      = {Hairer, Martin},
  title       = {An introduction to stochastic {PDE}s},
  institution = {University of Warwick},
  year        = {2009},
  type        = {Lecture notes},
  note        = {{\tt http://arxiv.org/abs/0907.4178}},
}

@Book{IJ,
  Title                    = {Elementary Stability and Bifurcation Theory},
  Author                   = {Iooss, G\'erard and Joseph, Daniel D.},
  Publisher                = {Springer-Verlag},
  Year                     = {1990},

  Address                  = {New York},
  Edition                  = {Second}
}

@Book{nualart2006malliavin,
  Title                    = {The Malliavin calculus and related topics},
  Author                   = {Nualart, David},
  Publisher                = {Springer},
  Year                     = {2006},
  Volume                   = {1995}
}

@Article{daPrato_Tubaro,
  author    = {Da Prato, Giuseppe and Tubaro, Luciano},
  title     = {Wick powers in stochastic {PDE}s: an introduction},
  year      = {2007},
  publisher = {University of Trento},
}

@TechReport{Hairer_Malliavin26,
  author      = {Hairer, Martin},
  title       = {Advanced Stochastic Calculus},
  institution = {EPFL and Imperial College London},
  year        = {2026},
  type        = {Lecture notes},
}

@Article{Caravenna_Zambotti_20,
  author    = {Caravenna, Francesco and Zambotti, Lorenzo},
  title     = {{H}airer's reconstruction theorem without regularity structures},
  journal   = {EMS Surveys in Mathematical Science},
  year      = {2021},
  volume    = {7},
  number    = {2},
  pages     = {207--251},
  note      = {\texttt{arXiv:2005.09287}},
  url       = {https://arxiv.org/abs/2005.09287},
}

@Article{Weinberg60,
  author     = {Weinberg, Steven},
  title      = {High-energy behavior in quantum field-theory},
  journal    = {Phys. Rev. (2)},
  year       = {1960},
  volume     = {118},
  pages      = {838--849},
  issn       = {0031-899X},
  fjournal   = {Physical Review. Series II},
  mrclass    = {81.00},
  mrnumber   = {116953},
  mrreviewer = {O. Hara},
}

@Article{Hepp66,
  author    = {Hepp, K.},
  title     = {Proof of the {B}ogoliubov--{P}arasiuk theorem on renormalization},
  journal   = {Comm. Math. Phys.},
  year      = {1966},
  volume    = {2},
  number    = {4},
  pages     = {301--326},
}

@Article{Zimmermann69,
  author    = {Zimmermann, W.},
  title     = {Convergence of {B}ogoliubov's method of renormalization in momentum space},
  journal   = {Comm. Math. Phys.},
  year      = {1969},
  volume    = {15},
  pages     = {208--234},
  issn      = {0010-3616},
  fjournal  = {Communications in Mathematical Physics},
  mrclass   = {81.28},
  mrnumber  = {255162},
  url       = {http://projecteuclid.org/euclid.cmp/1103841945},
}

@Article{BB19,
  author  = {Berglund, Nils and Bruned, Yvain},
  title   = {{BPHZ} renormalisation and vanishing subcriticality asymptotics of the fractional $\Phi^3_d$ model},
  journal = {Stochastics and Partial Differential Equations: Analysis and Computations},
  year    = {2025},
  volume  = {13},
  pages   = {243-307},
  comment = {arXiv:1907.13028},
}

@Article{Connes_Kreimer_2000a,
  author    = {Connes, Alain and Kreimer, Dirk},
  title     = {Renormalization in quantum field theory and the {R}iemann-{H}ilbert problem. {I}. {T}he {H}opf algebra structure of graphs and the main theorem},
  journal   = {Comm. Math. Phys.},
  year      = {2000},
  volume    = {210},
  number    = {1},
  pages     = {249--273},
  issn      = {0010-3616},
  doi       = {10.1007/s002200050779},
  fjournal  = {Communications in Mathematical Physics},
  mrclass   = {81T15 (16W30 34M50 81T18)},
  mrnumber  = {1748177},
  url       = {https://doi.org/10.1007/s002200050779},
}

@Article{Connes_Kreimer_2001,
  author    = {Connes, Alain and Kreimer, Dirk},
  title     = {Renormalization in quantum field theory and the {R}iemann-{H}ilbert problem. {II}. {T}he {$\beta$}-function, diffeomorphisms and the renormalization group},
  journal   = {Comm. Math. Phys.},
  year      = {2001},
  volume    = {216},
  number    = {1},
  pages     = {215--241},
  issn      = {0010-3616},
  doi       = {10.1007/PL00005547},
  fjournal  = {Communications in Mathematical Physics},
  mrclass   = {81T15 (16W30 34M50 81T18)},
  mrnumber  = {1810779},
  url       = {https://doi.org/10.1007/PL00005547},
}

@Unpublished{BKT,
  author = {Berglund, Nils and Klose, Tom and Tapia, Nikolas},
  title  = {Perturbative renormalisation of the $\Phi^4_{4-\eps}$ model via generalized {W}ick maps},
  note   = {Preprint, \texttt{arXiv:2507.03820}},
  year   = {2025},
  url    = {https://arxiv.org/abs/2507.03820},
}

@Article{Aizenmann-Duminil-Copin,
  author    = {Michael {Aizenman} and Hugo {Duminil-Copin}},
  title     = {{Marginal triviality of the scaling limits of critical 4D Ising and \(\phi_4^4\) models}},
  journal   = {{Ann. Math. (2)}},
  year      = {2021},
  volume    = {194},
  number    = {1},
  pages     = {163--235},
  issn      = {0003-486X},
  doi       = {10.4007/annals.2021.194.1.3},
  fjournal  = {{Annals of Mathematics. Second Series}},
  language  = {English},
  msc2010   = {60K35 82B20 82B27 60G60},
  publisher = {Princeton University, Mathematics Department, Princeton, NJ},
}

@Article{Froehlich_1982,
  author  = {J{\"u}rg Fr{\"o}hlich},
  title   = {{On the Triviality of $\lambda \phi^4_d$ Theories and the Approach to the Critical Point in $d > 4$ Dimensions}},
  journal = {Nuclear Physics B},
  year    = {1982},
  volume  = {200},
  number  = {2},
  pages   = {281--296},
  doi     = {10.1016/0550-3213(82)90087-6},
}

@Article{Rivasseau_09,
  author    = {Rivasseau, V.},
  title     = {Constructive field theory in zero dimension},
  journal   = {Adv. Math. Phys.},
  year      = {2009},
  volume    = {2009},
  pages     = {12},
  issn      = {1687-9120},
  note      = {Id/No 180159},
  doi       = {10.1155/2009/180159},
  fjournal  = {Advances in Mathematical Physics},
  language  = {English},
  zbl       = {1201.81085},
  zbmath    = {5816333},
}

@Book{Salmhofer_Renormalization,
  title     = {{Renormalization. An Introduction.}},
  publisher = {Springer},
  year      = {1999},
  author    = {Salmhofer, Manfred},
  address   = {Berlin, Heidelberg, New York},
  edition   = {1st},
}

@Article{Brouder09,
  author    = {Brouder, Christian},
  title     = {Quantum field theory meets {Hopf} algebra},
  journal   = {Math. Nachr.},
  year      = {2009},
  volume    = {282},
  number    = {12},
  pages     = {1664--1690},
  issn      = {0025-584X},
  doi       = {10.1002/mana.200610828},
  fjournal  = {Mathematische Nachrichten},
  language  = {English},
  zbl       = {1182.81056},
  zbmath    = {5654789},
}

@Article{EFPTZ18,
  author    = {Ebrahimi-Fard, Kurusch and Patras, Fr{\'e}d{\'e}ric and Tapia, Nikolas and Zambotti, Lorenzo},
  title     = {Hopf-algebraic deformations of products and {Wick} polynomials},
  journal   = {Int. Math. Res. Not.},
  year      = {2020},
  volume    = {2020},
  number    = {24},
  pages     = {10064--10099},
  issn      = {1073-7928},
  doi       = {10.1093/imrn/rny269},
  fjournal  = {IMRN. International Mathematics Research Notices},
  language  = {English},
  zbl       = {1476.16033},
  zbmath    = {7323440},
}

@Article{bruned2025renormalisingfeynmandiagramsmultiindices,
  author        = {Yvain Bruned and Yingtong Hou},
  title         = {Renormalising {F}eynman diagrams with multi-indices},
  journal       = {Preprint, arXiv/2501.08151},
  year          = {2025},
  archiveprefix = {arXiv},
  eprint        = {2501.08151},
  primaryclass  = {math-ph},
  url           = {https://arxiv.org/abs/2501.08151},
}

@Book{Peccati_Taqqu_book,
  title      = {Wiener chaos: moments, cumulants and diagrams},
  publisher  = {Springer, Milan; Bocconi University Press, Milan},
  year       = {2011},
  author     = {Peccati, Giovanni and Taqqu, Murad S.},
  volume     = {1},
  series     = {Bocconi \& Springer Series},
  isbn       = {978-88-470-1678-1},
  note       = {A survey with computer implementation, Supplementary material available online},
  doi        = {10.1007/978-88-470-1679-8},
  mrclass    = {60H05 (05D40 60-08 60G15 60H30 65Cxx)},
  mrnumber   = {2791919},
  mrreviewer = {Sergey V. Lototsky},
  pages      = {xiv+274},
  url        = {https://doi.org/10.1007/978-88-470-1679-8},
}

@Book{Janson_book_08,
  title     = {Gaussian {Hilbert} spaces},
  publisher = {Cambridge: Cambridge University Press},
  year      = {2008},
  author    = {Janson, Svante},
  volume    = {129},
  series    = {Camb. Tracts Math.},
  edition   = {Reprint of the 1997 hardback ed.},
  isbn      = {978-0-521-05720-2},
  doi       = {10.1017/CBO9780511526169},
  fseries   = {Cambridge Tracts in Mathematics},
  issn      = {0950-6284},
  language  = {English},
  zbl       = {1143.60005},
  zbmath    = {5321086},
}

@Book{Sanz_Sole_book,
  title     = {Malliavin calculus with applications to stochastic partial differential equations},
  publisher = {Boca Raton, FL: CRC Press; Lausanne: EPFL Press},
  year      = {2005},
  author    = {Sanz-Sol{\'e}, Marta},
  isbn      = {0-8493-4030-6; 2-940222-06-1; 978-1-4398-1894-7},
  doi       = {10.1201/9781439818947},
  language  = {English},
  zbl       = {1098.60050},
  zbmath    = {2200693},
}

@Book{NB_SPDE_book,
  title     = {An introduction to singular stochastic {PDEs}. {Allen}-{Cahn} equations, metastability, and regularity structures},
  publisher = {Berlin: European Mathematical Society (EMS)},
  year      = {2022},
  author    = {Berglund, Nils},
  series    = {EMS Ser. Lect. Math.},
  isbn      = {978-3-98547-014-3; 978-3-98547-514-8},
  doi       = {10.4171/ELM/34},
  fseries   = {EMS Series of Lectures in Mathematics},
  issn      = {2523-5176},
  language  = {English},
  zbl       = {1511.35001},
  zbmath    = {7533266},
}

@TechReport{Tubaro_Zanella25,
  author      = {Tubaro, Luciano and Zanella, Margherita},
  title       = {An introduction to {M}alliavin calculus},
  institution = {Trento, Milano.},
  year        = {2025},
  note        = {\texttt{arxiv:2502.07941}},
}

\vfill

\bigskip\bigskip\noindent
{\small
Nils Berglund \\
Institut Denis Poisson (IDP) \\ 
Universite d'Orleans, Universite de Tours, CNRS -- UMR 7013 \\
B\^atiment de Mathematiques, B.P. 6759\\
45067~Orleans Cedex 2, France \\
{\it E-mail address: }
{\tt nils.berglund@univ-orleans.fr} \\
{\tt https://www.idpoisson.fr/berglund} 

\end{document}